\DeclarePairedDelimiter{\abs}{\lvert}{\rvert}
\DeclarePairedDelimiter{\norm}{\lVert}{\rVert}
\newcommand{\real}{\mathbb{R}}
\newcommand{\R}{\mathbb{R}}
\newcommand{\nat}{\mathbb{N}}
\newcommand{\Q}{\mathbb{Q}}
\newcommand{\prob}{\mathbb{P}}
\newcommand{\E}{\mathbb{E}}
\newcommand{\closure}[2][3]{{}\mkern#1mu\overline{\mkern-#1mu#2}}
\newcommand{\barr}[2][3]{{}\mkern#1mu\overline{\mkern-#1mu#2}}
\newcommand{\de}{\mathrm{d}}
\newcommand{\ev}{\mathrm{ev}}
\newcommand{\Lip}{\mathrm{Lip}}
\newcommand{\BL}{\mathrm{BL}}
\newcommand{\ind}{\mathds{1}}
\newcommand{\bx}{\bm{x}}
\newcommand{\by}{\bm{y}}
\newcommand{\blambda}{\bm{\lambda}}
\newcommand{\bX}{\mathbf{X}}
\newcommand{\bY}{\mathbf{Y}}
\newcommand{\bLambda}{\mathbf{\Lambda}}
\newcommand{\bv}{\bm{v}}
\newcommand{\bsigma}{\bm{\sigma}}
\newcommand{\bcT}{\bm{\cT}}
\newcommand{\bcG}{\bm{\cG}}
\newcommand{\bB}{\mathbf{B}}
\newcommand{\bh}{\bm{h}}
\newcommand{\barX}{\barr{X}}
\newcommand{\barLambda}{\barr{\Lambda}}
\newcommand{\barY}{\barr{Y}}
\newcommand{\barB}{\barr{B}}
\newcommand{\barSigma}{\barr{\Sigma}}
\newcommand{\barmu}{\barr{\mu}}
\newcommand{\barA}{\barr{A}}
\newcommand{\cC}{\mathcal{C}}
\newcommand{\cM}{\mathcal{M}}
\newcommand{\cX}{\mathcal{X}}
\newcommand{\cY}{\mathcal{Y}}
\newcommand{\cP}{\mathcal{P}}
\newcommand{\cT}{\mathcal{T}}
\newcommand{\cG}{\mathcal{G}}
\newcommand{\cR}{\mathcal{R}}
\newcommand{\cK}{\mathcal{K}}
\newcommand{\cZ}{\mathcal{Z}}
\newcommand{\cN}{\mathcal{N}}
\newcommand{\cS}{\mathcal{S}}
\newcommand{\cA}{\mathcal{A}}
\newcommand{\barcR}{\barr{\cR}}
\newcommand{\sX}{\mathscr{X}}
\newcommand{\sY}{\mathscr{Y}}
\newcommand{\sF}{\mathscr{F}}
\newcommand{\sB}{\mathscr{B}}
\newcommand{\sN}{\mathscr{N}}
\newcommand{\sA}{\mathscr{A}}
\newcommand{\barsA}{\barr{\sA}}
\newcommand{\sE}{\mathscr{E}}
\newcommand{\sC}{\mathscr{C}}
\newcommand{\Law}{\operatorname{Law}}
\newcommand{\diag}{\operatorname{diag}}
\newcommand{\red}[1]{\textcolor{red}{#1}}
\newcommand{\blue}[1]{\textcolor{blue}{#1}}
\newcommand{\green}[1]{\textcolor{ForestGreen}{#1}}
\numberwithin{equation}{section}
\theoremstyle{plain}
\newtheorem{thm}{Theorem}[section]
\newtheorem{lemma}[thm]{Lemma}
\newtheorem{prop}[thm]{Proposition}
\theoremstyle{definition}
\newtheorem{defn}[thm]{Definition}
\theoremstyle{remark}
\newtheorem{rem}[thm]{Remark}
\newtheorem*{notation}{Notational warning}
\title[Multi-agent models with strategies and diffusive effects]{Well-posedness and propagation of chaos\\for multi-agent models with strategies\\and diffusive effects}\thanks{\emph{Acknowledgments.}
AB and MM are members of the GNAMPA group of INdAM (Istituto Nazionale di Alta Matematica).
This research fits within the scopes of the GNAMPA Project 2024 \emph{Analisi asintotica di modelli evolutivi di interazione} and of the MUR grant \emph{Geometric Analytic Methods for PDEs and Applications} (2022SLTHCE cup E53D23005880006). This manuscript reflects only the authors’ views and opinions and the Italian Ministry cannot be considered responsible for them.}
\date{\today}
\author[A. Baldi]{Alessandro Baldi}
\address[Alessandro Baldi]{Dipartimento di Scienze Matematiche ``G. L. Lagrange'', Politecnico di Torino, Corso Duca degli Abruzzi, 24, 10129 Torino, Italy. ORCID: 0009-0006-4778-8431.}
\email{alessandro.baldi@polito.it}
\author[M. Morandotti]{Marco Morandotti}
\address[Marco Morandotti]{Dipartimento di Scienze Matematiche ``G. L. Lagrange'', Politecnico di Torino, Corso Duca degli Abruzzi, 24, 10129 Torino, Italy. ORCID: 0000-0003-3528-6152.}
\email{marco.morandotti@polito.it}
\subjclass[2020]{34F05, 
60K35, 
93A16, 
60H10
}
\keywords{Multi-agent systems, mean-field limit, propagation of chaos, stochastic differential equations, McKean--Vlasov equation}
\begin{document}

\begin{abstract}
A multi-agent model for individuals endowed with strategies and subject to diffusive effects is proposed. 
The microscopic state of each agent is described by a spatial position and a probability measure, interpreted as a mixed strategy, over a compact metric space.
The evolution is governed by a non-local interaction mechanism and by stochastic effects acting on the spatial component of the state.
The well-posedness of the multi-agent system and that of a certain McKean--Vlasov stochastic differential equation are proved.
Eventually, a propagation of chaos result is obtained, which guarantees that the former model converges to the latter as the number of agents goes to infinity.
\end{abstract}

\maketitle

\allowdisplaybreaks

\tableofcontents

\section{Introduction}
Multi-agent models offer a paradigm to describe a huge variety of complex systems driven by interactions among the constituting individuals. 
Since the number of agents is typically very large, following the evolution of each of them can be a demanding, often unfeasible, task, and both analytical approaches and the algorithmic complexity in simulating such large ensembles call
for reduced models that can capture the macroscopic, global behaviour of these systems. 
The idea of providing macroscopic descriptions emerging from the microscopic behaviour of particles dates back to Boltzmann's work \cite{Boltzmann1970}, where one of the fundamental assumptions is the so-called \emph{Stosszahlansatz}, namely the independence of the bahaviour of distinct particles when their number is very large. 
One of the most successful attempts to justify this approach can be found in Kac's seminal paper \cite{Kac}, where he introduces the notion of \emph{propagation of chaos}, which, shortly afterwards, was shown by McKean \cite{McKean1967} to be satisfied by a certain class of diffusion models.

Systems that can be studied with this approach model, for instance, 
opinion formation \cite{AMS33,Toscani-Opinioni}, 
wealth distribution \cite{AMS32,AMS34,Par-Tosc-Ricchezza}, 
traffic or pedestrian flow \cite{AMS2,AMS31,AMS49,AMS50,AMS55}, 
herding problems \cite{AMS1,AMS2,AMS9,AMS20,AMS43,AMS51}, 
flocking and swarming \cite{AMS11,ToscaniRosado,CFTV,CuckerSmale},
consensus-based optimisation \cite{ABMS2025,AMS17,AMS26,AMS36,FKR2024,HoffmannCBO,CBO-Originale,AMS56},
including applications to pattern formation and chemical reaction networks \cite{AMS18,Bibbona}, often stimulating the development of novel rigorous analytical techniques, see, \emph{e.g.}, \cite{AMS28,AMS47,AMS48}.
We refer the reader to \cite{PareschiToscani_book} for a comprehensive overview of modelling through multi-agent kinetic equations, and to \cite{ChaosReviewI,ChaosReviewII} for a thorough review of propagation of chaos techniques.
In recent years, much attention has been drawn to two enhancements of multi-agent systems, with the aim of devising models that can better describe the above-mentioned complex phenomena: the addition, to the microscopic state, of more variables that may represent the agents' strategies or their belonging to a certain population 
\cite{AFMS,Ascione-Castorina-Solombrino,CristianiLoyTosin,Loy-Tosin,Morandotti-Solombrino,thai2015birthdeathprocessmean}, and the introduction of stochastic terms which alter the deterministic character of the dynamics, to account for randomness.

\smallskip

We study here multi-agent models expanding those in \cite{AFMS,Morandotti-Solombrino} by introducing a diffusive term to the spatial evolution of agents in addition to the classical drift term that describes the spatial velocity.
The microscopic state features also the presence of a variable, referred to as \emph{mixed strategy}, describing the behavioural pattern of agents.
The evolution of this microscopic state is regulated by non-local interactions.

Given $N\in\nat^+$, a time horizon $T>0$, and a compact metric space $U$ of \emph{pure strategies}, we consider a population of $N$ agents, each of which is identified by a label $i \in \{1,\dots,N\}$; the microscopic state of the $i$-th agent at time $t \in [0,T]$ is represented through the random variable $Y^i_t = (X^i_t,\Lambda^i_t)$ taking values in the space $\real^d \times \cP(U)$, where $\cP(U)$ is the convex set of Borel probability measures over $U$.
The $\real^d$-valued random variable $X^i_t$ can be interpreted as the spatial position of the agent $i$, while the random variable $\Lambda_t^i$ can be viewed, borrowing the language of game theory, as the agent's \emph{mixed strategy}.\footnote{Despite the fact that our model includes the case of~$U$ containing a continuum of pure strategies, the concept is perhaps more clear in the simpler and very peculiar case in which $U=\{u_1,\ldots,u_M\}$ contains a finite number of pure strategies.
In particular, the simplest case is instructive: if $M=2$, then we can consider $U=\{u_1,u_2\}=\{F,L\}$, so that $\Lambda_t^i\in\cP(U)$ can be identified with a parameter in $[0,1]$ describing the degree of leadership of agent~$i$ at time~$t$, ranging from being a leader to being a follower. 
If $M>2$, then~$U$ can represent, for instance, the different assets in a financial portfolio, or different populations an individual can belong to, see, \emph{e.g.}, \cite{Morandotti-Solombrino} for a few examples; in this case, $\Lambda_t^i\in\cP(U)$ can be identified with a point in the $(M-1)$-dimensional simplex $\Delta^{M-1}\coloneqq\big\{\lambda\in\real^M: \text{$\sum_{j=1}^M \lambda_j=1$ and $\lambda_j\ge0$ for every $j=1,\ldots,M$}\big\}$.}

The states of the 
agents 
evolve 
according to the stochastic differential equation (SDE)
\begin{equation}
\begin{dcases}
\de X^i_t = v_{\Sigma^N_t}(X^i_t,\Lambda^i_t)\,\de t + \sigma_{\Sigma^N_t}(X^i_t,\Lambda^i_t)\,\de B^i_t\,, \\
\de \Lambda^i_t = \cT_{\Sigma^N_t}(X^i_t,\Lambda^i_t)\,\de t,
\end{dcases}
\qquad \text{for $t \in [0,T]$ and $i = 1,\dots,N$,}
\label{eq:NDicsreteProblem}
\end{equation}
where $\Sigma^N_t \coloneqq \frac{1}{N} \sum_{j=1}^N \delta_{(X^j_t,\Lambda^j_t)}$ denotes the \emph{empirical measure} of the system, starting from given initial states $Y_0^i = (X_0^i,\Lambda_0^i)$ ($i = 1,\dots,N$).
In \eqref{eq:NDicsreteProblem}, the field~$v$ models drift, whereas the diffusive effects are accounted for the $m$-dimensional standard Brownian motions $B^1,\dots,B^N$, whose effects are modulated by the $\real^{d\times m}$-valued field~$\sigma$; the measure-valued field~$\cT$ in the second equation regulates the evolution of the mixed strategies.
The interaction mechanism is encoded in the dependence of these three fields on the empirical measure $\Sigma^N_t$\, which gives \eqref{eq:NDicsreteProblem} a non-linear and non-local character.

When the number $N$ of agents becomes very large, the rigorous study of the coupled SDE \eqref{eq:NDicsreteProblem} may be hindered, due to the complex dependencies that may arise among the random variables $Y^i_t$\,.
To overcome this difficulty, one seeks to approximate the $N$-particle system by a single SDE describing the dynamics of a representative agent, by performing a mean-field limit: we forgo the description of the evolution of every single agent of the population in favour of a representation of the behaviour of a generic member of the population. 
However, to properly perform this limiting procedure, it is necessary to prove a \emph{propagation of chaos} result, that is, when $N\to\infty$, the microscopic states of the agents $\{Y^i_t\}_{i= 1,\dots,N}$ become independent and identically distributed, provided that the i.i.d.~property holds at the initial time $t = 0$.
Intuitively, if propagation of chaos holds, owing to the asymptotic i.i.d.~property of the random variables $Y_t^i$, we can approximate, for large values of $N$, the empirical measure $\Sigma_t^N$ by the (deterministic) law $\Sigma_t\coloneqq \Law(Y_t^1)$ of the representative individual.
Therefore, by formally plugging the common law $\Sigma_t$ in \eqref{eq:NDicsreteProblem}, the dynamics decouples and it is reasonable to postulate that the mean-field description as $N\to\infty$ is provided by the \emph{McKean--Vlasov}~SDE
\begin{subequations}\label{eq:MeanField}
\begin{equation}\label{eq:MeanFieldProblem}
\begin{dcases}
\de\barr{X}_t = v_{\Sigma_t}(\barr{X}_t,\barr{\Lambda}_t)\,\de t + \sigma_{\Sigma_t}(\barr{X}_t,\barr{\Lambda}_t)\,\de\barr{B}_t\,, \\
\de\barr{\Lambda}_t = \cT_{\Sigma_t}(\barr{X}_t,\barr{\Lambda}_t)\,\de t,
\end{dcases}
\end{equation}
subject to the requirement that
\begin{equation}\label{eq:MeanFieldSelfReferential}
    \Law(\barY_t) = \Sigma_t, \qquad \text{for all } t \in [0,T]. 
\end{equation}
\end{subequations}
In \eqref{eq:MeanField}, $\barY_t=(\barX_t\,,\barLambda_t)$ denotes the state of the representative individual.

One of the peculiarities of systems \eqref{eq:NDicsreteProblem} and \eqref{eq:MeanField} is that the state space of the agents is $\real^d \times \cP(U)$, which lacks the vector space structure and is, in general, infinite-dimensional.
By embedding it into a suitable Banach space (see Section~\ref{sec:F(U)}), we will be able to take advantage of the techniques to study evolution problems in closed convex subspaces of Banach spaces, see \cite[Chapitre~I.3]{Brezis-Operateur-Maximaux} (see also \cite{AFMS}).

\subsection{Main results and technical details}
We now proceed to describe the three main contributions of this paper, together with the relevant technical details.

Our first main result, Theorem~\ref{thm:WellPosednessNParticleSystem} below, concerns the well-posedness of the $N$-particle systems \eqref{eq:NDicsreteProblem}. 
We prove the existence of a pathwise unique strong solution (see Definition~\ref{def:StrongSolutionAndUniqueness} below), under the hypothesis of square integrability on the spatial initial data~$\{X_0^i\}_{i=1}^N$\,. 
Moreover, we obtain that the integrability of the initial data is inherited by the solution at all future times.
To show the existence of the solutions to \eqref{eq:NDicsreteProblem}, we combine a Picard iteration scheme with Br\'{e}zis's strategy \cite[Chapitre~I.3]{Brezis-Operateur-Maximaux} to treat the infinite-dimensional components~$\Lambda^i_t$ belonging to the convex set~$\cP(U)$.
As in \cite{AFMS}, the crucial hypothesis on the fields is the global Lipschitz continuity (see \eqref{eq_fieldsprop} below), together with the geometric property~\eqref{eq:T_prop_geometry} of the field~$\cT$, which ensures that the evolution of the $\Lambda_t^i$'s does not escape $\cP(U)$.
We stress that system \eqref{eq:NDicsreteProblem} is a generalisation of \cite[equation (3.5)]{AFMS}, with respect to which a state-dependent stochastic component is added to the spatial dynamics (we mention that, in the deterministic case, a generalisation of \cite[equation (3.5)]{AFMS} has been studied in \cite[equation (1.3)]{Morandotti-Solombrino}).

Our second main result, Theorem~\ref{thm:WellPosednessMeanField} below, concerns the well-posedness of the McKean--Vlasov SDE~\eqref{eq:MeanField}.
We prove the existence of a strong solution (see Definition~\ref{def:StrongSolutionMF} below), under the hypothesis of square integrability on the spatial initial datum~$\barX_0$\,. 
Also in this case, the integrability of the initial datum is inherited by the solution uniformly in time.
Uniqueness is obtained in the class of strong solutions whose spatial first moment is uniformly bounded in time.
To show the existence of solutions to~\eqref{eq:MeanField}, we resort to an auxiliary SDE (see~\eqref{eq:AuxiliaryMeanField} below), which allows us to construct a map~$\cS$ (see Proposition~\ref{prop:DefS}) in a way that condition~\eqref{eq:MeanFieldSelfReferential} can be interpreted as a fixed-point problem for~$\cS$.
The latter is solved by applying the Banach--Caccioppoli fixed-point Theorem.
This strategy is an adaptation of that of Sznitman \cite{Sznitman} to the case in which the state space is the more general $\real^d \times \cP(U)$.

Our third main result, Theorem~\ref{thm:propagation_of_chaos} below, concerns the propagation of chaos.
This is achieved via \emph{synchronous coupling} \cite[Section~3]{ChaosReviewII} (see also \cite{Sznitman}) between the trajectories of the $N$-particle system~\eqref{eq:NDicsreteProblem} and the trajectories of $N$ independent copies of the solution to the mean-field equation.
Thanks to this result, the McKean--Vlasov SDE~\eqref{eq:MeanField} is indeed the mean-field limit of the $N$-particle system~\eqref{eq:DicsreteProblem} as $N\to\infty$.

\smallskip

The paper is structured as follows: in Section~\ref{sec:prel}, we set the notation, collect some preliminaries on measure theory, and fix the functional setting of the problem. 
Section~\ref{sec:NParticleModel} is devoted to studying the $N$-particle system~\eqref{eq:NDicsreteProblem}, Section~\ref{sec:MeanField} tackles the well-posedness of the McKean--Vlasov SDE~\eqref{eq:MeanField}, and propagation of chaos is proved in Section~\ref{sec:propagation_of_chaos}.
Appendix~\ref{app} contains the proofs of some technical probabilistic results.


\section{Notation and preliminaries}\label{sec:prel}
We collect here some preliminaries on measure theory, probability measures, stochastic processes, and present the functional setting of our problem.

\subsection{Distances on the space on probability measures} If $(\cX,d_{\cX})$ is a metric space, we denote by $\cM(\cX)$ the vector space of signed Borel measures over $\cX$ with finite total variation; we also indicate by $\cP(\cX)$ the convex subset of probability measures, by $\cM_+(\cX)$ the convex cone of non-negative measures and by $\cM_0(\cX)\coloneqq \R(\cP(\cX) - \cP(\cX))$ the vector subspace of measures with zero mass.

We recall the notion of \emph{push-forward} of measures.
\begin{defn} Let $(\cX,\sX,\mu)$ be a measure space (with $\mu \in \cM_+(\cX)$) and $(\cY,\sY)$ a measurable space. Given a measurable function $f \colon \cX \longrightarrow \cY$, we define the measure $f_\sharp\mu$ on $(\cY,\sY)$ as
\begin{equation}
f_\sharp\mu(B) \coloneq \mu(f^{-1}(B)),\qquad \text{for all $B \in \sY$.}
\end{equation}
The measure $f_\sharp\mu$ is called \emph{push-forward} (or \emph{image measure}) of $\mu$ through $f$.
\end{defn}
By construction, the image measure $f_\sharp\mu$ is non-negative and it has the same total mass as $\mu$; in particular, if $\mu \in \cP(\cX)$ then also $f_\sharp\mu \in \cP(\cX)$. 
The integrals with respect to $\mu$ and $f_\sharp\mu$ are related by the following \emph{change of variable formula}.
\begin{thm} Let $(\cX,\sX,\mu)$ be a measure space (with $\mu \in \cM(\cX)$), $(\cY,\sY)$ a measurable space and $f \colon \cX \longrightarrow \cY$ a measurable map. Then, for every measurable $\varphi \colon \cY \longrightarrow \real$, the following relation holds
\begin{equation}
    \int_{\cX} \varphi(f(x))\,\de\mu(x) = \int_{\cY} \varphi(y)\,\de(f_\sharp\mu)(y).
\end{equation}
\label{thm:IntegrazioneMisImg}
\end{thm}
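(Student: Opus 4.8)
The plan is to run the standard measure-theoretic approximation argument: verify the identity first on indicator functions, extend it by linearity to simple functions, then by monotone approximation to non-negative measurable functions, and finally, by Jordan decomposition of $\mu$ and the $\varphi=\varphi^+-\varphi^-$ splitting, to the general (signed) case under the implicit integrability assumption that makes both sides meaningful. For the first step, fix $B\in\sY$ and take $\varphi=\ind_B$. Since $f$ is measurable, $\ind_B\circ f=\ind_{f^{-1}(B)}$ is $\sX$-measurable, and by the very definition of the push-forward,
\[
\int_{\cX}\ind_B(f(x))\,\de\mu(x)=\mu(f^{-1}(B))=(f_\sharp\mu)(B)=\int_{\cY}\ind_B(y)\,\de(f_\sharp\mu)(y).
\]
By linearity of the integral on both sides, the identity then holds for every non-negative simple function $\varphi=\sum_{k=1}^n c_k\ind_{B_k}$ with $c_k\ge0$ and $B_k\in\sY$.

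Assume next that $\mu\in\cM_+(\cX)$ and that $\varphi\colon\cY\to[0,+\infty]$ is measurable. Choose an increasing sequence of non-negative simple functions $\varphi_n\uparrow\varphi$ pointwise on $\cY$; then $\varphi_n\circ f\uparrow\varphi\circ f$ pointwise on $\cX$, and $\varphi\circ f$ is $\sX$-measurable as a composition of measurable maps. Applying the monotone convergence theorem on $(\cX,\mu)$ and on $(\cY,f_\sharp\mu)$ respectively, together with the simple-function case, gives
\[
\int_{\cX}\varphi(f(x))\,\de\mu(x)=\lim_{n\to\infty}\int_{\cX}\varphi_n(f(x))\,\de\mu(x)=\lim_{n\to\infty}\int_{\cY}\varphi_n(y)\,\de(f_\sharp\mu)(y)=\int_{\cY}\varphi(y)\,\de(f_\sharp\mu)(y).
\]
In particular, taking $\varphi=|\psi|$ shows that $\psi\circ f\in L^1(\mu)$ if and only if $\psi\in L^1(f_\sharp\mu)$, so the integrability hypothesis needed for the statement is the same on both sides; for such $\psi$, writing $\psi=\psi^+-\psi^-$ and using linearity concludes the proof when $\mu$ is non-negative.

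Finally, for a signed $\mu\in\cM(\cX)$ one invokes the Jordan decomposition $\mu=\mu^+-\mu^-$ with $\mu^\pm\in\cM_+(\cX)$ of finite mass, observes that $f_\sharp\mu=f_\sharp\mu^+-f_\sharp\mu^-$ (push-forward is additive on $\cM_+$), and applies the already established identity to $\mu^+$ and $\mu^-$ separately, adding the results. The only point requiring a little care is this last bookkeeping step in the signed case: one must check that $f_\sharp$ is linear on $\cM(\cX)$ — equivalently that $f_\sharp\mu^+$ and $f_\sharp\mu^-$ furnish a (possibly non-minimal) decomposition of $f_\sharp\mu$ into non-negative measures — and that the integrability conditions transfer so that no $\infty-\infty$ ambiguity arises; everything else is the routine four-step extension and presents no real difficulty.
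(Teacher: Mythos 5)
Your proof is correct: it is the standard four-step extension (indicators, simple functions, monotone convergence, Jordan decomposition plus $\varphi=\varphi^+-\varphi^-$), and you rightly flag the only delicate point, namely that $f_\sharp\mu^+ - f_\sharp\mu^-$ need not be the minimal Jordan decomposition of $f_\sharp\mu$ but still suffices for the integral identity. The paper recalls this change-of-variable formula as a classical preliminary and gives no proof of its own, so there is nothing to compare against; your argument is the expected one and fills the gap adequately.
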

If $(\Omega,\sF,\prob)$ is a probability space and $X \colon \Omega \longrightarrow \sY$ is a random variable, the probability measure $X_\sharp\prob$ over $\sY$ is called the \emph{law} of $X$, which we shall denote by $\Law(X)$.

In the sequel we will frequently employ a notable class of metrics defined on spaces of probability measures: the \emph{Wasserstein distances}. We start by defining the set of probability measures with finite $p$-th moment.
\begin{defn}  Let $(\cX,d_\cX)$ be a metric space and $p \in [1,+\infty)$. The set of \emph{probability measures with finite $p$-th moment} $\cP_p(\cX)$ is defined as
\begin{equation}
    \cP_p(\cX) \coloneqq \biggl\{\mu \in \cP(\cX) : \int_\cX d_\cX(x,x_0)^p\,\de\mu(x) < \infty \textrm{ for some } x_0 \in \cX \biggr\}.
\label{eq:PpDef}
\end{equation}
\end{defn}
We note that the definition of $\cP_p(\cX)$ is actually independent of the choice of the point $x_0$ and that if $(\cX, d_\cX)$ is a compact metric space then $\cP_p(\cX) = \cP(\cX)$ for every $p \in [1,\infty)$.

If $(\cX,d_\cX)$ is a complete and separable metric space, the sets $\cP_p(\cX)$ can be endowed with a metric structure by equipping them with the \emph{Wasserstein distances}, whose definition and main properties we recall thereafter (for a detailed treatment see, \emph{e.g.}, \cite{AGS,Optimal-Transport-Villani}).
\begin{defn} Let $(\cX,d_\cX)$ be a complete and separable metric space and let $p \in [1,+\infty)$. For every $\mu, \nu \in \cP_p(\cX)$ the \emph{Wasserstein distance of order $p$} between $\mu$ and $\nu$ is defined as
\begin{equation}
W_p^p(\mu,\nu) \coloneqq \inf_{\pi \in \Gamma(\mu,\nu)}\int_{\cX\times\cX} d_\cX(x_1,x_2)^p\,\de \pi(x_1,x_2),
\label{eq:WasserDef}
\end{equation}
where $\Gamma(\mu,\nu)$ denotes the sets of \emph{transport plans} between $\mu$ and $\nu$, given by
\begin{equation*}
\Gamma(\mu,\nu) \coloneqq \bigl\{ \pi \in \cP(\cX\times\cX) : (p_1)_\sharp \pi = \mu, \;(p_2)_\sharp \pi = \nu\bigr\},
\label{eq:TransportPlan}
\end{equation*}
where $p_1, p_2 \colon \cX\times\cX \longrightarrow \cX$ denote the canonical projections on the first and second factor, respectively.
\label{def:WasserDef}
\end{defn}
The general theory of Optimal Transport (see, e.g., \cite{Optimal-Transport-Ambrosio,AGS,Optimal-Transport-Santambrogio,Optimal-Transport-Villani}) ensures that the infimum in (\ref{eq:WasserDef}) is actually attained, so that the definition of $W_p$ is well-posed. Furthermore, it can be shown that $W_p$ satisfies all the properties of a metric.
\begin{defn}[Wasserstein space] Let $(\cX,d_\cX)$ be a complete and separable metric space and $p \in [1,+\infty)$. The function $W_p$ defines a metric over the set $\cP_p(\cX)$ of probability measures with finite $p$-th moment. The metric space $(\cP_p(\cX),W_p)$ is called the \emph{Wasserstein space} of order $p$.
\end{defn}
Wasserstein spaces inherit form $(\cX,d_\cX)$ the properties of completeness and separability.
\begin{thm} Let $(\cX,d_\cX)$ be a complete and separable metric space and $p \in [1,+\infty)$. Then the Wasserstein space $(\cP_p(\cX),W_p)$ is complete and separable.
\label{thm:WasserPolish}
\end{thm}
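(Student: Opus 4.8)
The plan is to prove separability and completeness by the classical arguments (see, \emph{e.g.}, \cite{AGS, Optimal-Transport-Villani}), adapted to the present generality of $(\cX,d_\cX)$.

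\emph{Separability.} Fix a countable dense set $D=\{x_n:n\in\nat\}\subseteq\cX$, which we may assume contains $x_0$, and let $\mathcal{D}$ be the countable family of measures of the form $\sum_{i=1}^{k}q_i\delta_{x_{n_i}}$ with $k\in\nat^+$, $n_i\in\nat$ and $q_i\in\Q\cap[0,1]$ satisfying $\sum_i q_i=1$; clearly $\mathcal{D}\subseteq\cP_p(\cX)$. To show $\mathcal{D}$ is dense, fix $\mu\in\cP_p(\cX)$ and $\varepsilon>0$. First I would use finiteness of the $p$-th moment to pick $R>0$ with $\int_{\{d_\cX(\cdot,x_0)\ge R\}}d_\cX(x,x_0)^p\,\de\mu<\varepsilon$. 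Then, for $\delta>0$, I would disjointify the cover $\{B(x_n,\delta)\}_n$ into a Borel partition $\{A_n\}_n$ of $\cX$ with $A_n\subseteq B(x_n,\delta)$, pick $N$ so that $\mu\big(\bigcup_{n>N}A_n\big)$ is small, and define $f\colon\cX\to\{x_0,\dots,x_N\}$ by $f(x)=x_n$ when $x\in A_n$ with $n\le N$ and $d_\cX(x,x_0)<R$, and $f(x)=x_0$ otherwise. Using the coupling $(\mathrm{id},f)_\sharp\mu\in\Gamma(\mu,f_\sharp\mu)$ and Theorem~\ref{thm:IntegrazioneMisImg},
\[
W_p^p(\mu,f_\sharp\mu)\le\int_\cX d_\cX(x,f(x))^p\,\de\mu\le\delta^p+\varepsilon+R^p\,\mu\Big(\bigcup_{n>N}A_n\Big),
\]
which is at most $3\varepsilon$ once $\delta^p\le\varepsilon$ and $N$ is large. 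A further perturbation of the (finitely many) masses of $f_\sharp\mu$ to rationals summing to $1$ costs, in $W_p$, at most the diameter of the fixed finite support times the displaced mass, so it produces an element of $\mathcal{D}$ arbitrarily close to $\mu$.

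\emph{Completeness.} Let $(\mu_k)_k$ be $W_p$-Cauchy. It is bounded, so testing against $\delta_{x_0}\in\cP_p(\cX)$ gives $\sup_k\int_\cX d_\cX(x,x_0)^p\,\de\mu_k=\sup_k W_p^p(\mu_k,\delta_{x_0})=:M<\infty$. Next I would prove tightness: for $\eta\in(0,1)$ pick $K$ with $W_p(\mu_k,\mu_K)<\eta$ for $k\ge K$; the finite family $\{\mu_1,\dots,\mu_K\}$ is tight by Ulam's theorem, so there is a compact $L$ with $\mu_j(\cX\setminus L)<\eta^p$ for $j\le K$; an optimal plan between $\mu_k$ and $\mu_K$ handles $k>K$, while $k\le K$ is immediate, so that for all $k$ and all $s>0$,
\[
\mu_k\big(\{x:d_\cX(x,L)>s\}\big)\le\mu_K(\cX\setminus L)+s^{-p}W_p^p(\mu_k,\mu_K)\le\eta^p\big(1+s^{-p}\big).
\]
Running this with a sequence $\eta_m\downarrow0$ and suitably chosen radii $s_m$ so that the errors are summable, and intersecting the closed $s_m$-neighbourhoods of the associated compacts, yields (using completeness of $\cX$) a compact $C$ with $\sup_k\mu_k(\cX\setminus C)\le\varepsilon$. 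By Prokhorov's theorem a subsequence $\mu_{k_j}$ converges weakly to some $\mu\in\cP(\cX)$; weak lower semicontinuity of $\nu\mapsto\int(d_\cX(x,x_0)^p\wedge m)\,\de\nu$ together with monotone convergence give $\int_\cX d_\cX(x,x_0)^p\,\de\mu\le M$, so $\mu\in\cP_p(\cX)$.

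Finally I would upgrade weak convergence to $W_p$-convergence via the lower semicontinuity of $W_p$ under weak convergence: if $\nu_j\rightharpoonup\nu$ and $\rho_j\rightharpoonup\rho$, then $W_p(\nu,\rho)\le\liminf_j W_p(\nu_j,\rho_j)$. This follows by extracting, along a subsequence realising the $\liminf$, a weak limit $\pi$ of optimal plans $\pi_j$ — tight since its marginals are — and using $\int d_\cX(x_1,x_2)^p\,\de\pi\le\liminf_j\int d_\cX(x_1,x_2)^p\,\de\pi_j$ (Portmanteau, the integrand being continuous and non-negative), while the marginals of $\pi$ are $\nu$ and $\rho$. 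Applying this with $\nu_j=\mu_{k_j}$, $\nu=\mu$ and $\rho_j\equiv\mu_k$ for a fixed $k$, the Cauchy property gives $W_p(\mu,\mu_k)\le\liminf_j W_p(\mu_{k_j},\mu_k)\le\sup_{l,m\ge k}W_p(\mu_l,\mu_m)\to 0$ as $k\to\infty$, so $\mu_k\to\mu$ in $W_p$. The main obstacle is the uniform tightness estimate: individual tightness of each $\mu_k$ is immediate from Ulam's theorem, but turning the $W_p$-Cauchy control into \emph{uniform} smallness of the tails — both near a single compact set and for the $p$-th moments — is exactly where completeness of $\cX$ and the coupling structure of $W_p$ are genuinely needed.
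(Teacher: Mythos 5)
The paper does not prove this statement: Theorem~\ref{thm:WasserPolish} is recalled as a classical fact, with the proof deferred to the cited references (\emph{e.g.}, \cite{AGS,Optimal-Transport-Villani}). Your argument is a correct rendition of exactly the standard proof found there — separability via rational convex combinations of Dirac masses on a countable dense set after a tail truncation, and completeness via uniform tightness of a $W_p$-Cauchy sequence, Prokhorov's theorem, lower semicontinuity of the $p$-th moment, and lower semicontinuity of $W_p$ under joint weak convergence obtained from weak limits of optimal plans — so there is nothing in the paper to compare it against beyond confirming that it matches the referenced literature.
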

We conclude this paragraph by recalling the definition and fundamental properties of the spaces of Lipschitz continuous functions. Let $(\cX,d_\cX)$ be a metric space and $\varphi \colon \cX \longrightarrow \real$ a Lipschitz continuous function. We define \emph{Lipschitz constant} of $\varphi$ the quantity
\begin{equation}
    \Lip(\varphi) \coloneqq \sup_{\substack{x_1,x_2 \in \cX \\ x_1 \neq x_2}}\frac{\abs{\varphi(x_1)-\varphi(x_2)}}{d_\cX(x_1,x_2)}.
\end{equation}
We denote by $\Lip_b(\cX)$ the set of functions $\varphi \colon \cX \longrightarrow \real$ that are bounded and Lipschitz continuous over $\cX$. $\Lip_b(\cX)$ is a real vector space and can be endowed with the norm $\norm{\cdot}_\Lip$ defined as
\begin{equation}
    \norm{\varphi}_\Lip \coloneqq \sup_{x \in \cX}\abs{\varphi(x)} + \Lip(\varphi)
\end{equation}
for every $\varphi \in \Lip_b(\cX)$. If $(\cX,d_\cX)$ is a compact metric space, by Weierstrass Theorem, $\Lip_b(\cX)$ coincide with the vector space of Lipschitz function over $\cX$, which we simply denote by $\Lip(\cX)$.

\subsection{The Banach space $F(U)$}\label{sec:F(U)}
One of the peculiarities of the system \eqref{eq:NDicsreteProblem} lies in the fact that the dynamics of the mixed strategies $\Lambda^i_t$ takes place in the convex set $\cP(U)$, which inherently does not posses the structure of a normed vector space. Therefore, it is useful to preliminarily embed the set of probability measures $\cP(U)$ into a suitable Banach space, thereby taking advantage of its vector space and metric structures, and of some analytical tools such as Bochner integration. 
The construction that we adopt follows that of~ \cite[Section 2.1]{AFMS}; here, we provide an outline of the procedure, referring to the work of Ambrosio \emph{et al.}~and to \cite{Ambrosio-Puglisi, Arens-Eells, WeaverLip}  for further details.

Let us consider the metric space $(U,d_U)$, which we recall to be compact by hypothesis; we consider the Banach space $(\Lip(U),\norm{\cdot}_\Lip)$, from which we can construct the 
topological dual $(\Lip(U))^*$ endowed with the customary dual norm, referred to as the \emph{bounded Lipschitz norm} (or $\BL$ norm for brevity), defined as
\begin{equation}
\norm{\ell}_\BL \coloneqq \sup\Bigl\{\abs{\langle\ell,\varphi\rangle} \colon \varphi \in \Lip(U),\;\norm{\varphi}_\Lip \le 1\Bigr\}
\label{eq:DefNormaF(U)}
\end{equation}
for every  $\ell \in (\Lip(U))^*$. 
Now, by identifying each probability measure on $U$ with a linear and continuous functional on $\Lip(U)$, we can embed $\cP(U)$ into a Banach space, which we define as follows.
\begin{defn} We call $F(U)$ the closed subspace of the Banach space $\bigl((\Lip(U))^*,\norm{\cdot}_\BL\bigr)$ defined as
\begin{equation}
    F(U) \coloneqq \closure[-1]{\mathrm{span}(\cP(U))}^{\norm{\cdot}_\BL},
\end{equation}
where $\mathrm{span}(\cP(U))$ denotes the set of all finite linear combinations of elements of $\cP(U)$; here, each measure $\lambda \in \cP(U)$ is identified with the element of $(\Lip(U))^*$ defined by
\begin{equation}
\varphi \longmapsto \int_U \varphi(u)\,\de\lambda(u),\qquad\text{for every $\varphi\in \Lip(U)$.}
\end{equation}
\label{def:F(U)}
\end{defn}
\begin{rem}\label{remark1}
For notational consistency, we shall use the symbol $\norm{\cdot}_{F(U)}$ to indicate the $\BL$ norm restricted to $F(U)$.
The space $F(U)$ is known in the literature as \emph{Arens-Eells space}, and it can be shown to be a separable Banach space containing $\cM(U)$. Additionally, $\cP(U)$ 
turns out to be a compact subset of $F(U)$ with respect to the topology induced by the $\BL$ norm. 
\end{rem}
In view of the importance of these properties throughout the present article, we highlight them in the following proposition. 
\begin{prop}[properties of $F(U)$] Let $(U,d_U)$ be a compact metric space. Then the normed vector space $(F(U),\norm{\cdot}_{F(U)})$ introduced in Definition~\ref{def:F(U)} is a separable Banach space. Furthermore, the convex subset $\cP(U)$ is compact in $F(U)$.
\label{prop:ProprF(U)P(U)}
\end{prop}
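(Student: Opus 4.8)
\smallskip

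The Banach property is immediate: by Definition~\ref{def:F(U)}, $F(U)$ is a norm-closed subspace of the dual $\bigl((\Lip(U))^*,\norm{\cdot}_\BL\bigr)$, which is complete being the topological dual of a normed space; hence $F(U)$ is itself complete. The convexity of $\cP(U)$ inside $F(U)$ is also a formality: the map $\lambda\mapsto\bigl(\varphi\mapsto\int_U\varphi\,\de\lambda\bigr)$ used to identify measures with functionals is the restriction of a linear map on $\cM(U)$, so it sends convex combinations of probability measures to the corresponding convex combinations in $F(U)$, and $\cP(U)$ is convex as a set of measures. The two genuine points are the separability of $F(U)$ and the $\norm{\cdot}_{F(U)}$-compactness of $\cP(U)$, and I would deduce the former from the latter: a compact metric space is separable, and the closed linear span of a separable subset of a normed space is separable, so once $\cP(U)$ is known to be $\norm{\cdot}_{F(U)}$-compact it is separable, and therefore so is $F(U)=\overline{\mathrm{span}(\cP(U))}$.

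It thus remains to prove that $\cP(U)$ is compact in $(F(U),\norm{\cdot}_{F(U)})$; since $\cP(U)$ lies in a metric space it suffices to establish sequential compactness. Given $\{\mu_n\}\subset\cP(U)$, view it as a bounded sequence in $(C(U))^*=\cM(U)$ (each $\mu_n$ has total variation $1$); as $U$ is compact, $C(U)$ is separable, so by the Banach--Alaoglu theorem together with the weak-$*$ metrizability of bounded sets there is a subsequence, not relabelled, and a $\mu$ with $\int_U\varphi\,\de\mu_n\to\int_U\varphi\,\de\mu$ for every $\varphi\in C(U)$. Testing against $\varphi\equiv1$ and against non-negative $\varphi$ gives $\mu(U)=1$ and $\mu\ge0$, hence $\mu\in\cP(U)$ (it is here that compactness of $U$ forbids loss of mass). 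The crux is then to upgrade this narrow convergence to convergence in $\norm{\cdot}_{F(U)}$, i.e.\ to show that $\int_U\varphi\,\de(\mu_n-\mu)\to0$ \emph{uniformly} over $K\coloneqq\{\varphi\in\Lip(U):\norm{\varphi}_\Lip\le1\}$. But $K$ is equibounded (by $1$) and equi-Lipschitz (with modulus $d_U$), so by the Arzel\`{a}--Ascoli theorem it is relatively compact in $C(U)$; covering $K$ by finitely many $\norm{\cdot}_\infty$-balls of radius $\varepsilon$ centred at $\varphi_1,\dots,\varphi_m$ and invoking the narrow convergence at these finitely many points yields $\sup_{\varphi\in K}\bigl|\int_U\varphi\,\de(\mu_n-\mu)\bigr|\le3\varepsilon$ for $n$ large, that is, $\norm{\mu_n-\mu}_{F(U)}\to0$.

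I expect this last upgrade to be the only delicate step; everything else is soft. As an alternative to the Arzel\`{a}--Ascoli argument --- one that exploits the machinery already in place in Section~\ref{sec:prel} --- one can compare $\norm{\cdot}_{F(U)}$ with the Wasserstein distance $W_1$ on $\cP(U)$: since $\norm{\varphi}_\Lip\le1$ forces $\Lip(\varphi)\le1$, Kantorovich duality gives $\norm{\mu-\nu}_{F(U)}\le W_1(\mu,\nu)$; conversely, replacing a competitor $\varphi$ with $\Lip(\varphi)\le1$ by $\varphi-\varphi(u_0)$ for a fixed $u_0\in U$ (which leaves $\int_U\varphi\,\de(\mu-\nu)$ unchanged, $\mu$ and $\nu$ being probability measures) produces a function of $\norm{\cdot}_\Lip$-norm at most $\mathrm{diam}(U)+1$, whence $W_1(\mu,\nu)\le(\mathrm{diam}(U)+1)\norm{\mu-\nu}_{F(U)}$. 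Thus the two metrics are bi-Lipschitz equivalent on $\cP(U)$; since $U$ is compact one has $\cP(U)=\cP_1(U)$, which is separable by Theorem~\ref{thm:WasserPolish} and compact for $W_1$ (tightness is automatic on a compact space, so Prokhorov's theorem applies and $\cP(U)$ is narrowly closed), and both properties transfer to $(\cP(U),\norm{\cdot}_{F(U)})$ through the equivalence, giving in turn the separability of $F(U)$ as above.
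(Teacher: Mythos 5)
Your argument is correct. Note, however, that the paper does not actually prove this proposition: it is stated as a known property of the Arens--Eells space and delegated to the references (\cite{AFMS}, \cite{Ambrosio-Puglisi}, \cite{Arens-Eells}, \cite{WeaverLip}), so there is no internal proof to compare against. What you supply is a sound self-contained justification. The soft parts (completeness of a closed subspace of $(\Lip(U))^*$, convexity of $\cP(U)$, separability of $F(U)$ deduced from separability of the compact set $\cP(U)$ via rational linear combinations of a countable dense subset) are all fine. The substantive step --- upgrading weak-$*$ convergence in $(C(U))^*$ to $\norm{\cdot}_{F(U)}$-convergence --- is handled correctly by Arzel\`{a}--Ascoli applied to the unit ball $\{\varphi\in\Lip(U):\norm{\varphi}_\Lip\le1\}$, which is relatively compact in $C(U)$ precisely because $U$ is compact; the $3\varepsilon$ estimate closes the gap. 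Your alternative route, via the two-sided comparison $\norm{\mu-\nu}_{F(U)}\le W_1(\mu,\nu)\le(\mathrm{diam}(U)+1)\norm{\mu-\nu}_{F(U)}$ on $\cP(U)$ (the normalisation $\varphi\mapsto\varphi-\varphi(u_0)$ being legitimate since $\mu-\nu$ has zero mass), is in fact the one most consonant with the paper's own remarks about the equivalence of $\BL$-convergence and $W_1$-convergence on $\cP(U)$, and it gives compactness and separability in one stroke from Prokhorov plus Theorem~\ref{thm:WasserPolish}. Either version would serve as a complete proof.
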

It can be shown that the convergence of probability measures with respect to the $\BL$ norm is closely related to the convergence in the $1$-Wasserstein distance $W_1$, which, in turn, is related to the weak convergence of probability measures (that is, in the duality with continuous and bounded functions). For details, we refer again to \cite[Section~2.1]{AFMS}; we highlight that 
\begin{equation}\label{eq_triangolino}
\norm{\lambda}_{F(U)} \le 1,\qquad \text{for all $\lambda \in \cP(U)$.}
\end{equation}
\begin{rem}
We point out that all integrals of $F(U)$-valued functions are to be intended as Bochner integrals, see \cite[Appendix A]{AFMS} and \cite{Yosida}.
\end{rem}

\subsection{Functional setting of the problem}
We can now formulate problem \eqref{eq:NDicsreteProblem} in a suitable analytical framework. 
We recall that the  microscopic states $Y^i_t = (X^i_t,\Lambda^i_t)$ belong to $\real^d \times \cP(U)$ for all $i \in \{1,\dots, N\}$ and for all times $t \in [0,T]$.
In view of Proposition~\ref{prop:ProprF(U)P(U)},
we shall always consider $\real^d \times \cP(U)$ as a subset of 
$\real^d \times F(U)$, which
can naturally be endowed with a Banach space structure by considering the product norm $\norm{\cdot}_{\real^d \times F(U)}$, defined as
\begin{equation}\label{eq_normay}
    \norm{y}_{\real^d \times F(U)} \coloneqq \norm{x}_{\real^d} + \norm{\lambda}_{F(U)},\qquad\text{for $y=(x,\lambda)\in \real^d\times F(U)$.}
\end{equation}
Here, $\norm{\cdot}_{\real^d}$ denotes the customary Euclidean norm on $\real^d$, and $\norm{\cdot}_{F(U)}$ denotes the norm on $F(U)$ (see Remark~\ref{remark1}). 

It will be often useful to consider the product spaces $(\real^d)^N$, $F(U)^N$, and $(\real^d \times F(U))^N$ (with $N \in \nat^+$), which we also equip with the customary product norms, given by
\begin{equation*}
\norm{\bx}_{(\real^d)^N} \coloneqq \sum_{i=1}^N\norm{x_i}_{\real^d}\,, \qquad \norm{\blambda}_{(F(U))^N} \coloneqq \sum_{i=1}^N\norm{\lambda_i}_{F(U)}\,, 
\end{equation*}
and
\begin{equation*}
\norm{\by}_{(\real^d\times F(U))^N} \coloneqq \sum_{i=1}^N\norm{x_i}_{\real^d} + \sum_{i=1}^N\norm{\lambda_i}_{F(U)} = \norm{\bx}_{(\real^d)^N} + \norm{\blambda}_{(F(U))^N}\,,
\end{equation*}
respectively, for all $\bx = (x_1,\dots,x_N) \in (\real^d)^N$, $\blambda = (\lambda_1,\dots,\lambda_N) \in (F(U))^N$, and $\by = (x_1,\lambda_1,\dots,x_N,\lambda_N) \in (\real^d \times F(U))^N$. Since $(\real^d,\norm{\cdot}_{\real^d})$ and $(F(U),\norm{\cdot}_{F(U)})$ are separable Banach spaces, so are $((\real^d)^N,\norm{\cdot}_{(\real^d)^N})$, $((F(U))^N,\norm{\cdot}_{(F(U))^N})$, and $((\real^d \times F(U))^N,\norm{\cdot}_{(\real^d \times F(U))^N})$. 
By \eqref{eq_triangolino}, it follows that 
\begin{equation}\label{eq_doppiotriangolino}
\norm{\blambda}_{(F(U))^N} \le N,\qquad \text{for all $\blambda \in (\cP(U))^N$.}
\end{equation}

On the vector space $\real^{d\times m}$ 
we shall consider the Frobenius norm
\begin{equation*}
    \norm{\sigma}_{\real^{d\times m}} \coloneqq \sqrt{\sum_{i=1}^d \sum_{j=1}^m  \sigma_{ij}^2}\,,\qquad\textup{ for all } \sigma \in \real^{d\times m},
\end{equation*}
while for real matrices $\bsigma$ of dimension $dN\times mN$ obtained by the juxtaposition of $N^2$ matrices of dimension $d \times m$ we stipulate that their norm be given by the sum of the norms of the constituting blocks: more explicitly, we define
\begin{equation*}
\norm{\bsigma}_{\real^{dN\times mN}} \coloneqq \sum_{k=1}^N\sum_{\ell=1}^N \norm{\sigma_{k\ell}}_{\real^{d\times m}},\quad\textup{ for all } 
\bsigma =
\begin{pmatrix}
    \sigma_{11} & \cdots & \sigma_{1N} \\
    \vdots & \ddots & \vdots \\
    \sigma_{N1} & \cdots & \sigma_{NN}
\end{pmatrix}
\in \real^{dN\times mN}.
\end{equation*}

When considering the space $\cC([0,T],E)$ of continuous functions taking values in a Banach space $(E,\norm{\cdot}_E)$, we always endow it with the uniform norm
$$\norm{f}_\infty \coloneqq \max_{t\in[0,T]} \norm{f(t)}_E\,,\qquad \text{for every $f\in \cC([0,T],E)$,}$$
which makes it a Banach space. We recall that if $(E,\norm{\cdot}_E)$ is separable, so is $(\cC([0,T],E),\norm{\cdot}_\infty)$.


As a last item, we state an inequality for stochastic integrals. 
\begin{thm}[Burkholder--Davis--Gundy inequality] 
Let $\bigl(\Omega, \sF, (\mathscr{F}_t)_{t\in[0,T]},\prob\bigr)$ be a filtered probability space and let $(B_t)_{t\in[0,T]}$ be a standard $\real^m$-valued Brownian motion and $(G_t)_{t\in[0,T]}$ be a progressively measurable, $\real^{d\times m}$-valued stochastic process, both defined on $\bigl(\Omega, \sF, (\mathscr{F}_t)_{t\in[0,T]},\prob\bigr)$.
Let $p \ge 2$ and suppose that $\int_0^T\,\norm{G_t}_{\real^{d\times m}}^p\,\de t < +\infty$, $\prob$-almost surely. 
Then for all $t \in [0,T]$, the following inequality holds:
\begin{equation}\label{eq_BDG}
    \E\biggl[\sup_{u\in[0,t]}\norm*{\int_0^u G_s\,\de B_s}_{\real^d}^p\biggr] \le c_p\, t^\frac{p-2}{2}\, \E\biggl[\int_0^t\norm{G_s}_{\real^{d\times m}}^p\,\de s\biggr]
\end{equation}
where 
$c_p = \bigl[\frac{1}{2}\, p^{p+1} (p-1)^{1-p}\bigr]^\frac{p}{2}$.
\end{thm}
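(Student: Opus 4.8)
The plan is to derive \eqref{eq_BDG} from the classical combination of It\^o's formula, Doob's $L^p$ maximal inequality, and two applications of H\"older's inequality, a route that produces precisely the constant $c_p$ in the statement; a localisation argument takes care of the fact that the integrability hypothesis on $G$ holds only $\prob$-almost surely. Set $M_u \coloneqq \int_0^u G_s\,\de B_s$ for $u\in[0,t]$, which is a continuous local martingale with $M_0=0$, and define $f\colon\real^d\to\real$ by $f(x)\coloneqq\norm{x}_{\real^d}^p$, which is of class $C^2$ since $p\ge 2$. For $n\in\nat$ introduce the stopping times $\tau_n \coloneqq t\wedge\inf\{u\in[0,t]: \norm{M_u}_{\real^d}\ge n \text{ or } \int_0^u \norm{G_s}_{\real^{d\times m}}^2\,\de s\ge n\}$; since $M$ is continuous and $\int_0^t\norm{G_s}_{\real^{d\times m}}^2\,\de s<\infty$ $\prob$-a.s., one has $\tau_n\uparrow t$ almost surely, and the stopped process $M^{\tau_n}$ is a bounded martingale. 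I will prove the inequality first for $M^{\tau_n}$ (where every expectation below is finite) and then let $n\to\infty$, using Fatou's lemma on the left-hand side and the fact that replacing $\tau_n$ by $t$ only increases the right-hand side; to keep the notation light I describe the argument pretending $M$ and $G$ are already bounded.

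Applying It\^o's formula to $f(M_t)$ and taking expectations (the stochastic integral term being a genuine martingale in the bounded setting) gives
\[
\E\bigl[\norm{M_t}_{\real^d}^p\bigr] = \tfrac12\,\E\!\left[\int_0^t \mathrm{tr}\bigl(D^2 f(M_s)\,G_sG_s^\top\bigr)\,\de s\right].
\]
A direct computation of the Hessian, $D^2 f(x) = p\norm{x}_{\real^d}^{p-2}I + p(p-2)\norm{x}_{\real^d}^{p-4}\,x\otimes x$ (extended continuously at the origin), together with $\norm{G_s^\top M_s}_{\real^d}^2\le\norm{G_s}_{\real^{d\times m}}^2\norm{M_s}_{\real^d}^2$ and $p-2\ge0$, yields the pointwise bound $\mathrm{tr}(D^2 f(x)\,GG^\top)\le p(p-1)\norm{x}_{\real^d}^{p-2}\norm{G}_{\real^{d\times m}}^2$. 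Hence
\[
\E\bigl[\norm{M_t}_{\real^d}^p\bigr]\le\frac{p(p-1)}{2}\,\E\!\left[\sup_{s\in[0,t]}\norm{M_s}_{\real^d}^{p-2}\int_0^t\norm{G_s}_{\real^{d\times m}}^2\,\de s\right].
\]
Applying H\"older's inequality with exponents $\tfrac{p}{p-2}$ and $\tfrac p2$ to the last expectation, and then H\"older's inequality in the time variable, which gives $\bigl(\int_0^t\norm{G_s}_{\real^{d\times m}}^2\,\de s\bigr)^{p/2}\le t^{(p-2)/2}\int_0^t\norm{G_s}_{\real^{d\times m}}^p\,\de s$, and combining with Doob's $L^p$ maximal inequality $\E[\sup_{u\in[0,t]}\norm{M_u}_{\real^d}^p]\le\bigl(\tfrac{p}{p-1}\bigr)^p\E[\norm{M_t}_{\real^d}^p]$, I obtain, writing $A\coloneqq\E[\sup_{u\in[0,t]}\norm{M_u}_{\real^d}^p]$ and $C\coloneqq\E[\int_0^t\norm{G_s}_{\real^{d\times m}}^p\,\de s]$,
\[
A\le\Bigl(\tfrac{p}{p-1}\Bigr)^{p}\frac{p(p-1)}{2}\,A^{(p-2)/p}\,t^{(p-2)/p}\,C^{2/p}.
\]
Since $A<\infty$ in the localised setting, I may divide by $A^{(p-2)/p}$ and raise to the power $p/2$; simplifying $\bigl[\bigl(\tfrac{p}{p-1}\bigr)^p\tfrac{p(p-1)}{2}\bigr]^{p/2}=\bigl[\tfrac12 p^{p+1}(p-1)^{1-p}\bigr]^{p/2}=c_p$ gives $A\le c_p\,t^{(p-2)/2}\,C$. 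Reinstating the stopping time and letting $n\to\infty$ (Fatou on the left, monotone convergence on the right) concludes the proof; if $C=+\infty$ the inequality is trivial.

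The main obstacle here is not conceptual depth but bookkeeping: the localisation must be set up so that the a priori finiteness of $A$ — exactly what allows $A^{(p-2)/p}$ to be divided out — is guaranteed, and the Hessian computation and the arithmetic simplifying the constant must be carried out carefully, since the statement commits to the explicit value of $c_p$. The remaining ingredients (It\^o's formula for $C^2$ functions, Doob's inequality, H\"older) are entirely standard.
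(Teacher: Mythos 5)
The paper does not prove this statement at all: it is quoted as a known result, with the reader referred to \cite{Stochastic-Calculus} and \cite{SV}. Your proof is therefore not competing with an argument in the text, and it is correct. The chain It\^o formula for $f(x)=\norm{x}_{\real^d}^p$ (which is indeed $C^2$ for $p\ge 2$, with $D^2f$ vanishing continuously at the origin when $p>2$), the Hessian bound $\mathrm{tr}(D^2f(x)\,GG^\top)\le p(p-1)\norm{x}_{\real^d}^{p-2}\norm{G}_{\real^{d\times m}}^2$, H\"older with exponents $\tfrac{p}{p-2}$ and $\tfrac{p}{2}$, H\"older in time, and Doob's $L^p$ maximal inequality for the submartingale $\norm{M_u}_{\real^d}$ all check out, and the arithmetic $\bigl[(\tfrac{p}{p-1})^p\tfrac{p(p-1)}{2}\bigr]^{p/2}=\bigl[\tfrac12 p^{p+1}(p-1)^{1-p}\bigr]^{p/2}$ reproduces exactly the constant $c_p$ committed to in the statement — which is precisely the constant one gets from this derivation and the one appearing in the cited reference. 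The localisation is set up correctly: stopping at $\tau_n$ makes $A$ finite so that dividing by $A^{(p-2)/p}$ is legitimate, and Fatou on the left together with monotonicity of the right-hand side in the upper integration limit passes the bound to the unstopped process. The only point worth stating slightly more explicitly is that $\sup_{u\le t}\norm{M_{u\wedge\tau_n}}_{\real^d}\uparrow\sup_{u\le t}\norm{M_u}_{\real^d}$ as $n\to\infty$, which follows from $\tau_n\uparrow t$ and path continuity; with that remark the argument is complete.
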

We refer the reader to \cite[Proposition 8.4]{Stochastic-Calculus} and \cite[page 116]{SV} for further details.

\subsection{Structural hypotheses on the fields $v$, $\sigma$, and $\cT$}
We list here the standing assumptions on the fields
\begin{subequations}\label{eq_hpfields}
\begin{align}
v &\colon \cP_1(\real^d \times \cP(U)) \times \real^d \times \cP(U) \longrightarrow \real^d \\
\sigma &\colon \cP_1(\real^d \times \cP(U)) \times \real^d \times \cP(U) \longrightarrow \real^{d\times m} \\
\cT &\colon \cP_1(\real^d \times \cP(U)) \times \real^d \times \cP(U) \longrightarrow \cM_0(U) \subseteq F(U) %
\end{align}
\end{subequations}
in the right-hand side of \eqref{eq:NDicsreteProblem}.
We assume that there exist constants $L_v\,, L_\sigma\,, L_\cT\,, \theta > 0$ such that the following properties hold:
for all $(\Sigma_1,x_1,\lambda_1),(\Sigma_2,x_2,\lambda_2)\in \cP_1(\real^d \times \cP(U)) \times \real^d \times \cP(U)$,
\begin{subequations}\label{eq_fieldsprop}
\begin{align}
&\hspace{-3mm}\norm{v_{\Sigma_1}(x_1,\lambda_1) - v_{\Sigma_2}(x_2,\lambda_2)}_{\real^d} \!\le\! L_v \big( \norm{x_1 - x_2}_{\real^d} + \norm{\lambda_1 - \lambda_2}_{F(U)}+W_1(\Sigma_1,\Sigma_2) \big), \label{eq:v_prop}\\
&\hspace{-3mm}\norm{\sigma_{\Sigma_1}(x_1,\lambda_1) - \sigma_{\Sigma_2}(x_2,\lambda_2)}_{\real^{d\times m}} \!\le\! L_\sigma \big( \norm{x_1 - x_2}_{\real^d} + \norm{\lambda_1 - \lambda_2}_{F(U)} + W_1(\Sigma_1,\Sigma_2) \big), \label{eq:sigma_prop}\\
&\hspace{-3mm}\norm{\cT_{\Sigma_1}(x_1,\lambda_1) - \cT_{\Sigma_2}(x_2,\lambda_2)}_{F(U)} \!\le\! L_\cT \big( \norm{x_1 - x_2}_{\real^d} + \norm{\lambda_1 - \lambda_2}_{F(U)} + W_1(\Sigma_1,\Sigma_2)), \label{eq:T_prop}
\end{align}
\end{subequations}
and, for all $(\Sigma,x,\lambda)\in \cP_1(\real^d \times \cP(U)) \times \real^d \times \cP(U)$, 
\begin{equation}\label{eq:T_prop_geometry}
\lambda + \theta \cT_\Sigma(x,\lambda) \in \cP(U).
\end{equation}

Requirements 
\eqref{eq_fieldsprop} correspond to global Lipschitz continuity of the field with respect to all their arguments. Instead, assumption 
\eqref{eq:T_prop_geometry} is geometric in nature: intuitively, it expresses the fact that the field $\cT$ always points inside the convex set $\cP(U)$, thereby preventing the trajectories of the mixed strategy components from escaping the set of probability measures.

\section{The $N$-particle model}\label{sec:NParticleModel}
Let us consider a time horizon $T > 0$ and a fixed number of particles $N\in\nat^+$. We begin studying the well posedness of \eqref{eq:NDicsreteProblem} by introducing a suitable notion of solution.

\begin{defn}\label{def:StrongSolutionAndUniqueness}
Let  $B^1,\ldots,B^N \colon \Omega \longrightarrow \cC([0,T],\real^m)$ be $m$-dimensional standard Brownian motions defined on a filtered probability space $\bigl(\Omega, \sF, (\mathscr{F}_t)_{t\in[0,T]},\prob\bigr)$; let $X_0^1,\ldots,X_0^N\colon\Omega\longrightarrow\real^d$ be $\mathscr{F}_0$-measurable random variables, and let $\Lambda_0^1,\dots,\Lambda_0^N\colon\Omega\longrightarrow\mathcal{P}(U)$ be 
$\mathscr{F}_0$-measurable random variables.
We define a \emph{strong solution} to problem \eqref{eq:NDicsreteProblem} a continuous, $(\mathscr{F}_t)_t$-adapted stochastic process $\bY = (X^1,\Lambda^1,\dots,X^N,\Lambda^N) \colon \Omega \longrightarrow \cC\bigl([0,T],(\real^d\times\cP(U))^N\bigr)$ satisfying, $\prob$-almost surely, for all $t \in [0,T]$ and $i = 1,\dots,N$,
\begin{equation}\label{eq_NparticleIntegral}
\begin{dcases}
X^i_t = X^i_0 + \int_0^t v_{\Sigma^N_s}(X^i_s,\Lambda^i_s)\,\de s + \int_0^t\sigma_{\Sigma^N_s}(X^i_s,\Lambda^i_s)\,\de B^i_s \,,\\
\Lambda^i_t = \Lambda^i_0 + \int_0^t\cT_{\Sigma^N_s}(X^i_s,\Lambda^i_s)\,\de s \,,
\end{dcases}
\end{equation}
where $\Sigma^N_t = \frac{1}{N} \sum_{j=1}^N \delta_{(X^j_t,\Lambda^j_t)}\in \mathcal{P}(\real^d\times\mathcal{P}(U))$, for every $t\in[0,T]$.
We say that such a strong solution is \emph{pathwise unique} if, given two strong solutions $\bY_1$, $\bY_2$ of \eqref{eq:NDicsreteProblem} (with the same Brownian motions and initial data), we have
\begin{equation}\label{eq_sarapathwiseuniqueness}
    \prob\Bigl(\bY_{1,t} = \bY_{2,t}, \textup{ for every } t\in[0,T]\Bigr) = 1.
\end{equation}
\end{defn}

The main result of this section will be the following well-posedness theorem.
\begin{thm}[well-posedness of the $N$-particle system~\eqref{eq:NDicsreteProblem}]\label{thm:WellPosednessNParticleSystem} 
Let us assume that \eqref{eq_fieldsprop} and \eqref{eq:T_prop_geometry} are satisfied and that the spatial initial data $X_0^1,\dots,X_0^N$ belong to $L^2(\Omega,\mathscr{F},\prob)$. Then problem \eqref{eq:NDicsreteProblem} admits a pathwise unique strong solution $\bY$. 
Moreover, the integrability of the spatial initial data is inherited, uniformly in time, by the solution, namely, if  $X_0^1,\dots,X_0^N \in L^p(\Omega,\mathscr{F},\prob)$ with $p \ge 2$, the process $\bY$ satisfies 
\begin{equation}
    \E\biggl[\sup_{t\in[0,T]} \norm{\bY_t}_{(\real^d \times F(U))^N}^p\biggr] \le C\Bigl(1+\E\bigl[\norm{\bX_0}_{(\real^d)^N}^p\bigr]\Bigr), 
\end{equation}
for some constant $C>0$ depending on $p$, $N$, $T$, $M_v$\,, and $M_\sigma$ (the last two constants are introduced in the statement of Proposition~\ref{prop:DiscreteEmpLipSub-v-sigma-T}, see estimates \eqref{eq:DiscreteEmpSub} below).
\end{thm}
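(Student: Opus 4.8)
The plan is to build the solution by a Picard iteration in the space of continuous $(\mathscr{F}_t)_t$-adapted processes with values in $(\real^d\times F(U))^N$, handling the components $\Lambda^i$ — which must live in the convex set $\cP(U)$ — by the technique of Br\'ezis \cite[Chapitre~I.3]{Brezis-Operateur-Maximaux} for evolution equations in closed convex subsets of a Banach space, as in \cite{AFMS}. A preliminary reduction (the content of Proposition~\ref{prop:DiscreteEmpLipSub-v-sigma-T}) is that, since $W_1\bigl(\tfrac1N\sum_j\delta_{y_j},\tfrac1N\sum_j\delta_{y_j'}\bigr)\le\tfrac1N\sum_j\|y_j-y_j'\|_{\real^d\times F(U)}$, the three maps $\by\mapsto\bigl(v_{\Sigma^N}(x_i,\lambda_i)\bigr)_i$, $\by\mapsto\bigl(\sigma_{\Sigma^N}(x_i,\lambda_i)\bigr)_i$, $\by\mapsto\bigl(\cT_{\Sigma^N}(x_i,\lambda_i)\bigr)_i$ (with $\Sigma^N=\tfrac1N\sum_j\delta_{(x_j,\lambda_j)}$) are globally Lipschitz on $(\real^d\times\cP(U))^N$ with constants $M_v$, $M_\sigma$, $M_\cT$, have at most linear growth, and that the geometric condition \eqref{eq:T_prop_geometry} lifts componentwise, namely $\lambda_i+\theta\cT_{\Sigma^N}(x_i,\lambda_i)\in\cP(U)$ for every $i$.

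Starting from the constant process $\bY^{(0)}_t\equiv(X^1_0,\Lambda^1_0,\dots,X^N_0,\Lambda^N_0)$, I would define $\bY^{(n+1)}$ componentwise: the spatial part $X^{i,(n+1)}$ solves the It\^o equation obtained by inserting $\bY^{(n)}$ into $v_{\Sigma^{N,(n)}}$ and $\sigma_{\Sigma^{N,(n)}}$, while the strategy part $\Lambda^{i,(n+1)}$ is the solution of the pathwise Carath\'eodory ODE in $F(U)$
\[
\Lambda^{i,(n+1)}_t=\Lambda^i_0+\int_0^t\cT_{\Sigma^{N,(n)}_s}\bigl(X^{i,(n)}_s,\Lambda^{i,(n+1)}_s\bigr)\,\de s ,
\]
whose right-hand side is $L_\cT$-Lipschitz in the last variable uniformly in $s$, so that it has a unique global solution. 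Discretising this ODE by the explicit Euler scheme with step $h\le\theta$ and rewriting each update as $\Lambda_{k+1}=\bigl(1-\tfrac h\theta\bigr)\Lambda_k+\tfrac h\theta\bigl(\Lambda_k+\theta\cT_{\Sigma^{N,(n)}_{t_k}}(X^{i,(n)}_{t_k},\Lambda_k)\bigr)$, a convex combination of two elements of $\cP(U)$, one sees that every Euler iterate, hence the uniform limit $\Lambda^{i,(n+1)}$, stays in the closed convex set $\cP(U)$ — this is exactly where Br\'ezis's viability argument enters. Consequently $\bY^{(n+1)}$ again takes values in $(\real^d\times\cP(U))^N$, the empirical measures $\Sigma^{N,(n+1)}_t$ are bona fide elements of $\cP_1(\real^d\times\cP(U))$, and the recursion is well defined; a further induction using the Burkholder--Davis--Gundy inequality \eqref{eq_BDG} and the linear growth of the coefficients shows that each $\bY^{(n)}$ is continuous, adapted, and satisfies $\E\bigl[\sup_{[0,T]}\|\bY^{(n)}_t\|_{(\real^d\times F(U))^N}^2\bigr]<\infty$ with a bound uniform in $n$.

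Next I would show that $(\bY^{(n)})_n$ is a Cauchy sequence in $L^2\bigl(\Omega;\cC([0,T],(\real^d\times F(U))^N)\bigr)$: subtracting consecutive iterates and using \eqref{eq:v_prop}--\eqref{eq:T_prop}, the Lipschitz dependence of $\Sigma^N$ on $\by$, the BDG inequality for the stochastic increment and Gronwall's lemma to absorb the implicit term $\|\Lambda^{i,(n+1)}-\Lambda^{i,(n)}\|$, one gets $\E\bigl[\sup_{s\le t}\|\bY^{(n+1)}_s-\bY^{(n)}_s\|^2\bigr]\le C\int_0^t\E\bigl[\sup_{r\le s}\|\bY^{(n)}_r-\bY^{(n-1)}_r\|^2\bigr]\,\de s$, hence summability of the increments by the usual $(CT)^n/n!$ bound. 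The limit $\bY$ is a continuous adapted process with values in the closed set $(\real^d\times\cP(U))^N$, and passing to the limit in the integral identities — the stochastic integrals converging in $L^2$ by It\^o's isometry and the Lipschitz estimates, the Bochner integrals by dominated convergence in $F(U)$ — shows that $\bY$ is a strong solution of \eqref{eq:NDicsreteProblem} in the sense of Definition~\ref{def:StrongSolutionAndUniqueness}. Pathwise uniqueness follows from the same Gronwall argument applied to the difference of two solutions driven by the same Brownian motions and sharing the initial data, upgrading the resulting $\prob$-a.s.\ equality at each fixed $t$ to an a.s.\ equality of trajectories by path-continuity.

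It remains to propagate the moment bound for $p\ge2$. From the integral equation for $X^i$ one estimates $\sup_{s\le t}\|X^i_s\|_{\real^d}^p$ by controlling the drift term through H\"older's inequality and the linear growth of $v_{\Sigma^N}$ (constant $M_v$) and the stochastic term through \eqref{eq_BDG} and the linear growth of $\sigma_{\Sigma^N}$ (constant $M_\sigma$); the strategy components require no estimate at all, since $\Lambda^i_t\in\cP(U)$ forces $\|\Lambda^i_t\|_{F(U)}\le1$ by \eqref{eq_triangolino}, whence $\|\bLambda_t\|_{(F(U))^N}\le N$ — this is why the final constant depends on $M_v$ and $M_\sigma$ but not on $M_\cT$. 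Summing over $i=1,\dots,N$ and applying Gronwall's lemma, after a preliminary localisation by the stopping times $\tau_R=\inf\{t:\|\bX_t\|_{(\real^d)^N}>R\}$ to make the relevant expectation finite before invoking Gronwall, and then letting $R\to\infty$ by Fatou, yields the asserted estimate with $C=C(p,N,T,M_v,M_\sigma)$. The step I expect to be the main obstacle is the construction itself: a na\"ive Picard scheme inserting $\Lambda^{i,(n)}$ into the last slot of $\cT$ fails to keep the iterates in $\real^d\times\cP(U)$, and since the fields are defined only for measures supported there, one is forced to solve the implicit-in-$\Lambda$ ODE displayed above and to invoke Br\'ezis's convex-viability argument at every step, while simultaneously respecting the measurability and adaptedness required by the stochastic setting.
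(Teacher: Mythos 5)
Your proposal is correct and shares the paper's global architecture (vectorised Picard iteration, Br\'ezis-type convex viability for the $\Lambda$-components, BDG plus the $(CT)^n/n!$ bound for convergence, stopping-time localisation and Gr\"onwall for the $p$-th moment estimate, Gr\"onwall again for pathwise uniqueness), but it diverges from the paper in the one step you yourself single out as the obstacle. The paper keeps the Picard scheme \emph{fully explicit}: it sets $\bcG_\Sigma(\bx,\blambda)\coloneqq\blambda+\theta\bcT_\Sigma(\bx,\blambda)$ and defines
\begin{equation*}
\bLambda_{n+1,t} \coloneqq e^{-\frac{t}{\theta}}\bLambda_0 + \frac{1}{\theta}\int_0^t e^{\frac{s-t}{\theta}}\, \bcG_{\Sigma^N_{n,s}}(\bX_{n,s},\bLambda_{n,s})\,\de s,
\end{equation*}
so that invariance of $(\cP(U))^N$ is immediate: since $\frac{1}{\theta}\int_0^t e^{(s-t)/\theta}\,\de s = 1-e^{-t/\theta}$, the right-hand side is a convex combination of $\bLambda_0$ and a normalised average of values of $\bcG$, all lying in $(\cP(U))^N$ by \eqref{eq:T_prop_geometry}. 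The price is a final verification (the paper's Proposition on differentiating $\blambda$ and $\tilde\blambda$) that a fixed point of the exponential-kernel equation solves the original integral equation for $\bLambda$. You instead solve, at each Picard step, an ODE that is implicit in $\Lambda^{i,(n+1)}$ and establish viability by an explicit Euler scheme with step $h\le\theta$; this is also sound --- each Euler update is a convex combination of $\Lambda_k$ and $\Lambda_k+\theta\cT_{\Sigma^{N,(n)}}(\cdot,\Lambda_k)$, both in $\cP(U)$, and the polygons stay in the compact convex set, so the field is only ever evaluated where it is defined --- but it costs you a separate existence-and-uniqueness argument (plus adaptedness of the limit of the Euler polygons) at \emph{every} iteration, and an extra Gr\"onwall absorption of the implicit term in the Cauchy estimate. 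Both routes are instances of Br\'ezis's Corollaire~1.1; the exponential integrating factor simply packages the Euler/convexity argument once and for all. The remaining ingredients of your proof (passage to the limit in the integral identities, the reason the final constant involves $M_v,M_\sigma$ but not $M_\cT$ via $\norm{\bLambda_t}_{(F(U))^N}\le N$, and the localisation before Gr\"onwall) coincide with the paper's.
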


\begin{proof}
The proof of the theorem is articulated in various steps, which are addressed in the rest of this section.
In particular, the first step is to recast the dynamics~\eqref{eq:NDicsreteProblem} in the equivalent form \eqref{eq:DicsreteProblem} below, thanks to the introduction of suitable vector fields $\bv$, $\bsigma$, and $\bcT$ (see \eqref{eq_boldfields}).
In Section~\ref{sec:structuralbold}, we prove some structural properties of these fields (see Propositions~\ref{prop:DiscreteLip} and~\ref{prop:DiscreteEmpLipSub-v-sigma-T}), as well as of the field $\bcG$ defined in~\eqref{eq:G_Def} below (see Proposition~\ref{prop:DiscreteEmpLipSub-G}).

In Section \eqref{sec_existenceN}, we prove that the $N$-particle problem in \eqref{eq:NDicsreteProblem} admits a strong solution according to Definition~\ref{def:StrongSolutionAndUniqueness}: to do so, we rely on Picard iterations to solve \eqref{eq:DicsreteProblem}, whose convergence is a consequence of the estimates proved in Proposition~\ref{prop:QuattroStime} below. 
The proofs that the Picard iterations and the solution to \eqref{eq:NDicsreteProblem} are adapted to the filtration $(\sF_t)_{t\in[0,T]}$ (see Propositions~\ref{prop:DiscreteIterationsContAdapt} and~\ref{prop:SolContAdatt}, respectively) are postponed to Section~\ref{app_adapted}.
In Section~\eqref{sec_aprioriestimateN}, we prove an \emph{a priori} estimate on the $p$-th moments of solutions to \eqref{eq:NDicsreteProblem}.
Finally, in Section~\ref{sec_pathwiseuniquenessN}, we prove pathwise uniqueness.
\end{proof}

It is convenient to rewrite problem (\ref{eq:NDicsreteProblem}) in a more compact and synthetic form, by gathering all spatial variables together in the process $\bX \colon \Omega \longrightarrow \cC\bigl([0,T],(\real^d)^N\bigr)$ defined by $\bX_t \coloneqq (X^1_t,\dots,X^N_t)$, and the mixed strategies in the process $\bm{\Lambda} \colon \Omega \longrightarrow \cC\bigl([0,T],(\cP(U))^N\bigr)$ defined by $\bLambda_t \coloneqq (\Lambda^1_t,\dots,\Lambda^N_t)$, for all $t \in [0,T]$.
We also introduce the random variables $\bX_0 \coloneqq (X_0^1,\dots,X_0^N) \in L^2(\Omega,\mathscr{F},\prob)$ and $\bLambda_0 \coloneqq (\Lambda_0^1,\dots,\Lambda_0^N)$, taking values in $(\real^d)^N$ and $(\cP(U))^N$, respectively.

We now introduce the fields $\bv \colon \cP_1(\real^d \times \cP(U)) \times (\real^d)^N \times (\cP(U))^N \longrightarrow (\real^d)^N$, $\bsigma \colon \cP_1(\real^d \times \cP(U)) \times (\real^d)^N \times (\cP(U))^N \longrightarrow \real^{dN\times mN}$, and $\bcT \colon \cP_1(\real^d \times \cP(U)) \times (\real^d)^N \times (\cP(U))^N \longrightarrow (F(U))^N$ defined as
\begin{subequations}\label{eq_boldfields}
\begin{align}
\bv\colon(\Sigma,x_1,\dots,x_N,\lambda_1,\dots,\lambda_N) & \longmapsto 
\begin{pmatrix}
    v_\Sigma(x_1,\lambda_1) \\ \vdots \\ v_\Sigma(x_N,\lambda_N)
\end{pmatrix}, \label{eq:b_vDef}\\[2mm]
\bsigma\colon(\Sigma,x_1,\dots,x_N,\lambda_1,\dots,\lambda_N) & \longmapsto
\diag
\begin{pmatrix}
\sigma_\Sigma(x_1,\lambda_1),\dots,\sigma_\Sigma(x_N,\lambda_N)
\end{pmatrix}, \label{eq:b_sigmaDef}\\[2mm]
\bcT\colon(\Sigma,x_1,\dots,x_N,\lambda_1,\dots,\lambda_N) & \longmapsto 
\begin{pmatrix}
    \cT_\Sigma(x_1,\lambda_1) \\ \vdots \\ \cT_\Sigma(x_N,\lambda_N)
\end{pmatrix}, \label{eq:b_lambdaDef}
\end{align}
\end{subequations}
respectively, so that we can rewrite problem (\ref{eq:NDicsreteProblem}) in integral form as
\begin{equation}\label{eq:DicsreteProblem}
\begin{dcases}
\bX_t = \bX_0 + \int_0^t \bv_{\Sigma^N_s}(\bX_s,\bLambda_s)\,\de s + \int_0^t \bsigma_{\Sigma^N_s}(\bX_s,\bLambda_s)\,\de\bB_s\,, \\ 
\bLambda_t = \bm{\Lambda}_0 + \int_0^t \bcT_{\Sigma^N_s}(\bX_s,\bLambda_s)\,\de s\,,
\end{dcases}
\end{equation}
where $\bB_t \coloneqq (B^1_t,\ldots,B^N_t)$, for $t\in[0,T]$. 
The more concise formulation \eqref{eq:DicsreteProblem} will be convenient for the analytical arguments that follow, which require a different treatment of the spatial and mixed-strategy components due to their different nature. 


\subsection{\texorpdfstring{Structural properties of the fields $\bv$, $\bsigma$, and $\bcT$}{Structural properties of the boldface fields}}
\label{sec:structuralbold}
Before starting with the proof of Theorem \ref{thm:WellPosednessNParticleSystem}, we deduce some Lipschitz continuity and sublinearity estimates for the fields $\bv$, $\bsigma$, and $\bcT$ introduced in \eqref{eq_boldfields}.

\begin{prop}\label{prop:DiscreteLip}
Let $(\Sigma_1,\bx_1,\blambda_1),(\Sigma_2,\bx_2,\blambda_2) \in \cP_1(\real^d \times \cP(U))\times (\real^d)^N\times (\cP(U))^N$. 
Then the following estimates hold
\begin{subequations}
\begin{align}
\label{eq:DiscreteLip-v1}
&\begin{split}
&\, \norm{\bv_{\Sigma_1}(\bx_1,\blambda_1) - \bm{v}_{\Sigma_2}(\bx_2,\blambda_2)}_{(\real^d)^N}\\
\le &\, L_v\bigl(\norm{\bx_1 - \bx_2}_{(\real^d)^N} + \norm{\blambda_1 - \blambda_2}_{(F(U))^N} + N W_1(\Sigma_1, \Sigma_2) \bigr),
\end{split} \\
\label{eq:DiscreteLip-sigma1} 
&\begin{split}
&\, \norm{\bsigma_{\Sigma_1}(\bx_1,\blambda_1) - \bsigma_{\Sigma_2}(\bx_2,\blambda_2)}_{\real^{dN\times mN}} \\
\le&\, L_\sigma \bigl(\norm{\bx_1 - \bx_2}_{(\real^d)^N} + \norm{\blambda_1 - \blambda_2}_{(F(U))^N} + N W_1(\Sigma_1, \Sigma_2)\bigr),
\end{split} \\
\label{eq:DiscreteLip-T1}
&\begin{split}
&\, \norm{\bcT_{\Sigma_1}(\bx_1,\blambda_1) - \bcT_{\Sigma_2}(\bx_2,\blambda_2)}_{(F(U))^N} \\
\le &\, L_{\cT}\bigl(\norm{\bx_1 - \bx_2}_{(\real^d)^N} + \norm{\blambda_1 - \blambda_2}_{(F(U))^N} + N W_1(\Sigma_1, \Sigma_2)\bigr),
\end{split}
\end{align}
\end{subequations}
where the constants $L_v$\,, $L_\sigma$\,, and $L_\cT$ are those appearing in \eqref{eq_fieldsprop}.
\end{prop}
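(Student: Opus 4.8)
The plan is to derive all three estimates directly from their scalar counterparts \eqref{eq_fieldsprop}, using nothing more than the additive structure of the product norms on $(\real^d)^N$, $(F(U))^N$, and $\real^{dN\times mN}$.

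I would start with $\bv$. Writing $\bx_k = (x_{k,1},\dots,x_{k,N})$ and $\blambda_k = (\lambda_{k,1},\dots,\lambda_{k,N})$ for $k = 1,2$, the definition \eqref{eq:b_vDef} and that of $\norm{\cdot}_{(\real^d)^N}$ give
\[
\norm{\bv_{\Sigma_1}(\bx_1,\blambda_1) - \bv_{\Sigma_2}(\bx_2,\blambda_2)}_{(\real^d)^N} = \sum_{i=1}^N \norm{v_{\Sigma_1}(x_{1,i},\lambda_{1,i}) - v_{\Sigma_2}(x_{2,i},\lambda_{2,i})}_{\real^d}.
\]
Applying \eqref{eq:v_prop} termwise and summing, the contributions $\norm{x_{1,i}-x_{2,i}}_{\real^d}$ and $\norm{\lambda_{1,i}-\lambda_{2,i}}_{F(U)}$ reassemble into $\norm{\bx_1-\bx_2}_{(\real^d)^N}$ and $\norm{\blambda_1-\blambda_2}_{(F(U))^N}$, while the term $W_1(\Sigma_1,\Sigma_2)$, being independent of $i$, is added $N$ times; this produces \eqref{eq:DiscreteLip-v1}. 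The estimate \eqref{eq:DiscreteLip-T1} for $\bcT$ will follow verbatim, with \eqref{eq:T_prop} in place of \eqref{eq:v_prop} and the $F(U)$-norm in place of the $\real^d$-norm, since $\norm{\cdot}_{(F(U))^N}$ is likewise the sum of the component norms.

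For $\bsigma$ the only extra point is the block-diagonal shape of $\bsigma_\Sigma(\bx,\blambda)$ prescribed by \eqref{eq:b_sigmaDef}: in the difference all off-diagonal $d\times m$ blocks vanish and the $i$-th diagonal block is $\sigma_{\Sigma_1}(x_{1,i},\lambda_{1,i}) - \sigma_{\Sigma_2}(x_{2,i},\lambda_{2,i})$, so by the definition of $\norm{\cdot}_{\real^{dN\times mN}}$ as the sum of the Frobenius norms of the constituting blocks one gets
\[
\norm{\bsigma_{\Sigma_1}(\bx_1,\blambda_1) - \bsigma_{\Sigma_2}(\bx_2,\blambda_2)}_{\real^{dN\times mN}} = \sum_{i=1}^N \norm{\sigma_{\Sigma_1}(x_{1,i},\lambda_{1,i}) - \sigma_{\Sigma_2}(x_{2,i},\lambda_{2,i})}_{\real^{d\times m}},
\]
and one concludes via \eqref{eq:sigma_prop} exactly as above. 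I do not expect any genuine obstacle: the argument is pure bookkeeping, and the only items that deserve a moment's attention are the block-diagonal structure of $\bsigma$ and the fact that the $i$-independent Wasserstein term is responsible for the factor $N$ on the right-hand sides.
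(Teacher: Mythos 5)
Your proposal is correct and follows exactly the paper's own argument: apply the scalar Lipschitz bounds \eqref{eq_fieldsprop} componentwise, sum over the $N$ components using the additive product norms, and observe that the $i$-independent term $W_1(\Sigma_1,\Sigma_2)$ accumulates the factor $N$. Your explicit remark on the block-diagonal structure of $\bsigma$ is a detail the paper leaves as "analogous," but it is the right observation and nothing is missing.
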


\begin{proof}
We only prove estimate \eqref{eq:DiscreteLip-v1}, since the others are analogous. 
By the definition of the norm in $(\real^d)^N$ and \eqref{eq:v_prop}, we have
\begin{equation*}
\begin{split}
&\,\norm{\bm{v}_{\Sigma_1}(\bx_1,\blambda_1) - \bm{v}_{\Sigma_2}(\bx_2,\blambda_2)}_{(\real^d)^N} 
= \sum_{j=1}^N \norm{v_{\Sigma_1}(x_{1,j},\lambda_{1,j}) - v_{\Sigma_2}(x_{2,j},\lambda_{2,j})}_{\real^d} \\
\le &\, L_v\sum_{j=1}^N \big(\norm{x_{1,j} - x_{2,j}}_{\real^d} +  \norm{\lambda_{1,j} - \lambda_{2,j}}_{F(U)} + W_1(\Sigma_1, \Sigma_2) \big), \\
\end{split}
\end{equation*}
which yields \eqref{eq:DiscreteLip-v1}.
\end{proof}

\begin{prop}\label{prop:DiscreteEmpLipSub-v-sigma-T}
Let $\bx_i=(x_{i,1},\ldots,x_{i,N})\in(\real^d)^N$ and $\blambda_i=(\lambda_{i,1},\ldots,\lambda_{i,N})\in(\cP(U))^N$, and let $\by_i=(x_{i,1},\lambda_{i,1},\ldots,x_{i,N},\lambda_{i,N})\eqqcolon(y_{i,1},\ldots,y_{i,N})\in(\real^d\times\cP(U))^N$, for $i=1,2$, and let 
\begin{equation}\label{eq_empirical}
\barr{\Sigma}^i \coloneqq \frac{1}{N} \sum_{j = 1}^N \delta_{y_{i,j}}\in \cP_1(\real^d \times \cP(U)),\quad\text{for $i=1,2$.}
\end{equation}
Then
\begin{subequations}\label{eq:DiscreteEmpLip}
\begin{align}
\label{eq:DiscreteEmpLip-v}
\norm{\bv_{\barr{\Sigma}^1}(\bx_1,\blambda_1) - \bv_{\barr{\Sigma}^2}(\bx_2,\blambda_2)}_{(\real^d)^N} &\le 2L_v \bigl(\norm{\bx_1 - \bx_2}_{(\real^d)^N} + \norm{\blambda_1 - \blambda_2}_{(F(U))^N}\bigr), \\
\label{eq:DiscreteEmpLip-sigma}
\norm{\bsigma_{\barr{\Sigma}^1}(\bx_1,\blambda_1) - \bsigma_{\barr{\Sigma}^2}(\bx_2,\blambda_2)}_{\real^{dN\times mN}} &\le 2L_\sigma \bigl(\norm{\bx_1 - \bx_2}_{(\real^d)^N} + \norm{\blambda_1 - \blambda_2}_{(F(U))^N}\bigr), \\
\label{eq:DiscreteEmpLip-T}
\!\!\!
\norm{\bcT_{\barr{\Sigma}^1}(\bx_1,\blambda_1) - \bcT_{\barr{\Sigma}^2}(\bx_2,\blambda_2)}_{(F(U))^N} &\le 2L_{\cT} \bigl(\norm{\bx_1 - \bx_2}_{(\real^d)^N} + \norm{\blambda_1 - \blambda_2}_{(F(U))^N}\bigr).
\end{align}
\end{subequations}
Furthermore there exist $M_v\,, M_\sigma\,, M_\cT> 0$ such that for every $\by =(x_{1},\lambda_{1},\ldots,x_{N},\lambda_{N})\in (\real^d \times \cP(U))^N$, letting $\barr{\Sigma} \coloneqq \frac{1}{N} \sum_{j = 1}^N \delta_{y_j}$\,, the following sublinearity estimates hold 
\begin{subequations}\label{eq:DiscreteEmpSub}
\begin{align}
\label{eq:DiscreteEmpSub-v}
\norm{\bm{v}_{\barr{\Sigma}}(\bx,\blambda)}_{(\real^d)^N} \le M_v \bigl(1 + \norm{\bx}_{(\real^d)^N}\bigr), \\
\label{eq:DiscreteEmpSub-sigma}
\norm{\bsigma_{\barr{\Sigma}}(\bx,\blambda)}_{\real^{dN\times mN}} \le M_\sigma\bigl(1 + \norm{\bx}_{(\real^d)^N}\bigr), \\
\label{eq:DiscreteEmpSub-T}
\norm{\bcT_{\barr{\Sigma}}(\bx,\blambda)}_{(F(U))^N} \le M_\cT \bigl(1 + \norm{\bx}_{(\real^d)^N}\bigr).
\end{align}
\end{subequations}
\end{prop}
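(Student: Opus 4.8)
The plan is to reduce both families of estimates to Proposition~\ref{prop:DiscreteLip}, the only genuinely new ingredient being a sharp bound on the Wasserstein distance between the two empirical measures appearing in~\eqref{eq_empirical}.

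\emph{Step 1: the empirical Lipschitz estimates \eqref{eq:DiscreteEmpLip}.} The ``diagonal'' measure $\pi \coloneqq \frac1N\sum_{j=1}^N \delta_{(y_{1,j},y_{2,j})}$ is readily checked to have marginals $\barr{\Sigma}^1$ and $\barr{\Sigma}^2$, hence $\pi\in\Gamma(\barr{\Sigma}^1,\barr{\Sigma}^2)$; recalling that the distance on $\real^d\times\cP(U)$ is the one inherited from the product norm \eqref{eq_normay}, this transport plan yields
\[
W_1(\barr{\Sigma}^1,\barr{\Sigma}^2)\le\frac1N\sum_{j=1}^N\bigl(\norm{x_{1,j}-x_{2,j}}_{\real^d}+\norm{\lambda_{1,j}-\lambda_{2,j}}_{F(U)}\bigr)=\frac1N\bigl(\norm{\bx_1-\bx_2}_{(\real^d)^N}+\norm{\blambda_1-\blambda_2}_{(F(U))^N}\bigr).
\]
Plugging this into \eqref{eq:DiscreteLip-v1}, \eqref{eq:DiscreteLip-sigma1}, and \eqref{eq:DiscreteLip-T1} — where the factor $N$ multiplying $W_1$ exactly cancels the $\frac1N$ above — gives at once \eqref{eq:DiscreteEmpLip-v}, \eqref{eq:DiscreteEmpLip-sigma}, and \eqref{eq:DiscreteEmpLip-T}, with constants $2L_v$, $2L_\sigma$, $2L_\cT$.

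\emph{Step 2: the sublinearity estimates \eqref{eq:DiscreteEmpSub}.} I would fix once and for all a reference measure $\lambda^\star\in\cP(U)$ (the set is nonempty), set $\blambda^\star\coloneqq(\lambda^\star,\dots,\lambda^\star)\in(\cP(U))^N$, and let $\barr{\Sigma}^\star\coloneqq\frac1N\sum_{j=1}^N\delta_{(0,\lambda^\star)}$, which lies in $\cP_1(\real^d\times\cP(U))$ since it is finitely supported. Comparing $\bv_{\barr{\Sigma}}(\bx,\blambda)$ with the fixed vector $\bv_{\barr{\Sigma}^\star}(\bm{0},\blambda^\star)$ by means of \eqref{eq:DiscreteEmpLip-v}, and using \eqref{eq_doppiotriangolino} to bound $\norm{\blambda-\blambda^\star}_{(F(U))^N}\le\norm{\blambda}_{(F(U))^N}+\norm{\blambda^\star}_{(F(U))^N}\le 2N$, one obtains
\[
\norm{\bv_{\barr{\Sigma}}(\bx,\blambda)}_{(\real^d)^N}\le\norm{\bv_{\barr{\Sigma}^\star}(\bm{0},\blambda^\star)}_{(\real^d)^N}+2L_v\bigl(\norm{\bx}_{(\real^d)^N}+2N\bigr),
\]
which is of the claimed form with $M_v\coloneqq\max\bigl\{2L_v,\ \norm{\bv_{\barr{\Sigma}^\star}(\bm{0},\blambda^\star)}_{(\real^d)^N}+4L_vN\bigr\}$. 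The same argument, starting from \eqref{eq:DiscreteEmpLip-sigma} and \eqref{eq:DiscreteEmpLip-T}, produces $M_\sigma$ and $M_\cT$; for $\bcT$ one may alternatively observe that \eqref{eq:T_prop_geometry} together with \eqref{eq_triangolino} forces $\norm{\cT_\Sigma(x,\lambda)}_{F(U)}=\frac1\theta\norm{(\lambda+\theta\cT_\Sigma(x,\lambda))-\lambda}_{F(U)}\le\frac2\theta$, whence $\norm{\bcT_{\barr{\Sigma}}(\bx,\blambda)}_{(F(U))^N}\le\frac{2N}{\theta}$, a uniform and in particular sublinear bound.

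I do not expect any real obstacle: the argument is a routine combination of the coupling inequality of Step~1 with the \emph{a priori} bound \eqref{eq_doppiotriangolino}. The only points deserving a line of justification are that $\pi$ has the prescribed marginals and that every empirical measure occurring in the proof belongs to $\cP_1(\real^d\times\cP(U))$ — the latter being immediate, since these measures are finitely supported and $\cP(U)$ is compact, hence bounded.
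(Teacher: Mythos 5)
Your proposal is correct and follows essentially the same route as the paper: the diagonal coupling $\frac1N\sum_j\delta_{(y_{1,j},y_{2,j})}$ to bound $W_1(\barr{\Sigma}^1,\barr{\Sigma}^2)$ and then Proposition~\ref{prop:DiscreteLip} for the Lipschitz estimates, followed by a comparison with a fixed reference point together with \eqref{eq_doppiotriangolino} for the sublinearity bounds. Your side remark that \eqref{eq:T_prop_geometry} and \eqref{eq_triangolino} already force the uniform bound $\norm{\bcT_{\barr{\Sigma}}(\bx,\blambda)}_{(F(U))^N}\le 2N/\theta$ is a valid (and slightly stronger) shortcut for the $\cT$ case, but it is not needed.
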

\begin{proof} 
We start by observing that, since $\frac{1}{N} \sum_{j=1}^N \delta_{(y_{1,j},y_{2,j})} \in \Gamma\big(\barr{\Sigma}^1,\barr{\Sigma}^2\big)$, by definition of Wasserstein distance, we have the estimate
\begin{equation}
\begin{split}
W_1(\barr{\Sigma}^1,\barr{\Sigma}^2) &\le \int_{(\real^d \times \cP(U))^2} \norm{y - y'}_{\real^d \times F(U)}\, \de \Biggl(\frac{1}{N} \sum_{j=1}^N \delta_{(y_{1,j},y_{2,j})} \Biggr)(y,y') \\
&\le \frac{1}{N} \sum_{j=1}^N \norm{y_{1,j} - y_{2,j}}_{\real^d \times F(U)} = \frac{1}{N} \norm{\by_1 - \by_2}_{(\real^d \times F(U))^N}\,.
\end{split}
\end{equation}
This, in combination with \eqref{eq:DiscreteLip-v1}, gives \eqref{eq:DiscreteEmpLip-v}; the remaining estimates are proved in an analogous fashion.
Finally, by choosing a point $\by_0\in (\real^d\times\mathcal{P}(U))^N$ and letting $\barr{\Sigma}^0$ be the associated empirical measure defined as in \eqref{eq_empirical},  by means of the triangle inequality and \eqref{eq:DiscreteEmpLip-v}, we obtain
\begin{equation}
\begin{split}
\norm{\bm{v}_{\barr{\Sigma}}(\bx,\blambda)}_{(\real^d)^N} \le &\norm{\bm{v}_{\barr{\Sigma}_0}(\bx_0,\blambda_0)}_{(\real^d)^N} + \norm{\bm{v}_{\barr{\Sigma}}(\bx,\blambda) - \bm{v}_{\barr{\Sigma}_0}(\bx_0,\blambda_0)}_{(\real^d)^N} \\
\le & \norm{\bm{v}_{\barr{\Sigma}_0}(\bx_0,\blambda_0)}_{(\real^d)^N} +  2L_v(\norm{\bx - \bx_0}_{(\real^d)^N} + \norm{\blambda - \blambda_0}_{(F(U))^N}) \\
\le & 
M_v(1 + \norm{\bx}_{(\real^d)^N}),
\end{split}
\end{equation}
where we have used \eqref{eq_doppiotriangolino} for the last inequality. Estimate \eqref{eq:DiscreteEmpSub-v} is proved; the remaining estimates are proved in an analosgous fashion.
\end{proof}

Let us now introduce an auxiliary vector field that will be useful in the proof of existence. We fix $\theta>0$ such that \eqref{eq:T_prop_geometry} is satisfied, thereby we can define a vector field $\bcG \colon \cP_1(\real^d \times \cP(U)) \times (\real^d)^N \times (\cP(U))^N \longrightarrow (\cP(U))^N$ given by
\begin{equation}
(\Sigma,\bx,\blambda) \longmapsto \bcG_\Sigma(\bx,\blambda) \coloneqq \blambda + \theta \bcT_\Sigma(\bx,\blambda),
\label{eq:G_Def}
\end{equation}
which allows us to 
write
\begin{equation*}
\bcT_\Sigma(\bx,\blambda) = \frac{\bcG_\Sigma(\bx,\blambda) - \blambda}{\theta},
\end{equation*}
so as to obtain a field analogous to the one studied in \cite[Corollaire~1.1, Pag.~11]{Brezis-Operateur-Maximaux}.
The field $\bcG$ inherits from $\bcT$ its properties of Lipschitz continuity and sublinearity: in fact, as an easy corollary of Proposition \ref{prop:DiscreteEmpLipSub-v-sigma-T}, we obtain the following estimate.
\begin{prop}\label{prop:DiscreteEmpLipSub-G} 
Let $\barr{\Sigma}^1$, $\barr{\Sigma}^2$, $\barr{\Sigma}$ be the empirical measures of Proposition \ref{prop:DiscreteEmpLipSub-v-sigma-T}. 
Then there exists $L_\cG>0$, depending on $L_\cT$ and $\theta$, and there exists $M_\cG > 0$, depending on $M_\cT$ and $\theta$, such that the following inequalities hold:
\begin{subequations}\label{eq:DiscreteEmpLipSubG}
\begin{eqnarray}
\!\!\!\! \!\!\!\! \!\!\!\! \!\!\!\!
\norm{\bcG_{\barr{\Sigma}^1}(\bx_1,\blambda_1) - \bcG_{\barr{\Sigma}^2}(\bx_2,\blambda_2)}_{(F(U))^N} &\!\!\!\! \le &\!\!\!\! 2L_{\cG} \bigl(\norm{\bx_1 - \bx_2}_{(\real^d)^N} + \norm{\blambda_1 - \blambda_2}_{(F(U))^N}\bigr),\label{eq:DiscreteEmpLip-G}\\
\norm{\bcG_{\barr{\Sigma}}(\bx,\blambda)}_{(F(U))^N} &\!\!\!\! \le &\!\!\!\! M_\cG \bigl(1 + \norm{\bx}_{(\real^d)^N}\bigr).\label{eq:DiscreteEmpSub-G}
\end{eqnarray}
\end{subequations}
\end{prop}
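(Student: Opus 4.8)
The plan is to treat Proposition~\ref{prop:DiscreteEmpLipSub-G} as a direct corollary of Proposition~\ref{prop:DiscreteEmpLipSub-v-sigma-T}, exploiting the affine relation $\bcG_\Sigma(\bx,\blambda) = \blambda + \theta\,\bcT_\Sigma(\bx,\blambda)$ from \eqref{eq:G_Def}, the triangle inequality in $(F(U))^N$, and the trivial bound $\norm{\blambda_1-\blambda_2}_{(F(U))^N} \le \norm{\bx_1-\bx_2}_{(\real^d)^N}+\norm{\blambda_1-\blambda_2}_{(F(U))^N}$.

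For the Lipschitz estimate \eqref{eq:DiscreteEmpLip-G}, I would subtract the two values of $\bcG$ and split via the triangle inequality,
\[
\norm{\bcG_{\barr{\Sigma}^1}(\bx_1,\blambda_1) - \bcG_{\barr{\Sigma}^2}(\bx_2,\blambda_2)}_{(F(U))^N} \le \norm{\blambda_1 - \blambda_2}_{(F(U))^N} + \theta\,\norm{\bcT_{\barr{\Sigma}^1}(\bx_1,\blambda_1) - \bcT_{\barr{\Sigma}^2}(\bx_2,\blambda_2)}_{(F(U))^N},
\]
then apply \eqref{eq:DiscreteEmpLip-T} to the second summand and bound the first by $\norm{\bx_1-\bx_2}_{(\real^d)^N}+\norm{\blambda_1-\blambda_2}_{(F(U))^N}$. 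This yields the claim with $L_\cG \coloneqq \tfrac{1}{2}\bigl(1 + 2\theta L_\cT\bigr)$, a quantity depending only on $L_\cT$ and $\theta$ as required.

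For the sublinearity estimate \eqref{eq:DiscreteEmpSub-G}, the cleanest observation is that, applying the geometric hypothesis \eqref{eq:T_prop_geometry} componentwise, $\bcG_{\barr{\Sigma}}(\bx,\blambda) \in (\cP(U))^N$, so \eqref{eq_doppiotriangolino} already gives $\norm{\bcG_{\barr{\Sigma}}(\bx,\blambda)}_{(F(U))^N} \le N \le N\bigl(1+\norm{\bx}_{(\real^d)^N}\bigr)$; alternatively, to make the dependence on $M_\cT$ and $\theta$ explicit, I would use $\norm{\bcG_{\barr{\Sigma}}(\bx,\blambda)}_{(F(U))^N} \le \norm{\blambda}_{(F(U))^N} + \theta\,\norm{\bcT_{\barr{\Sigma}}(\bx,\blambda)}_{(F(U))^N}$, bound the first summand by $N$ via \eqref{eq_doppiotriangolino} and the second by $\theta M_\cT\bigl(1+\norm{\bx}_{(\real^d)^N}\bigr)$ via \eqref{eq:DiscreteEmpSub-T}, and collect the constant as $M_\cG \coloneqq N + \theta M_\cT$.

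There is no substantial obstacle here: the statement is, by design, a bookkeeping consequence of the estimates already established for $\bcT$. The only point deserving a moment's care is that the geometric constraint \eqref{eq:T_prop_geometry} forces $\bcG$ to be automatically $\cP(U)^N$-valued, which is precisely what keeps the $\blambda$-type terms uniformly bounded (by $N$) and thus permits all constants to be absorbed in the way indicated.
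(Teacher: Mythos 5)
Your proposal is correct and matches the paper's (implicit) argument: the paper states this result without proof as an easy corollary of Proposition~\ref{prop:DiscreteEmpLipSub-v-sigma-T}, obtained exactly as you do, by writing $\bcG_\Sigma(\bx,\blambda)=\blambda+\theta\,\bcT_\Sigma(\bx,\blambda)$, applying the triangle inequality, and invoking \eqref{eq:DiscreteEmpLip-T}, \eqref{eq:DiscreteEmpSub-T}, and \eqref{eq_doppiotriangolino}. The only cosmetic remark is that your $M_\cG=N+\theta M_\cT$ also carries an explicit $N$-dependence, but this is consistent with the paper's conventions since $M_\cT$ itself already depends on $N$.
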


\subsection{Existence of a solution to the $N$-particle model} \label{sec_existenceN}
We start by showing existence of a strong solution to \eqref{eq:NDicsreteProblem}. We make use of the classical \emph{method of successive approximation}: having fixed $\theta>0$ so as to satisfy condition \eqref{eq:T_prop_geometry}, we define the Picard iterations as follows:
\begin{equation}
\begin{dcases}
\bX_{n+1,t} \coloneqq \bX_0 + \int_0^t \bv_{\Sigma^N_{n,s}}(\bX_{n,s},\bLambda_{n,s})\,\de s + \int_0^t \bsigma_{\Sigma^N_{n,s}}(\bX_{n,s},\bLambda_{n,s})\,\de\bB_s\,, \\ 
\bLambda_{n+1,t} \coloneqq e^{-\frac{t}{\theta}}\bLambda_0 + \frac{1}{\theta}\int_0^t e^\frac{s-t}{\theta} \bcG_{\Sigma^N_{n,s}}(\bX_{n,s},\bLambda_{n,s})\,\de s\,,
\end{dcases}
\label{eq:DiscreteIterations}
\end{equation}
for all $t \in [0,T]$ and $n = 0,1,2,\dots$, where $\Sigma^N_{n,t} \coloneqq \frac{1}{N}\sum_{j=1}^N \delta_{(X^j_{n,t},\Lambda^j_{n,t})}$\,; 
the initial iteration is given by
\begin{equation}
\begin{dcases}
\bX_{0,t} \coloneqq \bX_0\,,\\
\bLambda_{0,t} \coloneqq \bLambda_0\,,
\end{dcases}
\qquad \text{for all $t \in [0,T]$.}
\label{eq:DiscreteIterations0}
\end{equation}

The proof of the following property (which is more probabilistic in nature) is postponed to Section~\ref{app_adapted}.
\begin{prop} \label{prop:DiscreteIterationsContAdapt}
The stochastic processes $\bX_n$ and $\bLambda_n$, defined by \eqref{eq:DiscreteIterations} and \eqref{eq:DiscreteIterations0}, are continuous and adapted to the filtration $(\sF_t)_{t \in [0,T]}$ for all $n = 0,1,2,\dots$
\end{prop}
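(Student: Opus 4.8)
The plan is to prove the statement by induction on $n$, establishing simultaneously for each $n$ that $\bX_n$ is a continuous $(\sF_t)_t$-adapted process with values in $(\real^d)^N$ and that $\bLambda_n$ is a continuous $(\sF_t)_t$-adapted process with values in $(\cP(U))^N$ (in particular, an $F(U)^N$-valued process). The base case $n=0$ is immediate from \eqref{eq:DiscreteIterations0}: $\bX_{0,t}\equiv\bX_0$ is $\sF_0$-measurable hence $\sF_t$-measurable for every $t$, and constant in $t$ hence continuous; the same applies to $\bLambda_{0,t}\equiv\bLambda_0$, which moreover takes values in $(\cP(U))^N$ by hypothesis on the initial data.

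For the inductive step, assume $\bX_n$ and $\bLambda_n$ are continuous and adapted, and that $\bLambda_n$ takes values in $(\cP(U))^N$. First I would observe that the empirical measure process $t\mapsto\Sigma^N_{n,t}=\frac1N\sum_{j=1}^N\delta_{(X^j_{n,t},\Lambda^j_{n,t})}$ is continuous as a map into $(\cP_1(\real^d\times\cP(U)),W_1)$ — this follows from the continuity of $t\mapsto(\bX_{n,t},\bLambda_{n,t})$ in $(\real^d\times F(U))^N$ together with the elementary bound $W_1(\Sigma^N_{n,t},\Sigma^N_{n,s})\le\frac1N\|\by_{n,t}-\by_{n,s}\|_{(\real^d\times F(U))^N}$ already used in the proof of Proposition~\ref{prop:DiscreteEmpLipSub-v-sigma-T} — and that it is adapted, being a measurable function of the adapted variables $(X^j_{n,t},\Lambda^j_{n,t})$. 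Composing with the fields $\bv$, $\bsigma$, $\bcG$, which are continuous by the Lipschitz estimates of Propositions~\ref{prop:DiscreteLip} and~\ref{prop:DiscreteEmpLipSub-G}, yields that the integrands $s\mapsto\bv_{\Sigma^N_{n,s}}(\bX_{n,s},\bLambda_{n,s})$, $s\mapsto\bsigma_{\Sigma^N_{n,s}}(\bX_{n,s},\bLambda_{n,s})$, $s\mapsto\bcG_{\Sigma^N_{n,s}}(\bX_{n,s},\bLambda_{n,s})$ are continuous and $(\sF_t)_t$-adapted, hence in particular progressively measurable. The sublinearity estimates \eqref{eq:DiscreteEmpSub} and \eqref{eq:DiscreteEmpSub-G} together with the continuity in $s$ guarantee the local integrability needed to make sense of the Lebesgue/Bochner integrals and, for $\bsigma$, the condition $\int_0^T\|\bsigma_{\Sigma^N_{n,s}}(\bX_{n,s},\bLambda_{n,s})\|^2\,\de s<\infty$ $\prob$-a.s.\ required for the Itô integral.

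Next I would invoke the standard facts about the three types of integrals appearing in \eqref{eq:DiscreteIterations}: (i) the Lebesgue integral $t\mapsto\int_0^t\bv_{\Sigma^N_{n,s}}(\bX_{n,s},\bLambda_{n,s})\,\de s$ of a continuous adapted integrand is continuous in $t$ and, being a limit of Riemann sums of $\sF_t$-measurable random variables, is $\sF_t$-measurable; (ii) the Bochner integral $t\mapsto\frac1\theta\int_0^t e^{\frac{s-t}\theta}\bcG_{\Sigma^N_{n,s}}(\bX_{n,s},\bLambda_{n,s})\,\de s$ enjoys the same continuity and adaptedness (the $F(U)$-valued Bochner integral of a continuous adapted process is continuous and adapted, by the same Riemann-sum approximation, valid since $F(U)$ is a separable Banach space); (iii) the Itô integral $t\mapsto\int_0^t\bsigma_{\Sigma^N_{n,s}}(\bX_{n,s},\bLambda_{n,s})\,\de\bB_s$ of a progressively measurable integrand satisfying the local $L^2$ condition admits a continuous adapted modification (this is the classical existence theorem for the stochastic integral, cf.\ the references cited for the Burkholder--Davis--Gundy inequality). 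Adding $\bX_0$ (resp.\ $e^{-t/\theta}\bLambda_0$, which is continuous in $t$ and $\sF_0$-adapted) preserves continuity and adaptedness, so $\bX_{n+1}$ and $\bLambda_{n+1}$ are continuous and $(\sF_t)_t$-adapted. Finally, $\bLambda_{n+1}$ takes values in $(\cP(U))^N$: by Proposition~\ref{prop:DiscreteEmpLipSub-v-sigma-T} (or directly \eqref{eq:T_prop_geometry}) each component of $\bcG_{\Sigma^N_{n,s}}(\bX_{n,s},\bLambda_{n,s})$ lies in $\cP(U)$, and $\bLambda_{n+1,t}$ is, componentwise, a convex combination $e^{-t/\theta}\Lambda^j_0+\frac1\theta\int_0^t e^{(s-t)/\theta}(\cdots)\,\de s$ with weights $e^{-t/\theta}$ and $\frac1\theta\int_0^t e^{(s-t)/\theta}\,\de s=1-e^{-t/\theta}$ summing to $1$; since $\cP(U)$ is convex and closed in $F(U)$, the average stays in $\cP(U)$, which closes the induction.

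The routine parts here are the continuity/adaptedness of Lebesgue and Bochner integrals of continuous adapted integrands, which are standard. The one genuine technical point — the main obstacle — is the existence of a continuous adapted modification of the Itô integral when the integrand is only known to be progressively measurable with a $\prob$-a.s.\ finite (rather than expectation-finite) quadratic variation; this is handled by the usual localization argument along stopping times $\tau_k=\inf\{t:\int_0^t\|\bsigma_{\Sigma^N_{n,s}}(\bX_{n,s},\bLambda_{n,s})\|^2\,\de s\ge k\}$, but since the paper defers this proposition to Section~\ref{app_adapted} it is natural to present it there in full detail.
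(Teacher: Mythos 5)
Your proposal is correct and follows essentially the same route as the paper: induction on $n$, continuity of the integrands by composition of continuous maps, and then the standard continuity and adaptedness properties of the Lebesgue, Bochner, and It\^{o} integrals. The one point where your mechanism differs is the adaptedness of the time integrals: you obtain $\sF_t$-measurability of the Lebesgue and Bochner integrals as pointwise limits of Riemann sums of $\sF_t$-measurable random variables (which is legitimate, since the integrands are continuous and $F(U)^N$ is separable, so Proposition~\ref{prop:measurelim} applies to the limit), whereas the paper deduces progressive measurability of the integrands from continuity and adaptedness and then applies Fubini--Tonelli for the spatial part, and reduces the $F(U)^N$-valued case to the scalar one via the Pettis measurability criterion (Proposition~\ref{prop:nonPettis}) by pairing with continuous linear functionals. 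Both devices are standard and interchangeable here; the Riemann-sum argument is slightly more elementary, while the duality route avoids having to discuss convergence of vector-valued Riemann sums. Your additional verification that $\bLambda_{n+1}$ remains in $(\cP(U))^N$ is the content of Remark~\ref{rem:trajectories are confined in P(U)} in the paper and is not needed for the proposition itself, but it is harmless and correctly argued.
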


\begin{rem}\label{rem:trajectories are confined in P(U)} We observe that for all $n = 0,1,2,\dots$, and for all $t \in [0,T]$, we have that $\bLambda_{n,t} \in (\cP(U))^N$. In fact, by hypothesis, $\bLambda_{0,t} \equiv \bLambda_0 \in (\cP(U))^N$; additionally, if $\bLambda_{n,t} \in (\cP(U))^N$ for all $t \in [0,T]$, then the same holds for $\bLambda_{n+1,t}$. Indeed, since $\bcG$ take values in $(\cP(U))^N$, we have
\begin{equation*}
\bcG_{\Sigma^N_{n,s}}(\bX_{n,s},\bLambda_{n,s}) \in (\cP(U))^N, \quad \text{for all $s \in [0,T]$,}
\end{equation*}
whence
\begin{equation*}
\frac{\displaystyle\frac{1}{\theta}\int_0^t e^{\frac{s-t}{\theta}}\bcG_{\Sigma^N_{n,s}}(\bX
_{n,s},\bLambda_{n,s})\,\de s}{\displaystyle\frac{1}{\theta}\int_0^t e^{\frac{s-t}{\theta}}\,\de s} \in (\cP(U))^N, \qquad\text{for every $t\in[0,T]$,}
\end{equation*}
by virtue of the convexity of $(\cP(U))^N$. Noticing that $\frac{1}{\theta}\int_0^t e^{\frac{s-t}{\theta}}\,\de s = 1 - e^{-\frac{t}{\theta}}$, we have that
\begin{equation*}
\frac{1}{\theta}\int_0^t e^{\frac{s-t}{\theta}} \bm{\bcG}_{\Sigma^N_{n,s}}(\bX_{n,s},\bLambda_{n,s})\,\de s \in (1 - e^{-\frac{t}{\theta}})(\cP(U))^N
\end{equation*}
and again, by convexity of $(\cP(U))^N$, 
\begin{equation*}
\bLambda_{n+1,t} = e^{-\frac{t}{\theta}}\bLambda_0 + \frac{1}{\theta}\int_0^t e^{\frac{s-t}{\theta}} \bcG_{\Sigma^N_{n,s}}(\bX_{n,s},\bLambda_{n,s})\,\de s \in (\cP(U))^N
\end{equation*}
and the statement follows by induction on $n$.

Invoking Proposition~\ref{prop:DiscreteIterationsContAdapt}, this ensures that, for all fixed $\omega \in \Omega$, the map $t \longmapsto \bLambda_{n,t}(\omega)$ describes continuous trajectories that remain confined inside the convex set $(\cP(U))^N$, where the Lipchitz continuity and sublinearity inequalities of Propositions \ref{prop:DiscreteEmpLipSub-v-sigma-T} and \ref{prop:DiscreteEmpLipSub-G} hold.
\end{rem}

Let us now deduce the fundamental estimates that will be crucial in the proof of the existence of solutions for the $N$-particle system (\ref{eq:NDicsreteProblem}).

\begin{prop}\label{prop:QuattroStime} 
Under the hypotheses of Theorem~\ref{thm:WellPosednessNParticleSystem}, the following estimates hold:
\begin{subequations}\label{eq:12QuattroStime}
\begin{align}\label{eq:disuguaglianza(a)}
\begin{split}
\!\!\!\!
\E\bigg[\sup_{u\in[0,t]} \norm{\bX_{1,u} - \bX_0}_{(\real^d)^N}^2\bigg] \le &\, \bigl(6 M_v^2 t^2 + 24 N M_\sigma^2 t \bigr)\\
&\, \cdot \Bigl(1 + \E\bigl[\norm{\bX_0}_{(\real^d)^N}^2\bigr] + \E\bigl[\norm{\bLambda_0}_{(F(U))^N}^2\bigr]\Bigr),
\end{split}\\
\begin{split}\label{eq:disuguaglianza(b)}
\!\!\!\! 
\E\bigg[\sup_{u\in[0,t]} \norm{\bLambda_{1,u} - \bLambda_0}_{(F(U))^N}^2\bigg] \le &\, \frac{2 t^2}{\theta^2} \E\bigl[\norm{\bLambda_0}_{(F(U))^N}^2\bigr]\\
&\, + \frac{6M_\cG^2 t^2}{\theta^2} \Bigl(1 + \E\bigl[\norm{\bX_0}_{(\real^d)^N}^2\bigr] + \E\bigl[\norm{\bLambda_0}_{(F(U))^N}^2\bigr]\Bigr) 
\end{split}
\end{align}
\end{subequations}
for all $t \in [0,T]$, and
\begin{subequations}\label{eq:34QuattroStime}
\begin{align}
\begin{split}\label{eq:disuguaglianza(c)}
\E\bigg[\sup_{u\in[0,t]} \norm{\bX_{n+1,u} &\,- \bX_{n,u}}_{(\real^d)^N}^2\bigg] \le \bigl(16L_v^2t + 64 N L_\sigma^2 \bigr) \\ 
&\, \cdot\E\biggl[\int_0^t \bigl(\norm{\bX_{n,s} - \bX_{n-1,s}}_{(\real^d)^N}^2+ \norm{\bLambda_{n,s} - \bLambda_{n-1,s}}_{(F(U))^N}^2\bigr)\,\de s \biggr],
\end{split}\\
\begin{split}\label{eq:disuguaglianza(d)}
\E\bigg[\sup_{u\in[0,t]} \norm{\bLambda_{n+1,u} &\,- \bLambda_{n,u}}_{(\real^d)^N}^2\bigg] \le \frac{8L_\cG^2 t}{\theta^2} \E\biggl[\int_0^t \bigl(\norm{\bX_{n,s} - \bX_{n-1,s}}_{(\real^d)^N}^2 \\
&\,\hspace{4.7cm}+ \norm{\bLambda_{n,s} - \bLambda_{n-1,s}}_{(F(U))^N}^2\bigr)\,\de s \biggr]
\end{split}
\end{align}
\end{subequations}
for all $n = 1,2,\dots$ and $t \in [0,T]$, where $L_v\,,L_\sigma\,,L_\cG$ and $M_v\,, M_\sigma\,, M_\cG$ are the constants in \eqref{eq_fieldsprop}, \eqref{eq:DiscreteEmpSub}, and \eqref{eq:DiscreteEmpLipSubG}.
\end{prop}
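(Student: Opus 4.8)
The plan is to derive all four estimates directly from the defining relations \eqref{eq:DiscreteIterations}--\eqref{eq:DiscreteIterations0} of the Picard iterations, following in each case the same scheme: split the increment into its drift (Lebesgue/Bochner) part and its diffusion (Itô) part, apply the elementary inequality $\norm{a+b}^2 \le 2\norm{a}^2 + 2\norm{b}^2$, estimate the time integrals via the Cauchy--Schwarz inequality (which produces the extra power of $t$) and the Itô integral via the Burkholder--Davis--Gundy inequality \eqref{eq_BDG} with $p=2$ (for which $c_2 = 4$ and $t^{(p-2)/2} = 1$), and finally invoke the sublinearity bounds \eqref{eq:DiscreteEmpSub} and \eqref{eq:DiscreteEmpSub-G} for \eqref{eq:disuguaglianza(a)}--\eqref{eq:disuguaglianza(b)}, respectively the Lipschitz bounds \eqref{eq:DiscreteEmpLip} and \eqref{eq:DiscreteEmpLip-G} for \eqref{eq:disuguaglianza(c)}--\eqref{eq:disuguaglianza(d)}. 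A preliminary remark is that, by Remark~\ref{rem:trajectories are confined in P(U)}, each $\bLambda_{n,s}$ takes values in $(\cP(U))^N$, so the empirical measures $\Sigma^N_{n,s}$ occurring in \eqref{eq:DiscreteIterations} are exactly of the form \eqref{eq_empirical}; this is what licenses the use of Propositions~\ref{prop:DiscreteEmpLipSub-v-sigma-T} and~\ref{prop:DiscreteEmpLipSub-G}, in which the Wasserstein contribution $W_1(\barr{\Sigma}^1,\barr{\Sigma}^2)$ has already been absorbed into the difference of the remaining arguments.

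For \eqref{eq:disuguaglianza(a)}, I would write $\bX_{1,u} - \bX_0 = \int_0^u \bv_{\Sigma^N_{0,s}}(\bX_0,\bLambda_0)\,\de s + \int_0^u \bsigma_{\Sigma^N_{0,s}}(\bX_0,\bLambda_0)\,\de\bB_s$, bound the squared norm of the drift term by $t\int_0^t \norm{\bv_{\Sigma^N_{0,s}}(\bX_0,\bLambda_0)}_{(\real^d)^N}^2\,\de s$, and then use \eqref{eq:DiscreteEmpSub-v} together with $(1+\norm{\bX_0}_{(\real^d)^N})^2 \le (1 + \norm{\bX_0}_{(\real^d)^N} + \norm{\bLambda_0}_{(F(U))^N})^2 \le 3(1 + \norm{\bX_0}_{(\real^d)^N}^2 + \norm{\bLambda_0}_{(F(U))^N}^2)$ to produce the coefficient $6M_v^2 t^2$. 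For the stochastic integral I would apply \eqref{eq_BDG} blockwise and then pass from the $\ell^1$-type product norm on $(\real^d)^N$ to the Euclidean norm on $\real^{dN}$ (which underlies the blockwise application) and from the Frobenius norm to the block-sum norm on $\real^{dN\times mN}$; these norm equivalences are precisely where the extra factor $N$ in $24 N M_\sigma^2 t$ comes from, after which \eqref{eq:DiscreteEmpSub-sigma} closes the estimate. Estimate \eqref{eq:disuguaglianza(b)} is simpler: from the second line of \eqref{eq:DiscreteIterations} one has $\bLambda_{1,u} - \bLambda_0 = (e^{-u/\theta} - 1)\bLambda_0 + \frac{1}{\theta}\int_0^u e^{(s-u)/\theta}\bcG_{\Sigma^N_{0,s}}(\bX_0,\bLambda_0)\,\de s$; the bound $\abs{1 - e^{-u/\theta}} \le u/\theta$ yields the term $\frac{2t^2}{\theta^2}\E\bigl[\norm{\bLambda_0}_{(F(U))^N}^2\bigr]$, while $e^{(s-u)/\theta} \le 1$, Cauchy--Schwarz, and \eqref{eq:DiscreteEmpSub-G} yield the term with $\frac{6M_\cG^2 t^2}{\theta^2}$.

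Estimates \eqref{eq:disuguaglianza(c)} and \eqref{eq:disuguaglianza(d)} follow the same pattern applied to the difference of two consecutive iterates. Subtracting the respective lines of \eqref{eq:DiscreteIterations} at levels $n+1$ and $n$, the term $e^{-u/\theta}\bLambda_0$ cancels in the $\bLambda$-equation, and the integrands become differences of the fields evaluated at $(\Sigma^N_{n,s},\bX_{n,s},\bLambda_{n,s})$ and $(\Sigma^N_{n-1,s},\bX_{n-1,s},\bLambda_{n-1,s})$, to which \eqref{eq:DiscreteEmpLip-v}, \eqref{eq:DiscreteEmpLip-sigma}, and \eqref{eq:DiscreteEmpLip-G} apply directly. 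Using in addition $(\norm{\bX_{n,s}-\bX_{n-1,s}}_{(\real^d)^N} + \norm{\bLambda_{n,s}-\bLambda_{n-1,s}}_{(F(U))^N})^2 \le 2\norm{\bX_{n,s}-\bX_{n-1,s}}_{(\real^d)^N}^2 + 2\norm{\bLambda_{n,s}-\bLambda_{n-1,s}}_{(F(U))^N}^2$, the same splitting (Cauchy--Schwarz on the drift, \eqref{eq_BDG} plus the norm conversions on the Itô integral) produces the coefficients $16 L_v^2 t + 64 N L_\sigma^2$ in \eqref{eq:disuguaglianza(c)} and $\frac{8 L_\cG^2 t}{\theta^2}$ in \eqref{eq:disuguaglianza(d)}. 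All the expectations involved are finite: by Proposition~\ref{prop:DiscreteIterationsContAdapt} and a straightforward induction on $n$ using the sublinearity estimates, each $\bX_n$ satisfies $\E\bigl[\sup_{t\in[0,T]}\norm{\bX_{n,t}}_{(\real^d)^N}^2\bigr] < \infty$, which also guarantees the applicability of \eqref{eq_BDG}.

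The computations are entirely elementary, and I do not expect a genuine obstacle. The only care required is bookkeeping: the main (minor) difficulty is to keep track of the various norm-equivalence constants on the product spaces $(\real^d)^N$, $(F(U))^N$, and $\real^{dN\times mN}$ — in particular identifying where the factor $N$ enters, namely through the passage between the $\ell^1$-type product norms adopted in the statements and the Euclidean/Frobenius norms natural to the Burkholder--Davis--Gundy inequality — and to track the correct power of $t$ in each estimate (quadratic for the drift contributions, linear for the diffusion contribution).
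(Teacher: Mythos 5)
Your proposal is correct and follows essentially the same route as the paper: split each increment into drift and diffusion parts, use $\norm{a+b}^2\le 2\norm{a}^2+2\norm{b}^2$ and H\"older on the drift integrals, apply Burkholder--Davis--Gundy blockwise to the It\=o integrals (with the factor $N$ arising exactly as you say, from comparing the $\ell^1$-type product norm with the sum of squared block norms), and then invoke the sublinearity bounds for \eqref{eq:disuguaglianza(a)}--\eqref{eq:disuguaglianza(b)} and the empirical-measure Lipschitz bounds for \eqref{eq:disuguaglianza(c)}--\eqref{eq:disuguaglianza(d)}, after noting via Remark~\ref{rem:trajectories are confined in P(U)} that the iterates stay in $(\cP(U))^N$. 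Your additional remark on the finiteness of the moments of the iterates, while left implicit in the paper, is a legitimate and harmless extra precaution.
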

\begin{proof}
We start by proving \eqref{eq:disuguaglianza(a)}. 
Let $t \in [0,T]$ be fixed and $u \in [0,t]$. We have that
\begin{equation*}
\begin{split}
&\,\norm{\bX_{1,u} - \bX_0}_{(\real^d)^N}^2 = \norm*{\int_0^u \bv_{\Sigma^N_{0,s}}(\bX_0,\bLambda_0)\,\de s + \int_0^u \bsigma_{\Sigma^N_{0,s}}(\bX_0,\bLambda_0)\,\de \bB_s}_{(\real^d)^N}^2 \\
\le&\, 2\Biggl(\norm*{\int_0^u \bv_{\Sigma^N_{0,s}}(\bX_0,\bLambda_0)\,\de s}_{(\real^d)^N}^2 + \norm*{\int_0^u \bsigma_{\Sigma^N_{0,s}}(\bX_0,\bLambda_0)\,\de \bB_s}_{(\real^d)^N}^2\Biggr) \\
\le&\, 2\Biggl(\int_0^u \norm{\bv_{\Sigma^N_{0,s}}(\bX_0,\bLambda_0)}_{(\real^d)^N}\,\de s\Biggr)^2 + 2 \norm*{\int_0^u \bsigma_{\Sigma^N_{0,s}}(\bX_0,\bLambda_0)\,\de \bB_s}_{(\real^d)^N}^2 \\
\le&\, 2u\int_0^u \norm{\bv_{\Sigma^N_{0,s}}(\bX_0,\bLambda_0)}_{(\real^d)^N}^2\,\de s + 2 \norm*{\int_0^u \bsigma_{\Sigma^N_{0,s}}(\bX_0,\bLambda_0)\,\de \bB_s}_{(\real^d)^N}^2,
\end{split}
\end{equation*}
where we have applied H\"{o}lder inequality in the last line; now, since $ \Sigma^N_{0,s}$ is an empirical measure, by \eqref{eq:DiscreteEmpSub-v}, we have that, for all $s \in [0,u]$,
\begin{equation*}
\begin{split}
    \norm{\bv_{\Sigma^N_{0,s}}(\bX_0,\bLambda_0)}_{(\real^d)^N}^2 &\le\, \bigl[M_v\bigl(1 + \norm{\bX_0}_{(\real^d)^N} + \norm{\bLambda_0}_{(F(U))^N} \bigr)\bigr]^2 \\
    &\le\, 3M_v^2\bigl(1 + \norm{\bX_0}_{(\real^d)^N}^2 + \norm{\bLambda_0}_{(F(U))^N}^2 \bigr),
\end{split}
\end{equation*}
hence, for all $u \in [0,t]$,
\begin{equation*}
\norm{\bX_{1,u} - \bX_0}_{(\real^d)^N}^2 \le 6u^2M_v^2\bigl(1 + \norm{\bX_0}_{(\real^d)^N}^2 + \norm{\bLambda_0}_{(F(U))^N}^2 \bigr) + 2 \norm*{\int_0^u \!\! \bsigma_{\Sigma^N_{0,s}}(\bX_0,\bLambda_0)\,\de \bB_s}_{(\real^d)^N}^2.
\end{equation*}
By taking the supremum over the interval $[0,t]$ and by subsequently taking the expectations, we obtain 
\begin{equation*}
\begin{split}
\E\biggl[\sup_{u\in[0,t]}\norm{\bX_{1,u} - \bX_0}_{(\real^d)^N}^2\biggr] \le &\, 6 t^2 M_v^2 \Bigl(1 + \E\bigl[\norm{\bX_0}_{(\real^d)^N}^2\bigl] + \E\bigl[\norm{\bLambda_0}_{(F(U))^N}^2\bigl]\Bigl) \\ 
&\,+ 2\E \biggl[\sup_{u \in [0,t]} \norm*{\int_0^u \bsigma_{\Sigma^N_{0,s}}(\bX_0,\bLambda_0)\,\de \bB_s}_{(\real^d)^N}^2 \biggr];
\end{split}
\end{equation*}
now, by applying the Burkholder--Davis--Gundy inequality \eqref{eq_BDG} to each component of the last term, and by \eqref{eq:DiscreteEmpSub-sigma}, we get 
\begin{equation*}
\begin{split}
&\,\E \biggl[\sup_{u \in [0,t]} \norm*{\int_0^u \bsigma_{\Sigma^N_{0,s}}(\bX_0,\bLambda_0)\,\de \bB_s}_{(\real^d)^N}^2 \biggr] \le N \sum_{i=1}^N \E \biggl[\sup_{u \in [0,t]} \norm*{\int_0^u \sigma_{\Sigma^N_{0,s}}(X^i_0,\Lambda^i_0)\,\de B^i_s}_{\real^d}^2 \biggr] \\
\le&\, N \sum_{i=1}^N 4\E \biggl[\int_0^t \norm{\sigma_{\Sigma^N_{0,s}}(X^i_0,\Lambda^i_0)}_{\real^{d\times m}}^2 \,\de s\biggr] \le 4N\E \biggl[\int_0^t \norm{\bsigma_{\Sigma^N_{0,s}}(\bX_0,\bLambda_0)}_{\real^{dN\times mN}}^2 \,\de s\biggr]  \\
\le&\, 4N\E \biggl[\int_0^t \bigl[M_\sigma\bigl(1 + \norm{\bX_0}_{(\real^d)^N} + \norm{\bLambda_0}_{(F(U))^N} \bigr)\bigr]^2\,\de s \biggr] \\
\le&\, 12 N M_\sigma^2 t\Bigl(1 + \E\bigl[\norm{\bX_0}_{(\real^d)^N}^2\bigr] + \E\bigl[\norm{\bLambda_0}_{(F(U))^N}^2\bigr]\Bigr).
\end{split}
\end{equation*}
Therefore, estimate \eqref{eq:disuguaglianza(a)} follows by combining the previous estimates.

Let us now turn to the proof of \eqref{eq:disuguaglianza(b)} Let $t \in [0,T]$ be fixed and $u \in [0,t]$. The following chain of inequalities holds:
\begin{equation*}
\begin{split}
&\,\norm{\bLambda_{1,u} - \bLambda_0}_{(F(U))^N}^2 = \norm*{\bigl(e^{-\frac{u}{\theta}}-1\bigr)\bLambda_0 + \int_0^u \frac{1}{\theta} e^{\frac{s-u}{\theta}} \bcG_{\Sigma^N_{0,s}}(\bX_0,\bLambda_0)\,\de s}_{(F(U))^N}^2 \\
\le&\, 2\Biggl(\norm*{\bigl(e^{-\frac{u}{\theta}}-1\bigr)\bLambda_0}_{(F(U))^N}^2 + \norm*{\int_0^u \frac{1}{\theta} e^{\frac{s-u}{\theta}} \bcG_{\Sigma^N_{0,s}}(\bX_0,\bLambda_0)\,\de s}_{(F(U))^N}^2\Biggr) \\
\le&\, 2\abs*{e^{-\frac{u}{\theta}}-1}^2\norm{\bLambda_0}_{(F(U))^N}^2 + 2\biggl(\int_0^u \abs*{\frac{1}{\theta} e^{\frac{s-u}{\theta}}} \norm{\bcG_{\Sigma^N_{0,s}}(\bX_0,\bLambda_0)}_{(F(U))^N}\,\de s\biggr)^2,
\end{split}
\end{equation*}
where we have used the subadditivity property of Bochner integrals (see \cite[formula (A.8)]{AFMS}). 
Recalling the elementary inequalities 
\begin{equation*}
\abs*{e^{-\frac{u}{\theta}}-1}^2\le \frac{u^2}{\theta^2}, \quad \text{for all $u \in [0,t]$},
\qquad\text{and}\qquad
\abs*{\frac{1}{\theta} e^{\frac{s-u}{\theta}}} 
\le \frac{1}{\theta}, \quad\text{for all $s\in[0,u]$,}
\end{equation*}
we conclude that
\begin{equation*}
\begin{split}
\norm{\bLambda_{1,u} - \bLambda_0}_{(F(U))^N}^2 &\le \frac{2}{\theta^2}u^2\norm{\bLambda_0}_{(F(U))^N}^2 + \frac{2}{\theta^2}\Biggl(\int_0^u \norm{\bcG_{\Sigma^N_{0,s}}(\bX_0,\bLambda_0)}_{(F(U))^N}\,\de s\Biggr)^2 \\
&\le \frac{2}{\theta^2}u^2\norm{\bLambda_0}_{(F(U))^N}^2 + \frac{2}{\theta^2} u \int_0^u \norm{\bcG_{\Sigma^N_{0,s}}(\bX_0,\bLambda_0)}_{(F(U))^N}^2\,\de s.
\end{split}
\end{equation*}
Applying 
\eqref{eq:DiscreteEmpSub-G}, to the last term, we obtain that, 
for all $u \in [0,t]$,
\begin{equation*}
\norm{\bLambda_{1,u} - \bLambda_0}_{(F(U))^N}^2 \le \frac{2}{\theta^2}u^2\norm{\bLambda_0}_{(F(U))^N}^2 + \frac{6M_\cG^2}{\theta^2}u^2 (1 + \norm{\bX_0}_{(\real^d)^N}^2 + \norm{\bLambda_0}_{(F(U))^N}^2).
\end{equation*}
By taking the supremum over the interval $[0,t]$ and subsequently the expected values, estimate \eqref{eq:disuguaglianza(b)} follows.

We now tackle estimate \eqref{eq:disuguaglianza(c)}. 
Let $n \in \nat^+$, $t \in [0,T]$, and $u \in [0,t]$. 
We can estimate
\begin{equation*}
\begin{split}
\norm{\bX_{n+1,u} - \bX_{n,u}}_{(\real^d)^N}^2
\!\le&\, 2u\int_0^u \norm{\bv_{\Sigma^N_{n,s}}(\bX_{n,s},\bLambda_{n,s}) - \bv_{\Sigma^N_{n-1,s}}(\bX_{n-1,s},\bLambda_{n-1,s})}_{(\real^d)^N}^2\,\de s\\
&\,+ 2\norm*{\int_0^u \bigl(\bsigma_{\Sigma^N_{n,s}}(\bX_{n,s},\bLambda_{n,s}) - \bsigma_{\Sigma^N_{n-1,s}}(\bX_{n-1,s},\bLambda_{n-1,s})\bigr)\,\de\bB_s}_{(\real^d)^N}^2\\
\le&\, 16L_v^2u\int_0^u \bigl(\norm{\bX_{n,s} - \bX_{n-1,s}}_{(\real^d)^N}^2 + \norm{\bLambda_{n,s} - \bLambda_{n-1,s}}_{(F(U))^N}^2\bigr)\,\de s\\
&\,+ 2\norm*{\int_0^u \!\bigl(\bsigma_{\Sigma^N_{n,s}}(\bX_{n,s},\bLambda_{n,s}) - \bsigma_{\Sigma^N_{n-1,s}}(\bX_{n-1,s},\bLambda_{n-1,s})\bigr)\,\de\bB_s}_{(\real^d)^N}^2,
\end{split}
\end{equation*}
where we have used \eqref{eq:DiscreteEmpLip-v};
hence, by taking the supremum over the interval $[0,t]$ and subsequently the expected values,
by applying Burkholder--Davis--Gundy inequality \eqref{eq_BDG} and 
\eqref{eq:DiscreteEmpLip-sigma}, 
we obtain
\begin{equation*}
\begin{split}
&\E\biggl[\sup_{u\in[0,t]}\norm{\bX_{n+1,u} - \bX_{n,u}}_{(\real^d)^N}^2\biggr] \\
\le&\, \bigl(16L_v^2t + 64 N L_\sigma^2\bigr) \E\biggl[\int_0^t \bigl(\norm{\bX_{n,s} - \bX_{n-1,s}}_{(\real^d)^N}^2 + \norm{\bLambda_{n,s} - \bLambda_{n-1,s}}_{(F(U))^N}^2\bigr)\,\de s \biggr].
\end{split}
\end{equation*}
From the arbitrariness of $t \in [0,T]$ and $n \in \nat^+$, \eqref{eq:disuguaglianza(c)} follows .

We now turn to \eqref{eq:disuguaglianza(d)}. 
As before, we fix $n \in \nat^+$, $t \in [0,T]$, and $u \in [0,t]$. 
We have that
\begin{equation*}
\begin{split}
&\,\norm{\bLambda_{n+1,u} - \bLambda_{n,u}}_{(F(U))^N}^2 \\
\le&\, \biggl( \int_0^u \biggl\lvert\frac{e^{\frac{s-u}{\theta}}}{\theta}\biggr\rvert \norm{\bcG_{\Sigma^N_{n,s}}(\bX_{n,s},\bLambda_{n,s}) -  \bcG_{\Sigma^N_{n-1,s}}(\bX_{n-1,s},\bLambda_{n-1,s})}_{(F(U))^N}\,\de s \biggr)^2 \\
\le&\, \frac{u}{\theta^2}\int_0^u \norm{\bcG_{\Sigma^N_{n,s}}(\bX_{n,s},\bLambda_{n,s}) -  \bcG_{\Sigma^N_{n-1,s}}(\bX_{n-1,s},\bLambda_{n-1,s})}_{(F(U))^N}^2\,\de s \\
\le&\, \frac{8L_\cG^2}{\theta^2}u\int_0^u  \bigl(\norm{\bX_{n,s} - \bX_{n-1,s}}_{(\real^d)^N}^2 + \norm{\bLambda_{n,s} - \bLambda_{n-1,s}}_{(F(U))^N}^2\bigr)\,\de s,
\end{split}
\end{equation*}
where we have used \eqref{eq:DiscreteEmpLip-G} in the third inequality.
By taking the supremum over $[0,t]$ and the expected values of both sides of the inequality, we conclude that
\begin{equation*}
\begin{split}
&\,\E\biggl[\sup_{u\in[0,t]} \norm{\bLambda_{n+1,u} - \bLambda_{n,u}}_{(\real^d)^N}^2\biggr] \\ 
\le&\, \frac{8L_\cG^2 t}{\theta^2} \E\biggl[\int_0^t \bigl(\norm{\bX_{n,s} - \bX_{n-1,s}}_{(\real^d)^N}^2 + \norm{\bLambda_{n,s} - \bLambda_{n-1,s}}_{(F(U))^N}^2\bigr)\,\de s \biggr],
\end{split}
\end{equation*}
and from the arbitrariness of $t \in [0,T]$ and $n \in \nat^+$ we obtain \eqref{eq:disuguaglianza(d)}.
This concludes the proof.
\end{proof}

\begin{rem}
We notice that the term $\E\bigl[\norm{\bX_0}_{(\real^d)^N}^2\bigr]$ is finite due to the assumed square integrability of the spatial initial data, whereas $\E\bigl[\norm{\bLambda_0}_{(F(U))^N}^2\bigr]\le N^2$ by \eqref{eq_doppiotriangolino}. 
\end{rem}

Now, let us introduce the sequence of continuous $(\real^d \times \cP(U))^N$-valued stochastic processes $\{\bY_n\}_{n=0}^{\infty}$ defined by
\begin{equation}\label{eq:YnDef}
\bY_{n,t} \coloneqq \bigl(X^1_{n,t}\,,\Lambda^1_{n,t}\,,\dots,X^N_{n,t}\,,\Lambda^N_{n,t}\bigr) 
\end{equation}
for all $t\in [0,T]$, where the processes $\{X^i_n\}_{i=0}^N$ and $\{\Lambda^i_n\}_{i=0}^N$ are, for all $n = 0,1,2\dots$, the components of the processes $\bX_n$ and $\bLambda_n$ defined through the iterations (\ref{eq:DiscreteIterations}). In the following, we will construct a solution to the $N$-particle problem (\ref{eq:DicsreteProblem}) as a suitable limit of the processes $\{\bY_n\}_{n=0}^{\infty}$\,.

\begin{prop}\label{prop:factInequality_t} Let $\{\bY_n\}_{n=0}^{\infty}$ be the sequence of continuous $(\real^d \times \cP(U))^N$-valued stochastic processes $\{\bY_n\}_{n=0}^{\infty}$ defined by \eqref{eq:YnDef}. Then, for all $n = 0,1,2,\dots$ and for all $t \in [0,T]$, the following inequality holds
\begin{equation}\label{eq:factInequality_t}
\E\biggl[\sup_{u\in[0,t]} \norm{\bY_{n+1,u} - \bY_{n,u}}_{(\real^d \times F(U))^N}^2\biggr] \le \frac{(\cR t)^{n+1}}{(n+1)!}\,,
\end{equation}
where $\mathcal{R}>0$ depends on $\theta,N,T$, on the constants $L_{v}\,, L_{\sigma}\,, L_{\cG}$\,, $M_{v}\,, M_{\sigma}\,, M_{\cG}$\, and on the second moments of the initial data $X_0^1\,,\dots,X_0^N$\,. 
\end{prop}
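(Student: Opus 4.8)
The plan is to establish~\eqref{eq:factInequality_t} by induction on $n$, feeding the four estimates of Proposition~\ref{prop:QuattroStime} into a scalar recursion. Set $a_n(t) \coloneqq \E\bigl[\sup_{u\in[0,t]} \norm{\bY_{n+1,u} - \bY_{n,u}}_{(\real^d \times F(U))^N}^2\bigr]$. Since $\norm{\cdot}_{(\real^d\times F(U))^N}$ is the sum of the $(\real^d)^N$- and $(F(U))^N$-norms of the two blocks, the elementary inequality $(p+q)^2\le 2p^2+2q^2$ gives
\[
a_n(t) \le 2\,\E\Bigl[\sup_{u\in[0,t]}\norm{\bX_{n+1,u}-\bX_{n,u}}_{(\real^d)^N}^2\Bigr] + 2\,\E\Bigl[\sup_{u\in[0,t]}\norm{\bLambda_{n+1,u}-\bLambda_{n,u}}_{(F(U))^N}^2\Bigr],
\]
so the entire argument can be carried out on the single quantity $a_n$.

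For the base case $n=0$, I would insert~\eqref{eq:disuguaglianza(a)} and~\eqref{eq:disuguaglianza(b)} into the display above; using $t^2\le Tt$ on $[0,T]$ together with $\E\bigl[\norm{\bLambda_0}_{(F(U))^N}^2\bigr]\le N^2$ (by~\eqref{eq_doppiotriangolino}), this yields $a_0(t)\le C_0\,t$ for all $t\in[0,T]$, with $C_0>0$ depending on $\theta$, $N$, $T$, on $M_v$, $M_\sigma$, $M_\cG$, and on the second moments of the spatial initial data. Hence~\eqref{eq:factInequality_t} holds for $n=0$ whenever $\cR\ge C_0$.

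For the inductive step, assume~\eqref{eq:factInequality_t} holds with $n$ replaced by $n-1$, for some $n\ge1$ and all $t\in[0,T]$. Summing~\eqref{eq:disuguaglianza(c)} and~\eqref{eq:disuguaglianza(d)}, and bounding the integrand there by $\norm{\bX_{n,s}-\bX_{n-1,s}}_{(\real^d)^N}^2+\norm{\bLambda_{n,s}-\bLambda_{n-1,s}}_{(F(U))^N}^2\le \norm{\bY_{n,s}-\bY_{n-1,s}}_{(\real^d\times F(U))^N}^2$ (using $p^2+q^2\le(p+q)^2$), I would invoke Tonelli's theorem to exchange the expectation with the $[0,t]$-integral and then majorise by $\sup_{u\in[0,s]}\norm{\bY_{n,u}-\bY_{n-1,u}}_{(\real^d\times F(U))^N}^2$. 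With $t\le T$ this gives
\[
a_n(t)\le 2\bigl(16L_v^2 T + 64NL_\sigma^2 + 8L_\cG^2 T/\theta^2\bigr)\int_0^t a_{n-1}(s)\,\de s\le 2\bigl(16L_v^2 T + 64NL_\sigma^2 + 8L_\cG^2 T/\theta^2\bigr)\frac{\cR^n t^{n+1}}{(n+1)!},
\]
where the last step uses the inductive hypothesis and $\int_0^t \frac{(\cR s)^n}{n!}\,\de s=\frac{\cR^n t^{n+1}}{(n+1)!}$. The right-hand side is $\le\frac{(\cR t)^{n+1}}{(n+1)!}$ as soon as $\cR\ge 2\bigl(16L_v^2 T + 64NL_\sigma^2 + 8L_\cG^2 T/\theta^2\bigr)$.

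It then suffices to fix $\cR\coloneqq\max\bigl\{C_0,\,2(16L_v^2 T + 64NL_\sigma^2 + 8L_\cG^2 T/\theta^2)\bigr\}$, which has exactly the dependence claimed in the statement; with this choice both the base case and the inductive step close, and~\eqref{eq:factInequality_t} follows for every $n\ge0$ and $t\in[0,T]$. The computation is routine once Proposition~\ref{prop:QuattroStime} is available; the only mildly delicate points are repackaging the two components of different nature into the single quantity $a_n$ and the Tonelli interchange, which is precisely what produces the factorial $1/(n+1)!$ and will force $\{\bY_n\}_{n=0}^\infty$ to be a Cauchy sequence in the subsequent step of the existence proof.
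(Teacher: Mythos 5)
Your proof is correct and follows essentially the same route as the paper: induction on $n$, with the base case obtained by combining \eqref{eq:disuguaglianza(a)}--\eqref{eq:disuguaglianza(b)} (using $\E[\norm{\bLambda_0}_{(F(U))^N}^2]\le N^2$) and the inductive step by summing \eqref{eq:disuguaglianza(c)}--\eqref{eq:disuguaglianza(d)}, repackaging the two blocks into $\norm{\bY_{n,s}-\bY_{n-1,s}}^2$, and integrating the factorial bound, finally enlarging $\cR$ to a maximum so that both steps close. The Tonelli interchange and the final choice of $\cR$ are handled exactly as in the paper, so there is nothing to add.
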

\begin{proof} 
We proceed by induction on $n$. We first prove the statement for $n=0$: recalling estimates \eqref{eq:12QuattroStime}, we obtain
\begin{equation*}
\begin{split}
&\,\E\biggl[\sup_{u\in[0,t]} \norm{\bY_{1,u} - \bY_0}_{(\real^d \times F(U))^N}^2\biggr] \\
\le&\, 2 \E\biggl[\sup_{u\in[0,t]}\norm{\bX_{1,u} - \bX_0}_{(\real^d)^N}^2\biggr] + 2 \E\biggl[\sup_{u\in[0,t]}\norm{\bLambda_{1,u} - \bLambda_0}_{(F(U))^N}^2\biggr] \\
\le&\, \bigl(12 M_v^2 t^2 + 48 N M_\sigma^2 t \bigr)
\Bigl(1 + \E\bigl[\norm{\bX_0}_{(\real^d)^N}^2\bigr] + \E\bigl[\norm{\bLambda_0}_{(F(U))^N}^2\bigr]\Bigr) \\
&\,+\frac{4t^2}{\theta^2} \E\bigl[\norm{\bLambda_0}_{(F(U))^N}^2\bigr] + \frac{12M_\cG^2 t^2}{\theta^2}  \Bigl(1 + \E\bigl[\norm{\bX_0}_{(\real^d)^N}^2\bigr] + \E\bigl[\norm{\bLambda_0}_{(F(U))^N}^2\bigr]\Bigr) \\
\le&\, \biggl(12M_v^2 T + 48 N M_{\sigma}^2 + \frac{12M_{\cG}^2 T}{\theta^2}  \biggr)\Bigl(N^2+ 1 + \E\bigl[\norm{\bX_0}_{(\real^d)^N}^2\bigr] 
\Bigr)t + \frac{4N^2 T}{\theta^2} t \eqqcolon \cR t,
\end{split}
\end{equation*}
where we have bounded some occurrences of
$t$ with $T$ and used \eqref{eq_doppiotriangolino} in the last inequality; this is 
\eqref{eq:factInequality_t} for $n=0$.
Now, let $n \in \nat$\,; by \eqref{eq:34QuattroStime}, we can estimate
\begin{equation*}
\begin{split}
&\E\biggl[\sup_{u\in[0,t]} \norm{\bY_{n+2,u} - \bY_{n+1,u}}_{(\real^d \times F(U))^N}^2\biggr] \\
\le&\, 2 \E\biggl[\sup_{u\in[0,t]}\norm{\bX_{n+2,u} - \bX_{n+1,u}}_{(\real^d)^N}^2\biggr] + 2 \E\biggl[\sup_{u\in[0,t]}\norm{\bLambda_{n+2,u} - \bLambda_{n+1,u}}_{(F(U))^N}^2\biggr] \\
\le&\,\bigl(32L_v^2t + 128 N L_\sigma^2 \bigr)\E\biggl[\int_0^t \bigl(\norm{\bX_{n+1,s} - \bX_{n,s}}_{(\real^d)^N}^2+ \norm{\bLambda_{n+1,s} - \bLambda_{n,s}}_{(F(U))^N}^2\bigr)\,\de s \biggr]\\
&\,+ \frac{16L_\cG^2 t}{\theta^2} \E\biggl[\int_0^t \bigl(\norm{\bX_{n+1,s} - \bX_{n,s}}_{(\real^d)^N}^2 + \norm{\bLambda_{n+1,s} - \bLambda_{n,s}}_{(F(U))^N}^2\bigr)\,\de s \biggr] \\
\le&\, 32\biggl(2L_v^2T + 8 N L_\sigma^2+ \frac{L_\cG^2 T}{\theta^2}\biggr)\E\biggl[\int_0^t \norm{\bY_{n+1,s} - \bY_{n,s}}_{(\real^d\times\cP(U))^N}^2\,\de s \biggr] \\
\le&\, 32\biggl(2L_v^2T + 8L_\sigma^2+ \frac{L_\cG^2 T}{\theta^2}\biggr) \int_0^t\frac{(\cR s)^{n+1}}{(n+1)!} \,\de s \eqqcolon \barcR\frac{\cR^{n+1} t^{n+2}}{(n+2)!},
\end{split}
\end{equation*}
where, again, we have estimated some occurrences of~$t$ by~$T$, and we have used the induction hypothesis in the last inequality.
By possibly redefining~$\cR$ as $\max\{\barcR,\cR\}$, 
we finally obtain the instance of \eqref{eq:factInequality_t} with $n$ replaced by $n+1$.
\end{proof}
In what follows, we will construct a strong solution to problem \eqref{eq:DicsreteProblem} as the uniform limit of the processes $\{\bY_n\}_{n=0}^{\infty}$\,.

\begin{prop}\label{prop:IterateCauchy}
There exists a $\prob$-negligible set $\cZ\in\sF$ such that for all $\omega\in\cZ^c$,
$\{\bY_n(\omega)\}_{n=0}^{\infty} \subseteq \cC([0,T],(\real^d \times \cP(U))^N)$
is a Cauchy sequence in the space $(\cC([0,T],(\real^d \times F(U))^N), \norm{\cdot}_\infty)$.
Therefore, letting $\tilde{\by}$ be an arbitrary element of $(\real^d \times \cP(U))^N$, the process $\bY \colon \Omega \longrightarrow \cC([0,T],(\real^d \times \cP(U))^N)$ given by
\begin{equation}\label{eq:DefSoluzione}
\bY(\omega) \coloneqq
\begin{dcases}
\lim_{n\to \infty} \bY_{n}(\omega), &\omega \in \cZ^c \\
\tilde{\by}, &\omega \in \cZ 
\end{dcases}
\end{equation}
is well defined.
\end{prop}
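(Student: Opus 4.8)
The plan is to turn the factorial decay provided by Proposition~\ref{prop:factInequality_t} into almost-sure summability of the increments $\bY_{n+1}-\bY_n$, and then to combine the completeness of the Banach space $\bigl(\cC([0,T],(\real^d\times F(U))^N),\norm{\cdot}_\infty\bigr)$ with the closedness of $\cP(U)$ inside $F(U)$ (Proposition~\ref{prop:ProprF(U)P(U)}). Concretely, I would set $a_n\coloneqq\norm{\bY_{n+1}-\bY_n}_\infty=\sup_{u\in[0,T]}\norm{\bY_{n+1,u}-\bY_{n,u}}_{(\real^d\times F(U))^N}$, which is a non-negative random variable, and evaluate \eqref{eq:factInequality_t} at $t=T$; combined with the Cauchy--Schwarz inequality $\norm{\cdot}_{L^1(\prob)}\le\norm{\cdot}_{L^2(\prob)}$ this yields $\E[a_n]\le\sqrt{(\cR T)^{n+1}/(n+1)!}$. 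Since the ratio of two consecutive terms of $\sum_{n\ge0}\sqrt{(\cR T)^{n+1}/(n+1)!}$ equals $\sqrt{\cR T/(n+2)}$, which tends to $0$, that numerical series converges; by Tonelli's theorem $\E\bigl[\sum_{n\ge0}a_n\bigr]=\sum_{n\ge0}\E[a_n]<\infty$, so $\sum_{n\ge0}a_n<\infty$ $\prob$-almost surely, and the set $\cZ\coloneqq\{\omega\in\Omega:\sum_{n\ge0}a_n(\omega)=\infty\}$ is $\sF$-measurable with $\prob(\cZ)=0$.

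With $\cZ$ in hand, I would fix $\omega\in\cZ^c$ and, for $m>n$, bound $\norm{\bY_m(\omega)-\bY_n(\omega)}_\infty\le\sum_{k=n}^{m-1}a_k(\omega)\le\sum_{k\ge n}a_k(\omega)$ by the triangle inequality in $\bigl(\cC([0,T],(\real^d\times F(U))^N),\norm{\cdot}_\infty\bigr)$; the right-hand side is the tail of a convergent series, hence it tends to $0$ as $n\to\infty$, so $\{\bY_n(\omega)\}_n$ is a Cauchy sequence in that Banach space (recall from Section~\ref{sec:prel} that $\cC([0,T],E)$ is Banach whenever $E$ is), and therefore converges uniformly on $[0,T]$ to a continuous limit, which I would call $\bY(\omega)$. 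To see that this limit genuinely takes values in $(\real^d\times\cP(U))^N$, I would use Remark~\ref{rem:trajectories are confined in P(U)}: since $\bLambda_{n,t}(\omega)\in(\cP(U))^N$ for every $n$ and $t$, we have $\bY_{n,t}(\omega)\in(\real^d\times\cP(U))^N$, and because $\cP(U)$ is closed (indeed compact) in $F(U)$, the set $(\real^d\times\cP(U))^N$ is closed in $(\real^d\times F(U))^N$; passing to the pointwise-in-$t$ limit preserves membership, whence $\bY(\omega)\in\cC([0,T],(\real^d\times\cP(U))^N)$.

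Finally, extending $\bY$ by the constant value $\tilde{\by}\in(\real^d\times\cP(U))^N$ on $\cZ$ as in \eqref{eq:DefSoluzione}, the map $\bY\colon\Omega\longrightarrow\cC([0,T],(\real^d\times\cP(U))^N)$ is well defined; I would also note that it is $\sF$-measurable, being the $\prob$-almost-everywhere limit of the measurable maps $\bY_n$ (continuous processes with values in the separable Banach space $(\real^d\times F(U))^N$, hence measurable as $\cC([0,T],(\real^d\times F(U))^N)$-valued random variables) modified on the measurable null set $\cZ$. No step is a genuine obstacle here; the only points that need a moment of care are the convergence of the numerical series $\sum_n\sqrt{(\cR T)^{n+1}/(n+1)!}$, which is immediate from the ratio test, and the observation that the Picard trajectories stay in the closed convex set $\cP(U)$ — this is exactly what allows the limit to land in $\cC([0,T],(\real^d\times\cP(U))^N)$ rather than merely in the ambient space of $(\real^d\times F(U))^N$-valued continuous functions. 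That the process $\bY$ so constructed actually solves \eqref{eq:DicsreteProblem}, which requires passing to the limit in the iterations \eqref{eq:DiscreteIterations}, is a separate matter not addressed by the present statement.
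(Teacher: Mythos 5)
Your proof is correct, and it differs from the paper's in the probabilistic step that converts the moment bound \eqref{eq:factInequality_t} into an almost-sure statement. The paper applies the Markov inequality to the events $\{\sup_{t}\norm{\bY_{n+1,t}-\bY_{n,t}}_{(\real^d\times F(U))^N}>2^{-n}\}$, checks that $\sum_n 4^n(\cR T)^{n+1}/(n+1)!<\infty$, and invokes the Borel--Cantelli Lemma to conclude that the increments are eventually dominated by $2^{-n}$ on a full-measure set; the Cauchy property then follows from the tails of the geometric series. You instead pass to $L^1$ via Cauchy--Schwarz, obtaining $\E[a_n]\le\sqrt{(\cR T)^{n+1}/(n+1)!}$, sum the resulting numerical series by the ratio test, and use Tonelli (monotone convergence for series of non-negative terms) to get $\sum_n a_n<\infty$ $\prob$-almost surely, after which the Cauchy property follows from the tails of the random series $\sum_{k\ge n}a_k(\omega)$. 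Both routes are standard and yield the same negligible set up to modification; yours avoids the choice of the threshold $2^{-n}$ and the Borel--Cantelli machinery at the cost of one application of Cauchy--Schwarz, and it is marginally more self-contained. The remaining steps --- telescoping, completeness of $(\cC([0,T],(\real^d\times F(U))^N),\norm{\cdot}_\infty)$, and the closedness of $\cP(U)$ in $F(U)$ (Proposition~\ref{prop:ProprF(U)P(U)} together with Remark~\ref{rem:trajectories are confined in P(U)}) to ensure the limit stays in $(\real^d\times\cP(U))^N$ --- coincide with the paper's argument, and your closing remark correctly delimits the statement from the separate task of passing to the limit in the iterations.
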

\begin{proof} 
By Markov inequality and \eqref{eq:factInequality_t} with $t=T$,  
we obtain, for all $n = 0,1,2,\dots$,
\begin{equation}\label{eq:Summable?}
\begin{split}
&\,\prob\biggl(\sup_{t\in[0,T]} \norm{\bY_{n+1,t} - \bY_{n,t}}_{(\real^d \times F(U))^N} > \frac{1}{2^n}\biggr) \\
\le&\, 4^n\E\biggl[\sup_{t\in[0,T]} \norm{\bY_{n+1,t} - \bY_{n,t}}_{(\real^d \times F(U))^N}^2\biggr] \le  4^n\frac{(\mathcal{R}T)^{n+1}}{(n+1)!}\,,
\end{split}
\end{equation}
so that, by 
the Borel--Cantelli Lemma, we deduce that 
\begin{equation*}
\begin{split}
&\,\prob\Biggl(\limsup_n \biggl\{\sup_{t\in[0,T]} \norm{\bY_{n+1,t} - \bY_{n,t}}_{(\real^d \times F(U))^N} > \frac{1}{2^n}\biggr\}\Biggr)=0. 
\end{split}
\end{equation*}
Therefore, we have found a $\prob$-negligible set $\cZ \in \mathscr{F}$ such
that, for all $\omega \in \cZ^c$, it holds
\begin{equation}\label{eq_stellina}
\sup_{t\in[0,T]} \norm{\bY_{n+1,t}(\omega) - \bY_{n,t}(\omega)}_{(\real^d \times F(U))^N} \le \frac{1}{2^n},\quad \text{eventually.}
\end{equation}
Now we show that, for all $\omega \in \cZ^c$, the sequence of continuous functions $\{\bY_n(\omega)\}_{n=0}^{\infty}$ is a Cauchy sequence in the Banach space $(\cC([0,T],(\real^d \times F(U))^N), \norm{\cdot}_\infty)$.
Let us begin by presenting 
the function $\bY_n(\omega)$ as a telescoping sum
\begin{equation*}
\bY_n(\omega) = \bY_0(\omega) + \sum_{k = 0}^{n-1} \bigl(\bY_{k+1}(\omega) - \bY_{k}(\omega)\bigr), \qquad\text{for all $n=0,1,2,\ldots$,}
\end{equation*}
so that, for 
$m > n$, we can write 
\begin{equation*}
\bY_{m,t}(\omega) - \bY_{n,t}(\omega) = \sum_{k = n}^{m-1} \bigl(\bY_{k+1,t}(\omega) - \bY_{k,t}(\omega)\bigr)\qquad\text{for all $t \in [0,T]$.}
\end{equation*}
Therefore, for $n$ large enough, by \eqref{eq_stellina},
\begin{equation}
\label{eq:CauchyCondition}
\begin{split}
&\,\sup_{t \in [0,T]} \norm{\bY_{m,t}(\omega) - \bY_{n,t}(\omega)}_{(\real^d \times F(U))^N} \\
\le&\, \sum_{k = n}^{m-1} \biggl(\sup_{t \in [0,T]}\norm{\bY_{k+1,t}(\omega) - \bY_{k,t}(\omega)}_{(\real^d \times F(U))^N}\biggr) \le \sum_{k = n}^{m-1} \frac{1}{2^k} \le \sum_{k = n}^{\infty} \frac{1}{2^k}\,,
\end{split}
\end{equation}
which vanishes as $n\to\infty$.
Now, letting $\tilde{\by}$ be an arbitrary element of $(\real^d \times \cP(U))^N$, \eqref{eq:DefSoluzione} is well defined and $\bY_t(\omega)\in (\real^d\times\cP(U))^N$ owing to the closedness of $\cP(U)$ in $F(U)$.
Continuity of the trajectories follows from uniform convergence.
This concludes the proof.
\end{proof}
%

The proof of the following property, which is more probabilistic in nature, is postponed to Section~\ref{app_adapted}.
\begin{prop}\label{prop:SolContAdatt} 
The limiting stochastic process $\bY$ defined in Proposition~\ref{prop:IterateCauchy} is adapted to the filtration $(\mathscr{F}_t)_{t \in [0,T]}$\,.
\end{prop}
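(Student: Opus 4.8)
The plan is to fix $t\in[0,T]$ and reduce the statement to showing that the random variable $\bY_t\colon\Omega\longrightarrow(\real^d\times F(U))^N$ is $\mathscr{F}_t$-measurable; since $t$ is arbitrary (and $\bY$ has $\prob$-a.s.\ continuous trajectories by Proposition~\ref{prop:IterateCauchy}), this yields that $\bY$ is $(\mathscr{F}_t)_{t\in[0,T]}$-adapted. The two ingredients needed are already at hand: on the one hand, Proposition~\ref{prop:DiscreteIterationsContAdapt} asserts that each Picard iterate $\bY_n$ is $(\mathscr{F}_t)_t$-adapted, hence $\bY_{n,t}$ is $\mathscr{F}_t$-measurable for every $n$; on the other hand, Proposition~\ref{prop:IterateCauchy} furnishes a $\prob$-negligible set $\cZ\in\sF$ such that $\bY_{n,t}(\omega)\to\bY_t(\omega)$ in $(\real^d\times F(U))^N$ for every $\omega\in\cZ^c$, while $\bY_t(\omega)=\tilde{\by}$ for $\omega\in\cZ$.

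Combining these, I would proceed as follows. Assuming, as is customary for filtered probability spaces carrying Brownian motions, that $(\mathscr{F}_t)_{t\in[0,T]}$ satisfies the usual conditions --- so that $\mathscr{F}_0$, and therefore every $\mathscr{F}_t$, contains all $\prob$-null sets of $\sF$ --- we have $\cZ\in\mathscr{F}_t$. I then introduce the modified iterates $\widetilde{\bY}_{n,t}\coloneqq\ind_{\cZ^c}\,\bY_{n,t}+\ind_{\cZ}\,\tilde{\by}$, which are $\mathscr{F}_t$-measurable, being obtained by gluing the $\mathscr{F}_t$-measurable map $\bY_{n,t}$ on $\cZ^c\in\mathscr{F}_t$ with the constant $\tilde{\by}$ on $\cZ\in\mathscr{F}_t$. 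By construction $\widetilde{\bY}_{n,t}(\omega)\to\bY_t(\omega)$ for \emph{every} $\omega\in\Omega$: on $\cZ^c$ this is the convergence supplied by Proposition~\ref{prop:IterateCauchy}, and on $\cZ$ both sides equal $\tilde{\by}$. Since the pointwise limit of a sequence of measurable maps valued in the separable metric space $(\real^d\times F(U))^N$ (recall that $F(U)$ is a separable Banach space by Proposition~\ref{prop:ProprF(U)P(U)}) is again measurable, $\bY_t$ is $\mathscr{F}_t$-measurable; and as $\cP(U)$ is closed in $F(U)$, this is equivalent to the $\mathscr{F}_t$-measurability of $\bY_t$ regarded as a map into $(\real^d\times\cP(U))^N$.

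I do not expect a genuine obstacle here: the content is essentially that $\prob$-a.s.\ limits of adapted processes are adapted whenever the filtration is complete, and the remaining steps are routine measure theory. The only point demanding a little care is the treatment of the exceptional null set $\cZ$, on which $\bY$ was set equal to the constant $\tilde{\by}$ in Proposition~\ref{prop:IterateCauchy}; this is precisely what completeness of the filtration disposes of. Should one prefer not to impose the usual conditions from the outset, the identical conclusion holds after replacing $(\mathscr{F}_t)_t$ by its usual augmentation --- with respect to which adaptedness is in any case the natural notion, and which leaves Definition~\ref{def:StrongSolutionAndUniqueness} and all preceding arguments unaffected.
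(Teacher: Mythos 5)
Your proof is correct and follows essentially the same route as the paper: both arguments modify the iterates on the exceptional null set $\cZ$ (using that the filtration is standard, so $\cZ\in\mathscr{F}_t$), observe that the modified iterates $\widetilde{\bY}_{n,t}$ are $\mathscr{F}_t$-measurable and converge pointwise everywhere to $\bY_t$, and conclude via the stability of measurability under pointwise limits of maps into a metric space (Proposition~\ref{prop:measurelim}). No gaps.
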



In the following, we will show that $\bY$ is a strong solution to the $N$-particle problem (\ref{eq:NDicsreteProblem}). The key idea is to pass to the limit as $n\to\infty$ in the definition of the iterations $\eqref{eq:DiscreteIterations}$. In line with the previous sections, we write $\bY$ as
\begin{equation*}
\bY_t = (X^1_t,\Lambda^1_t,\dots,X^N_t,\Lambda^N_t), \quad \text{for all $t \in [0,T]$,}
\end{equation*}
and we introduce the $(\real^d)^N$- and $(\cP(U))^N$-valued processes $\bX$ and $\bLambda$ defined as
\begin{equation*}
\bX_t \coloneqq (X^1_t,\dots,X^N_t), \quad \bLambda_t \coloneqq (\Lambda^1_t,\dots,\Lambda^N_t), \quad \text{for all $t \in [0,T]$;}
\end{equation*}
we also introduce the associated empirical measures
\begin{equation}
    \Sigma^N \coloneqq \frac{1}{N}\sum_{j=1}^N \delta_{(X^j,\Lambda^j)} \qquad\text{and}\qquad     \Sigma^N_t \coloneqq (\ev_t)_\sharp\Sigma^N = \frac{1}{N}\sum_{j=1}^N \delta_{(X^j_t,\Lambda^j_t)}.
\end{equation}
By construction, we have that, for all $\omega\in \cZ^c$, the sequences of continuous functions $\{\bX_n(\omega)\}_{n=0}^{\infty}$ and $\{\bLambda_n(\omega)\}_{n=0}^{\infty}$ uniformly converge to $\bX(\omega)$ and $\bLambda(\omega)$, respectively.
We first observe that, as a consequence of inequalities \eqref{eq:DiscreteEmpLip-v}, \eqref{eq:DiscreteEmpLip-sigma} and \eqref{eq:DiscreteEmpLip-G}, we have that
\begin{align*}
\sup_{t \in [0,T]}\norm{\bv_{\Sigma^N_{n,t}}(\bX_{n,t},\bLambda_{n,t}) - \bv_{\Sigma^N_t}(\bX_t,\bLambda_t)}_{(\real^d)^N} \le&\, 2L_v\sup_{t \in [0,T]}\norm{\bY_{n,t} - \bY_t}_{(\real^d\times F(U))^N}\,, \\
\sup_{t \in [0,T]}\norm{\bsigma_{\Sigma^N_{n,t}}(\bX_{n,t},\bLambda_{n,t}) - \bsigma_{\Sigma^N_t}(\bX_t,\bLambda_t)}_{\real^{dN\times mN}} \le&\, 2L_\sigma\sup_{t \in [0,T]}\norm{\bY_{n,t} - \bY_t}_{(\real^d\times F(U))^N}\,, \\
\sup_{t \in [0,T]}\norm{\bcG_{\Sigma^N_{n,t}}(\bX_{n,t},\bLambda_{n,t}) - \bcG_{\Sigma^N_t}(\bX_t,\bLambda_t)}_{(F(U))^N} \le&\, 2L_\cG\sup_{t \in [0,T]}\norm{\bY_{n,t} - \bY_t}_{(\real^d\times F(U))^N}\,.
\end{align*}
By Proposition~\ref{prop:IterateCauchy} (see \eqref{eq:DefSoluzione}), for every $\omega\in\cZ^c$ 
we deduce that the sequences of continuous functions
\begin{equation*}
\begin{aligned}
&[0,T]\ni t \longmapsto \bv_{\Sigma^N_{n,t}}(\bX_{n,t}(\omega),\bLambda_{n,t}(\omega))\in(\real^d)^N \\
&[0,T]\ni t \longmapsto \bcG_{\Sigma^N_{n,t}}(\bX_{n,t}(\omega),\bLambda_{n,t}(\omega))\in(\cP(U))^N
\end{aligned}\qquad n = 0,1,2.\dots
\end{equation*}
uniformly converge to
\begin{equation*}
[0,T] \ni t \longmapsto \bv_{\Sigma^N_t}(\bX_t(\omega),\bLambda_t(\omega))
\quad\text{and}\quad
[0,T]\ni t \longmapsto \bcG_{\Sigma^N_t}(\bX_t(\omega),\bLambda_t(\omega)),
\end{equation*}
respectively, and that the latter are in turn continuous. Therefore we deduce that on $\cZ^c$

\begin{equation*}
\begin{aligned}
\lim_{n\to\infty} \int_0^t \bv_{\Sigma^N_{n,s}}(\bX_{n,s},\bLambda_{n,s})\,\de s &= \int_0^t \bv_{\Sigma^N_s}(\bX_s,\bLambda_s)\,\de s\,, \\
\lim_{n\to\infty}\frac{1}{\theta}\int_0^t e^{\frac{s-t}{\theta}} \bcG_{\Sigma^N_{n,s}}(\bX_{n,s},\bLambda_{n,s})\,\de s &= \frac{1}{\theta}\int_0^t e^{\frac{s-t}{\theta}} \bcG_{\Sigma^N_s}(\bX_s,\bLambda_s)\,\de s\,.
\end{aligned}
\end{equation*}
for all $t \in [0,T]$.
Furthermore, we have that on $\cZ^c$ the sequence of continuous functions
\begin{equation*}
[0,T]\ni t \longmapsto \bsigma_{\Sigma^N_{n,t}}(\bX_{n,t}(\omega),\bLambda_{n,t}(\omega))\in \real^{dN\times mN}, \quad n = 0,1,2,\dots
\end{equation*}
uniformly converges to the function
$[0,T]\ni t \longmapsto \bsigma_{\Sigma^N_t}(\bX_t(\omega),\bLambda_t(\omega))$;
therefore 
\begin{equation*}
    \lim_{n\to\infty}\int_0^t\norm{\bsigma_{\Sigma^N_{n,s}}(\bX_{n,s},\bLambda_{n,s}) - \bsigma_{\Sigma^N_s}(\bX_s,\bLambda_s)}^2_{\real^{dN\times mN}}\,\de s = 0
\end{equation*}
$\prob$-almost surely. This implies (see \cite[Proposition~7.3]{Stochastic-Calculus}) the convergence of the random variables
\begin{equation*}
    \int_0^t\bsigma_{\Sigma^N_{n,s}}(\bX_{n,s},\bLambda_{n,s})\,\de\bB_s \overset{\prob}{\longrightarrow}\int_0^t\bsigma_{\Sigma^N_s}(\bX_s,\bLambda_s)\,\de\bB_s \qquad\text{for all $t \in [0,T]$.}
\end{equation*}
Hence, taking the limit in $\prob$-probability as $n \to \infty$, we have that, for all $t \in [0,T]$, 
$$
\bX_{n,t} \overset{\prob}{\longrightarrow} \bX_t\,, \qquad \bLambda_{n,t} \overset{\prob}{\longrightarrow} \bLambda_t\,,
$$
\begin{equation*}
\begin{split}
& \bX_0 + \int_0^t \bv_{\Sigma^N_{n,s}}(\bX_{n,s},\bLambda_{n,s})\,\de s + \int_0^t \bsigma_{\Sigma^N_{n,s}}(\bX_{n,s},\bLambda_{n,s})\,\de\bB_s \\
&\overset{\prob}{\longrightarrow}\, \bX_0 + \int_0^t \bv_{\Sigma^N_s}(\bX_s,\bLambda_s)\,\de s + \int_0^t \bsigma_{\Sigma^N_s}(\bX_s,\bLambda_s)\,\de\bB_s\,,
\end{split}
\end{equation*}
and
$$e^{-\frac{t}{\theta}}\bLambda_0 + \frac{1}{\theta}\int_0^t e^{\frac{s-t}{\theta}} \bcG_{\Sigma^N_{n,s}}(\bX_{n,s},\bLambda_{n,s})\,\de s \overset{\prob}{\longrightarrow} e^{-\frac{t}{\theta}}\bLambda_0 + \frac{1}{\theta}\int_0^t e^{\frac{s-t}{\theta}} \bcG_{\Sigma^N_s}(\bX_s,\bLambda_s)\,\de s\,.$$
In particular, we have that for all $t \in [0,T]\cap\Q$ there exists a $\prob$-negligible set $\cZ_t \in \sF$ such that, on $(\cZ_t\cup\cZ)^c$, by the uniqueness almost-everywhere of the limit in probability, we have that
\begin{equation}\label{eq:EqualityOnRationals}
\begin{dcases}
\bX_t = \bX_0 + \int_0^t \bv_{\Sigma^N_s}(\bX_s,\bLambda_s)\,\de s + \int_0^t\bsigma_{\Sigma^N_s}(\bX_s,\bLambda_s)\,\de\bB_s, \\
\bLambda_t = e^{-\frac{t}{\theta}}\bLambda_0 + \frac{1}{\theta}\int_0^t e^{\frac{s-t}{\theta}} \bcG_{\Sigma^N_s}(\bX_s,\bLambda_s)\,\de s.
\end{dcases}
\end{equation}
Now, defining the $\prob$-negligible set $\cN \coloneqq\bigr(\bigcup_{t\in[0,T]\cap\Q} \cZ_t\bigr)\cup\cZ$, we deduce that on $\cN^c$, equality \eqref{eq:EqualityOnRationals} holds for all $t\in[0,T]\cap\Q$. Since the trajectories of all processes are continuous, we can conclude that, by density, \eqref{eq:EqualityOnRationals} holds for all $t \in [0,T]$. 

We note that equality \eqref{eq:EqualityOnRationals} does not immediately imply that $\bY$ is a solution to problem \eqref{eq:DicsreteProblem}. To conclude the proof of existence we need the following result.
\begin{prop}\label{prop:SolDerivate}
The stochastic process $\bY$ defined in Proposition~\ref{prop:IterateCauchy} satisfies, $\prob$-a.s.,
\begin{equation}
\begin{dcases}
\bX_t = \bX_0 + \int_0^t \bv_{\Sigma^N_s}(\bX_s,\bLambda_s)\,\de s + \int_0^t\bsigma_{\Sigma^N_s}(\bX_s,\bLambda_s)\,\de\bB_s, \\
\bLambda_t = \bLambda_0 + \int_0^t \bcT_{\Sigma^N_s}(\bX_s,\bLambda_s)\,\de s,
\end{dcases}\qquad\text{for all $t \in [0,T]$.}
\end{equation}
\end{prop}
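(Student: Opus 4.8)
The spatial line of the claimed identity requires no further work: equation~\eqref{eq:EqualityOnRationals}, which we have just extended by continuity and density to every $t\in[0,T]$ on the complement of a $\prob$-negligible set, already coincides with the first equation in the statement. Hence the plan is to treat only the evolution of $\bLambda$, passing from the ``variation-of-constants'' form in \eqref{eq:EqualityOnRationals} to the integral of $\bcT$. The natural device is the identity $\bcT_\Sigma(\bx,\blambda)=\theta^{-1}\bigl(\bcG_\Sigma(\bx,\blambda)-\blambda\bigr)$ coming from \eqref{eq:G_Def}, which is precisely the reason the auxiliary field $\bcG$ was introduced, following \cite[Corollaire~1.1]{Brezis-Operateur-Maximaux}.

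Concretely, I would fix $\omega$ in the full-measure set on which \eqref{eq:EqualityOnRationals} holds for all $t$ and on which the map $s\mapsto\bcG_{\Sigma^N_s}(\bX_s,\bLambda_s)$ is continuous with values in $(F(U))^N$ (this continuity was established in the paragraphs above, as a uniform-convergence consequence of Proposition~\ref{prop:IterateCauchy}). Multiplying the second line of \eqref{eq:EqualityOnRationals} by $e^{t/\theta}$ gives $e^{t/\theta}\bLambda_t=\bLambda_0+\theta^{-1}\int_0^t e^{s/\theta}\bcG_{\Sigma^N_s}(\bX_s,\bLambda_s)\,\de s$, whose right-hand side is the Bochner primitive of a continuous $(F(U))^N$-valued function; by the fundamental theorem of calculus for Bochner integrals it is therefore of class $\cC^1$ in $t$, with derivative $\theta^{-1}e^{t/\theta}\bcG_{\Sigma^N_t}(\bX_t,\bLambda_t)$. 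It follows that $t\mapsto\bLambda_t=e^{-t/\theta}\bigl(e^{t/\theta}\bLambda_t\bigr)$ is itself $\cC^1$ and, by the Leibniz rule, $\tfrac{\de}{\de t}\bLambda_t=\theta^{-1}\bigl(\bcG_{\Sigma^N_t}(\bX_t,\bLambda_t)-\bLambda_t\bigr)=\bcT_{\Sigma^N_t}(\bX_t,\bLambda_t)$ for all $t\in[0,T]$. Integrating back on $[0,t]$ --- again by the fundamental theorem of calculus, now for the continuous integrand $s\mapsto\bcT_{\Sigma^N_s}(\bX_s,\bLambda_s)$ --- yields $\bLambda_t=\bLambda_0+\int_0^t\bcT_{\Sigma^N_s}(\bX_s,\bLambda_s)\,\de s$ on the chosen full-measure set, which together with the first line of \eqref{eq:EqualityOnRationals} is exactly \eqref{eq:DicsreteProblem}, $\prob$-almost surely.

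I do not expect a genuine obstacle here: the whole content is the elementary observation that a first-order linear (Bochner) ODE with continuous forcing term is solved by the variation-of-constants formula, read in reverse. The only points demanding a modicum of care are of bookkeeping type --- making sure all statements are made on the complement of a single $\prob$-negligible set, and invoking the Bochner-integral version of the fundamental theorem of calculus and of the product rule for the $(F(U))^N$-valued component (see \cite[Appendix A]{AFMS}). An equivalent, differentiation-free route would be to set $h(t):=\bLambda_t-\bLambda_0-\theta^{-1}\int_0^t\bigl(\bcG_{\Sigma^N_s}(\bX_s,\bLambda_s)-\bLambda_s\bigr)\,\de s$, check that $h$ is $\cC^1$ with $h(0)=0$ and $h'\equiv 0$, and conclude $h\equiv 0$; either way the argument is short.
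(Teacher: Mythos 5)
Your proposal is correct and follows essentially the same route as the paper: fix $\omega$ in the full-measure set where \eqref{eq:EqualityOnRationals} holds, differentiate the variation-of-constants expression for $\bLambda_t$ using the continuity of $s\mapsto\bcG_{\Sigma^N_s}(\bX_s,\bLambda_s)$, and use $\bcT=\theta^{-1}(\bcG-\mathrm{id})$ to obtain $\bLambda_t'=\bcT_{\Sigma^N_t}(\bX_t,\bLambda_t)$ before integrating back. The ``differentiation-free'' variant you sketch at the end (the auxiliary function $\bh$ with $\bh'\equiv 0$ and $\bh(0)=0$) is in fact exactly the argument the paper writes out.
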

\begin{proof}
Let us fix $\omega \in \cN^c$. We define, for all $t \in [0,T]$, the following functions:
\begin{equation*}
\begin{aligned}
&\bx(t) \coloneqq \bX_t(\omega) = \bX_0(\omega) + \int_0^t \bv_{\Sigma^N_s}(\bX_s,\bLambda_s)(\omega)\,\de s + \int_0^t\bsigma_{\Sigma^N_s}(\bX_s,\bLambda_s)\,\de\bB_s(\omega)  \in (\real^d)^N, \\
&\blambda(t) \coloneqq  \bLambda_t(\omega) = e^{-\frac{t}{\theta}}\bLambda_0(\omega) + \frac{1}{\theta}\int_0^t e^{\frac{s-t}{\theta}} \bcG_{\Sigma^N_s}(\bX_s,\bLambda_s)(\omega)\,\de s \in (\cP(U))^N,\\
&\tilde{\blambda}(t) \coloneqq  \bLambda_0(\omega) + \int_0^t \bcT_{\Sigma^N_s}(\bX_s,\bLambda_s)(\omega)\,\de s \in (\cP(U))^N.
\end{aligned}
\end{equation*}
Our goal is to show that $\blambda(t) = \tilde{\blambda}(t)$ for all $t \in [0,T]$. We first note that $\blambda$ e $\tilde{\blambda}$ are of class $C^1$ with respect to the $\norm{\cdot}_{(F(U))^N}$ norm on the interval $[0,T]$, while $\bx$ is of class $C^0$. By defining the function
$\bh \colon [0,T] \longrightarrow (F(U))^N$ as $t \longmapsto \bh(t) \coloneqq \blambda(t) - \tilde{\blambda}(t)$, we can readily show that $\bh(t) = 0$, for all $t \in [0,T]$.
In fact, 
we have 
\begin{equation*}
\begin{split}
\blambda'(t) = & \frac{\de}{\de t}\biggl(e^{-\frac{t}{\theta}}\blambda(0) + \frac{1}{\theta}e^{-\frac{t}{\theta}}\int_0^t e^{\frac{s}{\theta}} \bcG_{\Sigma^N_s}(\bx(s),\blambda(s))\,\de s\biggr)\\
=& -\frac{1}{\theta}e^{-\frac{t}{\theta}}\blambda(0) - \frac{1}{\theta^2}e^{-\frac{t}{\theta}}\int_0^t e^{\frac{s}{\theta}} \bcG_{\Sigma^N_s}(\bx(s),\blambda(s))\,\de s + \frac{1}{\theta} \bcG_{\Sigma^N_t}(\bx(t),\blambda(t)) \\
=& -\frac{1}{\theta}\blambda(t) + \frac{1}{\theta}\Bigl(\blambda(t) + \theta \bcT_{\Sigma^N_t}(\bx(t),\blambda(t))\Bigr) = \bcT_{\Sigma^N_t}(\bx(t),\blambda(t))
\end{split}
\end{equation*}
(the last line follows from the definition \eqref{eq:G_Def} of $\bcG$), 
whence $\bh'(t) = \blambda'(t) - \tilde{\blambda}'(t) = \bm{0}$, for all $t \in [0,T]$.
Therefore we conclude that, for all $t \in [0,T]$, we have $\bh(t) = \bm{c} $, for a certain $\bm{c} \in (F(U))^N$. Since $\bh(0) = \blambda(0) - \tilde{\blambda}(0) = \bm{0}$, we have that  $\bm{c} = \bm{0}$ for all $t \in [0,T]$, which concludes the proof.
\end{proof}
The existence of a strong solution to the $N$-particle problem \eqref{eq:DicsreteProblem} is therefore proved.


\subsection{An \emph{a priori} estimate on the moments of the solutions}\label{sec_aprioriestimateN}
In the proof of uniqueness we will employ an auxiliary result, consisting in an a priori estimate on the moments of the solutions to problem \eqref{eq:NDicsreteProblem}, which is of independent interest. 

\begin{prop}\label{prop:stima a priori}
Let $\bY$ be a strong solution to problem \eqref{eq:NDicsreteProblem}, and let the spatial initial data $\bX_0$ be a random variable in $L^p(\Omega,\mathscr{F},\prob)$ with $p \ge 2$.
Then the following estimate holds
\begin{equation}\label{eq:APrioriEstimate}
\E\biggl[\sup_{t\in[0,T]} \norm{\bY_t}_{(\real^d \times F(U))^N}^p\biggr] \le C\Bigl(1 + \E\bigl[\norm{\bX_0}_{(\real^d)^N}^p\bigr]\Bigr),
\end{equation}
where $C>0$ depends on $p,N,M_v\,,M_\sigma\,,T$.
\end{prop}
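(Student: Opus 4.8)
The plan is to reduce the estimate to its spatial component and then to close a Grönwall inequality starting from the integral form \eqref{eq:DicsreteProblem}. Since by definition a strong solution takes values in $(\real^d\times\cP(U))^N$, inequality \eqref{eq_doppiotriangolino} gives $\norm{\bLambda_t}_{(F(U))^N}\le N$ for all $t\in[0,T]$, whence $\norm{\bY_t}_{(\real^d\times F(U))^N}\le\norm{\bX_t}_{(\real^d)^N}+N$. It therefore suffices to prove $\E\bigl[\sup_{t\in[0,T]}\norm{\bX_t}_{(\real^d)^N}^p\bigr]\le C(1+\E[\norm{\bX_0}_{(\real^d)^N}^p])$ and then combine this with $(a+b)^p\le 2^{p-1}(a^p+b^p)$; note that this is why the constant in \eqref{eq:APrioriEstimate} depends only on $p,N,M_v,M_\sigma,T$ and not on $M_\cT$.

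Before manipulating expectations I would localise, since a strong solution is not a priori known to have finite $p$-th moments. For $R>0$ set $\tau_R\coloneqq\inf\{t\in[0,T]:\norm{\bX_t}_{(\real^d)^N}>R\}\wedge T$; continuity of $\bX$ makes $\tau_R$ an $(\sF_t)_t$-stopping time, forces $\sup_{u\in[0,t\wedge\tau_R]}\norm{\bX_u}_{(\real^d)^N}\le R$, and gives $\tau_R\uparrow T$ pointwise as $R\to\infty$. Then $\phi_R(t)\coloneqq\E\bigl[\sup_{u\in[0,t\wedge\tau_R]}\norm{\bX_u}_{(\real^d)^N}^p\bigr]$ is finite for each $R$, and it is this quantity that I would estimate.

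Evaluating \eqref{eq:DicsreteProblem} at $u\wedge\tau_R$ and using $(a+b+c)^p\le 3^{p-1}(a^p+b^p+c^p)$, the spatial process splits into the initial datum, the drift integral, and the stochastic integral. For the drift integral I would apply H\"older's inequality in time and then the sublinearity bound \eqref{eq:DiscreteEmpSub-v} (legitimate because $\Sigma^N_s$ is an empirical measure), producing a term controlled by $C_1\int_0^t(1+\phi_R(s))\,\de s$. For the stochastic integral I would first pass to the $N$ scalar blocks, bounding $\norm{\int_0^u\bsigma_{\Sigma^N_s}(\bX_s,\bLambda_s)\,\de\bB_s}_{(\real^d)^N}^p$ by $N^{p-1}\sum_{i=1}^N\norm{\int_0^u\sigma_{\Sigma^N_s}(X^i_s,\Lambda^i_s)\,\de B^i_s}_{\real^d}^p$, then apply the Burkholder--Davis--Gundy inequality \eqref{eq_BDG} to each block, and finally use \eqref{eq:DiscreteEmpSub-sigma}, producing a term controlled by $C_2\int_0^t(1+\phi_R(s))\,\de s$; throughout, the localisation guarantees $\ind_{\{s\le\tau_R\}}\norm{\bX_s}_{(\real^d)^N}^p\le\sup_{r\in[0,s\wedge\tau_R]}\norm{\bX_r}_{(\real^d)^N}^p$, so every time integral is dominated by $\phi_R$. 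Taking the supremum over $[0,t]$, then expectations, and using Tonelli to interchange expectation with the time integral, I would arrive at
\begin{equation*}
\phi_R(t)\le 3^{p-1}\E\bigl[\norm{\bX_0}_{(\real^d)^N}^p\bigr]+C\int_0^t\bigl(1+\phi_R(s)\bigr)\,\de s,\qquad t\in[0,T],
\end{equation*}
with $C$ depending only on $p,N,M_v,M_\sigma,T$. Grönwall's lemma then gives $\phi_R(T)\le C'\bigl(1+\E[\norm{\bX_0}_{(\real^d)^N}^p]\bigr)$ with $C'$ independent of $R$; letting $R\to\infty$ and invoking monotone convergence yields the desired spatial bound, and combining it with $\norm{\bLambda_t}_{(F(U))^N}\le N$ gives \eqref{eq:APrioriEstimate}.

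The main obstacle is precisely the one the localisation circumvents: since $\bY$ is merely assumed to be a strong solution, $\E[\sup_{t\in[0,T]}\norm{\bX_t}_{(\real^d)^N}^p]$ cannot be used in a Grönwall argument until it is known to be finite, which is why the truncated quantities $\phi_R$ are indispensable. The rest is routine: keeping track of the combinatorial constants coming from the $N$-fold sums that define the product norms and from the repeated use of $(\sum_{i=1}^n a_i)^p\le n^{p-1}\sum_{i=1}^n a_i^p$.
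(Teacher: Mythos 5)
Your proposal is correct and follows essentially the same route as the paper: localisation by the exit time $\tau_R$, the $3^{p-1}$ splitting with H\"older, the sublinearity bounds \eqref{eq:DiscreteEmpSub-v} and \eqref{eq:DiscreteEmpSub-sigma} together with a blockwise Burkholder--Davis--Gundy estimate, Gr\"onwall on the truncated quantity, and monotone convergence in $R$, finishing with the trivial bound on the mixed-strategy components. The only cosmetic point is that $\sup_{u\in[0,t\wedge\tau_R]}\norm{\bX_u}_{(\real^d)^N}$ is bounded by $\norm{\bX_0}_{(\real^d)^N}\vee R$ rather than by $R$ (the initial datum may already lie outside the ball), but this still yields finiteness of $\phi_R$ since $\bX_0\in L^p(\Omega,\sF,\prob)$.
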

\begin{proof}
Our strategy consists in proving estimate \eqref{eq:APrioriEstimate} for a suitable \emph{stopped} process, constructed from $\bY$, and then to extend it to $\bY$ through an approximation procedure. 
Notice that, by \eqref{eq_doppiotriangolino}, we only need to prove the finiteness of the $p$-th moments of the spatial components $\bX$.

Let us fix a constant $R > 0$ and let us consider, for all $\omega \in \Omega$, the set of times
\begin{equation*}
\mathscr{T}_R(\omega) \coloneqq \{t \in [0,T] \colon \norm{\bX_t(\omega)}_{(\real^d)^N} \ge R\},
\end{equation*}
through which we define the random time
\begin{equation*}
\tau_R(\omega) \coloneqq
\begin{dcases}
\inf\mathscr{T}_R(\omega) &\text{if $\mathscr{T}_R(\omega) \neq \emptyset$,} \\
T &\text{if $\mathscr{T}_R(\omega) = \emptyset$,}
\end{dcases}
\end{equation*}
representing the exit time of the process $\bX$ from the open ball of radius $R$ in $(\real^d)^N$. 
The continuity of the process $\bX$ guarantees that $\tau_R$ is a stopping time \cite[Proposition~3.7]{Stochastic-Calculus}; in particular, it is a finite stopping time taking values in the range $[0,T]$. 
Through the random time $\tau_R$ we can define the \emph{stopped} stochastic processes $\bX_R$ and $\bLambda_R$ given by, for all $t \in [0,T]$,
\begin{equation}
\bX^R_t \coloneqq \bX_{t \wedge \tau_R}, \qquad \bLambda^R_t \coloneqq \bLambda_{t \wedge \tau_R}\,.
\end{equation}
The continuity of $\bX$ and $\bLambda$ ensures that $\bX^R$ e $\bLambda^R$ are stochastic processes adapted to the filtration $(\sF_t)_{t\in[0,T]}$\,, see \cite[Proposition~3.6]{Stochastic-Calculus}]. 
We observe that the process $\bX^R$ coincides with $\bX$ on the interval $[0,\tau_R]$, whereas on $(\tau_r,T]$ it takes the constant value $\bX(\tau_R)$; therefore, for all $t \in [0,T]$, we have
\begin{equation*}
\begin{split}
\bX^R_t =&\, \bX_0 + \int_0^{t \wedge \tau_R} \bv_{\Sigma^N_s}(\bX_s,\bLambda_s)\,\de s + \int_0^{t \wedge \tau_R}\bsigma_{\Sigma^N_s}(\bX_s,\bLambda_s)\,\de\bB_s\\
=&\, \bX_0 + \int_0^t \bv_{\Sigma^N_s}(\bX_s,\bLambda_s) \ind_{\{s<\tau_R\}}\,\de s + \int_0^t\bsigma_{\Sigma^N_s}(\bX_s,\bLambda_s)\ind_{\{s<\tau_R\}}\, \de\bB_s \\
=&\, \bX_0 + \int_0^t \bv_{\Sigma^{N,R}_s}(\bX^R_s,\bLambda^R_s) \ind_{\{s<\tau_R\}}\,\de s + \int_0^t\bsigma_{\Sigma^{N,R}_s}(\bX^R_s,\bLambda^R_s)\ind_{\{s<\tau_R\}}\, \de\bB_s, 
\end{split}
\end{equation*}
where
\begin{equation*}
    \Sigma^{N,R}_t \coloneqq \frac{1}{N}\sum_{j=1}^N\delta_{(X^j_{t\wedge\tau_R},\Lambda^j_{t\wedge\tau_R})}
\end{equation*}
is the empirical measure associated with the stopped processes.
Now, if $t \in [0,T]$, 
we have
\begin{equation*}
\begin{split}
\E\biggl[\sup_{u\in[0,t]}\norm{\bX^R_u}_{(\real^d)^N}^p\biggr] \le&\, 3^{p-1}\biggl(\E\bigl[\norm{\bX_0}_{(\real^d)^N}^p\bigr] + \E\biggl[t^{p-1}\int_0^t \norm{\bv_{\Sigma^{N,R}_s}(\bX^R_s,\bLambda^R_s)}^p_{(\real^d)^N}\ind_{\{s<\tau_R\}}\,\de s\biggr] \\ &\,+\E\biggl[\sup_{u\in[0,t]} \norm*{\int_0^u\bsigma_{\Sigma^{N,R}_s}(\bX^R_s,\bLambda^R_s)\ind_{\{s<\tau_R\}}\,\de\bB_s}_{(\real^d)^N}^p\biggl]\biggr) \\
\eqqcolon&\, 3^{p-1}\bigl(\E\bigl[\norm{\bX_0}_{(\real^d)^N}^p\bigr] + \mathrm{I}_t + \mathrm{II}_t \bigr).
\end{split}
\end{equation*}
We now estimate the two terms $\mathrm{I}_t$ and $\mathrm{II}_t$ separately. 
By \eqref{eq:DiscreteEmpSub-v}, we have
\begin{equation*}
\begin{split}
\mathrm{I}_t &\le t^{p-1}\E\biggl[\int_0^t \bigl[M_v\bigl(1 + \norm{\bX^R_s}_{(\real^d)^N} + \norm{\bLambda^R_s}_{(F(U))^N}\bigr)\bigr]^p\ind_{\{s<\tau_R\}}\,\de s\biggr] \\
&\le T^{p-1}3^{p-1}M_v^p \int_0^t \bigl(1 + \E\bigl[\norm{\bX^R_s}_{(\real^d)^N}^p\bigr] + \E\bigl[\norm{\bLambda^R_s}_{(F(U))^N}^p\bigr]\bigr)\,\de s. 
\end{split}
\end{equation*}
Invoking \eqref{eq_BDG} and \eqref{eq:DiscreteEmpSub-sigma}, we have that 
\begin{equation*}
\begin{split}
\mathrm{II}_t &\le c_p N^{p-1} t^\frac{p-2}{2}\E\biggl[\int_0^t \norm{\bsigma_{\Sigma^{N,R}_s}(\bX^R_s,\bLambda^R_s)}_{\real^{dN\times mN}}^p \ind_{\{s<\tau_R\}}\,\de s\biggr] \\
&\le c_p N^{p-1} t^\frac{p-2}{2}\E\biggl[\int_0^t \bigl[M_\sigma\bigl(1 + \norm{\bX^R_s}_{(\real^d)^N} + \norm{\bLambda^R_s}_{(F(U))^N}\bigr)\bigr]^p\,\de s\biggr] \\
&\le c_p N^{p-1} T^\frac{p-2}{2} 3^{p-1}M_\sigma^p \int_0^t \bigl(1 + \E\bigl[\norm{\bX^R_s}_{(\real^d)^N}^p\bigr] + \E\bigl[\norm{\bLambda^R_s}_{(F(U))^N}^p\bigr]\bigr)\,\de s. 
\end{split}
\end{equation*}

Therefore, for all $t\in[0,T]$, recalling \eqref{eq_doppiotriangolino}, we have
\begin{equation*}
\begin{split}
\E\biggl[\sup_{u\in[0,t]}\norm{\bX^R_u}_{(\real^d)^N}^p\biggr] \le &\, 3^{p-1}\E\bigl[\norm{\bX_0}_{(\real^d)^N}^p\bigr] +3^{2(p-1)}\Bigl(T^{p-1}M_v^p + c_p N^{p-1}T^\frac{p-2}{2}M_\sigma^p\Bigr)\cdot \\
&\, \cdot\int_0^t \bigl(1 + N^p+ \E\bigl[\norm{\bX^R_s}_{(\real^d)^N}^p\bigr] \bigr)\,\de s \\
\leq&\, C_1\Bigl(1 + \E\bigl[\norm{\bX_0}_{(\real^d)^N}^p\bigr]\Bigr) + C_2\int_0^t \E\biggl[\sup_{u\in[0,s]}\norm{\bX^R_u}_{(\real^d)^N}^p\biggr]\,\de s,
\end{split}
\end{equation*}
where $C_1$ and $C_2$ are suitable positive constants depending on $p,N,M_v\,,M_\sigma\,,T$, but independent of the parameter $R$.

We can now apply Gr\"{o}nwall inequality, once we prove the boundedness of
$$[0,T]\ni t \longmapsto v(t) \coloneqq \E\biggl[\sup_{u\in[0,t]}\norm{\bX^R_u}_{(\real^d)^N}^p\biggr].$$
Since, $\prob$-almost surely,
\begin{equation*}
\norm{\bX^R_t}_{(\real^d)^N} \le \norm{\bX_0}_{(\real^d)^N} \vee R, \text{ for all } t \in [0,T],
\end{equation*}
we deduce that, for all $t\in[0,T]$,
\begin{equation*}
v(t) \le \E\bigl[\norm{\bX_0}_{(\real^d)^N}^p\bigr] \vee R^p \le \E\bigl[\norm{\bX_0}_{(\real^d)^N}^p\bigr] + R^p
\end{equation*}
thus implying the boundedness of $v$ over $[0,T]$, as desired. 
Consequently, we can conclude that, for all $t \in [0,T]$,
\begin{equation*}
v(t) \le C_1\bigl(1 + \E\bigl[\norm{\bX_0}_{(\real^d)^N}^p\bigr]\bigr)e^{\int_0^t C_2\,\de s}.
\end{equation*}
By substituting $t = T$ in the last inequality we obtain
\begin{equation*}
v(T) \le C_1\bigl(1 + \E\bigl[\norm{\bX_0}_{(\real^d)^N}^p\bigr]\bigr)e^{C_2 T},
\end{equation*}
that is
\begin{equation}\label{eq:intermediateAPrioriR}
\E\bigg[\sup_{t\in[0,T]}\norm{\bX^R_t}_{(\real^d)^N}^p\bigg] \le C_0 \Bigl(1 + \E\bigl[\norm{\bX_0}_{(\real^d)^N}^p\bigr]\Bigr),
\end{equation}
where $C_0 \coloneqq C_1 e^{C_2 T}$ is a positive constant depending on $p,N,M_v\,,M_\sigma$\,, and $T$, but is independent of $R$. Therefore the estimate is proven for the stopped process $\bX_R$; we will now extend it to the process $\bX$ by letting $R \longrightarrow +\infty$ in \eqref{eq:intermediateAPrioriR}. 
We observe that for almost all $\omega \in \Omega$ we have the inequality
\begin{equation*}
\sup_{t\in[0,T]}\norm{\bX^R_t(\omega)}_{(\real^d)^N}^p = \sup_{t\in[0,\tau_R(\omega)]}\norm{\bX_t(\omega)}_{(\real^d)^N}^p, \text{ for all } R \in [0,+\infty),
\end{equation*}
and that the function
$
R \longmapsto \sup_{t\in[0,\tau_R(\omega)]}\norm{\bX_t(\omega)}_{(\real^d)^N}^p
$ 
is monotonically increasing. Moreover, we have that
\begin{equation*}
\lim_{R \to +\infty}\Bigl(\sup_{t\in[0,T]}\norm{\bX^R_t(\omega)}_{(\real^d)^N}^p\Bigr) 
=
\lim_{R \to +\infty}\Bigl(\sup_{t\in[0,\tau_R(\omega)]}\norm{\bX_t(\omega)}_{(\real^d)^N}^p\Bigl) 
=
\sup_{t\in[0,T]}\norm{\bX_t(\omega)}_{(\real^d)^N}^p\,.
\end{equation*}
We can therefore apply Beppo Levi's Theorem on monotone convergence to inequality \eqref{eq:intermediateAPrioriR} to obtain, as desired,
\begin{equation*}
\E\biggl[\sup_{t\in[0,T]}\norm{\bX_t}_{(\real^d)^N}^p\biggr] \le C_0 \Bigl(1 + \E\bigl[\norm{\bX_0}_{(\real^d)^N}^p\bigr]\Bigr).
\end{equation*}
We now conclude the proof by obtaining inequality \eqref{eq:APrioriEstimate}. We have
\begin{equation*}
\begin{split}
\E\bigg[\sup_{t\in[0,T]}\norm{\bY_t}_{(\real^d \times F(U))^N}^p\bigg] \le&\, 
2^{p-1}\biggl(\E\bigg[\sup_{t\in[0,T]}\norm{\bX_t}_{(\real^d)^N}^p\bigg] + \E\bigg[\sup_{t\in[0,T]}\norm{\bLambda_t}_{(F(U))^N}^p\bigg]\biggr) \\
\le&\, 2^{p-1} \Bigl(C_0+N^p + C_0\E\bigl[\norm{\bX_0}_{(\real^d)^N}^p\bigr]\!\Bigr)  \le C \Bigl(1 + \E\bigl[\norm{\bX_0}_{(\real^d)^N}^p\bigr]\!\Bigr),
\end{split}
\end{equation*}
where $C>0$ is a constant depending on $p,N,M_v\,,M_\sigma\,,$ and $T$. The proof is concluded.
\end{proof}

\subsection{Pathwise uniqueness of the solution}\label{sec_pathwiseuniquenessN}
We dedicate this subsection to the proof of the pathwise uniqueness for the solutions to the $N$-particle problem \eqref{eq:DicsreteProblem}. 

Let $\bY_1$ and $\bY_2$ be two strong solutions to problem \eqref{eq:DicsreteProblem} defined on the same filtered probability space $(\Omega,\mathscr{F},(\mathscr{F}_t)_{t \in[0,T]}, \prob)$ and associated with the same collection of Brownian motions $(B^1,\dots,B^N)\eqqcolon \bB$. 
According to Definition~\ref{def:StrongSolutionAndUniqueness}, $\bY_1$ and $\bY_2$ are two continuous, $(\real^d \times \cP(U))^N$-valued stochastic processes satisfying \eqref{eq:DicsreteProblem} $\prob$-almost surely, with $\Sigma_s^N$ replaced by  
\begin{equation*}
    \Sigma^N_{i,s} = \frac{1}{N}\sum_{j=1}^N\delta_{(X^j_{i,s},\Lambda^j_{i,s})}, \qquad\text{for $i = 1,2$.}
\end{equation*}
For $t \in [0,T]$, we can estimate
\begin{equation*}
\begin{split}
&\,\E\biggl[\sup_{u\in[0,t]}\norm{\bX_{1,u}- \bX_{2,u}}_{(\real^d)^N}^2\biggr] \\
\le&\, 2t\, \E\biggl[\int_0^t \norm{\bv_{\Sigma^N_{1,s}}(\bX_{1,s},\bLambda_{1,s}) - \bv_{\Sigma^N_{2,s}}(\bX_{2,s},\bLambda_{2,s})}_{(\real^d)^N}^2\,\de s\biggr] \\
&\,+ 8N\,\E\biggl[\int_0^t \norm{\bsigma_{\Sigma^N_{1,s}}(\bX_{1,s},\bLambda_{1,s}) - \bsigma_{\Sigma^N_{2,s}}(\bX_{2,s},\bLambda_{2,s})}_{\real^{dN\times mN}}^2\,\de s\biggr] \\
\le&\, 2T\E\biggl[\int_0^t\bigl[2L_v\bigl(\norm{\bX_{1,s} - \bX_{2,s}}_{(\real^d)^N} + \norm{\bLambda_{1,s} - \bLambda_{2,s}}_{(F(U))^N}\bigr)\bigr]^2\,\de s\biggr] \\
&\,+ 8N\,\E\biggl[\int_0^t\bigl[2L_\sigma\bigl(\norm{\bX_{1,s} - \bX_{2,s}}_{(\real^d)^N} + \norm{\bLambda_{1,s} - \bLambda_{2,s}}_{(F(U))^N}\bigr)\bigr]^2\,\de s\biggr] \\
\le&\, 16\bigl(T L_v^2 + 4NL_\sigma^2\bigr)\,\E\biggl[\int_0^t \bigl(\norm{\bX_{1,s} - \bX_{2,s}}_{(\real^d)^N}^2 + \norm{\bLambda_{1,s} - \bLambda_{2,s}}_{(F(U))^N}^2\bigr)\,\de s\biggr],
\end{split}
\end{equation*}
where we have applied H\"{o}lder inequality, \eqref{eq_BDG}, \eqref{eq:DiscreteEmpLip-v}, and \eqref{eq:DiscreteEmpLip-sigma}. Analogously, by H\"{o}lder inequality and \eqref{eq:DiscreteEmpLip-T}, we have
\begin{equation*}
\begin{split}
\E\biggl[\sup_{u\in[0,t]} \norm{\bLambda_{1,u} -&\, \bLambda_{2,u}}_{(F(U))^N}^2\biggr]\le t\,\E\biggl[\int_0^t \norm{\bcT_{\Sigma^N_{1,s}}(\bX_{1,s},\bLambda_{1,s}) - \bcT_{\Sigma^N_{2,s}}(\bX_{2,s},\bLambda_{2,s})}_{(\real^d)^N}^2\,\de s \biggr] \\
\le&\, 8T L_\cT^2\, \E\biggl[\int_0^t \bigl(\norm{\bX_{1,s} - \bX_{2,s}}_{(\real^d)^N}^2 + \norm{\bLambda_{1,s} - \bLambda_{2,s}}_{(F(U))^N}^2\bigr)\,\de s\biggr].
\end{split}
\end{equation*}

Therefore, combining the two previous estimates, we conclude that, for all $t \in [0,T]$,
\begin{equation*}
\begin{split}
&\,\E\biggl[\sup_{u\in[0,t]} \norm{\bY_{1,u} - \bY_{2,u}}_{(\real^d \times F(U))^N}^2\biggl] \\
\le&\, 2\E\biggl[\sup_{u\in[0,t]} \norm{\bX_{1,u} - \bX_{2,u}}_{(\real^d)^N}^2\biggr] + 2\E\biggl[\sup_{u\in[0,t]} \norm{\bLambda_{1,u} - \bLambda_{2,u}}_{(F(U))^N}^2\biggr] \\
\le&\, 16\bigl(2T L_v^2 + 8NL_\sigma^2 + TL_\cT^2\bigr)\E\biggl[\int_0^t \bigl(\norm{\bX_{1,s} - \bX_{2,s}}_{(\real^d)^N}^2 + \norm{\bLambda_{1,s} - \bLambda_{2,s}}_{(F(U))^N}^2\bigr)\,\de s\biggr] \\
\le&\, C_0\int_0^t \E\biggl[\sup_{u\in[0,s]}\norm{\bY_{1,u} - \bY_{2,u}}_{(\real^d \times F(U))^N}^2\biggr]\,\de s,
\end{split}
\end{equation*}
where $C_0 \coloneqq 32\bigl(2T L_v^2 + 8NL_\sigma^2 + TL_\cT^2\bigr)$. 
Now, the \emph{a priori} estimate obtained in Proposition~\ref{prop:stima a priori} allows us to prove the boundedness of the function
\begin{equation*}
[0,T]\ni t\longmapsto v(t) \coloneqq \E\biggl[\sup_{u\in[0,t]} \norm{\bY_{1,u} - \bY_{2,u}}_{(\real^d \times F(U))^N}^2\biggr].
\end{equation*}
Indeed, by~\eqref{eq:APrioriEstimate},
\begin{equation*}
\begin{split}
v(t) \leq 
2\sum_{i=1}^2 \E\biggl[\sup_{u\in[0,t]} \norm{\bY_{i,u}}_{(\real^d \times F(U))^N}^2\biggr] 
\le
4C\Bigl(1 + \E\bigl[\norm{\bX_0}_{(\real^d)^N}^2\bigr]\Bigr),
\end{split}
\end{equation*}
which is bounded by virtue of the square integrability of $\bX_0$.
Therefore we can apply Gr\"{o}nwall inequality obtaining that
$
v(t) = 0$, 
for all 
$t \in [0,T]$.
In particular, for $t = T$, we 
deduce that
\begin{equation*}
\prob\biggl(\sup_{t\in[0,T]} \norm{\bY_{1,t} - \bY_{2,t}}_{(\real^d \times F(U))^N}^2 = 0\biggr) = 1,
\end{equation*}
which is \eqref{eq_sarapathwiseuniqueness}.
Pathwise uniqueness is proved. \qed


\section{The mean-field model}\label{sec:MeanField}


In the present section we will address rigorously the problem of well-posedness of the mean-field model \eqref{eq:MeanField}. Hence, we begin with the following

\begin{defn}
Let 
$\barB \colon \Omega \longrightarrow \cC([0,T],\real^m)$ be an
$m$-dimensional standard Brownian motion defined on a filtered probability space $\bigl(\Omega, \sF, (\sF_t)_{t\in[0,T]},\prob\bigr)$, let $\barX_0 \colon\Omega\longrightarrow\real^d$ be an $\sF_0$-measurable random variable, and let $\barLambda_0\colon\Omega\longrightarrow\mathcal{P}(U)$ be an $\sF_0$-measurable random variable. 
We define \emph{strong solution} to problem \eqref{eq:MeanField} a continuous, $(\sF_t)_t$-adapted stochastic process $\barY = (\barX,\barLambda) \colon \Omega \longrightarrow \cC\bigl([0,T],\real^d\times\cP(U)\bigr)$ satisfying, $\prob$-almost surely, for all $t \in [0,T]$,
\begin{equation}
\begin{dcases}
\barr{X}_t = \barr{X}_0 + \int_0^t v_{\Sigma_s}(\barr{X}_s,\barr{\Lambda}_s)\,\de s + \int_0^t \sigma_{\Sigma_s}(\barr{X}_s,\barr{\Lambda}_s)\,\de \barr{B}_s\,, \\
\barr{\Lambda}_t = \barr{\Lambda}_0 +\int_0^t \cT_{\Sigma_s}(\barr{X}_s,\barr{\Lambda}_s)\,\de s, \\
\Law(\barY_t) = \Sigma_t \in \cP_1(\real^d \times\cP(U)).
\end{dcases}
\end{equation}
We say that such a solution is \emph{pathwise unique} if, given two strong solutions $\barY_1$, $\barY_2$ of \eqref{eq:MeanField} (with the same Brownian motion and initial datum), we have
\begin{equation}
    \prob\Bigl(\barr{Y}_{1,t} = \barr{Y}_{2,t}\,, \textup{ for every } t\in[0,T]\Bigr) = 1.
\end{equation}
\label{def:StrongSolutionMF}
\end{defn}

The main result of this section is the following well-posedness theorem.

\begin{thm}[well-posedness of the mean-field system~\eqref{eq:MeanField}] \label{thm:WellPosednessMeanField}
Let us assume that \eqref{eq_fieldsprop} and \eqref{eq:T_prop_geometry} are satisfied and that the spatial initial datum $\barX_0$ belongs to $L^2(\Omega,\sF,\prob)$. Then problem \eqref{eq:MeanField} admits a strong solution $\barY$, 
which is pathwise unique in the class of strong solutions such that the map $[0,T] \ni t \longmapsto \Sigma_t \in \cP_1(\real^d\times\cP(U))$ 
satisfies the condition
\begin{equation}\label{eq:boundedness of moment of solutions}
\sup_{t \in [0,T]} \int_{\real^d \times \cP(U)} \norm{x}_{\real^d}\,\de\Sigma_t(x,\lambda) < +\infty.
\end{equation}
Moreover, the integrability of the spatial initial datum is inherited, uniformly in time, by the solution, namely, if  $\barX_0 \in L^p(\Omega,\mathscr{F},\prob)$ with $p \ge 2$, the process $\barY$ satisfies 
\begin{equation}\label{eq:APrioriEstimateMeanFieldSol}
    \E\biggl[\sup_{t\in[0,T]} \norm{\barY_t}_{\real^d \times F(U)}^p\biggr] \le C^\Sigma\Bigl(1+\E\bigl[\norm{\barX_0}_{\real^d}^p\bigr]\Bigr), 
\end{equation}
for some constant $C^\Sigma>0$ depending on $p$, $T$, $M_v^\Sigma$\,, and $M_\sigma^\Sigma$ (the last two constants are introduced in the statement of Proposition~\ref{prop:MeanFieldLipSub-v-sigma-T}, see estimates \eqref{eq:MeanFieldSub} below, with $t\longmapsto\Psi_t$ replaced by $t\longmapsto\Sigma_t=\Law(\barY_t)$).
\end{thm}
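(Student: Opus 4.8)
The plan is to adapt Sznitman's fixed-point strategy to the state space $\real^d\times\cP(U)$. First I would decouple the nonlinearity: given a ``frozen'' flow of measures $\Psi\in\cC([0,T],\cP_1(\real^d\times\cP(U)))$, consider the auxiliary SDE
\begin{equation*}
\begin{dcases}
\de\barX_t = v_{\Psi_t}(\barX_t,\barLambda_t)\,\de t + \sigma_{\Psi_t}(\barX_t,\barLambda_t)\,\de\barB_t\,,\\
\de\barLambda_t = \cT_{\Psi_t}(\barX_t,\barLambda_t)\,\de t\,,
\end{dcases}
\end{equation*}
with initial datum $(\barX_0,\barLambda_0)$ (this is \eqref{eq:AuxiliaryMeanField}). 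Since the maps $(x,\lambda)\mapsto v_{\Psi_t}(x,\lambda)$, $(x,\lambda)\mapsto\sigma_{\Psi_t}(x,\lambda)$, $(x,\lambda)\mapsto\cT_{\Psi_t}(x,\lambda)$ inherit global Lipschitz continuity and sublinearity from \eqref{eq_fieldsprop} and \eqref{eq:T_prop_geometry} (this is the content of Proposition~\ref{prop:MeanFieldLipSub-v-sigma-T}, the $N=1$, frozen-flow analogue of Propositions~\ref{prop:DiscreteEmpLipSub-v-sigma-T}--\ref{prop:DiscreteEmpLipSub-G}), the machinery of Section~\ref{sec:NParticleModel} with $N=1$ applies: Picard iterations combined with the Br\'ezis reformulation of the $\barLambda$-equation (with the same $\theta$ as in \eqref{eq:T_prop_geometry}) produce a pathwise unique strong solution $\barY^\Psi=(\barX^\Psi,\barLambda^\Psi)$, whose trajectories remain in $\real^d\times\cP(U)$ and which satisfies $\E[\sup_{t\le T}\norm{\barY^\Psi_t}_{\real^d\times F(U)}^p]\le C^\Psi(1+\E[\norm{\barX_0}_{\real^d}^p])$. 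In particular $\Law(\barY^\Psi_t)\in\cP_1(\real^d\times\cP(U))$ for every $t$, and continuity of trajectories together with the uniform moment bound (hence uniform integrability) makes $t\mapsto\Law(\barY^\Psi_t)$ continuous in $W_1$. This defines the map $\cS\colon\Psi\mapsto(t\mapsto\Law(\barY^\Psi_t))$ of Proposition~\ref{prop:DefS}, and by construction a flow $\Sigma$ is a fixed point of $\cS$ if and only if the associated $\barY^\Sigma$ is a strong solution of \eqref{eq:MeanField}.

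The heart of the argument is then the contraction estimate for $\cS$. Given two frozen flows $\Psi^1,\Psi^2$, I would realise $\barY^{\Psi^1}$ and $\barY^{\Psi^2}$ on the same probability space driven by the \emph{same} Brownian motion and initial datum (synchronous coupling); then $W_1(\cS(\Psi^1)_t,\cS(\Psi^2)_t)\le\E[\norm{\barY^{\Psi^1}_t-\barY^{\Psi^2}_t}_{\real^d\times F(U)}]$, since the joint law is an admissible transport plan. Estimating the right-hand side via the integral form of the auxiliary SDE, H\"older's and Burkholder--Davis--Gundy inequalities, the Br\'ezis representation of $\barLambda$, and the Lipschitz bounds of Proposition~\ref{prop:MeanFieldLipSub-v-sigma-T}, one obtains
\begin{equation*}
\E\Bigl[\sup_{u\le t}\norm{\barY^{\Psi^1}_u-\barY^{\Psi^2}_u}_{\real^d\times F(U)}^2\Bigr]\le C\int_0^t\E\Bigl[\sup_{u\le s}\norm{\barY^{\Psi^1}_u-\barY^{\Psi^2}_u}_{\real^d\times F(U)}^2\Bigr]\de s + C\int_0^t W_1(\Psi^1_s,\Psi^2_s)^2\,\de s\,,
\end{equation*}
whence, by Gr\"onwall's inequality, $\sup_{s\le t}W_1(\cS(\Psi^1)_s,\cS(\Psi^2)_s)^2\le C'\int_0^t\sup_{u\le s}W_1(\Psi^1_u,\Psi^2_u)^2\,\de s$. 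Iterating this inequality along the orbit of $\cS$ yields the factorial bound $\sup_{s\le T}W_1(\cS^n(\Psi^1)_s,\cS^n(\Psi^2)_s)^2\le\tfrac{(C'T)^n}{n!}\sup_{s\le T}W_1(\Psi^1_s,\Psi^2_s)^2$, so a sufficiently high power $\cS^n$ is a contraction on the complete metric space $(\cC([0,T],\cP_1(\real^d\times\cP(U))),\sup_t W_1)$; the Banach--Caccioppoli theorem then furnishes a unique fixed point $\Sigma$, and $\barY^\Sigma$ is the desired strong solution.

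For pathwise uniqueness in the stated class, if $\barY_1,\barY_2$ are strong solutions whose flows $\Sigma^i=\Law(\barY_i)$ both satisfy \eqref{eq:boundedness of moment of solutions}, then each $\barY_i$ solves the auxiliary SDE with frozen flow $\Sigma^i$, so pathwise uniqueness for the auxiliary problem gives $\barY_i=\barY^{\Sigma^i}$ almost surely. Condition \eqref{eq:boundedness of moment of solutions} guarantees $\sup_{t\le T}W_1(\Sigma^1_t,\Sigma^2_t)<+\infty$, so the Gr\"onwall argument above applied with $\Psi^1=\Sigma^1$, $\Psi^2=\Sigma^2$ (now self-referentially, since $\cS(\Sigma^i)=\Sigma^i$) forces $W_1(\Sigma^1_t,\Sigma^2_t)\equiv0$ on $[0,T]$, hence $\barY^{\Sigma^1}=\barY^{\Sigma^2}$ almost surely. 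Finally, the a priori estimate \eqref{eq:APrioriEstimateMeanFieldSol} is obtained exactly as in Proposition~\ref{prop:stima a priori}, applied to $\barY$ with frozen flow $\Sigma=\Law(\barY)$: a stopping-time truncation of $\barX$, the sublinearity bounds \eqref{eq:MeanFieldSub}, H\"older's and Burkholder--Davis--Gundy inequalities, Gr\"onwall's inequality, and the monotone convergence theorem as $R\to\infty$, using $\norm{\barLambda_t}_{F(U)}\le1$ to dispose of the mixed-strategy component.

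I expect the main obstacle to be the fixed-point step: checking that $\cS$ is genuinely a self-map of $\cC([0,T],\cP_1(\real^d\times\cP(U)))$ --- that is, $W_1$-continuity of $t\mapsto\Law(\barY^\Psi_t)$ and finiteness of its first moment, both of which hinge on the moment bound for the auxiliary solution --- and then organising the iteration so that the Banach--Caccioppoli theorem in the ``some power is a contraction'' form applies on a complete space; the rest is essentially a transcription of the $N$-particle estimates of Section~\ref{sec:NParticleModel} with $N=1$ and the empirical measure replaced by the prescribed deterministic flow.
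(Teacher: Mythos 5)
Your proposal is correct and follows the paper's overall Sznitman strategy: freeze the measure argument, solve the auxiliary problem \eqref{eq:AuxiliaryMeanField} by the Picard/Br\'ezis scheme of Section~\ref{sec:NParticleModel}, recast the self-consistency condition \eqref{eq:MeanFieldSelfReferential} as a fixed-point problem for $\cS$, derive a factorial contraction estimate for the iterates, and invoke the ``some power is a contraction'' form of the Banach--Caccioppoli theorem; the \emph{a priori} estimate and the uniqueness class are then treated exactly as in Proposition~\ref{prop:a priori mean field} and Section~\ref{sec:MeanFieldConclusion}. The one genuine difference is the ambient space for the fixed point. You contract on continuous curves of marginal laws, i.e.\ on $\bigl(\cC([0,T],\cP_1(\real^d\times\cP(U))),\sup_t W_1\bigr)$, whereas the paper fixes a measure $\Psi\in\cP_2\bigl(\cC([0,T],\real^d\times\cP(U))\bigr)$ on path space and contracts with respect to the truncated distances $W_{2,t}$ of Lemma~\ref{lemma:4.20}. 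Your choice spares you the step, needed in Lemma~\ref{lemma:4.20}, of dominating the marginal distance $W_1(\Psi^1_s,\Psi^2_s)$ by the path-space distance $W_{2,s'}(\Psi^1,\Psi^2)$ via an optimal plan and Jensen's inequality; the price, which you correctly identify, is having to verify separately that $\cS$ is a self-map, i.e.\ that $t\longmapsto\Law(\barY^\Psi_t)$ is $W_1$-continuous with uniformly bounded first moment, which indeed reduces to the moment bound for the auxiliary solution together with continuity of trajectories and uniform integrability. The paper's path-space formulation pays off later: the fixed point $\Sigma=\Law(\barY)\in\cP_2\bigl(\cC([0,T],\real^d\times\cP(U))\bigr)$ is precisely the object needed in Section~\ref{sec:propagation_of_chaos}, where $W_2$ on path space and the empirical measure $\barSigma^N$ of the coupled trajectories enter Propositions~\ref{prop:5.4} and~\ref{prop:Panaretos}; with your formulation one would have to upgrade the fixed point to a path-space law afterwards.
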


\begin{proof}
The proof of the theorem is articulated in various steps, which are addressed in the rest of this section.
As a preliminary result, in Proposition~\ref{prop:MeanFieldLipSub-v-sigma-T} below, we prove structural properties of the fields $v$, $\sigma$, and $\cT$ in the right-hand side of \eqref{eq:MeanFieldProblem} for a given curve of measures $t\longmapsto\Psi_t$\,; moreover, in Proposition~\ref{prop:a priori mean field}, we prove the \emph{a priori} estimate~\eqref{eq:APrioriEstimateMeanField}, of which estimate~\eqref{eq:APrioriEstimateMeanFieldSol} will be a particular instance satisfied by the solutions to~\eqref{eq:MeanField} which will be constructed.

In Section~\ref{sec:MeanFieldProofOutline}, we outline the main steps of the construction of the solution to~\eqref{eq:MeanField}, which relies on solving the auxiliary SDE presented in~\eqref{eq:AuxiliaryMeanField}.
In Section~\ref{sec:studyauxproblem}, we deal with the well-posedness of this auxiliary problem, which will be instrumental in the successive fixed-point argument needed to prove the existence of a solution to~\eqref{eq:MeanField}, see Section~\ref{sec:MeanFieldFixedPoint}.
Finally, the uniqueness argument is discussed in Section~\ref{sec:MeanFieldConclusion}.
\end{proof}

\begin{prop}\label{prop:MeanFieldLipSub-v-sigma-T} Let us consider a curve of measures $[0,T] \ni t \longmapsto \Psi_t \in \cP_1(\real^d \times \cP(U))$ such that
\begin{equation}\label{eq:boundedness of moment}
    \sup_{t \in [0,T]} \int_{\real^d \times \cP(U)} \norm{x}_{\real^d}\,\de\Psi_t(x,\lambda) < +\infty.
\end{equation} Then, for all $(t,x_1,\lambda_1), (t,x_2,\lambda_2) \in [0,T]\times\real^d\times\cP(U)$, the following estimates hold
\begin{subequations}\label{eq:MeanFieldLip}
\begin{align}
\norm{v_{\Psi_t}(x_1,\lambda_1) - v_{\Psi_t}(x_2,\lambda_2)}_{\real^d} &\le L_v \bigl(\norm{x_1 - x_2}_{\real^d} + \norm{\lambda_1 - \lambda_2}_{F(U)}\bigr), \label{eq:MeanFieldLip-v} \\
\norm{\sigma_{\Psi_t}(x_1,\lambda_1) - \sigma_{\Psi_t}(x_2,\lambda_2)}_{\real^{d\times m}} &\le L_\sigma \bigl(\norm{x_1 - x_2}_{\real^d} + \norm{\lambda_1 - \lambda_2}_{F(U)}\bigr), \label{eq:MeanFieldLip-sigma}\\
\norm{\cT_{\Psi_t}(x_1,\lambda_1) - \cT_{\Psi_t}(x_2,\lambda_2)}_{F(U)} &\le L_\cT \bigl(\norm{x_1 - x_2}_{\real^d} + \norm{\lambda_1 - \lambda_2}_{F(U)}\bigr), \label{eq:MeanFieldLip-T}
\end{align}
\end{subequations}
where the constants $L_v$\,, $L_\sigma$\,, and $L_\cT$ are those appearing in \eqref{eq_fieldsprop}. 
Furthermore, there exist constants $M_v^\Psi$\,, $M_\sigma^\Psi$\,, and $M_\cT^\Psi > 0$, depending on the curve $\Psi$, such that, for all $(t,x,\lambda) \in [0,T]\times\real^d\times\cP(U)$, the following sublinearity estimates hold 
\begin{subequations}\label{eq:MeanFieldSub}
\begin{align}
\norm{v_{\Psi_t}(x,\lambda)}_{\real^d} &\le M_v^\Psi \bigl(1 + \norm{x}_{\real^d}\bigr),\label{eq:MeanFieldSub-v} \\
\norm{\sigma_{\Psi_t}(x,\lambda)}_{\real^{d\times m}} &\le M_\sigma^\Psi\bigl(1 + \norm{x}_{\real^d}\bigr), \label{eq:MeanFieldSub-sigma} \\
\norm{\cT_{\Psi_t}(x,\lambda)}_{F(U)} &\le M_\cT^\Psi \bigl(1 + \norm{x}_{\real^d}\bigr). \label{eq:MeanFieldSub-T}
\end{align}
\end{subequations}
\end{prop}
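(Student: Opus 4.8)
The plan is to read off both groups of inequalities directly from the structural hypotheses \eqref{eq_fieldsprop}; the only point requiring any care is the uniform-in-time control of the Wasserstein term, which is exactly the role played by hypothesis \eqref{eq:boundedness of moment}.

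First, the Lipschitz estimates \eqref{eq:MeanFieldLip} are immediate: for fixed $t \in [0,T]$, I apply \eqref{eq:v_prop}, \eqref{eq:sigma_prop}, and \eqref{eq:T_prop} with the choice $\Sigma_1 = \Sigma_2 = \Psi_t$, so that the term $W_1(\Sigma_1,\Sigma_2)$ vanishes and \eqref{eq:MeanFieldLip-v}, \eqref{eq:MeanFieldLip-sigma}, \eqref{eq:MeanFieldLip-T} follow with the same constants $L_v$, $L_\sigma$, $L_\cT$.

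For the sublinearity estimates \eqref{eq:MeanFieldSub}, I fix once and for all a reference point $y_0 = (x_0,\lambda_0) \in \real^d \times \cP(U)$ and use as reference measure the Dirac mass $\delta_{y_0}$, which trivially lies in $\cP_1(\real^d \times \cP(U))$. Applying the triangle inequality together with \eqref{eq:v_prop} gives, for every $(t,x,\lambda) \in [0,T] \times \real^d \times \cP(U)$,
\begin{equation*}
\norm{v_{\Psi_t}(x,\lambda)}_{\real^d} \le \norm{v_{\delta_{y_0}}(x_0,\lambda_0)}_{\real^d} + L_v\bigl(\norm{x - x_0}_{\real^d} + \norm{\lambda - \lambda_0}_{F(U)} + W_1(\Psi_t,\delta_{y_0})\bigr),
\end{equation*}
and the analogous inequalities for $\sigma$ and $\cT$ via \eqref{eq:sigma_prop} and \eqref{eq:T_prop}. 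It remains to bound the three summands in the bracket uniformly in $t$ and at most linearly in $\norm{x}_{\real^d}$. The term $\norm{\lambda - \lambda_0}_{F(U)}$ is at most $2$, since $\norm{\lambda}_{F(U)} \le 1$ and $\norm{\lambda_0}_{F(U)} \le 1$ by \eqref{eq_triangolino}; the term $\norm{x - x_0}_{\real^d}$ is at most $\norm{x}_{\real^d} + \norm{x_0}_{\real^d}$. Finally, since $\delta_{y_0}$ is a Dirac mass, its only transport plan against $\Psi_t$ is the product measure, whence
\begin{equation*}
W_1(\Psi_t,\delta_{y_0}) = \int_{\real^d \times \cP(U)} \bigl(\norm{x - x_0}_{\real^d} + \norm{\lambda - \lambda_0}_{F(U)}\bigr)\,\de\Psi_t(x,\lambda) \le \norm{x_0}_{\real^d} + 2 + \sup_{s \in [0,T]} \int_{\real^d \times \cP(U)} \norm{x}_{\real^d}\,\de\Psi_s(x,\lambda),
\end{equation*}
and the last supremum is finite by hypothesis \eqref{eq:boundedness of moment}. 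Collecting these bounds yields \eqref{eq:MeanFieldSub-v} with, for instance,
\begin{equation*}
M_v^\Psi \coloneqq \max\Bigl\{L_v\,,\ \norm{v_{\delta_{y_0}}(x_0,\lambda_0)}_{\real^d} + L_v\Bigl(2\norm{x_0}_{\real^d} + 4 + \sup_{s\in[0,T]}\int_{\real^d\times\cP(U)} \norm{x}_{\real^d}\,\de\Psi_s(x,\lambda)\Bigr)\Bigr\},
\end{equation*}
and with $M_\sigma^\Psi$ and $M_\cT^\Psi$ obtained in the same way upon replacing $v$, $L_v$ by $\sigma$, $L_\sigma$ and by $\cT$, $L_\cT$, respectively. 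In particular, all three constants depend on the curve $\Psi$ only through the finite quantity $\sup_{t\in[0,T]}\int_{\real^d\times\cP(U)} \norm{x}_{\real^d}\,\de\Psi_t(x,\lambda)$. There is no genuine obstacle here: the whole content of the statement is that the moment bound \eqref{eq:boundedness of moment} is precisely what upgrades the uniform Lipschitz estimate into a uniform sublinear one, by keeping $W_1(\Psi_t,\delta_{y_0})$ bounded over the entire interval $[0,T]$.
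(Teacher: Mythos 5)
Your proof is correct and takes essentially the same approach as the paper: the Lipschitz estimates \eqref{eq:MeanFieldLip} follow from \eqref{eq_fieldsprop} with $\Sigma_1=\Sigma_2=\Psi_t$, and the sublinearity estimates come from the triangle inequality together with the moment bound \eqref{eq:boundedness of moment}, which keeps the $W_1$ term uniformly bounded in $t$. The only (harmless) difference is that you compare directly against the Dirac reference $\delta_{y_0}$, whereas the paper compares against $v_{\Psi_{t_0}}(x_0,\lambda_0)$ and then bridges $W_1(\Psi_t,\Psi_{t_0})$ to $W_1(\Psi_t,\delta_{y_0})$ by one extra triangle inequality; your variant is marginally more streamlined but mathematically equivalent.
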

\begin{proof} Inequalities \eqref{eq:MeanFieldLip} immediately follow from the structural assumptions \eqref{eq_fieldsprop}. 
We shall only prove the first of the estimates \eqref{eq:MeanFieldSub-v}, as the other two follow analogously. 
We fix an arbitrary element $(t_0,x_0,\lambda_0) \in [0,T]\times\real^d\times\cP(U)$, and by making use of inequalities \eqref{eq:v_prop} and \eqref{eq_triangolino} we obtain that, for all $(t,x,\lambda) \in [0,T]\times\real^d\times\cP(U)$,
\begin{equation}\label{eq:intermediate_sub_proof}
\begin{split}
&\,\norm{v_{\Psi_t}(x,\lambda)}_{\real^d} \le \norm{
v_{\Psi_{t_0}}(x_0,\lambda_0)}_{\real^d} + \norm{v_{\Psi_t}(x,\lambda) - 
v_{\Psi_{t_0}}(x_0,\lambda_0)}_{\real^d} \\
\le&\,  \norm{v_{\Psi_{t_0}}(x_0,\lambda_0)}_{\real^d} + L_v\bigl(\norm{x - x_0}_{\real^d} + \norm{\lambda - \lambda_0}_{F(U)} + W_1(\Psi_t,\Psi_{t_0})\bigr) \\
\le&\, \norm{v_{\Psi_{t_0}}(x_0,\lambda_0)}_{\real^d} + L_v\bigl(2 + \norm{x}_{\real^d} + \norm{x_0}_{\real^d} + W_1(\Psi_t,\Psi_{t_0})\bigr).
\end{split}
\end{equation}
Now, letting $y_0 \coloneqq (x_0,\lambda_0)$ and $c \coloneqq 2 + \norm{x_0}_{\real^d} + W_1(\Psi_{t_0}, \delta_{y_0})$, we have that for all $t \in [0,T]$
\begin{equation*}
W_1(\Psi_t, \Psi_{t_0}) \le  c + \int_{\real^d \times \cP(U)} \norm{x}_{\real^d}\,\de\Psi_t(x,\lambda),
\end{equation*}
where we have employed the triangle inequality for the metric $W_1$\,, inequality \eqref{eq_triangolino}, and the fact that
\begin{equation}\label{eq:moments as W distance from delta}
    W_1(\Psi_t, \delta_{y_0}) = \int_{\real^d \times \cP(U)} \norm{y - y_0}_{\real^d \times F(U)}\,\de\Psi_t(y)
\end{equation}
(which is an immediate consequence of $\Gamma(\Psi_t, \delta_{y_0}) = \{\Psi_t \otimes\delta_{y_0}\}$).
Therefore, by virtue of hypothesis \eqref{eq:boundedness of moment}, we conclude that there exists $C^\Psi > 0$, depending on the curve $\Psi$, such that $\sup_{t \in [0,T]}W_1(\Psi_t,\Psi_{t_0}) \le C^\Psi$. This bound, combined with \eqref{eq:intermediate_sub_proof}, yields the claim.
\end{proof}

\begin{rem}\label{rem: about measures on curves} 
Proposition \ref{prop:MeanFieldLipSub-v-sigma-T} applies, in particular, when the curve $t \longmapsto \Psi_t$ consists of the time marginals of a fixed probability measure $\Psi \in \cP_p(\cC([0,T],\real^d \times \cP(U)))$, with $p \in [1,+\infty) $; namely, $\Psi_t = (\ev_t)_\sharp\Psi$ for all $t \in [0,T]$.
Indeed, as a consequence of Lebesgue Dominated Convergence Theorem it can be verified that 
the curve $[0,T] \ni t \longmapsto \Psi_t \in \cP_p(\real^d \times \cP(U))$ is continuous with respect to the Wasserstein metric $W_p$, hence, by Weierstrass Theorem, there exists $\tilde{C}^\Psi > 0$ such that $W_p(\Psi_t,\delta_{y_0}) \le \tilde{C}^\Psi$ for all $t\in[0,T]$, where $y_0$ is an arbitrary point in $\real^d \times \cP(U)$. 
By choosing for convenience $y_0$ of the form $(0,\lambda_0)$, and by using equation \eqref{eq:moments as W distance from delta}, we immediately conclude that
\begin{equation*}
\int_{\real^d \times \cP(U)} \norm{x}_{\real^d}\,\de\Psi_t(x,\lambda) \le W_1(\Psi_t, \delta_{y_0}) \le W_p(\Psi_t, \delta_{y_0}) \le \tilde{C}^\Psi,\quad \text{for all } t \in [0,T],
\end{equation*}
therefore condition \eqref{eq:boundedness of moment} is satisfied.
\end{rem} 

We prove the following \emph{a priori} estimate on the moments of the solutions to~\eqref{eq:MeanField}.
\begin{prop}[\emph{a priori} estimate on the moments of the solutions]\label{prop:a priori mean field}
Let us assume that \eqref{eq_fieldsprop} and \eqref{eq:T_prop_geometry} are satisfied and that the spatial initial datum $\barX_0$ belongs to $L^p(\Omega,\sF,\prob)$, with $p \ge 2$. Moreover, let $\barY$ be a strong solution to problem \eqref{eq:MeanField} such that the map $[0,T] \ni t \longmapsto \Sigma_t = \Law(\barY_t) \in \cP_1(\real^d \times \cP(U))$ satisfies condition \eqref{eq:boundedness of moment of solutions}.
Then the following estimate holds 
\begin{equation}\label{eq:APrioriEstimateMeanField}
\E\biggl[\sup_{t\in[0,T]} \norm{\barY_t}_{\real^d \times F(U)}^p\biggr] \le C^\Sigma \Bigl(1 + \E\bigl[\norm{\barX_0}_{\real^d}^p\bigr]\Bigr),
\end{equation}
where $C^\Sigma >0$ depends on $p$, $T$, $M_v^\Sigma$\,, and $M_\sigma^\Sigma$ (see estimates \eqref{eq:MeanFieldSub}, with $t\longmapsto\Psi_t$ replaced by $t\longmapsto\Sigma_t=\Law(\barY_t)$).
\end{prop}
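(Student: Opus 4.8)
The plan is to transcribe the strategy of Proposition~\ref{prop:stima a priori} to the mean-field setting, with the empirical measure replaced by the law $\Sigma_t=\Law(\barY_t)$. First I would observe that, since the curve $t\longmapsto\Sigma_t$ satisfies the first-moment bound \eqref{eq:boundedness of moment of solutions}, Proposition~\ref{prop:MeanFieldLipSub-v-sigma-T} applies with $\Psi_t=\Sigma_t$; hence the sublinearity estimates \eqref{eq:MeanFieldSub} hold with constants $M_v^\Sigma$, $M_\sigma^\Sigma$, $M_\cT^\Sigma$ depending on the curve $\Sigma$. Moreover, by \eqref{eq_triangolino} we have $\norm{\barLambda_t}_{F(U)}\le 1$ for every $t\in[0,T]$, so by the product structure of the norm on $\real^d\times F(U)$ it suffices to control $\E\bigl[\sup_{t\in[0,T]}\norm{\barX_t}_{\real^d}^p\bigr]$ and then recombine the two components.

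As in the $N$-particle case, I would localize. Fixing $R>0$, let $\tau_R$ be the exit time of $\barX$ from the open ball of radius $R$ in $\real^d$ (a stopping time by continuity of $\barX$, see \cite[Proposition~3.7]{Stochastic-Calculus}), and set $\barX^R_t\coloneqq\barX_{t\wedge\tau_R}$, $\barLambda^R_t\coloneqq\barLambda_{t\wedge\tau_R}$. Writing $\barX^R_t=\barX_0+\int_0^t v_{\Sigma_s}(\barX^R_s,\barLambda^R_s)\ind_{\{s<\tau_R\}}\,\de s+\int_0^t\sigma_{\Sigma_s}(\barX^R_s,\barLambda^R_s)\ind_{\{s<\tau_R\}}\,\de\barB_s$, I would take the supremum over $[0,t]$ and expectations, split via $(a+b+c)^p\le 3^{p-1}(a^p+b^p+c^p)$ into the initial-datum term $\E[\norm{\barX_0}_{\real^d}^p]$, a drift term treated with H\"older's inequality and \eqref{eq:MeanFieldSub-v}, and a stochastic-integral term treated with the Burkholder--Davis--Gundy inequality \eqref{eq_BDG} and \eqref{eq:MeanFieldSub-sigma}. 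Using $\norm{\barLambda^R_s}_{F(U)}\le 1$, this produces an inequality of the form
\[
v(t)\le C_1\bigl(1+\E[\norm{\barX_0}_{\real^d}^p]\bigr)+C_2\int_0^t v(s)\,\de s,\qquad v(t)\coloneqq\E\Bigl[\sup_{u\in[0,t]}\norm{\barX^R_u}_{\real^d}^p\Bigr],
\]
with $C_1,C_2>0$ depending only on $p,T,M_v^\Sigma,M_\sigma^\Sigma$ and, crucially, independent of $R$.

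Since $\norm{\barX^R_t}_{\real^d}\le\norm{\barX_0}_{\real^d}\vee R$ almost surely, the function $v$ is finite on $[0,T]$, so Gr\"onwall's inequality yields $v(T)\le C_0\bigl(1+\E[\norm{\barX_0}_{\real^d}^p]\bigr)$ with $C_0=C_1 e^{C_2T}$ independent of $R$. Letting $R\to+\infty$ and using monotone convergence (note $\sup_{t\in[0,T]}\norm{\barX^R_t(\omega)}_{\real^d}^p=\sup_{t\in[0,\tau_R(\omega)]}\norm{\barX_t(\omega)}_{\real^d}^p$ is non-decreasing in $R$ and converges to $\sup_{t\in[0,T]}\norm{\barX_t(\omega)}_{\real^d}^p$), I obtain $\E\bigl[\sup_{t\in[0,T]}\norm{\barX_t}_{\real^d}^p\bigr]\le C_0\bigl(1+\E[\norm{\barX_0}_{\real^d}^p]\bigr)$. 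Combining this with $\norm{\barLambda_t}_{F(U)}\le 1$ gives \eqref{eq:APrioriEstimateMeanField} with a constant $C^\Sigma$ of the stated dependence. The only genuinely delicate point is verifying the applicability of Proposition~\ref{prop:MeanFieldLipSub-v-sigma-T}: it is exactly the hypothesis \eqref{eq:boundedness of moment of solutions} on $\Sigma_t$ that makes the sublinearity constants $M_v^\Sigma,M_\sigma^\Sigma$ finite, and hence the whole argument closed; everything else is a routine adaptation of the $N$-particle estimate to the case where the empirical measure is replaced by the one-particle law.
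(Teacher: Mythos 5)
Your proposal is correct and follows exactly the route the paper takes: the paper's proof simply declares the argument "completely analogous" to Proposition~\ref{prop:stima a priori}, with the sublinearity estimates \eqref{eq:DiscreteEmpSub-v}--\eqref{eq:DiscreteEmpSub-sigma} replaced by \eqref{eq:MeanFieldSub-v}--\eqref{eq:MeanFieldSub-sigma}, whose validity rests on hypothesis \eqref{eq:boundedness of moment of solutions} via Proposition~\ref{prop:MeanFieldLipSub-v-sigma-T}. You have filled in the stopping-time, Burkholder--Davis--Gundy, Gr\"onwall, and monotone-convergence details faithfully, and you correctly isolate the only genuinely new ingredient.
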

\begin{proof} The proof is completely analogous to that of Proposition \ref{prop:stima a priori}. Here, the sublinearity estimates \eqref{eq:DiscreteEmpSub-v} and \eqref{eq:DiscreteEmpSub-sigma} are replaced by the sublinearity estimates (uniform in time) \eqref{eq:MeanFieldSub-v} and \eqref{eq:MeanFieldSub-sigma} respectively. Note that the latter are valid since, by hypothesis, the map $t \longmapsto \Sigma_t$ satisfies condition \eqref{eq:boundedness of moment}.
\end{proof}

\subsection[Construction of a solution to the mean-field problem]{Construction of a solution to the mean-field problem: outline of the proof}
\label{sec:MeanFieldProofOutline}
The argument will be articulated in two steps. First, we consider a filtered probability space $(\Omega,\sF,(\sF_t)_{t\in[0,T]},\prob)$, an initial datum $\barY_0 = (\barX_0,\barLambda_0)$ with $\barX_0 \in L^2(\Omega, \sF,\prob)$, 
and an $m$-dimensional standard Brownian motion $\barB$ adapted to the filtration $(\sF_t)_{t \in [0,T]}$. We then fix a probability measure $\Psi \in \cP_2(\cC([0,T],\real^d \times \cP(U)))$ and we consider the following \emph{auxiliary problem} 
\begin{equation}\label{eq:AuxiliaryMeanField}
\begin{dcases}
    \de\barX^\Psi_t = v_{\Psi_t}(\barX^\Psi_t,\barLambda^\Psi_t)\,\de t + \sigma_{\Psi_t}(\barX^\Psi_t,\barLambda^\Psi_t)\,\de \barB_t\,, \\
    \de\barLambda^\Psi_t = \cT_{\Psi_t}(\barX^\Psi_t,\barLambda^\Psi_t)\,\de t,
\end{dcases}
\end{equation}
where $\Psi_t \coloneqq (\ev_t)_\sharp\Psi$, for all $t \in [0,T]$. We shall prove that problem \eqref{eq:AuxiliaryMeanField} has a pathwise unique strong solution $\barY^\Psi = (\barX^\Psi,\barLambda^\Psi) \colon \Omega \longrightarrow \cC([0,T],\real^d \times \cP(U))$, which satisfies
\begin{equation*}
    \Law(\barY^\Psi) \in \cP_2\bigl(\cC([0,T],\real^d \times \cP(U))\bigr).
\end{equation*}
Therefore, it will be well-defined a map
\begin{equation}
\begin{aligned}
\cS \colon \cP_2\bigl(\cC([0,T],\real^d \times \cP(U))\bigr) &\longrightarrow \cP_2\bigl(\cC([0,T],\real^d \times \cP(U))\bigr) \\
\Psi &\longmapsto \cS(\Psi) \coloneqq \Law(\barr{Y}^\Psi)
\end{aligned}
\end{equation}
which associates to the measure $\Psi \in \cP_2\bigl(\cC([0,T],\real^d \times \cP(U))\bigr)$ the law of the stochastic process $\barY^\Psi$ solving the auxiliary problem (\ref{eq:AuxiliaryMeanField}). 
We explicitly note that pathwise uniqueness of the solution guarantees that the measure $\cS(\Psi)$ is unambiguously determined. 

In the second step of the proof, we will show that the map $\cS$ admits a unique fixed point. To achieve this, we will employ the Banach--Caccioppoli fixed-point theorem in the complete metric space $\bigl(\cP_2\bigl(\cC([0,T],\real^d \times \cP(U))\bigr), W_2\bigr)$. Therefore we will obtain the existence of a unique measure $\Sigma \in \cP_2\bigl(C([0,T],\real^d \times \cP(U))\bigr)$ satisfying the \emph{fixed-point} equation
\begin{equation}
\cS(\Sigma) = \Law(\barY^\Sigma) = \Sigma.
\end{equation}
By setting $\barY \coloneqq \barY^\Sigma$, we will find a strong solution to the mean-field problem \eqref{eq:MeanField}. Indeed, since, by construction, $\Sigma = \Law(\barY)$, and $\Sigma_t \coloneqq (\mathrm{ev}_t)_\sharp\Sigma \in \cP_2(\real^d \times \cP(U))$ for all $t \in [0,T]$, we have, by associativity of the \emph{push-forward} operation,
\begin{equation*}
\Law(\barr{Y}_t) = \Law((\mathrm{ev}_t) \circ \barr{Y}) = ((\mathrm{ev}_t) \circ \barr{Y})_\sharp \prob =  (\mathrm{ev}_t)_\sharp(\barr{Y}_\sharp \prob) = (\mathrm{ev}_t)_\sharp\Sigma = \Sigma_t,
\end{equation*}
for all $t \in [0,T]$, as prescribed in \eqref{eq:MeanFieldSelfReferential}. In the following paragraphs, we will present the construction in detail.

\subsection{Study of the auxiliary problem}
\label{sec:studyauxproblem}
We begin by defining the notion of solution to the \emph{auxiliary problem} \eqref{eq:AuxiliaryMeanField} and the main result about its well-posedness.

\begin{defn}\label{def:strongSolutionAux} 
Let us fix a measure $\Psi \in \cP_2\bigl(\cC([0,T],\real^d \times \cP(U))\bigr)$, and define $\Psi_t \coloneqq (\ev_t)_\sharp \Psi$ for all $t \in [0,T]$. Moreover, let 
$\barB \colon \Omega \longrightarrow \cC([0,T],\real^m)$ be an
$m$-dimensional standard Brownian motion defined on a filtered probability space $\bigl(\Omega, \sF, (\sF_t)_{t\in[0,T]},\prob\bigr)$, let $\barX_0 \colon\Omega\longrightarrow\real^d$ be an $\sF_0$-measurable random variable, and let $\barLambda_0\colon\Omega\longrightarrow\mathcal{P}(U)$ be an $\sF_0$-measurable random variable. We define \emph{strong solution} to problem \eqref{eq:AuxiliaryMeanField} a continuous, $(\sF_t)_t$-adapted stochastic process $\barY^\Psi = (\barX^\Psi,\barLambda^\Psi) \colon \Omega \longrightarrow \cC\bigl([0,T],\real^d\times\cP(U)\bigr)$ satisfying, $\prob$-almost surely, for all $t \in [0,T]$,
\begin{equation}
\begin{dcases}
    \barX^\Psi_t = \barX_0 + \int_0^t v_{\Psi_s}(\barX^\Psi_s,\barLambda^\Psi_s)\,\de s + \int_0^t \sigma_{\Psi_s}(\barX^\Psi_s,\barLambda^\Psi_s)\,\de \barB_s\,, \\
    \barLambda^\Psi_t = \barLambda_0 +\int_0^t \cT_{\Psi_s}(\barX^\Psi_s,\barLambda^\Psi_s)\,\de s.
\end{dcases}
\end{equation}
We say that such a solution is \emph{pathwise unique} if, given two strong solutions $\barY_1^\Psi$, $\barY_2^\Psi$ of \eqref{eq:AuxiliaryMeanField} (with the same Brownian motion and initial datum), we have
\begin{equation}
    \prob\Bigl(\barr{Y}_{1,t}^\Psi = \barr{Y}_{2,t}^\Psi\,, \textup{ for every } t\in[0,T]\Bigr) = 1.
\end{equation}
\end{defn}

\begin{prop}\label{prop:AuxiliaryProblemWellPosedness} Let us assume that \eqref{eq_fieldsprop} and \eqref{eq:T_prop_geometry} are satisfied and that the spatial initial datum $\barX_0$ belongs to $L^2(\Omega,\sF,\prob)$. 
Then the auxiliary problem \eqref{eq:AuxiliaryMeanField} admits a pathwise unique strong solution $\barY^\Psi$ according to Definition~\ref{def:strongSolutionAux}.
Moreover, the integrability of the spatial initial datum is inherited, uniformly in time, by the solution, namely, if  $\barX_0 \in L^p(\Omega,\mathscr{F},\prob)$ with $p \ge 2$, the process $\barY^\Psi$ satisfies  
\begin{equation*}
    \E\biggl[\sup_{t\in[0,T]} \norm{\barY^\Psi_t}_{\real^d \times F(U)}^p\biggr] \le C^\Psi\Bigl(1+\E\bigl[\norm{\barX_0}_{\real^d}^p\bigr]\Bigr),
\end{equation*}
where $C^\Psi > 0$ depends on $p$, $T$, $M_v^\Psi$\,, and $M_\sigma^\Psi$ (see estimates \eqref{eq:MeanFieldSub}, with $t\longmapsto\Psi_t=(\ev_t)_\sharp \Psi$; see also Remark~\ref{rem: about measures on curves}).
\end{prop}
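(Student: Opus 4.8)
The plan is to carry over, in the single-particle setting with the \emph{prescribed} curve $t\mapsto\Psi_t$ in place of the empirical measure, the entire construction used for Theorem~\ref{thm:WellPosednessNParticleSystem}. The decisive simplification is furnished by Proposition~\ref{prop:MeanFieldLipSub-v-sigma-T}: since $\Psi\in\cP_2(\cC([0,T],\real^d\times\cP(U)))$, by Remark~\ref{rem: about measures on curves} the curve $t\mapsto\Psi_t=(\ev_t)_\sharp\Psi$ satisfies \eqref{eq:boundedness of moment}, so the time-dependent fields $v_{\Psi_t}$, $\sigma_{\Psi_t}$, $\cT_{\Psi_t}$ are globally Lipschitz in $(x,\lambda)$ and sublinear in $x$, \emph{uniformly in $t\in[0,T]$} (estimates \eqref{eq:MeanFieldLip} and \eqref{eq:MeanFieldSub}); moreover $t\mapsto\Psi_t$ is $W_1$-continuous, so by \eqref{eq:v_prop}, \eqref{eq:T_prop} the maps $(t,x,\lambda)\mapsto v_{\Psi_t}(x,\lambda)$, etc., are jointly continuous, which is what is needed to guarantee progressive measurability of all the integrands below.

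For existence, following Br\'ezis's strategy, introduce $\cG_{\Psi_t}(x,\lambda):=\lambda+\theta\cT_{\Psi_t}(x,\lambda)$ with $\theta$ as in \eqref{eq:T_prop_geometry}, so that $\cG_{\Psi_t}$ takes values in $\cP(U)$ and inherits the uniform Lipschitz and sublinearity bounds. Define the Picard iterations
\[
\barX^\Psi_{n+1,t} = \barX_0 + \int_0^t v_{\Psi_s}(\barX^\Psi_{n,s},\barLambda^\Psi_{n,s})\,\de s + \int_0^t\sigma_{\Psi_s}(\barX^\Psi_{n,s},\barLambda^\Psi_{n,s})\,\de\barB_s,
\]
\[
\barLambda^\Psi_{n+1,t} = e^{-t/\theta}\barLambda_0 + \tfrac1\theta\int_0^t e^{(s-t)/\theta}\cG_{\Psi_s}(\barX^\Psi_{n,s},\barLambda^\Psi_{n,s})\,\de s,
\]
started from $\barX^\Psi_{0,t}\equiv\barX_0$, $\barLambda^\Psi_{0,t}\equiv\barLambda_0$. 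Exactly as in Remark~\ref{rem:trajectories are confined in P(U)}, convexity of $\cP(U)$ and the identity $\tfrac1\theta\int_0^te^{(s-t)/\theta}\,\de s=1-e^{-t/\theta}$ yield by induction that $\barLambda^\Psi_{n,t}\in\cP(U)$ for all $n,t$; continuity and $(\sF_t)$-adaptedness of the iterations follow as in Proposition~\ref{prop:DiscreteIterationsContAdapt}. One then reproduces the four fundamental estimates of Proposition~\ref{prop:QuattroStime} (via H\"older, the Burkholder--Davis--Gundy inequality \eqref{eq_BDG}, the Bochner subadditivity estimate, and the uniform bounds \eqref{eq:MeanFieldLip}, \eqref{eq:MeanFieldSub}), obtaining as in Proposition~\ref{prop:factInequality_t} a bound of the form $\E\bigl[\sup_{u\in[0,t]}\norm{\barY^\Psi_{n+1,u}-\barY^\Psi_{n,u}}_{\real^d\times F(U)}^2\bigr]\le(\cR t)^{n+1}/(n+1)!$ for a suitable $\cR>0$ depending on $T$, $\theta$, the relevant Lipschitz and sublinearity constants, and $\E[\norm{\barX_0}_{\real^d}^2]$. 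Markov's inequality and Borel--Cantelli then produce a $\prob$-negligible set off which $\{\barY^\Psi_n(\omega)\}$ is Cauchy in $(\cC([0,T],\real^d\times F(U)),\norm{\cdot}_\infty)$; its limit $\barY^\Psi$ has trajectories in $\real^d\times\cP(U)$ by closedness of $\cP(U)$ in $F(U)$, is $(\sF_t)$-adapted (as in Proposition~\ref{prop:SolContAdatt}), and — passing to the limit in probability in the iterations along rational times, then extending by continuity, and finally running the elementary ODE computation $\barLambda'(t)=\cT_{\Psi_t}(\barX_t,\barLambda_t)$ of Proposition~\ref{prop:SolDerivate} — solves \eqref{eq:AuxiliaryMeanField} in the sense of Definition~\ref{def:strongSolutionAux}.

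The $p$-th moment estimate is then obtained by running verbatim the stopping-time, Gr\"onwall and monotone convergence argument of Proposition~\ref{prop:stima a priori} (equivalently Proposition~\ref{prop:a priori mean field} with $\Sigma$ replaced by $\Psi$), replacing the empirical-measure sublinearity bounds \eqref{eq:DiscreteEmpSub-v}, \eqref{eq:DiscreteEmpSub-sigma} by the uniform-in-time ones \eqref{eq:MeanFieldSub-v}, \eqref{eq:MeanFieldSub-sigma}; this gives the stated inequality with $C^\Psi$ depending on $p,T,M_v^\Psi,M_\sigma^\Psi$, and in particular (for $p=2$) $\Law(\barY^\Psi)\in\cP_2(\cC([0,T],\real^d\times\cP(U)))$. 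For pathwise uniqueness, given two strong solutions $\barY^\Psi_1,\barY^\Psi_2$ driven by the same Brownian motion and initial datum, one subtracts the integral equations and applies H\"older and BDG to the spatial part, Bochner subadditivity to the $\barLambda$-part, and the uniform Lipschitz bounds \eqref{eq:MeanFieldLip}, to obtain
\[
v(t):=\E\Bigl[\sup_{u\in[0,t]}\norm{\barY^\Psi_{1,u}-\barY^\Psi_{2,u}}_{\real^d\times F(U)}^2\Bigr]\le C_0\int_0^t v(s)\,\de s ;
\]
the moment estimate just established, together with $\norm{\lambda}_{F(U)}\le1$ for $\lambda\in\cP(U)$, guarantees that $v$ is finite and bounded on $[0,T]$, so Gr\"onwall forces $v\equiv0$, which is the assertion. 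No genuinely new difficulty arises with respect to Section~\ref{sec:NParticleModel}; the only point deserving care — and the one I would flag as the main obstacle — is to verify that substituting the empirical measure with the prescribed curve $t\mapsto\Psi_t$ preserves joint measurability of the fields and, crucially, keeps all the constants \emph{uniform in $t$}, which is precisely what Proposition~\ref{prop:MeanFieldLipSub-v-sigma-T} and the $\cP_2$ hypothesis on $\Psi$ (through Remark~\ref{rem: about measures on curves}) deliver.
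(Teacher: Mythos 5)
Your proposal is correct and follows essentially the same route as the paper: the paper likewise introduces $\cG_{\Psi_t}=\lambda+\theta\cT_{\Psi_t}$, runs the identical Picard scheme \eqref{eq:IterationsMeanField}, reproduces the four estimates, the factorial bound, the Borel--Cantelli/Cauchy argument, the passage to the limit with the ODE computation, the stopping-time a priori estimate, and the Gr\"onwall uniqueness argument, all hinging on the uniform-in-time Lipschitz and sublinearity bounds of Proposition~\ref{prop:MeanFieldLipSub-v-sigma-T} via Remark~\ref{rem: about measures on curves}. The point you flag as the main obstacle (uniformity in $t$ of the constants for a prescribed curve $\Psi_t$) is exactly the point the paper isolates and resolves in that proposition and remark.
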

The proof of Proposition \ref{prop:AuxiliaryProblemWellPosedness} will proceed along the lines of Section~\ref{sec:NParticleModel}, by exploiting the Lipschitz continuity and sublinearity properties \eqref{eq:MeanFieldLip}, \eqref{eq:MeanFieldSub}, and the geometric property \eqref{eq:T_prop_geometry}.
Unlike the $N$-particle system, the auxiliary problem \eqref{eq:AuxiliaryMeanField} features an explicit dependence on time. However, the techniques employed in the study of the $N$-particle system can be adapted to the present problem with almost no modifications, as we will see.

Let us begin with the proof of existence. Our strategy will be based again on the \emph{method of successive approximation}.
We first fix $\theta > 0$ such that property \eqref{eq:T_prop_geometry} is satisfied; this condition allows us to introduce the field $\cG \colon [0,T]\times\real^d\times\cP(U) \longrightarrow \cP(U)$, defined as
\begin{equation}\label{eq:MeanFieldDef-G}
    \cG_{\Psi_t}(x,\lambda) \coloneqq \lambda + \theta\cT_{\Psi_t}(x,\lambda)
\end{equation}
(notice that in the auxiliary problem (\ref{eq:AuxiliaryMeanField}) the measure $\Psi \in \cP_2\bigl(\cC([0,T],\real^d \times \cP(U))\bigr)$ is fixed once once and for all), so that the field $\cT$ can be rewritten as
\begin{equation*}
    \cT_{\Psi_t}(x,\lambda) = \frac{\cG_{\Psi_t}(x,\lambda) - \lambda}{\theta},
\end{equation*}
for all $(t,x,\lambda) \in [0,T]\times\real^d\times\cP(U)$, allowing us to employ the strategy of Brezis \cite[Corollaire 1.1]{Brezis-Operateur-Maximaux} in the definition of the Picard iterations. Before writing down the expression of the iterations, we state the structural properties of the field $\cG$.

\begin{prop}\label{prop:MeanFieldLipSub-G} 
Let $\Psi$ be as in Definition \ref{def:strongSolutionAux}. Then there exists a constant $L_\cG>0$, depending on $L_\cT$ and $\theta$, such that, for all $(t,x_1,\lambda_1), (t,x_2,\lambda_2) \in [0,T] \times \real^d \times \cP(U)$, the following inequality holds
\begin{subequations}\label{eq:MeanFieldLipSub-G}
\begin{equation}\label{eq:MeanFieldLip-G}
\norm{\cG_{\Psi_t}(x_1,\lambda_1) - \cG_{\Psi_t}(x_2,\lambda_2)}_{F(U)} \le L_{\cG} \bigl(\norm{x_1 - x_2}_{\real^d} + \norm{\lambda_1 - \lambda_2}_{F(U)}\bigr).
\end{equation}
Furthermore, there exists $M_\cG^\Psi > 0$, depending on $M_\cT^\Psi$ and $\theta$, such that
\begin{equation}\label{eq:MeanFieldSub-G}
\norm{\cG_{\Psi_t}(x,\lambda)}_{F(U)} \le M_\cG^\Psi \bigl(1 + \norm{x}_{\real^d}\bigr),
\end{equation}
\end{subequations}
for all $(t,x,\lambda) \in [0,T]\times\real^d\times\cP(U)$.
\end{prop}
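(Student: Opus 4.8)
The plan is to deduce both estimates directly from the definition~\eqref{eq:MeanFieldDef-G} of $\cG$ together with the Lipschitz and sublinearity bounds for $\cT$ established in Proposition~\ref{prop:MeanFieldLipSub-v-sigma-T}; this is the exact analogue, in the present time-dependent setting, of Proposition~\ref{prop:DiscreteEmpLipSub-G}, and no new idea is required. I expect no genuine obstacle: the only thing to be careful about is keeping track of which constants inherit the dependence on the curve $\Psi$.

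For the Lipschitz estimate~\eqref{eq:MeanFieldLip-G}, I would write, for $(t,x_1,\lambda_1),(t,x_2,\lambda_2)\in[0,T]\times\real^d\times\cP(U)$,
\[
\cG_{\Psi_t}(x_1,\lambda_1)-\cG_{\Psi_t}(x_2,\lambda_2)=(\lambda_1-\lambda_2)+\theta\bigl(\cT_{\Psi_t}(x_1,\lambda_1)-\cT_{\Psi_t}(x_2,\lambda_2)\bigr),
\]
apply the triangle inequality in $F(U)$, and invoke~\eqref{eq:MeanFieldLip-T} to bound the second summand by $\theta L_\cT\bigl(\norm{x_1-x_2}_{\real^d}+\norm{\lambda_1-\lambda_2}_{F(U)}\bigr)$. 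Collecting terms yields
\[
\norm{\cG_{\Psi_t}(x_1,\lambda_1)-\cG_{\Psi_t}(x_2,\lambda_2)}_{F(U)}\le\theta L_\cT\,\norm{x_1-x_2}_{\real^d}+(1+\theta L_\cT)\,\norm{\lambda_1-\lambda_2}_{F(U)},
\]
so that~\eqref{eq:MeanFieldLip-G} holds with $L_\cG\coloneqq 1+\theta L_\cT$, which depends only on $L_\cT$ and $\theta$.

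For the sublinearity estimate~\eqref{eq:MeanFieldSub-G}, I would bound, for $(t,x,\lambda)\in[0,T]\times\real^d\times\cP(U)$,
\[
\norm{\cG_{\Psi_t}(x,\lambda)}_{F(U)}\le\norm{\lambda}_{F(U)}+\theta\,\norm{\cT_{\Psi_t}(x,\lambda)}_{F(U)}\le 1+\theta M_\cT^\Psi\bigl(1+\norm{x}_{\real^d}\bigr),
\]
using~\eqref{eq_triangolino} for the first term and~\eqref{eq:MeanFieldSub-T} for the second. The right-hand side is in turn bounded by $(1+\theta M_\cT^\Psi)\bigl(1+\norm{x}_{\real^d}\bigr)$, so~\eqref{eq:MeanFieldSub-G} holds with $M_\cG^\Psi\coloneqq 1+\theta M_\cT^\Psi$, which depends on $M_\cT^\Psi$ and $\theta$, and hence on the curve $\Psi$ through $M_\cT^\Psi$ (whereas $L_\cG$ does not), exactly as claimed. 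This concludes the proof.
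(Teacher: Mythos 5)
Your proof is correct and follows essentially the same route as the paper, which simply notes that $\cG$ inherits the estimates of Proposition~\ref{prop:MeanFieldLipSub-v-sigma-T} from its definition~\eqref{eq:MeanFieldDef-G}; you merely carry out the check explicitly, obtaining the admissible constants $L_\cG=1+\theta L_\cT$ and $M_\cG^\Psi=1+\theta M_\cT^\Psi$. The only detail worth adding is the paper's observation that, by Remark~\ref{rem: about measures on curves}, the curve $t\longmapsto\Psi_t=(\ev_t)_\sharp\Psi$ satisfies condition~\eqref{eq:boundedness of moment}, which is what licenses the use of the sublinearity bound~\eqref{eq:MeanFieldSub-T}.
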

\begin{proof} By Remark \ref{rem: about measures on curves}, we have that the map $t \longmapsto \Psi_t = (\ev_t)_\sharp \Psi$ satisfies condition \eqref{eq:boundedness of moment}, therefore, estimates \eqref{eq:MeanFieldLip-T} and \eqref{eq:MeanFieldSub-T} are valid, and it can easily be checked that they are inherited by $\cG$ as a consequence of its definition \eqref{eq:MeanFieldDef-G}. \end{proof}
\begin{notation}
In the ensuing part of this section, and in this section alone, we will denote by~$\barY_n$ and~$\barY$ the iterations and the solutions, respectively, of the auxiliary problem \eqref{eq:AuxiliaryMeanField} associated with the measure $\Psi \in \cP_2\bigl(\cC([0,T],\real^d \times \cP(U))\bigr)$, and the same convention will apply to their spatial components $\barX_n$\,, $\barX$, and mixed-strategy components $\barLambda_n$\,, $\barLambda$. Therefore, we will avoid the more complete (though heavier) notation $\barY_n^\Psi$ and $\barY^\Psi$.
This choice is justified by the fact that in this section the measure $\Psi$ is fixed.
\end{notation}


We are now in a position to write the expression of the iterations and prove their convergence to a solution to the auxiliary problem \eqref{eq:AuxiliaryMeanField}.
They are define recursively as follows:
\begin{equation}\label{eq:IterationsMeanField}
\begin{dcases}
\barX_{n+1,t} \coloneqq \barX_0 + \int_0^t v_{\Psi_s}(\barX_{n,s},\barLambda_{n,s})\,\de s + \int_0^t \sigma_{\Psi_s}(\barX_{n,s},\barLambda_{n,s})\,\de\barB_s\,, \\
\barLambda_{n+1,t} \coloneqq e^{-\frac{t}{\theta}}\barLambda_0 + \frac{1}{\theta}\int_0^t e^{\frac{s-t}{\theta}} \cG_{\Psi_s}(\barX_{n,s},\barLambda_{n,s})\,\de s,
\end{dcases}
\end{equation}
for all $t \in [0,T]$ and $n = 0,1,2,\dots$ The initial iteration in given by
\begin{equation}\label{eq:Iterations0MeanField}
\begin{dcases}
\barX_{0,t} \coloneqq \barX_0,\\
\barLambda_{0,t} \coloneqq \barLambda_0,
\end{dcases}\qquad\text{for all } t \in [0,T]. 
\end{equation}
We anticipate that many of the ensuing arguments are completely analogous to those followed in the proof of well-posedness of the $N$-particle system \eqref{eq:DicsreteProblem}. Therefore, to avoid redundancy, we will only sketch the proof of the following statements, pointing out only the properties and results that are needed in each case.

The proof of the following proposition is the same as that of Proposition \ref{prop:DiscreteIterationsContAdapt}. 
\begin{prop}\label{prop:MeanFieldIterateContAdatt}
The stochastic processes $\barX_n$ and $\barLambda_n$, defined by \eqref{eq:IterationsMeanField} and \eqref{eq:Iterations0MeanField}, are continuous and adapted to the filtration $(\sF_t)_{t \in [0,T]}$ for all $n = 0,1,2,\dots$
\end{prop}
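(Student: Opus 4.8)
The plan is to reproduce, essentially line by line, the argument of Proposition~\ref{prop:DiscreteIterationsContAdapt}, running an induction on $n$. For $n=0$ there is nothing to prove: by \eqref{eq:Iterations0MeanField} the processes $\barX_0$ and $\barLambda_0$ are constant in $t$, hence continuous, and they are $\sF_0$-measurable by hypothesis, hence adapted to $(\sF_t)_{t\in[0,T]}$. For the inductive step I would assume that $\barX_n$ and $\barLambda_n$ are continuous and $(\sF_t)_t$-adapted — so, being continuous and adapted, progressively measurable — and deduce the same for $\barX_{n+1}$ and $\barLambda_{n+1}$ directly from the defining formulas \eqref{eq:IterationsMeanField}.

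The only genuine difference with respect to the $N$-particle setting is the explicit time dependence through $\Psi_t=(\ev_t)_\sharp\Psi$, so I would first record that the curve $t\longmapsto\Psi_t$ is continuous with respect to $W_2$ (hence to $W_1$), as observed in Remark~\ref{rem: about measures on curves}. Combined with the Lipschitz estimates \eqref{eq:MeanFieldLip} in the $(x,\lambda)$ variables, this yields joint continuity of the maps $(s,x,\lambda)\longmapsto v_{\Psi_s}(x,\lambda)$, $(s,x,\lambda)\longmapsto\sigma_{\Psi_s}(x,\lambda)$ and $(s,x,\lambda)\longmapsto\cG_{\Psi_s}(x,\lambda)$ on $[0,T]\times\real^d\times\cP(U)$; composing these with the continuous, adapted iterate $(\barX_n,\barLambda_n)$ produces three processes with $\prob$-a.s.\ continuous trajectories which are progressively measurable.

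With this in hand I would treat the three summands of \eqref{eq:IterationsMeanField} separately. The drift integral $\int_0^t v_{\Psi_s}(\barX_{n,s},\barLambda_{n,s})\,\de s$ and the Bochner integral $\frac1\theta\int_0^t e^{\frac{s-t}{\theta}}\cG_{\Psi_s}(\barX_{n,s},\barLambda_{n,s})\,\de s$ are continuous in $t$ — for each fixed $\omega$ the integrand is continuous, hence bounded, on the compact interval $[0,T]$, using also the sublinearity estimates \eqref{eq:MeanFieldSub} and the path-continuity of $\barX_n$ — and they are $\sF_t$-measurable, since the integral of a progressively measurable process is $\sF_t$-measurable (in the $F(U)$-valued case, via the analogous property of the Bochner integral, see \cite[Appendix A]{AFMS}); the term $e^{-t/\theta}\barLambda_0$ is obviously continuous and $\sF_0$-measurable. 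Finally, the It\^o integral $\int_0^t\sigma_{\Psi_s}(\barX_{n,s},\barLambda_{n,s})\,\de\barB_s$ is well defined, because its integrand is progressively measurable and, for $\prob$-a.e.\ $\omega$, $\int_0^T\norm{\sigma_{\Psi_s}(\barX_{n,s},\barLambda_{n,s})}_{\real^{d\times m}}^2\,\de s<+\infty$ by path-continuity, and by the standard construction of the stochastic integral (see, \emph{e.g.}, \cite{Stochastic-Calculus}) it admits a continuous, $(\sF_t)_t$-adapted version. Summing these contributions, $\barX_{n+1}$ and $\barLambda_{n+1}$ are continuous and $(\sF_t)_t$-adapted, which closes the induction.

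The main — and really only — point requiring care is the pair of measurability and continuity statements for the $F(U)$-valued Bochner integral in the $\barLambda$-equation, together with the progressive measurability of the composition of the time-dependent fields with the iterate; both are handled exactly as in the $N$-particle case, the sole new ingredient being the continuity of $t\longmapsto\Psi_t$ recorded above, so that no essentially new idea is needed and the argument is indeed ``the same as that of Proposition~\ref{prop:DiscreteIterationsContAdapt}''.
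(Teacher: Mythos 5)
Your proposal is correct and follows essentially the same route as the paper, which simply defers to the proof of Proposition~\ref{prop:DiscreteIterationsContAdapt} (induction on $n$, continuity of the three summands, Fubini--Tonelli for the drift, the standard continuous adapted version of the It\^o integral, and the Pettis-type criterion of Proposition~\ref{prop:nonPettis} for the $F(U)$-valued Bochner term). You also correctly isolate and handle the only genuinely new ingredient, namely the $W_1$-continuity of $t\longmapsto\Psi_t$ from Remark~\ref{rem: about measures on curves} combined with the Lipschitz dependence of the fields on the measure argument in \eqref{eq_fieldsprop}.
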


Arguing like in 
Remark \ref{rem:trajectories are confined in P(U)}, it can readily be checked that, as a consequence of the convexity of $\cP(U)$ and of the geometric property \ref{eq:T_prop_geometry}, we have that $\barLambda_{n,t} \in \cP(U)$, for all $t \in [0,T]$ and for all $n = 0,1,2,\dots$.

Let us now introduce the four estimates which represent the analogous for the auxiliary problem \eqref{eq:AuxiliaryMeanField} of estimates \eqref{eq:12QuattroStime} and \eqref{eq:34QuattroStime}.

\begin{prop}\label{prop:QuattroStimeMeanField} Under the hypotheses of Proposition~\ref{prop:AuxiliaryProblemWellPosedness}, the following estimates hold:
\begin{subequations}\label{eq:12QuattroStimeMeanField}
\begin{align}
\begin{split}\label{eq:disuguaglianza(a)MeanField}
\E\bigg[\sup_{u\in[0,t]} \norm{\barX_{1,u} - \barX_0}_{\real^d}^2\bigg] \le &\, \Bigl(6 (M_v^\Psi)^2 t^2 + 24(M_\sigma^\Psi)^2 t \Bigr) \\
&\, \cdot \Bigl(1 + \E\bigl[\norm{\barX_0}_{\real^d}^2\bigr] + \E\bigl[\norm{\barLambda_0}_{F(U)}^2\bigr]\Bigr),
\end{split}\\
\begin{split}\label{eq:disuguaglianza(b)MeanField}
\E\biggl[\sup_{u\in[0,t]} \norm{\barLambda_{1,u} - \barLambda_0}_{F(U)}^2\biggr] \le&\, \frac{2t^2}{\theta^2}  \E\bigl[\norm{\barLambda_0}_{F(U)}^2\bigr] \\ &+ \frac{6(M_\cG^\Psi)^2t^2}{\theta^2} \Bigl(1 + \E\bigl[\norm{\barX_0}_{\real^d}^2\bigr] + \E\bigl[\norm{\barLambda_0}_{F(U)}^2\bigr]\Bigr) 
\end{split}
\end{align}
\end{subequations}
for all $t \in [0,T]$, and
\begin{subequations}\label{eq:34QuattroStimeMeanField}
\begin{align}
\begin{split}\label{eq:disuguaglianza(c)MeanField}
\E\bigg[\sup_{u\in[0,t]} \norm{\barX_{n+1,u} &\,- \barX_{n,u}}_{\real^d}^2\bigg] \le \bigl(4L_v^2t + 16L_\sigma^2 \bigr) \\ 
&\, \cdot\E\biggl[\int_0^t \bigl(\norm{\barX_{n,s} - \barX_{n-1,s}}_{\real^d}^2+ \norm{\barLambda_{n,s} - \barLambda_{n-1,s}}_{F(U)}^2\bigr)\,\de s \biggr],
\end{split}\\
\begin{split}\label{eq:disuguaglianza(d)MeanField}
\E\bigg[\sup_{u\in[0,t]} \norm{\barLambda_{n+1,u} &\,- \barLambda_{n,u}}_{(\real^d)^N}^2\bigg] \le \frac{2L_\cG^2 t}{\theta^2} \E\biggl[\int_0^t \bigl(\norm{\barX_{n,s} - \barX_{n-1,s}}_{\real^d}^2 \\
&\,\hspace{4.7cm}+ \norm{\barLambda_{n,s} - \barLambda_{n-1,s}}_{F(U)}^2\bigr)\,\de s \biggr]
\end{split}
\end{align}
\end{subequations}
for all $n = 1,2,\dots$ and $t \in [0,T]$, where $L_v,L_\sigma,L_\cG$ and $M_v^\Psi, M_\sigma^\Psi, M_\cG^\Psi$ indicate the Lipschitz and sublinearity constants of the fields  $v_\Psi$, $\sigma_\Psi$, and $\cG_\Psi$ respectively (see \eqref{eq:MeanFieldLip}, \eqref{eq:MeanFieldSub}, and \eqref{eq:MeanFieldLipSub-G}).
\end{prop}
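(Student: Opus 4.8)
The plan is to follow, almost line by line, the proof of Proposition~\ref{prop:QuattroStime}, replacing the $N$-particle Lipschitz and sublinearity bounds by their mean-field counterparts from Propositions~\ref{prop:MeanFieldLipSub-v-sigma-T} and~\ref{prop:MeanFieldLipSub-G}. Two preliminary facts should be recalled first. Arguing exactly as in Remark~\ref{rem:trajectories are confined in P(U)} (convexity of $\cP(U)$ together with the geometric property \eqref{eq:T_prop_geometry}), the iterates satisfy $\barLambda_{n,t}\in\cP(U)$ for every $t\in[0,T]$ and every $n$; and, by Remark~\ref{rem: about measures on curves}, the curve $t\longmapsto\Psi_t=(\ev_t)_\sharp\Psi$ satisfies the moment bound \eqref{eq:boundedness of moment}, so that the (time-uniform) sublinearity estimates \eqref{eq:MeanFieldSub} and \eqref{eq:MeanFieldSub-G} are legitimately available along the trajectories of the iterations.

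For \eqref{eq:disuguaglianza(a)MeanField} and \eqref{eq:disuguaglianza(b)MeanField} I would start from \eqref{eq:IterationsMeanField} with $n=0$ and the initialisation \eqref{eq:Iterations0MeanField}. For the spatial part, split $\barX_{1,u}-\barX_0$ into its drift and Itô integrals, apply $(a+b)^2\le 2a^2+2b^2$, bound the drift term by Hölder's inequality in time and the sublinearity estimate \eqref{eq:MeanFieldSub-v}, then take the supremum over $[0,t]$ and the expectation, controlling the stochastic integral by the Burkholder--Davis--Gundy inequality \eqref{eq_BDG} with $p=2$ (hence $c_2=4$) and \eqref{eq:MeanFieldSub-sigma}. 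For the mixed-strategy part, write $\barLambda_{1,u}-\barLambda_0=(e^{-u/\theta}-1)\barLambda_0+\frac{1}{\theta}\int_0^u e^{(s-u)/\theta}\cG_{\Psi_s}(\barX_0,\barLambda_0)\,\de s$, use the subadditivity of the Bochner integral (cf.\ \cite[formula~(A.8)]{AFMS}), the elementary bounds $|e^{-u/\theta}-1|^2\le u^2/\theta^2$ and $|\tfrac{1}{\theta}e^{(s-u)/\theta}|\le\tfrac{1}{\theta}$, Hölder's inequality, and the sublinearity estimate \eqref{eq:MeanFieldSub-G}; then conclude by taking the supremum over $[0,t]$ and the expectation.

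The bounds \eqref{eq:disuguaglianza(c)MeanField} and \eqref{eq:disuguaglianza(d)MeanField} follow from the same manipulations applied to the difference $\barY_{n+1}-\barY_n$ of two consecutive iterations, now invoking the Lipschitz estimates \eqref{eq:MeanFieldLip-v}, \eqref{eq:MeanFieldLip-sigma}, \eqref{eq:MeanFieldLip-G} (combined with $(a+b)^2\le 2a^2+2b^2$) in place of the sublinearity ones, together with Hölder's inequality and \eqref{eq_BDG}. It is worth pointing out explicitly why the numerical constants are smaller than in \eqref{eq:34QuattroStime}: here the measure $\Psi$ is frozen, so the fields carry no Wasserstein term, and there is no dimensional factor $N$. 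I do not anticipate any genuine obstacle — the argument is a routine transcription — the only points demanding care being the verification, recalled above, that the sublinearity estimates genuinely apply, and the bookkeeping of the numerical constants (the repeated factor of $2$ from $(a+b)^2\le 2a^2+2b^2$ and the BDG constant $c_2=4$, which together yield the displayed coefficients, or larger admissible ones).
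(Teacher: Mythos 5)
Your proposal is correct and coincides with the paper's own argument: the paper proves this proposition precisely by repeating the proof of Proposition~\ref{prop:QuattroStime} verbatim, substituting the Lipschitz estimates \eqref{eq:MeanFieldLip-v}, \eqref{eq:MeanFieldLip-sigma}, \eqref{eq:MeanFieldLip-G} and the sublinearity estimates \eqref{eq:MeanFieldSub-v}, \eqref{eq:MeanFieldSub-sigma}, \eqref{eq:MeanFieldSub-G} (justified via Remark~\ref{rem: about measures on curves}), with H\"older and Burkholder--Davis--Gundy playing the same roles. Your explicit accounting for why the constants shrink (no Wasserstein term since $\Psi$ is frozen, no factor $N$) is a correct and welcome addition.
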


\begin{proof} The proof is the same as that of Proposition \ref{prop:QuattroStime}. In the present case, the Lipschitz estimates \eqref{eq:DiscreteEmpLip-v}, \eqref{eq:DiscreteEmpLip-sigma}, \eqref{eq:DiscreteEmpLip-G} must be replaced by \eqref{eq:MeanFieldLip-v}, \eqref{eq:MeanFieldLip-sigma}, \eqref{eq:MeanFieldLip-G}, respectively, whereas sublinearity estimates \eqref{eq:DiscreteEmpSub-v}, \eqref{eq:DiscreteEmpSub-sigma}, \eqref{eq:DiscreteEmpSub-G}, must be replaced by \eqref{eq:MeanFieldSub-v}, \eqref{eq:MeanFieldSub-sigma}, \eqref{eq:MeanFieldSub-G}. The validity of the latter is ensured by Remark \ref{rem: about measures on curves}. The role of H\"{o}lder and Burkholder--Davis--Gundy \eqref{eq_BDG}  inequalities is unchanged.
\end{proof}

We now introduce the sequence of $\real^d \times \cP(U)$-valued continuous stochastic processes $\{\barY_n\}_{n=0}^{\infty}\,$, which we define, for all $n = 0,1,2,\dots$, as 
\begin{equation}\label{eq:BarYnDef}
\barY_{n,t} \coloneqq \bigl(\barX_{n,t},\barLambda_{n,t}\bigr)
\end{equation}
for all $t \in [0,T]$, where $\barX_n$, $\barLambda_n$ are defined in \eqref{eq:IterationsMeanField}. As we will see, the uniform limit of $\{\barY_n\}_{n=0}^{\infty}$ will provide a strong solution to the auxiliary problem \eqref{eq:AuxiliaryMeanField}.

\begin{prop}\label{prop:MeanFieldfactInequality_t} Let $\{\barY_n\}_{n=0}^{\infty}$ be the sequence of continuous $\real^d \times \cP(U)$-valued stochastic processes $\{\barY_n\}_{n=0}^{\infty}$ defined by \eqref{eq:BarYnDef}. Then, for all $n = 0,1,2,\dots$ and for all $t \in [0,T]$, the following inequality holds
\begin{equation}\label{eq:MeanFieldfactInequality_t}
\E\biggl[\sup_{u\in[0,t]} \norm{\barY_{n+1,u} - \barY_{n,u}}_{\real^d \times F(U)}^2\biggr] \le \frac{(\cR^\Psi t)^{n+1}}{(n+1)!}\,,
\end{equation}
where $\mathcal{R}^\Psi>0$ depends on $\theta,T$, on the constants $L_{v}\,, L_{\sigma}\,, L_{\cG}$\,, $M_{v}^\Psi\,, M_{\sigma}^\Psi\,, M_{\cG}^\Psi$ and on the second moment of the initial datum $\barX_0$.
\end{prop}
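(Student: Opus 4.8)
The plan is to follow \emph{verbatim} the proof of Proposition~\ref{prop:factInequality_t}, arguing by induction on~$n$ and feeding the four estimates of Proposition~\ref{prop:QuattroStimeMeanField} into one another; throughout, the $N$-particle sublinearity constants $M_v$, $M_\sigma$, $M_\cG$ are to be replaced by the curve-dependent constants $M_v^\Psi$, $M_\sigma^\Psi$, $M_\cG^\Psi$, whose availability is guaranteed by Remark~\ref{rem: about measures on curves} (which ensures that $t\longmapsto\Psi_t$ satisfies \eqref{eq:boundedness of moment}).

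First, for the base case $n=0$, I would start from the elementary bound $\norm{\barY_{1,u}-\barY_0}_{\real^d\times F(U)}^2\le 2\norm{\barX_{1,u}-\barX_0}_{\real^d}^2+2\norm{\barLambda_{1,u}-\barLambda_0}_{F(U)}^2$, take the supremum over $[0,t]$ and then the expectation, and invoke estimates \eqref{eq:disuguaglianza(a)MeanField} and \eqref{eq:disuguaglianza(b)MeanField}. After bounding every power $t^k$ with $k\ge1$ by $T^{k-1}t$ and using $\E[\norm{\barLambda_0}_{F(U)}^2]\le1$ (a consequence of \eqref{eq_triangolino}; here the mixed strategy lives in $\cP(U)$, so this replaces the bound $\le N^2$ used in the discrete case), one arrives at $\E[\sup_{u\in[0,t]}\norm{\barY_{1,u}-\barY_0}_{\real^d\times F(U)}^2]\le\cR^\Psi t$ for a suitable $\cR^\Psi>0$ depending only on $\theta$, $T$, the constants $L_v, L_\sigma, L_\cG, M_v^\Psi, M_\sigma^\Psi, M_\cG^\Psi$, and $\E[\norm{\barX_0}_{\real^d}^2]$; this is \eqref{eq:MeanFieldfactInequality_t} for $n=0$.

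Then, for the inductive step, assuming \eqref{eq:MeanFieldfactInequality_t} for some $n$, I would split $\norm{\barY_{n+2,u}-\barY_{n+1,u}}_{\real^d\times F(U)}^2\le2\norm{\barX_{n+2,u}-\barX_{n+1,u}}_{\real^d}^2+2\norm{\barLambda_{n+2,u}-\barLambda_{n+1,u}}_{F(U)}^2$, apply \eqref{eq:disuguaglianza(c)MeanField} and \eqref{eq:disuguaglianza(d)MeanField} (with $n$ replaced by $n+1$), use that $\norm{\barX_{n+1,s}-\barX_{n,s}}_{\real^d}^2+\norm{\barLambda_{n+1,s}-\barLambda_{n,s}}_{F(U)}^2\le\norm{\barY_{n+1,s}-\barY_{n,s}}_{\real^d\times F(U)}^2$, and bound the leading occurrences of~$t$ by~$T$, thereby obtaining
\[
\E\biggl[\sup_{u\in[0,t]}\norm{\barY_{n+2,u}-\barY_{n+1,u}}_{\real^d\times F(U)}^2\biggr]\le K\int_0^t\E\biggl[\sup_{u\in[0,s]}\norm{\barY_{n+1,u}-\barY_{n,u}}_{\real^d\times F(U)}^2\biggr]\de s
\]
with $K\coloneqq 8L_v^2T+32L_\sigma^2+\frac{4L_\cG^2T}{\theta^2}$ (the exchange of the time integral with the expectation being legitimate by positivity of the integrand). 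Inserting the induction hypothesis $\frac{(\cR^\Psi s)^{n+1}}{(n+1)!}$ under the integral and integrating gives a bound of the form $\frac{K}{\cR^\Psi}\cdot\frac{(\cR^\Psi t)^{n+2}}{(n+2)!}$, so that, after possibly redefining $\cR^\Psi$ as $\max\{K,\cR^\Psi\}$, we recover the instance of \eqref{eq:MeanFieldfactInequality_t} with $n$ replaced by $n+1$, closing the induction.

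I do not expect a genuine obstacle here: this is essentially a transcription of the $N$-particle argument. The only points demanding care are the bookkeeping of the $\Psi$-dependent sublinearity constants (justified by Remark~\ref{rem: about measures on curves}), the replacement $\E[\norm{\barLambda_0}_{F(U)}^2]\le1$ in place of $\le N^2$, the explicit time dependence of the fields (which, as noted after Proposition~\ref{prop:AuxiliaryProblemWellPosedness}, affects nothing, since the Lipschitz and sublinearity bounds of Proposition~\ref{prop:MeanFieldLipSub-v-sigma-T} are uniform in $t$), and checking that all expectations appearing are finite, which follows from the assumption $\barX_0\in L^2(\Omega,\sF,\prob)$.
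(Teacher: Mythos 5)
Your proposal is correct and is essentially the paper's own argument: the paper simply states that the proof is identical to that of Proposition~\ref{prop:factInequality_t} with the estimates \eqref{eq:12QuattroStime}, \eqref{eq:34QuattroStime} replaced by \eqref{eq:12QuattroStimeMeanField}, \eqref{eq:34QuattroStimeMeanField}, which is exactly the induction you transcribe. Your bookkeeping (the constant $K=8L_v^2T+32L_\sigma^2+4L_\cG^2T/\theta^2$, the bound $\E[\norm{\barLambda_0}_{F(U)}^2]\le1$ replacing $\le N^2$, and the final redefinition $\cR^\Psi\coloneqq\max\{K,\cR^\Psi\}$) matches the structure of the original proof.
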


\begin{proof} The argument is identical to that of Proposition \ref{prop:factInequality_t}. Here, estimates \eqref{eq:12QuattroStime} and \eqref{eq:34QuattroStime} are replaced by \eqref{eq:12QuattroStimeMeanField} and \eqref{eq:34QuattroStimeMeanField}, respectively.
\end{proof}

\begin{prop}\label{prop:MeanFieldIterateCauchy} There exists a $\prob$-negligible set $\tilde{\cZ}\in\sF$ such that for all $\omega\in\tilde{\cZ}^c$,
$\{\barY_n(\omega)\}_{n=0}^{\infty} \subseteq \cC([0,T],\real^d \times \cP(U))$
is a Cauchy sequence in the space $(\cC([0,T],\real^d \times F(U)), \norm{\cdot}_\infty)$.
Therefore, letting $\tilde{y}$ be an arbitrary element of $\real^d \times \cP(U)$, the process $\barY \colon \Omega \longrightarrow \cC([0,T],\real^d \times \cP(U))$ given by
\begin{equation}\label{eq:MeanFieldDefSoluzione}
\barY(\omega) \coloneqq
\begin{dcases}
\lim_{n\to \infty} \barY_{n}(\omega), &\omega \in \tilde{\cZ}^c \\
\tilde{y}, &\omega \in \tilde{\cZ}
\end{dcases}
\end{equation}
is well defined.
\end{prop}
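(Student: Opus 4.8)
The plan is to mimic, almost verbatim, the argument of Proposition~\ref{prop:IterateCauchy}, the only substantive input being the factorial-type bound \eqref{eq:MeanFieldfactInequality_t} established in Proposition~\ref{prop:MeanFieldfactInequality_t}. First I would apply Markov's inequality to the random variable $\sup_{t\in[0,T]}\norm{\barY_{n+1,t}-\barY_{n,t}}_{\real^d\times F(U)}$ at level $2^{-n}$, which combined with \eqref{eq:MeanFieldfactInequality_t} taken at $t=T$ yields
\[
\prob\Bigl(\sup_{t\in[0,T]}\norm{\barY_{n+1,t}-\barY_{n,t}}_{\real^d\times F(U)}>\tfrac{1}{2^n}\Bigr)\le 4^n\,\E\Bigl[\sup_{t\in[0,T]}\norm{\barY_{n+1,t}-\barY_{n,t}}_{\real^d\times F(U)}^2\Bigr]\le 4^n\frac{(\cR^\Psi T)^{n+1}}{(n+1)!}\,.
\]
Since $\sum_{n\ge0}4^n(\cR^\Psi T)^{n+1}/(n+1)!<\infty$ (it is, up to a constant, the tail of the exponential series $\sum (4\cR^\Psi T)^{k}/k!$), the Borel--Cantelli Lemma produces a $\prob$-negligible set $\tilde{\cZ}\in\sF$ such that, for every $\omega\in\tilde{\cZ}^c$, the bound $\sup_{t\in[0,T]}\norm{\barY_{n+1,t}(\omega)-\barY_{n,t}(\omega)}_{\real^d\times F(U)}\le 2^{-n}$ holds for all $n$ large enough.

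Next, for a fixed $\omega\in\tilde{\cZ}^c$, I would write $\barY_n(\omega)$ as the telescoping sum $\barY_0(\omega)+\sum_{k=0}^{n-1}(\barY_{k+1}(\omega)-\barY_k(\omega))$ and estimate, for $m>n$ with $n$ large,
\[
\sup_{t\in[0,T]}\norm{\barY_{m,t}(\omega)-\barY_{n,t}(\omega)}_{\real^d\times F(U)}\le\sum_{k=n}^{m-1}\sup_{t\in[0,T]}\norm{\barY_{k+1,t}(\omega)-\barY_{k,t}(\omega)}_{\real^d\times F(U)}\le\sum_{k=n}^{\infty}\frac{1}{2^k}\,,
\]
which tends to $0$ as $n\to\infty$; hence $\{\barY_n(\omega)\}_{n}$ is a Cauchy sequence in the Banach space $(\cC([0,T],\real^d\times F(U)),\norm{\cdot}_\infty)$, recalling that $(F(U),\norm{\cdot}_{F(U)})$ is complete and separable by Proposition~\ref{prop:ProprF(U)P(U)}. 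Its uniform limit has continuous trajectories, and since each $\barY_{n,t}(\omega)\in\real^d\times\cP(U)$ and $\cP(U)$ is closed in $F(U)$ (again Proposition~\ref{prop:ProprF(U)P(U)}), the limit takes values in $\real^d\times\cP(U)$ as well. Finally, defining $\barY$ by \eqref{eq:MeanFieldDefSoluzione} with $\tilde y$ an arbitrary fixed point of $\real^d\times\cP(U)$ on the negligible set $\tilde{\cZ}$ produces a well-defined $\cC([0,T],\real^d\times\cP(U))$-valued map, and $\barY$ is a genuine random variable because it is the pointwise limit (off a negligible set) of the $\barY_n$, which are measurable by Proposition~\ref{prop:MeanFieldIterateContAdatt}.

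I do not expect a real obstacle here: the proof is a direct transcription of that of Proposition~\ref{prop:IterateCauchy}, with the discrete estimates \eqref{eq:factInequality_t} replaced by their mean-field counterpart \eqref{eq:MeanFieldfactInequality_t}, and with the state space $(\real^d\times\cP(U))^N$ replaced by $\real^d\times\cP(U)$. The only points deserving a word of care are the summability of the series coming out of the Markov/Borel--Cantelli step (handled by recognising an exponential series) and the fact that the limiting process remains $\cP(U)$-valued, which is exactly where the closedness of $\cP(U)$ in $F(U)$ enters; both are immediate from results already available. Accordingly, in the write-up I would state the Markov estimate, invoke Borel--Cantelli, run the telescoping/Cauchy argument, and conclude, referring back to the proof of Proposition~\ref{prop:IterateCauchy} for the routine details.
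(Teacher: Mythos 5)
Your proposal is correct and follows essentially the same route as the paper, which itself simply refers back to the proof of Proposition~\ref{prop:IterateCauchy} with estimate \eqref{eq:factInequality_t} replaced by \eqref{eq:MeanFieldfactInequality_t}. The Markov/Borel--Cantelli step, the telescoping Cauchy argument, and the use of the closedness of $\cP(U)$ in $F(U)$ are exactly the ingredients the paper relies on.
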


\begin{proof} The proof is the same that as Proposition \ref{prop:IterateCauchy}; estimate \eqref{eq:MeanFieldfactInequality_t} replaces \eqref{eq:factInequality_t}.
\end{proof}

The proof of the following proposition is analogous to that of Proposition \ref{prop:SolContAdatt}.
\begin{prop}\label{prop:MeanFieldSolContAdatt}The limiting stochastic process $\barY$ defined in Proposition~\ref{prop:MeanFieldIterateCauchy} is adapted to the filtration $(\sF_t)_{t \in [0,T]}$.
\end{prop}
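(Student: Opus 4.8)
The plan is to transcribe the proof of Proposition~\ref{prop:SolContAdatt} almost verbatim, since the two situations differ only by the explicit time-dependence of the auxiliary fields, which is irrelevant to the measurability argument. First I would recall that, by Proposition~\ref{prop:MeanFieldIterateContAdatt}, every process $\barY_n$ is continuous and $(\sF_t)_{t\in[0,T]}$-adapted; in particular $\barY_{n,t}$ is $\sF_t$-measurable for each $t\in[0,T]$ and each $n=0,1,2,\dots$. Next, by Proposition~\ref{prop:MeanFieldIterateCauchy}, there is a $\prob$-negligible set $\tilde{\cZ}\in\sF$ such that for every $\omega\in\tilde{\cZ}^c$ the sequence $\{\barY_n(\omega)\}_n$ converges uniformly on $[0,T]$ to $\barY(\omega)$; evaluating at a fixed $t$, this yields $\barY_{n,t}(\omega)\to\barY_t(\omega)$ for all $\omega\in\tilde{\cZ}^c$.

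Then, with $t\in[0,T]$ fixed, I would introduce the modified random variables $\barY'_{n,t}\coloneqq\barY_{n,t}\,\ind_{\tilde{\cZ}^c}+\tilde{y}\,\ind_{\tilde{\cZ}}$. Since $\tilde{\cZ}$ is $\prob$-negligible and (up to replacing $(\sF_t)_{t}$ by its usual augmentation, under which $\barB$ remains a standard Brownian motion and all the iterations remain adapted) each $\sF_t$ may be assumed to contain the $\prob$-null sets, the $\barY'_{n,t}$ are again $\sF_t$-measurable. By construction $\barY'_{n,t}\to\barY_t$ pointwise on the whole of $\Omega$ — to $\lim_n\barY_{n,t}$ on $\tilde{\cZ}^c$ and to the constant $\tilde{y}$ on $\tilde{\cZ}$ — so $\barY_t$ is $\sF_t$-measurable, being a pointwise limit of $\sF_t$-measurable maps valued in the separable Banach space $(\real^d\times F(U),\norm{\cdot}_{\real^d\times F(U)})$. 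As $t\in[0,T]$ was arbitrary, $\barY$ is $(\sF_t)_{t\in[0,T]}$-adapted, and the continuity of its trajectories has already been recorded in Proposition~\ref{prop:MeanFieldIterateCauchy} (uniform convergence on $\tilde{\cZ}^c$, constant value on $\tilde{\cZ}$).

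The only point deserving a word of care — and thus the single obstacle — is the $\sF_t$-measurability of the exceptional set $\tilde{\cZ}$, which belongs to $\sF$ by construction but not a priori to $\sF_t$. This is handled exactly as in Section~\ref{app_adapted}: either one works throughout under the usual conditions, so that $\tilde{\cZ}\in\sF_0\subseteq\sF_t$; or, if one prefers not to augment, one redefines $\barY_t$ as $\lim_n\barY_{n,t}$ on the $\sF_t$-measurable set $\{\omega : \{\barY_{n,t}(\omega)\}_n\text{ is Cauchy in }\real^d\times F(U)\}$, which contains $\tilde{\cZ}^c$, and as $\tilde{y}$ elsewhere. Either way, the argument reduces to the routine statement that a pointwise (almost sure) limit of adapted processes with values in a separable Banach space is adapted, and no new estimate is required.
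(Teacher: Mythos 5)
Your proposal is correct and follows essentially the same route as the paper: the paper's proof of this proposition simply declares it analogous to Proposition~\ref{prop:SolContAdatt}, whose appendix proof defines exactly your modified random variables (equal to $\barY_{n,t}$ on $\tilde{\cZ}^c$ and to $\tilde{y}$ on $\tilde{\cZ}$), invokes the standardness of the filtration to place $\tilde{\cZ}$ in $\sF_t$\,, and concludes via the pointwise-limit measurability criterion (Proposition~\ref{prop:measurelim}). You correctly isolate the only delicate point --- the $\sF_t$-measurability of the exceptional set --- and resolve it the same way the paper does.
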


We now show that the process $\barY$ is a strong solution to the auxiliary problem \eqref{eq:AuxiliaryMeanField}.
We write $\barY$ as
\begin{equation*}
\barY_t = (\barX_t,	\barLambda_t),
\end{equation*}
for all $t \in [0, T]$, where, by construction, $\barX$ and $\barLambda$ are the uniform limits of the sequences of processes $\{\barX_n\}_{n=0}^\infty$ and $\{\barLambda_n\}_{n=0}^\infty\,$, respectively.
We now observe that, by virtue of properties \eqref{eq:MeanFieldLip-v}, \eqref{eq:MeanFieldLip-sigma}, and \eqref{eq:MeanFieldLip-G}, we have that
\begin{align*}
\sup_{t \in [0,T]}\norm{v_{\Psi_t}(\barX_{n,t},\barLambda_{n,t}) - v_{\Psi_t}(\barr{X}_t,\barLambda_t)}_{\real^d} &\le L_v\sup_{t \in [0,T]}\norm{\barY_{n,t} - \barY_t}_{\real^d\times F(U)}, \\
\sup_{t \in [0,T]}\norm{\sigma_{\Psi_t}(\barX_{n,t},\barLambda_{n,t}) - \sigma_{\Psi_t}(\barr{X}_t,\barLambda_t)}_{\real^{d \times m}} &\le L_\sigma\sup_{t \in [0,T]}\norm{\barY_{n,t} - \barY_t}_{\real^d\times F(U)}, \\
\sup_{t \in [0,T]}\norm{\cG_{\Psi_t}(\barr{X}_{n,t},\barLambda_{n,t}) - \cG_{\Psi_t}(\barr{X}_t,\barLambda_t)}_{F(U)} &\le L_\cG\sup_{t \in [0,T]}\norm{\barY_{n,t} - \barY_t}_{\real^d\times F(U)};
\end{align*}
therefore we can replicate the argument of Section~\ref{sec_existenceN} (see the discussion just before Proposition~\ref{prop:SolDerivate}), by passing to the limit as $n \to \infty$ in the definition of the iterations \eqref{eq:IterationsMeanField}, and conclude that, $\prob$-almost surely, the process $\barY$ satisfies
\begin{equation}
\begin{dcases}
\barX_t = \barX_0 + \int_0^t v_{\Psi_s}(\barX_s,\barLambda_s)\,\de s + \int_0^t\sigma_{\Psi_s}(\barX_s,\barLambda_s)\,\de \barB_s\,,\\
\barLambda_t = e^{-\frac{t}{\theta}}\barLambda_0 + \frac{1}{\theta}\int_0^t e^{\frac{s-t}{\theta}} \cG_{\Psi_s}(\barX_s,\barLambda_s)\,\de s,
\end{dcases}
\label{eq:SoluzioneProbIntermedioMeanField}
\end{equation}
for all $t \in [0,T]$. Finally, equation \eqref{eq:SoluzioneProbIntermedioMeanField} allows us to deduce that $\barY$ is a strong solution to the auxiliary problem \eqref{eq:AuxiliaryMeanField}. Indeed, arguing as in Proposition \ref{prop:SolDerivate}, we have the following result.

\begin{prop}
The stochastic process $\barY$ defined in Proposition~\ref{prop:MeanFieldIterateCauchy} satisfies, $\prob$-a.s.,
\begin{equation}
\begin{dcases}
\barX_t = \barr{X}_0 + \int_0^t v_{\Psi_s}(\barX_s,\barLambda_s)\,\de s + \int_0^t\sigma_{\Psi_s}(\barX_s,\barLambda_s)\,\de\barB_s\,, \\
\barLambda_t = \barLambda_0 + \int_0^t \cT_{\Psi_s}(\barX_s,\barLambda_s)\,\de s,
\end{dcases}
\qquad\text{for all $t \in [0,T]$.}
\end{equation}
\end{prop}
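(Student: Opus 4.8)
The plan is to show that the ``exponential-smoothed'' representation of the mixed-strategy component in \eqref{eq:SoluzioneProbIntermedioMeanField} coincides with the Bochner-integral representation of the strategy component demanded by Definition~\ref{def:strongSolutionAux}, exactly as in the proof of Proposition~\ref{prop:SolDerivate}. The spatial equation is already in the correct form, so no work is needed there; the point is purely the $\barLambda$-component, and the argument is an ODE-uniqueness argument carried out pathwise in the Banach space $F(U)$.

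First I would fix $\omega$ outside the $\prob$-negligible set on which \eqref{eq:SoluzioneProbIntermedioMeanField} holds, and define the three functions $\bar x(t)\coloneqq\barX_t(\omega)$, $\bar\lambda(t)\coloneqq\barLambda_t(\omega)$, and $\tilde{\bar\lambda}(t)\coloneqq\barLambda_0(\omega)+\int_0^t\cT_{\Psi_s}(\barX_s,\barLambda_s)(\omega)\,\de s$, all taking values in $\real^d\times\cP(U)$ or $\cP(U)$. Using \eqref{eq:MeanFieldLip} (Lipschitz continuity of $v,\sigma,\cT$ in $(x,\lambda)$) together with the continuity of $t\mapsto\barY_t(\omega)$ and of $t\mapsto\Psi_t$ in $W_1$ (which holds by Remark~\ref{rem: about measures on curves}), the integrand $s\mapsto\cT_{\Psi_s}(\bar x(s),\bar\lambda(s))$ is continuous on $[0,T]$ as an $F(U)$-valued map, hence $\tilde{\bar\lambda}$ is $C^1$ with $\tilde{\bar\lambda}'(t)=\cT_{\Psi_t}(\bar x(t),\bar\lambda(t))$, and similarly $\bar\lambda(t)=e^{-t/\theta}\barLambda_0(\omega)+\tfrac1\theta e^{-t/\theta}\int_0^t e^{s/\theta}\cG_{\Psi_s}(\bar x(s),\bar\lambda(s))\,\de s$ is $C^1$.

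Next I differentiate $\bar\lambda$ using the product rule and the Leibniz rule for Bochner integrals: the two contributions from the prefactor $e^{-t/\theta}$ combine into $-\tfrac1\theta\bar\lambda(t)$, and the boundary term from differentiating the integral gives $\tfrac1\theta\cG_{\Psi_t}(\bar x(t),\bar\lambda(t))$; invoking the definition \eqref{eq:MeanFieldDef-G}, namely $\cG_{\Psi_t}(x,\lambda)=\lambda+\theta\cT_{\Psi_t}(x,\lambda)$, the two $\tfrac1\theta\bar\lambda(t)$ terms cancel and one is left with $\bar\lambda'(t)=\cT_{\Psi_t}(\bar x(t),\bar\lambda(t))$. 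Therefore $\bh(t)\coloneqq\bar\lambda(t)-\tilde{\bar\lambda}(t)$ satisfies $\bh'(t)=0$ on $[0,T]$, so $\bh$ is constant in $F(U)$; since $\bh(0)=\barLambda_0(\omega)-\barLambda_0(\omega)=0$, we get $\bh\equiv 0$, i.e.\ $\barLambda_t=\barLambda_0+\int_0^t\cT_{\Psi_s}(\barX_s,\barLambda_s)\,\de s$ for all $t\in[0,T]$, $\prob$-almost surely. This is exactly the claimed system, and the proof is complete.

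I do not expect any genuine obstacle here: the only point requiring a little care is the justification that the map $s\mapsto\cT_{\Psi_s}(\bar x(s),\bar\lambda(s))$ is continuous (so that the fundamental theorem of calculus applies in the Bochner sense), which follows from \eqref{eq:MeanFieldLip-T}, the continuity of $t\mapsto\barY_t(\omega)$, and the $W_1$-continuity of $t\mapsto\Psi_t$; and the differentiation of the $e^{\pm t/\theta}$ factors, which is the same elementary computation already performed verbatim in Proposition~\ref{prop:SolDerivate}. Everything else is a transcription of the $N$-particle argument with the boldface fields replaced by their scalar counterparts and $(\real^d\times F(U))^N$ replaced by $\real^d\times F(U)$.
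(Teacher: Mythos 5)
Your proof is correct and follows essentially the same route as the paper, which simply invokes the argument of Proposition~\ref{prop:SolDerivate}: differentiate the exponential-smoothed representation, use $\cG_{\Psi_t}(x,\lambda)=\lambda+\theta\cT_{\Psi_t}(x,\lambda)$ to see that $\bar\lambda$ and $\tilde{\bar\lambda}$ solve the same $C^1$ identity, and conclude that their difference vanishes. The only cosmetic remark is that the joint continuity in $t$ of $s\longmapsto\cT_{\Psi_s}(\bar x(s),\bar\lambda(s))$ rests on the full hypothesis \eqref{eq:T_prop} (which controls the dependence on the measure via $W_1$) together with Remark~\ref{rem: about measures on curves}, rather than on \eqref{eq:MeanFieldLip-T} alone; you note this correctly in substance.
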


The proof of existence of a solution to the auxiliary problem \eqref{eq:AuxiliaryMeanField} is concluded.

The following \emph{a priori} estimate on the moments of the strong solutions to \eqref{eq:AuxiliaryMeanField} holds true.
\begin{prop}\label{prop:stima a priori Aux}
Let $\barY$ be a strong solution to problem \eqref{eq:AuxiliaryMeanField}, and let $\barX_0$ a random variable in $L^p(\Omega,\sF,\prob)$ with $p \ge 2$. Then the following estimate holds
\begin{equation}
\label{eq:APrioriEstimateAux}
\E\biggl[\sup_{t\in[0,T]} \norm{\barY_t}_{\real^d \times F(U)}^p\biggr] \le C^\Psi\Bigl(1 + \E\bigl[\norm{\barX_0}_{\real^d}^p\bigr]\Bigr),
\end{equation}
where $C^\Psi > 0$ depends on $p$, $T$, $M_v^\Psi$\,, $M_\sigma^\Psi$\,.
\end{prop}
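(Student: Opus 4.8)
The plan is to follow \emph{verbatim} the strategy of the proof of Proposition~\ref{prop:stima a priori}, the only substantive change being that the empirical-measure sublinearity bounds \eqref{eq:DiscreteEmpSub-v} and \eqref{eq:DiscreteEmpSub-sigma} are replaced by the time-uniform sublinearity bounds \eqref{eq:MeanFieldSub-v} and \eqref{eq:MeanFieldSub-sigma}, which are available for the fixed curve $t\longmapsto\Psi_t=(\ev_t)_\sharp\Psi$ thanks to Proposition~\ref{prop:MeanFieldLipSub-v-sigma-T} and Remark~\ref{rem: about measures on curves} (the latter guaranteeing that \eqref{eq:boundedness of moment} holds for $t\longmapsto(\ev_t)_\sharp\Psi$). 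First, by \eqref{eq_triangolino} we have $\norm{\barLambda_t}_{F(U)}\le1$ for all $t\in[0,T]$, so it suffices to bound $\E\bigl[\sup_{t\in[0,T]}\norm{\barX_t}_{\real^d}^p\bigr]$; estimate \eqref{eq:APrioriEstimateAux} then follows by combining this with the elementary inequality $\norm{\barY_t}_{\real^d\times F(U)}^p\le 2^{p-1}\bigl(\norm{\barX_t}_{\real^d}^p+1\bigr)$.

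Next, I would fix $R>0$ and introduce the exit time $\tau_R$ of $\barX$ from the open ball of radius $R$ in $\real^d$, which is a finite $[0,T]$-valued stopping time by continuity of $\barX$ (cf.\ \cite[Proposition~3.7]{Stochastic-Calculus}), and consider the stopped processes $\barX^R_t\coloneqq\barX_{t\wedge\tau_R}$, $\barLambda^R_t\coloneqq\barLambda_{t\wedge\tau_R}$, which satisfy
\begin{equation*}
\barX^R_t = \barX_0 + \int_0^t v_{\Psi_s}(\barX^R_s,\barLambda^R_s)\ind_{\{s<\tau_R\}}\,\de s + \int_0^t\sigma_{\Psi_s}(\barX^R_s,\barLambda^R_s)\ind_{\{s<\tau_R\}}\,\de\barB_s.
\end{equation*}
Applying the inequality $(a+b+c)^p\le 3^{p-1}(a^p+b^p+c^p)$, then H\"older's inequality in time on the drift term and the Burkholder--Davis--Gundy inequality \eqref{eq_BDG} on the stochastic term, and finally \eqref{eq:MeanFieldSub-v}, \eqref{eq:MeanFieldSub-sigma} together with $\norm{\barLambda^R_s}_{F(U)}\le1$, one obtains, for $v(t)\coloneqq\E\bigl[\sup_{u\in[0,t]}\norm{\barX^R_u}_{\real^d}^p\bigr]$,
\begin{equation*}
v(t)\le C_1\Bigl(1+\E\bigl[\norm{\barX_0}_{\real^d}^p\bigr]\Bigr) + C_2\int_0^t v(s)\,\de s,\qquad t\in[0,T],
\end{equation*}
with $C_1,C_2>0$ depending only on $p,T,M_v^\Psi,M_\sigma^\Psi$ and \emph{not} on $R$. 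Since $\norm{\barX^R_t}_{\real^d}\le\norm{\barX_0}_{\real^d}\vee R$ holds $\prob$-almost surely, $v$ is bounded on $[0,T]$, so Gr\"onwall's inequality applies and yields
\begin{equation*}
\E\biggl[\sup_{t\in[0,T]}\norm{\barX^R_t}_{\real^d}^p\biggr]\le C_0\Bigl(1+\E\bigl[\norm{\barX_0}_{\real^d}^p\bigr]\Bigr),\qquad C_0\coloneqq C_1e^{C_2T},
\end{equation*}
again with $C_0$ independent of $R$.

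Finally, letting $R\to+\infty$, one observes that $\sup_{t\in[0,T]}\norm{\barX^R_t(\omega)}_{\real^d}^p=\sup_{t\in[0,\tau_R(\omega)]}\norm{\barX_t(\omega)}_{\real^d}^p$ is nondecreasing in $R$ and converges to $\sup_{t\in[0,T]}\norm{\barX_t(\omega)}_{\real^d}^p$ for almost every $\omega$ (because $\tau_R\uparrow T$ pointwise, by continuity of $\barX$); Beppo Levi's monotone convergence theorem then transfers the $R$-uniform bound to $\barX$, and \eqref{eq:APrioriEstimateAux} follows. I do not expect a genuine obstacle here: the argument is structurally identical to that of Proposition~\ref{prop:stima a priori}, and the only point that requires attention is that the sublinearity constants $M_v^\Psi,M_\sigma^\Psi$ now depend on $\Psi$ while remaining uniform in time — precisely what Proposition~\ref{prop:MeanFieldLipSub-v-sigma-T} combined with Remark~\ref{rem: about measures on curves} provides.
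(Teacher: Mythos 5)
Your proposal is correct and follows exactly the route the paper takes: the paper's own proof of Proposition~\ref{prop:stima a priori Aux} simply declares the argument analogous to that of Proposition~\ref{prop:stima a priori}, with the sublinearity estimates \eqref{eq:MeanFieldSub-v} and \eqref{eq:MeanFieldSub-sigma} (valid by Remark~\ref{rem: about measures on curves}) replacing the empirical-measure ones, and your write-up is precisely that adaptation spelled out in full (stopping time, H\"older and Burkholder--Davis--Gundy, Gr\"onwall with the \emph{a priori} boundedness of the stopped process, and monotone convergence in $R$).
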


\begin{proof} The argument is analogous to that of Proposition \ref{prop:stima a priori} (and of Proposition \ref{prop:a priori mean field}). Here, the sublinearity estimates \eqref{eq:MeanFieldSub-v} and \eqref{eq:MeanFieldSub-sigma} must be used, which are valid by Remark~\ref{rem: about measures on curves}.
\end{proof}

Finally, the proof of pathwise uniqueness is identical to the one presented in Section \ref{sec_pathwiseuniquenessN}. The Lipschitz estimates \eqref{eq:DiscreteEmpLip} must be replaced by estimates \eqref{eq:MeanFieldLip},
and the concluding argument is performed through Gr\"{o}nwall inequality and the \emph{a priori} estimate \eqref{eq:APrioriEstimateAux}. This concludes the proof of Proposition \ref{prop:AuxiliaryProblemWellPosedness}.

\subsection{Construction of a solution to the mean-field problem: fixed-point argument} 
\label{sec:MeanFieldFixedPoint}
In this paragraph, we conclude the proof of existence of a solution to the mean-field problem \eqref{eq:MeanField} through a fixed-point argument.
\begin{prop}\label{prop:DefS} Let us assume that \eqref{eq_fieldsprop} and \eqref{eq:T_prop_geometry} are satisfied and that the initial datum $\barY_0$ is such that its spatial component $\barX_0$ belongs to $L^2(\Omega,\sF,\prob)$. Then the map 
\begin{equation}\label{eq:DefS}
\begin{aligned}
\cS \colon \cP_2\bigl(\cC([0,T],\real^d\times\cP(U))\bigr) &\longrightarrow \cP_2\bigl(\cC([0,T],\real^d\times\cP(U))\bigr) \\
\Psi &\longmapsto \cS(\Psi) \coloneqq \Law(\barY^\Psi),
\end{aligned}
\end{equation}
is well defined. Here, $\barY^\Psi \colon \Omega \longrightarrow \cC([0,T],\real^d\times\cP(U))$ is the pathwise unique strong solution of the auxiliary problem \eqref{eq:AuxiliaryMeanField} associated with the measure $\Psi \in \cP_2\bigl(\cC([0,T],\real^d\times\cP(U))\bigr)$ (see Definition \ref{def:strongSolutionAux} and Proposition \ref{prop:AuxiliaryProblemWellPosedness}).
\end{prop}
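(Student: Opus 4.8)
The plan is to check the three things that make the assignment $\Psi\longmapsto\cS(\Psi)=\Law(\barY^\Psi)$ meaningful: first, that for every $\Psi$ the auxiliary problem \eqref{eq:AuxiliaryMeanField} has a strong solution and that its law does not depend on the particular solution chosen; second, that $\Law(\barY^\Psi)$ is a genuine Borel probability measure on the path space $\cC([0,T],\real^d\times\cP(U))$; and third, that this measure has finite second moment, so that it indeed belongs to the codomain $\cP_2\bigl(\cC([0,T],\real^d\times\cP(U))\bigr)$. Throughout, the filtered probability space, the Brownian motion $\barB$, and the initial datum $\barY_0=(\barX_0,\barLambda_0)$ are kept fixed, as in Section~\ref{sec:MeanFieldProofOutline}; note that the constants $C^\Psi$, $M_v^\Psi$, $M_\sigma^\Psi$ are allowed to depend on $\Psi$, which is harmless since $\Psi$ is fixed when $\cS(\Psi)$ is evaluated.

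The first point I would settle by invoking Proposition~\ref{prop:AuxiliaryProblemWellPosedness}: under \eqref{eq_fieldsprop}, \eqref{eq:T_prop_geometry} and the hypothesis $\barX_0\in L^2(\Omega,\sF,\prob)$, the auxiliary problem associated with $\Psi$ admits a pathwise unique strong solution $\barY^\Psi$ in the sense of Definition~\ref{def:strongSolutionAux}. If $\barY_1^\Psi$ and $\barY_2^\Psi$ are two strong solutions built on these same data, pathwise uniqueness gives $\prob\bigl(\barY_{1,t}^\Psi=\barY_{2,t}^\Psi \text{ for all } t\in[0,T]\bigr)=1$, hence $\barY_1^\Psi=\barY_2^\Psi$ $\prob$-almost surely as $\cC([0,T],\real^d\times\cP(U))$-valued random variables, so $\Law(\barY_1^\Psi)=\Law(\barY_2^\Psi)$ and $\cS(\Psi)$ is unambiguously defined. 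For the second point, I would recall from Section~\ref{sec:prel} that $\cC([0,T],\real^d\times F(U))$ is a separable Banach space and that $\cC([0,T],\real^d\times\cP(U))$ is a closed subset of it, hence itself a complete separable metric space; since $\barY^\Psi$ is a continuous, $(\sF_t)_t$-adapted process taking values in $\real^d\times\cP(U)$, it is a measurable map $\Omega\longrightarrow\cC([0,T],\real^d\times\cP(U))$, and $\Law(\barY^\Psi)=(\barY^\Psi)_\sharp\prob$ is therefore a well-defined Borel probability measure on this space.

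The only quantitative ingredient is the third point. Applying the \emph{a priori} estimate of Proposition~\ref{prop:stima a priori Aux} (equivalently, the moment bound in Proposition~\ref{prop:AuxiliaryProblemWellPosedness}) with $p=2$, and using that $\barX_0\in L^2(\Omega,\sF,\prob)$, one obtains
\begin{equation*}
\E\Bigl[\sup_{t\in[0,T]}\norm{\barY^\Psi_t}_{\real^d\times F(U)}^2\Bigr]\le C^\Psi\Bigl(1+\E\bigl[\norm{\barX_0}_{\real^d}^2\bigr]\Bigr)<+\infty.
\end{equation*}
Fixing any $g_0\in\real^d\times\cP(U)$ and viewing it as the constant curve $t\longmapsto g_0$, and writing $\norm{g-g_0}_\infty=\sup_{t\in[0,T]}\norm{g(t)-g_0}_{\real^d\times F(U)}$ for the distance on the path space (legitimate since $\cP(U)-\cP(U)\subseteq F(U)$), one has
\begin{equation*}
\int_{\cC([0,T],\real^d\times\cP(U))}\norm{g-g_0}_\infty^2\,\de\bigl(\Law(\barY^\Psi)\bigr)(g)=\E\bigl[\norm{\barY^\Psi-g_0}_\infty^2\bigr]\le 2\,\E\Bigl[\sup_{t\in[0,T]}\norm{\barY^\Psi_t}_{\real^d\times F(U)}^2\Bigr]+2\norm{g_0}_{\real^d\times F(U)}^2<+\infty,
\end{equation*}
which is precisely the defining condition for $\Law(\barY^\Psi)\in\cP_2\bigl(\cC([0,T],\real^d\times\cP(U))\bigr)$. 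Combining the three points, $\cS$ maps $\cP_2\bigl(\cC([0,T],\real^d\times\cP(U))\bigr)$ into itself, so it is well defined. I expect no real obstacle here: all the substance — existence, pathwise uniqueness, and the uniform-in-time moment estimate for the auxiliary problem — has already been established in the preceding propositions; the only mild care needed is in recognizing the solution as a path-space-valued random variable (so that its law makes sense) and in translating the moment bound into membership in $\cP_2$.
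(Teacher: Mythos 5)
Your proposal is correct and follows essentially the same route as the paper: invoke Proposition~\ref{prop:AuxiliaryProblemWellPosedness} for existence, use pathwise uniqueness to see that the law $\Law(\barY^\Psi)$ is unambiguous, and apply the \emph{a priori} estimate \eqref{eq:APrioriEstimateAux} with $p=2$ to conclude that $\cS(\Psi)\in\cP_2\bigl(\cC([0,T],\real^d\times\cP(U))\bigr)$. The additional care you take in checking measurability of $\barY^\Psi$ as a path-space-valued random variable and in explicitly translating the sup-norm moment bound into the finite-second-moment condition is a harmless (and slightly more thorough) elaboration of what the paper leaves implicit.
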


\begin{proof}
The statement is a consequence of Proposition \ref{prop:AuxiliaryProblemWellPosedness}. 
Indeed, for any fixed $\Psi \in \cP_2\bigl(\cC([0,T],\real^d\times\cP(U))\bigr)$, we can construct a strong solution $\barY^\Psi \colon \Omega \longrightarrow \cC([0,T],\real^d\times\cP(U))$ of the auxiliary problem \eqref{eq:AuxiliaryMeanField}, whose law $\Law(\barY^\Psi)$ is unambiguously determined owing to the pathwise uniqueness. 
In fact, given to strong solutions of \eqref{eq:AuxiliaryMeanField} $\barY^\Psi_1$ and $\barY^\Psi_2$, pathwise uniqueness implies that the $\cC([0,T],\real^d\times\cP(U))$-valued random variables $\barY^\Psi_1$ and $\barY^\Psi_2$ coincide $\prob$-almost surely. Since two random variables coinciding almost surely necessarily have the same law, we conclude that $\cS(\Psi)$ is well defined. 
Finally, to show that $\cS(\Psi) \in \cP_2\bigl(\cC([0,T],\real^d\times\cP(U))\bigr)$, we use estimate \eqref{eq:APrioriEstimateAux}, which ensures that the quantity
\begin{equation*}
    \E\biggl[\sup_{t\in[0,T]} \norm{\barY^\Psi_t}_{\real^d \times F(U)}^2\biggr] \le C^\Psi\Bigl(1+\E\bigl[\norm{\barX_0}_{\real^d}^2\bigr]\Bigr)
\end{equation*}
is finite. Therefore $\cS(\Psi) = \Law(\barY^\Psi)$ is a measure with finite second moment.
\end{proof}

For all $t \in [0,T]$, and for all measures $\Psi^1, \Psi^2 \in \cP_2\bigl(\cC([0,T],\real^d \times \cP(U))\bigr)$, we define the quantity 
\begin{equation*}
    W_{2,t}(\Psi^1,\Psi^2) \coloneqq \biggl(\inf_{\pi \in \Gamma(\Psi^1,\Psi^2)} \int_{\cC([0,T],\real^d \times \cP(U))^2} \sup_{u\in[0,t]} \norm{\varphi_1(u) - \varphi_2(u)}_{\real^d \times F(U)}^2 \, \de \pi(\varphi_1,\varphi_2)\biggr)^\frac{1}{2}.
\end{equation*}
\begin{rem}\label{rem:vuoifareunremark}
We highlight the following immediate facts about $W_{2,t}$\,.
\begin{enumerate}
\item[(a)] The infimum defining $W_{2,t}(\Psi^1,\Psi^2)$ is actually achieved for some optimal transport plan $\tilde{\pi} \in \Gamma(\Psi^1,\Psi^2)$, as a consequence of the general theory of Optimal Transport on complete and separable metric spaces \cite[Theorem~2.10]{Optimal-Transport-Ambrosio}.
\item[(b)] If $s \le t$, then $W_{2,s} \le W_{2,t}$\,; the quantity $W_{2,T}$ 
is the customary $W_2$ distance on $\cP_2\bigl(\cC([0,T],\real^d \times \cP(U))\bigr)$.
\end{enumerate}
\end{rem}

We now prove some estimates regarding the iteration of the map $\cS$. 

\begin{lemma}\label{lemma:4.20}
For all $\Psi^1,\Psi^2 \in \cP_2(\cC([0,T],\real^d \times \cP(U)))$, $n = 1,2,\dots$, and $t \in [0,T]$ we have that
\begin{subequations}\label{eq:4.32}
\begin{align}
    W_{2,t}^2(\cS(\Psi^1),\cS(\Psi^2)) &\le \cK \,t\, W_2^2(\Psi^1,\Psi^2), \label{eq:contractionLemma1} \\
    W_{2,t}^2(\cS^n(\Psi^1),\cS^n(\Psi^2)) &\le \cK \int_0^t W_{2,s}^2(\cS^{n-1}(\Psi^1),\cS^{n-1}(\Psi^2))\,\de s, \label{eq:contractionLemman}
\end{align}
\end{subequations}
where $\cK > 0$ depends only on $L_v$, $L_\sigma$, $L_\cT$, and $T$.
\end{lemma}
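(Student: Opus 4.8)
The plan is to deduce both inequalities in \eqref{eq:4.32} from a single ``one-step'' estimate for the map $\cS$, obtained via \emph{synchronous coupling} and a Gr\"onwall argument, in close analogy with the pathwise-uniqueness computations of Section~\ref{sec_pathwiseuniquenessN}. First I would fix $\Phi^1,\Phi^2 \in \cP_2(\cC([0,T],\real^d\times\cP(U)))$ and construct the pathwise unique strong solutions $\barY^{\Phi^1} = (\barX^{\Phi^1},\barLambda^{\Phi^1})$ and $\barY^{\Phi^2} = (\barX^{\Phi^2},\barLambda^{\Phi^2})$ of the auxiliary problem \eqref{eq:AuxiliaryMeanField} (Proposition~\ref{prop:AuxiliaryProblemWellPosedness}), realised on the \emph{same} filtered probability space, driven by the \emph{same} Brownian motion $\barB$ and the \emph{same} initial datum $\barY_0$. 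Since $\cS(\Phi^i) = \Law(\barY^{\Phi^i})$ by Proposition~\ref{prop:DefS}, the law of the pair $(\barY^{\Phi^1},\barY^{\Phi^2})$ belongs to $\Gamma(\cS(\Phi^1),\cS(\Phi^2))$, whence
\begin{equation*}
W_{2,t}^2(\cS(\Phi^1),\cS(\Phi^2)) \le \E\biggl[\sup_{u\in[0,t]}\norm{\barY^{\Phi^1}_u - \barY^{\Phi^2}_u}_{\real^d\times F(U)}^2\biggr] \eqqcolon g(t), \qquad t\in[0,T].
\end{equation*}

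Next I would estimate $g(t)$. Subtracting the two copies of \eqref{eq:AuxiliaryMeanField}, applying H\"older's inequality to the drift and Bochner integrals, the Burkholder--Davis--Gundy inequality \eqref{eq_BDG} with $p=2$ to the stochastic integral, and the Lipschitz estimates \eqref{eq:v_prop}, \eqref{eq:sigma_prop}, \eqref{eq:T_prop} in their full form (so that the term $W_1(\Phi^1_s,\Phi^2_s)$ is retained, with $\Phi^i_s \coloneqq (\ev_s)_\sharp\Phi^i$), and using $\norm{\barX^{\Phi^1}_s - \barX^{\Phi^2}_s}_{\real^d}^2 + \norm{\barLambda^{\Phi^1}_s - \barLambda^{\Phi^2}_s}_{F(U)}^2 \le 2\,g(s)$ together with the monotonicity of $s\mapsto g(s)$, I expect a bound of the form
\begin{equation*}
g(t) \le C_1\int_0^t g(s)\,\de s + C_2\int_0^t W_1(\Phi^1_s,\Phi^2_s)^2\,\de s,
\end{equation*}
with $C_1,C_2 > 0$ depending only on $L_v$, $L_\sigma$, $L_\cT$, and $T$. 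The time-marginal Wasserstein term can then be controlled by pushing any transport plan between $\Phi^1$ and $\Phi^2$ forward through $\ev_s$ and applying Jensen's inequality, which gives $W_1(\Phi^1_s,\Phi^2_s) \le W_{2,s}(\Phi^1,\Phi^2)$. Since $g$ is bounded on $[0,T]$ by the \emph{a priori} estimate \eqref{eq:APrioriEstimateAux} applied to $\barY^{\Phi^1}$ and $\barY^{\Phi^2}$, and $t\mapsto C_2\int_0^t W_{2,s}^2(\Phi^1,\Phi^2)\,\de s$ is nondecreasing, Gr\"onwall's inequality would then yield the one-step estimate
\begin{equation*}
W_{2,t}^2(\cS(\Phi^1),\cS(\Phi^2)) \le g(t) \le \cK\int_0^t W_{2,s}^2(\Phi^1,\Phi^2)\,\de s, \qquad \cK \coloneqq e^{C_1 T}C_2.
\end{equation*}

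Finally I would specialise this. Choosing $\Phi^i = \cS^{n-1}(\Psi^i)$, which is well defined and lies in $\cP_2(\cC([0,T],\real^d\times\cP(U)))$ by iterating Proposition~\ref{prop:DefS}, gives exactly \eqref{eq:contractionLemman}. Choosing instead $\Phi^i = \Psi^i$ and using that $W_{2,s}(\Psi^1,\Psi^2) \le W_{2,T}(\Psi^1,\Psi^2) = W_2(\Psi^1,\Psi^2)$ for $s\le t$ yields $W_{2,t}^2(\cS(\Psi^1),\cS(\Psi^2)) \le \cK\, t\, W_2^2(\Psi^1,\Psi^2)$, which is \eqref{eq:contractionLemma1} (after possibly enlarging $\cK$). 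The main obstacle is the bookkeeping in the one-step estimate — keeping track of how the Lipschitz constants and the factor $t\le T$ propagate through the H\"older and Burkholder--Davis--Gundy steps — together with the minor but essential point that the coupling must share the initial datum $\barY_0$ as well as the Brownian motion, so that only the differences of the fields $v$, $\sigma$, $\cT$, and hence only $g(s)$ and $W_1(\Phi^1_s,\Phi^2_s)$, enter the estimate.
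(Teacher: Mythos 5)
Your proposal is correct and follows essentially the same route as the paper: bound $W_{2,t}^2(\cS(\Psi^1),\cS(\Psi^2))$ by the synchronous-coupling cost $\E\bigl[\sup_{u\in[0,t]}\norm{\barY^{\Psi^1}_u-\barY^{\Psi^2}_u}_{\real^d\times F(U)}^2\bigr]$, estimate this via H\"older, Burkholder--Davis--Gundy, and the Lipschitz hypotheses \eqref{eq_fieldsprop}, control $W_1(\Psi^1_s,\Psi^2_s)$ by $W_{2,s}(\Psi^1,\Psi^2)$ through an evaluation push-forward and Jensen, and close with Gr\"onwall before specialising to obtain both \eqref{eq:contractionLemma1} and \eqref{eq:contractionLemman}. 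The only cosmetic difference is that you phrase the one-step bound for generic inputs $\Phi^1,\Phi^2$ and then substitute, whereas the paper proves it for $\Psi^1,\Psi^2$ and then replaces them by $\cS^{n-1}(\Psi^i)$.
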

\begin{proof} In what follows, we shall denote by $\barY^1\coloneqq \barY^{\Psi^1}$ and $\barY^2\coloneqq \barY^{\Psi^2}$ the strong solutions to the auxiliary problem \eqref{eq:AuxiliaryMeanField} associated with the measures $\Psi^1$ and $\Psi^2$, respectively. We first observe that, for all fixed $t \in [0,T]$, we have 
\begin{equation}\label{eq:420i}
\begin{split}
    &\,W^2_{2,t}(\cS(\Psi^1),\cS(\Psi^2)) \\
    \le& \int_{\cC([0,T],\real^d \times \cP(U))^2} \sup_{u\in[0,t]} \norm{\varphi_1(u) - \varphi_2(u)}_{\real^d \times F(U)}^2\, \de P_{(\barY^1, \barY^2)}(\varphi_1,\varphi_2) \\
    =&\, \E\biggl[\sup_{u \in [0,t]} \norm{\barY^1_u -  \barY^2_u}_{\real^d \times F(U)}^2\biggr],
\end{split}
\end{equation}
where $P_{(\barY^1,\barY^2)} \coloneqq \Law(\barY^1,\barY^2) = (\barY^1,\barY^2)_\sharp\prob\,$, which is an element of $\Gamma(\cS(\Psi^1),\cS(\Psi^2))$. 
Owing to H\"{o}lder inequality, \eqref{eq_BDG}, \eqref{eq:v_prop}, and \eqref{eq:sigma_prop}, we observe that 
\begin{equation*}
\begin{split}
    &\,\E\biggl[\sup_{u\in[0,t]} \norm{\barX^1_u - \barX^2_u}_{\real^d}^2\biggr] \\
    \le&\, 2t\,\E\biggl[\int_0^t\norm{v_{\Psi^1_s}(\barY^1_s) - v_{\Psi^2_s}(\barY^2_s)}_{\real^d}^2\,\de s \biggr] + 8\E\biggl[\int_0^t\norm{\sigma_{\Psi^1_s}(\barY^1_s) - \sigma_{\Psi^2_s}(\barY^2_s)}_{\real^{d \times m}}^2\,\de s\biggr] \\
    \le&\, 2t\, \E\biggl[\int_0^t \bigl(L_v(\norm{\barY^1_s - \barY^2_s}_{\real^d \times F(U)} + W_1(\Psi^1_s,\Psi^2_s))\bigr)^2\,\de s\biggr] \\
    &\,+ 8\E\biggl[\int_0^t \bigl(L_\sigma(\norm{\barY^1_s - \barY^2_s}_{\real^d \times F(U)} + W_1(\Psi^1_s,\Psi^2_s))\bigr)^2\,\de s\biggr] \\
    \le&\, 4(tL_v^2 + 4L_\sigma^2)\cdot \E\biggl[\int_0^t \bigl(\norm{\barY^1_s - \barY^2_s}_{\real^d \times F(U)}^2 + W_1^2(\Psi^1_s,\Psi^2_s)\bigr)\,\de s\biggr]
\end{split}
\end{equation*}
and, owing to H\"{o}lder inequality and \eqref{eq:T_prop}, we analogously observe that
\begin{equation*}
\begin{split}
&\,\E\biggl[\sup_{u\in[0,t]}\norm{\barLambda^1_u - \barLambda^2_u}_{F(U)}^2\biggr] \le \E\biggl[t\int_0^t \norm{\cT_{\Psi^1_s}(\barY^1_s) - \cT_{\Psi^2_s}(\barY^2_s)}^2_{F(U)}\de s\biggr] \\
\le&\, t\, \E\biggl[\int_0^t \bigl(L_\cT(\norm{\barY^1_s - \barY^2_s}_{\real^d \times F(U)} + W_1(\Psi^1_s,\Psi^2_s))\bigr)^2\,\de s\biggr] \\
\le&\, 2t\,L_\cT^2\, \E\biggl[\int_0^t \bigl(\norm{\barY^1_s - \barY^2_s}_{\real^d \times F(U)}^2 + W_1^2(\Psi^1,\Psi^2)\bigr)\,\de s\biggr].
\end{split}
\end{equation*}
Combining the previous inequalities, we obtain the estimate
\begin{equation*}
\begin{split}
&\,\E\biggl[\sup_{u \in [0,t]} \norm{\barY^1_u -  \barY^2_u}_{\real^d \times F(U)}^2\biggr] \le 2\E\biggl[\sup_{u\in[0,t]} \norm{\barX^1_u - \barX^2_u}_{\real^d}^2\biggr] + 2\E\biggl[\sup_{u\in[0,t]}\norm{\barLambda^1_u - \barLambda^2_u}_{F(U)}^2\biggr] \\
\le&\,4 \bigl(2tL_v^2 + 8L_\sigma^2 + tL_\cT^2\bigr) \cdot \E\biggl[\int_0^t \bigl(\norm{\barY^1_s - \barY^2_s}_{\real^d \times F(U)}^2 + W_1^2(\Psi^1_s,\Psi^2_s)\bigr)\,\de s\biggr] \\
\le&\,4 \bigl(2TL_v^2 + 8L_\sigma^2 + TL_\cT^2\bigr)\cdot\biggl(\int_0^t \E\biggl[\sup_{u\in[0,s]} \norm{\barY^1_u - \barY^2_u}_{\real^d \times F(U)}^2\biggr] \,\de s + \int_0^t W_1^2(\Psi^1_s,\Psi^2_s)\,\de s\biggr).
\end{split}
\end{equation*}
To estimate the last integrand above, let us fix $s \le s'$ and let us consider a measure $\tilde{\pi} \in \Gamma(\Psi^1,\Psi^2)$ which is optimal for $W_{2,s'}$ (see Remark~\ref{rem:vuoifareunremark}(a)).
We also define the probability measure over $(\real^d \times \cP(U))^2$ given by $\tilde{\pi}_s \coloneqq (\ev_s,\ev_s)_\sharp \tilde{\pi} \in \Gamma(\Psi^1_s,\Psi^2_s)$ . We have that
\begin{equation*}
\begin{split}
    W_1^2(\Psi^1_s,\Psi^2_s) &\le \biggl(\int_{(\real^d \times \cP(U))^2}\norm{y_1 - y_2}_{\real^d \times F(U)}\,\de\tilde{\pi}_s(y_1,y_2)\biggr)^2 \\
    &\le \int_{(\real^d \times \cP(U))^2}\norm{y_1 - y_2}_{\real^d \times F(U)}^2\,\de\tilde{\pi}_s(y_1,y_2) \\
    &= \int_{\cC([0,T],\real^d \times \cP(U))^2}\norm{\varphi_1(s) - \varphi_2(s)}_{\real^d \times F(U)}^2\,\de\tilde{\pi}(\varphi_1,\varphi_2) \\
    &\le \int_{\cC([0,T],\real^d \times \cP(U))^2} \sup_{u\in[0,s']} \norm{\varphi_1(u) - \varphi_2(u)}_{\real^d \times F(U)}^2 \, \de \tilde{\pi}(\varphi_1,\varphi_2) = W_{2,s'}^2(\Psi^1,\Psi^2),
\end{split}
\end{equation*}
where we have used Jensen inequality in the second line and the optimality of $\tilde{\pi}$ in the last one. 
Using this last estimate with $s'=s$, we can write
\begin{equation*}
\E\biggl[\sup_{u \in [0,t]} \norm{\barY^1_u -  \barY^2_u}_{\real^d \times F(U)}^2\biggr]\! \le C_0 \biggl(\int_0^t \!\E\biggl[\sup_{u\in[0,s]} \norm{\barY^1_u - \barY^2_u}_{\real^d \times F(U)}^2\biggr] \,\de s + \int_0^t \!W_{2,s}^2(\Psi^1,\Psi^2)\,\de s\biggr),
\end{equation*}
with $C_0 \coloneqq 4 \bigl(2TL_v^2 + 8L_\sigma^2 + TL_\cT^2\bigr)$. 
Since the map $t \longmapsto \E\bigl[\sup_{u \in [0,t]} \norm{\barY^1_u -  \barY^2_u}_{\real^d \times F(U)}^2\bigr]$ is bounded over $[0,T]$ (see the \emph{a priori} estimate \eqref{eq:APrioriEstimateAux}) and the map $t \longmapsto \int_0^t W_{2,s}^2(\Psi^1,\Psi^2) \,\de s$ is non decreasing and bounded (indeed $\int_0^t W_{2,s}^2(\Psi^1,\Psi^2) \,\de s 
\le T W_2^2(\Psi^1,\Psi^2) < +\infty$, see Remark~\ref{rem:vuoifareunremark}(b)), by Gr\"{o}nwall inequality we conclude that, for all $t \in [0,T]$, estimate \eqref{eq:420i} becomes
\begin{equation}\label{eq:bottigliarotta}
    W_{2,t}^2(\cS(\Psi^1),\cS(\Psi^2)) \le \E\biggl[\sup_{u \in [0,t]} \norm{\barY^1_u -  \barY^2_u}_{\real^d \times F(U)}^2\biggr]  \le e^{C_0 t} C_0 \int_0^t W_{2,s}^2(\Psi^1,\Psi^2)\,\de s.
\end{equation}
By choosing $\cK \coloneqq e^{C_0 T}C_0$, inequality \eqref{eq:contractionLemma1} follows from Remark~\ref{rem:vuoifareunremark}(b), and inequality \eqref{eq:contractionLemman} follows by applying \eqref{eq:bottigliarotta} replacing $\Psi^i$ by $\cS^{n-1}(\Psi^i)$, for $i=1,2$.
\end{proof}

The following contractivity estimates holds true.
\begin{lemma}\label{lemma:4.21}
For all $\Psi^1,\Psi^2 \in \cP_2(\cC([0,T],\real^d \times \cP(U)))$ and $n = 1,2,\dots$, we have that
\begin{equation}\label{eq:contractionestimate}
    W_2(\cS^n(\Psi^1),\cS^n(\Psi^2)) \le \sqrt{\frac{(\cK T)^n}{n!}} W_2(\Psi^1,\Psi^2),
\end{equation}
where $\cK$ is the constant from Lemma~\ref{lemma:4.20}.
\end{lemma}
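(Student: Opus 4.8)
The plan is to prove, by induction on $n$, the stronger pointwise-in-time estimate
\[
W_{2,t}^2(\cS^n(\Psi^1),\cS^n(\Psi^2)) \le \frac{(\cK t)^n}{n!}\, W_2^2(\Psi^1,\Psi^2), \qquad \text{for all } t\in[0,T],
\]
from which \eqref{eq:contractionestimate} follows immediately by setting $t=T$, recalling that $W_{2,T}=W_2$ (Remark~\ref{rem:vuoifareunremark}(b)), and taking square roots. This is the same factorial-recursion pattern already exploited in Proposition~\ref{prop:factInequality_t}.

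For the base case $n=1$, I would simply invoke \eqref{eq:contractionLemma1}, which already gives $W_{2,t}^2(\cS(\Psi^1),\cS(\Psi^2)) \le \cK\, t\, W_2^2(\Psi^1,\Psi^2)$ for every $t\in[0,T]$, i.e.\ the claimed estimate for $n=1$.

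For the inductive step, assuming the estimate holds for $n-1$ for every $t\in[0,T]$, I would plug it into \eqref{eq:contractionLemman}:
\[
W_{2,t}^2(\cS^n(\Psi^1),\cS^n(\Psi^2)) \le \cK \int_0^t W_{2,s}^2(\cS^{n-1}(\Psi^1),\cS^{n-1}(\Psi^2))\,\de s \le \frac{\cK^n\, W_2^2(\Psi^1,\Psi^2)}{(n-1)!} \int_0^t s^{n-1}\,\de s,
\]
and the elementary computation $\int_0^t s^{n-1}\,\de s = t^n/n$ turns the right-hand side into $\frac{(\cK t)^n}{n!}\, W_2^2(\Psi^1,\Psi^2)$, closing the induction. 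Evaluating at $t=T$ and taking square roots yields \eqref{eq:contractionestimate}.

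The argument presents no real obstacle: all the analytic content (H\"older, Burkholder--Davis--Gundy, the Lipschitz bounds \eqref{eq_fieldsprop}, and the control of $W_1^2(\Psi^1_s,\Psi^2_s)$ by $W_{2,s'}^2(\Psi^1,\Psi^2)$ via Jensen and optimality of the transport plan) was already carried out in Lemma~\ref{lemma:4.20}. The only point requiring minimal care is that the estimate must be propagated through the induction for \emph{all} $t\in[0,T]$ rather than just for $t=T$, since the recursive inequality \eqref{eq:contractionLemman} integrates the $(n-1)$-th bound over the whole interval $[0,t]$.
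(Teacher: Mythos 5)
Your proposal is correct and follows exactly the same route as the paper's proof: an induction on $n$ establishing the pointwise-in-time bound $W_{2,t}^2(\cS^n(\Psi^1),\cS^n(\Psi^2)) \le \frac{(\cK t)^n}{n!} W_2^2(\Psi^1,\Psi^2)$, with base case \eqref{eq:contractionLemma1}, inductive step via \eqref{eq:contractionLemman} and the elementary integral $\int_0^t s^{n-1}\,\de s = t^n/n$, then evaluation at $t=T$. Your closing observation that the estimate must be propagated for \emph{all} $t\in[0,T]$, not just $t=T$, is precisely the point the paper's formulation of \eqref{eq:perinduzione} is designed to handle.
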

\begin{proof}
We proceed by induction to show that, for every $n=1,2,\ldots$ and for every $t\in[0,T]$, 
\begin{equation}\label{eq:perinduzione}
W_{2,t}^2(\cS^n(\Psi^1),\cS^n(\Psi^2))\leq \frac{(\cK\,t)^n}{n!} W_2^2(\Psi^1,\Psi^2).
\end{equation}
For $n=1$, \eqref{eq:perinduzione} is exactly \eqref{eq:contractionLemma1}. 
Let us assume that \eqref{eq:perinduzione} holds true for a given $n-1\ge0$.
By \eqref{eq:contractionLemman} and the inductive hypothesis, we can estimate
\begin{equation*}
\begin{split}
    W_{2,t}^2(\cS^n(\Psi^1),\cS^n(\Psi^2)) \le &\, \cK \int_0^t W_{2,s}^2(\cS^{n-1}(\Psi^1),\cS^{n-1}(\Psi^2))\,\de s \\
    \leq &\, \cK \int_0^t \frac{(\cK\,s)^{n-1}}{(n-1)!} W_2^2(\Psi^1,\Psi^2)\,\de s = \frac{(\cK\,t)^n}{n!}W_2^2(\Psi^1,\Psi^2),
\end{split}
\end{equation*}
which proves \eqref{eq:perinduzione}. 
Estimate~\eqref{eq:contractionestimate} follows by taking $t=T$ and by Remark~\eqref{rem:vuoifareunremark}(b).
\end{proof}

As a consequence of Lemma~\ref{lemma:4.21}, in the next proposition we prove that the map $\cS$ has a unique fixed point.
\begin{prop}\label{prop:fixedpoint}
Under the hypotheses of Proposition~\ref{prop:DefS}, the map $\cS$ defined in \eqref{eq:DefS} admits a unique fixed point in the space $\cP_2(\cC([0,T],\real^d\times\cP(U)))$.
\end{prop}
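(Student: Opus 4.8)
The plan is to invoke the Banach--Caccioppoli fixed-point theorem in its iterated form: a self-map of a complete metric space admits a unique fixed point as soon as one of its iterates is a contraction. The quantitative estimate needed to feed this machinery has already been established in Lemma~\ref{lemma:4.21}, so the remaining work is essentially bookkeeping.

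First I would check that the ambient space is complete. Since $\real^d\times F(U)$ is a separable Banach space and $\cP(U)$ is a closed (indeed compact) subset of $F(U)$ by Proposition~\ref{prop:ProprF(U)P(U)}, the product $\real^d\times\cP(U)$ is a complete and separable metric space; consequently $\cC([0,T],\real^d\times\cP(U))$, endowed with the uniform norm, is itself complete and separable, and by Theorem~\ref{thm:WasserPolish} so is its Wasserstein space $\bigl(\cP_2(\cC([0,T],\real^d\times\cP(U))),W_2\bigr)$. Next I would exploit Lemma~\ref{lemma:4.21}: for every $n\in\nat^+$,
\[
W_2(\cS^n(\Psi^1),\cS^n(\Psi^2)) \le \sqrt{\frac{(\cK T)^n}{n!}}\,W_2(\Psi^1,\Psi^2),\qquad \Psi^1,\Psi^2\in\cP_2\bigl(\cC([0,T],\real^d\times\cP(U))\bigr).
\]
Because $(\cK T)^n/n!\to 0$ as $n\to\infty$, there is $\bar n\in\nat^+$ such that $\alpha\coloneqq\sqrt{(\cK T)^{\bar n}/\bar n!}<1$, so that $\cS^{\bar n}$ is an $\alpha$-contraction of the complete metric space above.

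Then the iterated contraction argument closes the proof: $\cS^{\bar n}$ has a unique fixed point $\Sigma$; since $\cS\circ\cS^{\bar n}=\cS^{\bar n}\circ\cS$, the measure $\cS(\Sigma)$ is again a fixed point of $\cS^{\bar n}$, hence $\cS(\Sigma)=\Sigma$ by uniqueness, which shows existence of a fixed point for $\cS$. Conversely, any fixed point of $\cS$ is a fixed point of $\cS^{\bar n}$, of which there is exactly one, giving uniqueness. I do not expect a real obstacle at this stage — the delicate estimate (the content of Lemmata~\ref{lemma:4.20} and~\ref{lemma:4.21}) has already been carried out — the only mild subtlety being that $\cS$ itself need not be a contraction when $\cK T$ is large, which is precisely why one passes to a sufficiently high iterate. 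Finally, as detailed in Section~\ref{sec:MeanFieldProofOutline}, setting $\barY\coloneqq\barY^\Sigma$ for this $\Sigma$ produces a strong solution of the mean-field problem~\eqref{eq:MeanField}.
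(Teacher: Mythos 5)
Your proposal is correct and follows the same route as the paper: invoke Lemma~\ref{lemma:4.21}, observe that $\sqrt{(\cK T)^n/n!}<1$ for $n$ large so that some iterate $\cS^{\bar n}$ is a contraction of the complete metric space $\bigl(\cP_2(\cC([0,T],\real^d\times\cP(U))),W_2\bigr)$, and conclude by the classical variant of the Banach--Caccioppoli theorem for maps with a contractive iterate. Your explicit verification of completeness and your spelled-out argument that the unique fixed point of $\cS^{\bar n}$ is also the unique fixed point of $\cS$ are exactly the content the paper delegates to the cited reference.
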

\begin{proof}
Since the coefficient $\sqrt{(\cK T)^n/n!}<1$ for $n$ sufficiently large, estimate~\eqref{eq:contractionestimate} implies that the correcponding iterate $\cS^n$ is a contraction in the complete metric space $\bigl(\cP_2(\cC([0,T],\real^d\times\cP(U))),W_2\bigr)$. 
A classical variant of the Banach--Caccioppoli fixed-point theorem (see, \emph{e.g.}, \cite[Esercizio~5, page~215]{PaganiSalsa}) allows us to conclude.
\end{proof}

\subsection{Uniqueness of the solution to the mean-field problem}\label{sec:MeanFieldConclusion}
Proposition~\ref{prop:fixedpoint} provides, in particular, pathwise uniqueness of the solution to problem \eqref{eq:MeanField} in the class of strong solutions whose law belongs to $\cP_2(\cC([0,T],\real^d\times\cP(U)))$.
We now prove that pathwise uniqueness, in fact, holds in the larger class of strong solutions satisfying \eqref{eq:boundedness of moment of solutions}.

To see this, notice that Proposition~\ref{prop:a priori mean field} implies that if $\barY$ is a solution to \eqref{eq:MeanField} and $\Sigma=\Law(\barY)$ satisfies \eqref{eq:boundedness of moment of solutions}, then the \emph{a priori} estimate \eqref{eq:APrioriEstimateMeanField} (with $p=2$) holds, which is equivalent to having $\Sigma\in\cP_2(\cC([0,T],\real^d\times\cP(U)))$. 
Uniqueness follows from Proposition~\ref{prop:fixedpoint}.
This concludes the proof of Theorem~\ref{thm:WellPosednessMeanField}.

\section{Propagation of chaos}\label{sec:propagation_of_chaos}
In this section, we prove the propagation of chaos, that is, the solution to the $N$-particle problem~\eqref{eq:NDicsreteProblem} converges to the solution to the mean-field problem~\eqref{eq:MeanField}, as $N\to\infty$.

We start by recalling that we work in the filtered probability space $(\Omega,\sF,(\sF_t)_{t\in[0,T]},\prob)$, which is fixed once and for all, and that we assume that the fields $v$, $\sigma$, and $\cT$ satisfy the structural assumptions~\eqref{eq_fieldsprop} and~\eqref{eq:T_prop_geometry}.

This section contains two main results.
In Proposition~\ref{prop:independence}, we prove that if we consider a sequence $\{(Y^n_0,B^n)\}_{n=1}^{\infty}$ of i.i.d.~pairs of initial datum and Brownian motion, then the corresponding solutions to the mean-field problem \eqref{eq:MeanField} maintain the i.i.d.~property.
In Theorem~\ref{thm:propagation_of_chaos}, we prove the pathwise propagation of chaos through synchronous coupling \cite[Section~3]{ChaosReviewII}. 
\begin{prop}[independence and identical distribution]
\label{prop:independence}
Let $\{(Y^n_0,B^n)\}_{n=1}^{\infty}$ be a sequence of independent and identically distributed random variables, where $\{Y^n_0\}_{n=1}^{\infty}$ is a sequence of $\sF_0$-measurable, $\real^d \times \cP(U)$-valued random variables with $Y^n_0 = (X^n_0,\Lambda^n_0)$ and $\{X_0^n\}_{n=1}^\infty \subseteq L^2(\Omega,\mathscr{F},\prob)$, and where $\{B^n\}_{n=1}^{\infty}$ is a sequence of $m$-dimensional standard Brownian motions.
Then, denoting with $\barY^n$ the solution to the mean-field problem \eqref{eq:MeanField} constructed from the initial datum $Y^n_0$ and the Brownian motion $B^n$ (for every $n \in \nat^+$), the processes $\{\barY^n\}_{n=1}^\infty$ are independent and identically distributed.
\end{prop}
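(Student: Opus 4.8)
The plan is to reduce the statement to the elementary fact that a single fixed measurable map sends an i.i.d.\ sequence of inputs to an i.i.d.\ sequence of outputs. Recall that, for each $n\in\nat^+$, the solution $\barY^n$ to the mean-field problem~\eqref{eq:MeanField} built from the pair $(Y^n_0,B^n)$ is obtained in two stages (see Sections~\ref{sec:MeanFieldProofOutline}--\ref{sec:MeanFieldFixedPoint}): first one finds the unique fixed point $\Sigma^n$ of the map $\cS^{(n)}$ associated with the data $(Y^n_0,B^n)$ (Proposition~\ref{prop:fixedpoint}); then $\barY^n=\barY^{n,\Sigma^n}$ is the pathwise unique strong solution of the auxiliary SDE~\eqref{eq:AuxiliaryMeanField} with fixed curve $\Psi=\Sigma^n$, driven by $(Y^n_0,B^n)$. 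Accordingly, the two points to establish are: (i) the solution of the auxiliary problem depends on its data only through a fixed Borel functional; (ii) the fixed point $\Sigma^n$ is in fact independent of $n$.

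For (i), I would show that, for any fixed curve $\Psi\in\cP_2(\cC([0,T],\real^d\times\cP(U)))$, the strong solution $\barY^\Psi$ of~\eqref{eq:AuxiliaryMeanField} is measurable with respect to $\sigma(\barY_0,\barB)$, so that, by the Doob--Dynkin lemma, $\barY^\Psi=\Phi^\Psi(\barY_0,\barB)$ holds $\prob$-almost surely for some Borel map
\begin{equation*}
\Phi^\Psi\colon (\real^d\times\cP(U))\times\cC([0,T],\real^m)\longrightarrow \cC([0,T],\real^d\times\cP(U))
\end{equation*}
depending only on $\Psi$ and on the fields $v,\sigma,\cT$, and not on the particular pair $(\barY_0,\barB)$. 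This is verified by inspecting the Picard scheme~\eqref{eq:IterationsMeanField}--\eqref{eq:Iterations0MeanField}: the initial iterate is $\barY_0$ itself; if $\barY_n$ is a Borel functional of $(\barY_0,\barB)$, then in $\barY_{n+1}$ the drift and $\cG$-terms are pathwise Lebesgue/Bochner integrals, hence again Borel functionals of $(\barY_0,\barB)$, while the It\^o term $\int_0^t\sigma_{\Psi_s}(\barX_{n,s},\barLambda_{n,s})\,\de\barB_s$ is an $L^2$-limit of stochastic integrals of elementary integrands, each of which is manifestly a Borel functional of the integrand process (itself a functional of $(\barY_0,\barB)$) and of $\barB$; hence it, too, is such a functional. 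Passing to the $\prob$-almost sure limit (Proposition~\ref{prop:MeanFieldIterateCauchy}) preserves this property, and the usual modification on a $\prob$-null set yields an everywhere-defined Borel $\Phi^\Psi$.

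For (ii), with $(Y^n_0,B^n)$ in place of $(\barY_0,\barB)$ one has $\barY^{n,\Psi}=\Phi^\Psi(Y^n_0,B^n)$, whence $\cS^{(n)}(\Psi)=\Law(\barY^{n,\Psi})=\Law(\Phi^\Psi(Y^n_0,B^n))$. Since all the pairs $(Y^n_0,B^n)$ share the same law, $\cS^{(n)}(\Psi)$ does not depend on $n$; that is, $\cS^{(n)}=\cS$ for a single map $\cS$, and its unique fixed point $\Sigma$ (Proposition~\ref{prop:fixedpoint}) is therefore independent of $n$. Consequently $\barY^n=\barY^{n,\Sigma}=\Phi^\Sigma(Y^n_0,B^n)$ for the single Borel map $\Phi^\Sigma$. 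Because $\{(Y^n_0,B^n)\}_{n=1}^\infty$ is i.i.d.\ and $\Phi^\Sigma$ is a fixed measurable map, the images $\{\barY^n\}_{n=1}^\infty=\{\Phi^\Sigma(Y^n_0,B^n)\}_{n=1}^\infty$ are i.i.d.: identical distribution is immediate, and independence follows since $\sigma(\barY^{n})\subseteq\sigma(Y^{n}_0,B^{n})$ for each $n$ and the $\sigma$-algebras $\{\sigma(Y^n_0,B^n)\}_{n=1}^\infty$ are mutually independent. This proves the proposition. The main obstacle is the rigorous verification of the measurable-functional representation in step~(i) — above all the joint measurability of the It\^o integral in the integrand and in the Brownian path; this is classical (it is the measurability half of the Yamada--Watanabe circle of ideas), but must be carried out with some care.
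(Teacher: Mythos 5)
Your argument is correct and reaches the same two pillars as the paper's proof --- that the map $\cS$ depends on the data $(\barY_0,\barB)$ only through their law (so all the $\cS^{(n)}$ coincide and share the single fixed point $\Sigma$ of Proposition~\ref{prop:fixedpoint}), and that $\barY^n$ is determined by $(Y^n_0,B^n)$ --- but it formalises the second pillar differently. You package everything into a single Borel functional $\Phi^\Sigma$ via Doob--Dynkin, so that i.i.d.\ inputs are pushed forward to i.i.d.\ outputs in one stroke; the paper instead proves Proposition~\ref{prop:propertiesS} by induction on the Picard iterates~\eqref{eq:IterationsMeanField}, showing equality of \emph{finite-dimensional distributions} through common measurable maps built from Riemann-sum approximations of the Lebesgue, Bochner and It\^o integrals, and then upgrades convergence in probability to a.s.\ convergence along subsequences and passes to the weak limit; independence is handled separately by the inclusion $\sigma(\barY^n_t,t\in[0,T])\subseteq\sigma(Y^n_0;B^n_t,t\in[0,T];\sN)$ together with Lemma~\ref{lemma:augmented_sub_sigma_algebras} on null-set augmentation. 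What your route buys is brevity and a cleaner conceptual statement; what it costs is exactly the point you flag at the end: the existence of a single Borel map $\Phi^\Psi$, valid simultaneously for all pairs with the given law and such that $\Phi^\Psi(Y^n_0,B^n)$ actually solves~\eqref{eq:AuxiliaryMeanField} driven by $(Y^n_0,B^n)$, is the measurability half of the Yamada--Watanabe theory --- the It\^o integral is not a pathwise functional, so Doob--Dynkin for one pair does not hand you the representation for another pair without a transfer-of-law argument. The paper's Riemann-sum construction is precisely a self-contained way of circumventing that machinery, since the finite sums \emph{are} manifestly common measurable maps of the sampled data. Note also that your final independence step needs the (minor, but worth a line) observation that the a.s.\ equality $\barY^n=\Phi^\Sigma(Y^n_0,B^n)$ only gives measurability up to null sets, which is the role Lemma~\ref{lemma:augmented_sub_sigma_algebras} plays in the paper. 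With the Yamada--Watanabe measurability fact either cited or proved, your proof is complete.
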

The proof is obtained from the following technical result concerning the structure of the map~$\cS$ defined in~\eqref{eq:DefS}.

\begin{prop}[properties of the map $\cS$]
\label{prop:propertiesS}
Under the hypotheses of Proposition~\ref{prop:DefS}, the map~$\cS$ defined in~\eqref{eq:DefS}
is determined only by the law of the initial datum $\barY_0$.
\end{prop}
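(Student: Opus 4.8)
The plan is to show that the fixed-point construction of $\cS$ depends on the pair $(\barX_0,\barLambda_0,\barB)$ only through the law of $\barY_0=(\barX_0,\barLambda_0)$, and actually not at all on the particular Brownian motion chosen. The key point is that solving the auxiliary problem \eqref{eq:AuxiliaryMeanField} is, by Proposition~\ref{prop:AuxiliaryProblemWellPosedness}, a matter of finding a pathwise unique strong solution $\barY^\Psi$, whose \emph{law} is all that enters the definition $\cS(\Psi)=\Law(\barY^\Psi)$. So the first step is to recall that strong solutions of SDEs are built from the data in a measurable, pathwise way via the Picard iterations \eqref{eq:IterationsMeanField}, which at each stage apply deterministic measurable operations (Lebesgue and Bochner integrals, the It\^o stochastic integral against $\barB$) to the previous iterate and the initial datum; by Proposition~\ref{prop:MeanFieldIterateCauchy} the iterates converge $\prob$-a.s.\ uniformly to $\barY^\Psi$.

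Second, I would invoke the classical fact (Yamada--Watanabe-type reasoning, or simply the explicit measurable construction given by the Picard scheme) that the solution map can be realised as $\barY^\Psi = \Phi^\Psi(\barY_0,\barB)$ for a measurable functional $\Phi^\Psi$ that does not depend on the probability space: the same $\Phi^\Psi$ produces the strong solution on any filtered probability space carrying an $\sF_0$-measurable $\real^d\times\cP(U)$-valued random variable and an adapted $m$-dimensional Brownian motion. Consequently $\cS(\Psi) = \Law(\Phi^\Psi(\barY_0,\barB)) = \bigl(\Phi^\Psi\bigr)_\sharp\Law(\barY_0,\barB)$. Since $\barB$ is a standard Brownian motion its law is the fixed Wiener measure on $\cC([0,T],\real^m)$, independent of everything, and $\barY_0$ is independent of $\barB$ (the initial datum being $\sF_0$-measurable and $\barB$ an $(\sF_t)_t$-Brownian motion); hence $\Law(\barY_0,\barB) = \Law(\barY_0)\otimes\mathcal W_m$, which is determined solely by $\Law(\barY_0)$. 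Therefore $\cS$ depends only on $\Law(\barY_0)$, as claimed.

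The main obstacle is making precise the assertion that $\barY^\Psi$ is a \emph{measurable functional of $(\barY_0,\barB)$ alone}, uniformly over probability spaces. This is where one must be slightly careful: the cleanest route is to observe that each Picard iterate $\barY_{n}^\Psi$ in \eqref{eq:IterationsMeanField} is, by construction, obtained from $\barY_0$ and $\barB$ by operations that are identical on any probability space (the drift and diffusion coefficients $v_{\Psi_s},\sigma_{\Psi_s},\cT_{\Psi_s}$ are fixed deterministic fields once $\Psi$ is fixed), so that $\barY_n^\Psi = \Phi_n^\Psi(\barY_0,\barB)$ for a fixed measurable $\Phi_n^\Psi$; then the a.s.\ uniform limit $\barY^\Psi$ is a measurable functional $\Phi^\Psi = \lim_n \Phi_n^\Psi$ defined $\Law(\barY_0)\otimes\mathcal W_m$-a.e., and pathwise uniqueness guarantees that this limit is indeed \emph{the} strong solution regardless of which $(\barY_0,\barB)$ with the given joint law we plug in. One should also note that the convergence of the stochastic integrals is only in probability in general (as in Section~\ref{sec_existenceN}), but since the limit is identified by pathwise uniqueness, the law of the limit is unambiguous; alternatively one passes to a subsequence converging a.s.

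Finally I would close the argument for Proposition~\ref{prop:independence}: given the i.i.d.\ sequence $\{(Y_0^n,B^n)\}_{n=1}^\infty$, the unique fixed point $\Sigma^n$ of the corresponding map $\cS^n$ depends only on $\Law(Y_0^n)$, which is the same for all $n$; hence all the $\Sigma^n$ coincide with a common measure $\Sigma$. Then $\barY^n = \Phi^\Sigma(Y_0^n,B^n)$ for the same measurable functional $\Phi^\Sigma$ applied to i.i.d.\ inputs, so the outputs $\{\barY^n\}_{n=1}^\infty$ are i.i.d.\ as well (images of independent random variables under a fixed measurable map are independent, and images of identically distributed random variables under a fixed measurable map are identically distributed). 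This yields both the identical-distribution and the independence claims simultaneously.
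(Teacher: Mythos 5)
Your proposal is correct and rests on the same idea as the paper's proof: the Picard iterates, and hence the solution of the auxiliary problem, are measurable functionals of $(\barY_0,\barB)$ built by a recipe depending only on $\Psi$, so that $\cS(\Psi)$ is the push-forward of $\Law(\barY_0)\otimes\mathcal{W}$ (Wiener measure), which is determined by $\Law(\barY_0)$ alone. The paper simply implements the delicate step you flag — that the It\^o integral is only a functional of the data up to null sets of the joint law — concretely, by an induction on the iterates comparing finite-dimensional distributions via Riemann-sum approximations and subsequences converging almost surely, which is exactly the filling-in you sketch in your ``main obstacle'' paragraph.
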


\begin{proof} Let us consider two identically distributed initial data $\barY^1_0$, $\barY^2_0$ and two $m$-dimensional Brownian motions $\barB^1$, $\barB^2$. 
We want to show that, for a fixed measure $\Psi\in \cP_2(\cC([0,T],\real^d\times\cP(U)))$, the laws of the solutions $\barY^1$ and $\barY^2$ to the auxiliary problems
\begin{equation}\label{eq:aux12}
\begin{dcases}
\barX^i_t = \barX_0^i + \int_0^t v_{\Psi_s}(\barX^i_s,\barLambda^i_s)\,\de s + \int_0^t \sigma_{\Psi_s}(\barX^i_s,\barLambda^i_s)\,\de \barB^i_s\,, \\
\barLambda^i_t = \barLambda_0^i +\int_0^t \cT_{\Psi_s}(\barX^i_s,\barLambda^i_s)\,\de s\,,
\end{dcases}
\end{equation}
($i = 1,2$) actually coincide.
From the proof of well-posedness of the auxiliary problem~\eqref{eq:aux12} (see Proposition~\ref{prop:AuxiliaryProblemWellPosedness}), we know that, for $i = 1,2$, the solutions $\barY^i = (\barX^i,\barLambda^i)$ are $\prob$-almost surely the uniform limit of the iterations $\{\barY^i_k\}_{k=0}^\infty$\,, which we recall to be defined as
\begin{equation}
\begin{dcases}
\barX^i_{k,t} = \barX^i_0 + \int_0^t v_{\Psi_s}(\barX^i_{k-1,s},\barLambda^i_{k-1,s})\,\de s + \int_0^t \sigma_{\Psi_s}(\barX^i_{k-1,s},\barLambda^i_{k-1,s})\,\de \barB^i_s\,, \\
\barLambda^i_{k,t} = e^{-\frac{t}{\theta}}\barLambda^i_0 + \frac{1}{\theta}\int_0^t e^{\frac{s-t}{\theta}}\Bigl(\underbrace{\barLambda^i_{k-1,s} + \theta\cT_{\Psi_s}(\barX^i_{k-1,s},\barLambda^i_{k-1,s})}_{= \cG_{\Psi_s}(\barX^i_{k-1,s},\barLambda^i_{k-1,s})}\Bigr)\,\de s,
\end{dcases}
\end{equation}
for $k = 0,1,2,\dots$ and 
for all $t\in[0,T]$.

Let us begin by proving that the two processes $(\barY^1_k,\barB^1)$ and $(\barY^2_k,\barB^2)$ (considered as random variables from $\Omega$ with values in $\cC([0,T],\real^d\times\cP(U)\times\real^m)$) are identically distributed for all $k = 0,1,2,\dots$ 
We proceed by induction on $k$: the case $k = 0$ follows from the fact that, by hypothesis, $\barY^1_0$ and $\barY^2_0$ are $\mathscr{F}_0$-measurable, hence (see, \emph{e.g.}, Exercise~3.4c in \cite[page~75]{Stochastic-Calculus}) they are independent of the Brownian motions $\barB^1$ and $\barB^2$ respectively. Therefore the random variables $(\barY^1_0,\barB^1)$ and $(\barY^2_0,\barB^2)$ are identically distributed since their joint law is the product of the laws of their components. 

We consider now $k\ge1$ and we suppose that $(\barY^1_{k-1},\barB^1)$ and $(\barY^2_{k-1},\barB^2)$ are identically distributed; to show that $(\barY^1_k,\barB^1)$ and $(\barY^2_k,\barB^2)$ have in turn the same law, we prove that their \emph{finite-dimensional distributions coincide}, that is, for any finite collection of times $t_1,\dots,t_r \in [0,T]$, the random vectors
$(\barY^i_k(t_1),\barB^i(t_1),\dots,\barY^i_k(t_r),\barB^i(t_r))$, for $i=1,2$, 
are identically distributed (the fact that we are working with continuous processes taking values in a separable Banach space ensures the applicability of the criterion). We observe that, for any time $t_j \in \{t_1,\dots,t_r\}$, we can construct a sequence of partitions $\pi^{k,j}_h\coloneqq \{0=\tau_0<\tau_1<\cdots<\tau_{M_h^{k,j}}=t_j\}$\footnote{To be precise, each node $\tau_\eta\in\pi_h^{k,j}$ should be labeled with $k$, $j$, and $h$; we omit this explicit dependence to keep the notation lighter.} of the interval $[0,t_j]$ with vanishing amplitude 
as $h \to \infty$, and such that
\begin{align*}
    &\int_0^{t_j} v_{\Psi_s}(\barY^i_{k-1,s})\,\de s = \lim_{h\to\infty} \sum_{\eta=0}^{M_h^{k,j}-1} v_{\Psi_{\tau_{\eta}}}(\barY^i_{k-1,\tau_{\eta}})(\tau_{\eta+1}-\tau_{\eta}),\quad \prob\text{-a.s.}\\
    &\int_0^{t_j} \sigma_{\Psi_s}(\barY^i_{k-1,s})\,\de \barB^i_s = \lim_{h\to\infty} \sum_{\eta=0}^{M_h^{k,j}-1} \sigma_{\Psi_{\tau_{\eta}}}(\barY^i_{k-1,\tau_\eta})(\barB^i_{\tau_{\eta+1}}-\barB^i_{\tau_\eta})\,,\quad \text{in $\prob$-probability,} \\
    &\int_0^{t_j} e^{\frac{s-t_j}{\theta}}\cG_{\Psi_s}(\barY^i_{k-1,s})\,\de s = \lim_{h\to\infty} \sum_{\eta=0}^{M_h^{k,j}-1} e^{\frac{\tau_\eta-t_j}{\theta}} \cG_{\Psi_{\tau_{\eta}}}(\barY^i_{k-1,\tau_\eta})(\tau_{\eta+1}-\tau_{\eta}),\quad \prob\text{-a.s.}
\end{align*}
for $i = 1,2$; this is possible thanks to the properties of Lebesgue, It\={o} (see \cite[Proposition~7.4]{Stochastic-Calculus}), and Bochner integrals, owing to the continuity of the integrands. Therefore, from the sequences of partitions $\{\pi^{k,j}_h\}_{h=1}^\infty$, $j = 1\dots,r$, we can extract suitable subsequences of partitions (which we will denote with the same set of symbols $\pi^{k,j}_h$) such that all limits in $\prob$-probability are $\prob$-almost sure limits.

Therefore, for any $j=1,\dots,r$ and for $i = 1,2$, we can write that, $\prob$-almost surely,
\begin{equation*}
    \barY^i_k(t_j) = \lim_{h\to\infty}\Phi^{k,j}_h\bigl(\barY^i_{k-1}(\tau_0),\barB^i(\tau_0),\dots,\barY^i_{k-1}\bigl(\tau_{M^{k,j}_h}\bigr),\barB^i\bigl(\tau_{M^{k,j}_h}\bigr)\bigr), 
\end{equation*}
where 
$\Phi^{k,j}_h \colon (\real^d\times\cP(U)\times\real^m)^{M^{k,j}_h+1}\longrightarrow \real^d\times F(U)$ is a suitable measurable map.
Hence, we conclude that, $\prob$-almost surely and for $i = 1,2$, we have
\begin{equation*}
(\barY^i_k(t_1),\barB^i(t_1),\dots,\barY^i_k(t_r),\barB^i(t_r)) = \lim_{h\to\infty} \Phi^{k}_h \bigl(\barY^i_{k-1}(\tau_0),\barB^i(\tau_0),\dots,\barY^i_{k-1}\bigl(\tau_{M^k_h}\bigr),\barB^i\bigl(\tau_{M^k_h}\bigr)\bigr) 
\end{equation*}
where
\begin{equation*}
    \tau_0,\dots,\tau_{M^k_h} \in \pi^k_h \coloneqq \bigcup_{j=1}^r \pi^{k,j}_h
\end{equation*}
and $\Phi^k_h \colon (\real^d\times\cP(U)\times\real^m)^{M^k_h+1}\longrightarrow (\real^d\times F(U)\times\real^m)^r$ is a suitable measurable map, common to $i=1,2$. Since, by inductive hypothesis, $(\barY^1_{k-1},\barB^1)$ and $(\barY^2_{k-1},\barB^2)$ have the same law, and hence the same finite-dimensional distributions, we conclude that for all fixed $h \in \nat$ we have
\begin{equation*}
\begin{split}
&\Law\bigl(\Phi^k_h(\barY^1_{k-1}(\tau_0),\barB^1(\tau_0),\dots,\barY^1_{k-1}(\tau_{N^k_h}),\barB^1(\tau_{M^k_h}))\bigr) \\
=&\Law\bigl(\Phi^k_h(\barY^2_{k-1}(\tau_0),\barB^2(\tau_0),\dots,\barY^2_{k-1}(\tau_{N^k_h}),\barB^2(\tau_{M^k_h})) \bigr).
\end{split}
\end{equation*}
By recalling that almost sure convergence implies weak convergence (that is, in duality with continuous and bounded functions) of the laws, we conclude that, for $i=1,2$,
\begin{equation*}
\begin{split}
&\,\Law\bigl(\Phi^k_h(\barY^i_{k-1}(\tau_0),\barB^i(\tau_0),\dots,\barY^i_{k-1}(\tau_{M^k_h}),\barB^i(\tau_{M^k_h}))\bigr) \\ &\, \xrightharpoonup[h\to\infty]{} \Law\bigl(\barY^i_k(t_1),\barB^i(t_1),\dots,\barY^i_k(t_r),\barB^i(t_r)\bigr), 
\end{split}
\end{equation*}
but the previous argument shows that the two sequences on the left-hand side are actually the same sequence:
by uniqueness of the weak limit of probability measures we deduce that
\begin{equation*}
\Law\bigl(\barY^1_k(t_1),\barB^1(t_1),\dots,\barY^1_k(t_r),\barB^1(t_r)\bigr) = \Law\bigl(\barY^2_k(t_1),\barB^2(t_1),\dots,\barY^2_k(t_r),\barB^2(t_r)\bigr),
\end{equation*}
and from the arbitrariness of times $t_1,\dots,t_r$ we finally conclude that $(\barY^1_k,\barB^1)$ and $(\barY^2_k,\barB^2)$ are identically distributed.

To conclude the proof, we need to show that $(\barY^1,\barB^1)$ and $(\barY^2,\barB^2)$ have the same law. 
We proceed similarly to the last lines of the first part of the proof. We know that
\begin{equation*}
\begin{split}
&(\barY^i,\barB^i) = \lim_{k\to\infty}(\barY^i_k,\barB^i),\quad \prob\text{-a.s., for $i=1,2$;}  \\
\end{split}
\end{equation*}
moreover, as shown earlier, for every $k=0,1,2,\dots$, we have
$
    \Law(\barY^1_k,\barB^1) = \Law(\barY^2_k,\barB^2),
$ 
therefore, relying again on the uniqueness of the weak limit, we conclude that
$
\Law(\barY^1,\barB^1) = \Law(\barY^2,\barB^2). $ 
Finally, by projecting onto the first component, it follows that
$
\Law(\barY^1) = \Law(\barY^2) 
$ 
and the proof is complete.
\end{proof}


\begin{proof}[Proof of Proposition~\ref{prop:independence}]
Let us begin by showing that the stochastic processes $\{\barY^n\}_{n=1}^\infty$ are identically distributed. 
From the proof of the well-posedness of the mean-field problem (see Theorem~\ref{thm:WellPosednessMeanField}), we know that $\Law(\barY^n)$ is the unique fixed point of the map $\cS_n$ defined as in \eqref{eq:DefS}
associating to the measure $\Psi \in \cP_2\bigl(\cC([0,T],\real^d \times \cP(U))\bigr)$ the law of the stochastic process solving the auxiliary problem \eqref{eq:AuxiliaryMeanField} with data $(Y_0^n,B^n)$. 
Proposition~\ref{prop:propertiesS} implies that all maps $\{\cS_n\}_{n=1}^\infty$ actually coincide, since the random variables $\{(Y^n_0, B^n)\}_{n=1}^{\infty}$\,, and therefore the initial data $\{Y_0^n\}_{n=1}^\infty$\,, are identically distributed. 
By uniqueness of the fixed point of the maps $\{\mathcal{S}_n\}_{n=1}^\infty$ (see Proposition~\ref{prop:fixedpoint}), we conclude that
$
    \Law(\barY^1) = \Law(\barY^2) = \cdots = \Law(\barY^n) = \cdots,
$ 
thus proving the claim.

We now want to show that the family of sub-$\sigma$-algebras $\{\sigma(\barY^n_t,t\in[0,T])\}_{n=1}^\infty$ of $\sF$ is independent. 
By hypothesis, the family of sub-$\sigma$-algebras
$    \bigl\{\sigma(Y^n_0;B^n_t,t\in[0,T])\bigr\}_{n=1}^\infty$
is independent.
Letting $\sN$ be the collection of the $\prob$-negligible elements of $\sF$, Lemma~\ref{lemma:augmented_sub_sigma_algebras} implies that also the family
$\bigl\{\sigma(Y^n_0;B^n_t,t\in[0,T];\mathscr{N})\bigr\}_{n=1}^\infty$
is independent. 
By arguing as in Proposition~\ref{prop:propertiesS}, we can see that, for all $t\in [0,T]$, each random variable $\barY^n_t$ is measurable with respect to the $\sigma$-algebra $\sigma(Y^n_0;B^n_t,t\in[0,T];\mathscr{N})$. Indeed, $\barY^n_t$ is the $\prob$-almost sure limit of the sequence of iterations $\{\barY^n_{k,t}\}_{k=0}^\infty$, which are, in turn, $\prob$-almost sure limits of $\sigma(Y^n_0;B^n_t,t\in[0,T])$-measurable functions. As a consequence, we have that, for all $n \in \nat^+$,
\begin{equation*}
        \sigma(\barY^n_t,t\in[0,T]) \subseteq \sigma(Y^n_0;B^n_t,t\in[0,T];\mathscr{N})
\end{equation*}
and since the family $\{\sigma(Y^n_0;B^n_t,t\in[0,T];\mathscr{N})\}_{n=1}^\infty$ is independent, the same holds true for the family
$\{\sigma(\barY^n_t,t\in[0,T])\}_{n=1}^\infty$\,, and the proof is concluded.
\end{proof}

We are now in a position to state and prove the main result of this section, concerning the propagation of chaos as the number of particles $N\to\infty$.
\begin{thm}[propagation of chaos]\label{thm:propagation_of_chaos}
Let $\{(Y^n_0,B^n)\}_{n=1}^{\infty}$ be as in Proposition~\ref{prop:independence}. 
For every fixed $N \in \nat^+$, denote with $Y^{1,N},\dots,Y^{N,N}$ the stochastic processes solving the $N$-particle problem~\eqref{eq:NDicsreteProblem}, namely
\begin{equation*}
\begin{dcases}
X^{n,N}_t = X^n_0 + \int_0^t v_{\Sigma^{n,N}_s}(X^{n,N}_s,\Lambda^{n,N}_s)\,\de s + \int_0^t \sigma_{\Sigma^{n,N}_s}(X^{n,N}_s,\Lambda^{n,N}_s)\,\de B^n_s, \\
\Lambda^{n,N}_t = \Lambda^n_0 +\int_0^t \cT_{\Sigma^{n,N}_s}(X^{n,N}_s,\Lambda^{n,N}_s)\,\de s,
\end{dcases}
\quad\text{($n=1,\ldots,N$)}
\end{equation*}
where $\Sigma_t^{n,N}\coloneqq \frac1{N}\sum_{j=1}^N \delta_{(X^{j,N}_t,\Lambda^{j,N}_t)}$\,,
and denote with $\barY^1,\barY^2,\ldots$ the stochastic processes solving the mean field problems \eqref{eq:MeanField}
\begin{equation}\label{eq:diamogli_un_nome}
\begin{dcases}
\barX^n_t = X^n_0 + \int_0^t v_{\Sigma_s}(\barX^n_s,\barLambda^n_s)\,\de s + \int_0^t \sigma_{\Sigma_s}(\barX^n_s,\barLambda^n_s)\,\de B^n_s, \\
\barLambda^n_t = \Lambda^n_0 + \int_0^t \cT_{\Sigma_s}(\barX^n_s,\barLambda^n_s)\,\de s,
\end{dcases}
\quad\text{($n=1,2,\ldots$)}
\end{equation}
where $\Sigma_t \coloneqq \Law(\barY^1_t) = \Law(\barY^2_t) = \cdots$, for all $t\in[0,T]$.
Then 
\begin{equation}\label{eq:propagation_of_chaos}
\lim_{N\to\infty}\biggl(\max_{1\le n\le N} \E\biggl[\sup_{t \in [0,T]} \norm{Y^{n,N}_t - \barY^n_t}_{\real^d \times F(U)}^2\biggr]\biggr) = 0.
\end{equation}
\end{thm}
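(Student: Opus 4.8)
The plan is to run a \emph{synchronous coupling} argument: realise the $N$-particle system $(Y^{1,N},\dots,Y^{N,N})$ and the mean-field copies $\barY^1,\barY^2,\dots$ on the same filtered probability space, each $\barY^n$ driven by the same initial datum $Y^n_0$ and Brownian motion $B^n$ as its $N$-particle counterpart (legitimate because Theorems~\ref{thm:WellPosednessNParticleSystem} and~\ref{thm:WellPosednessMeanField} produce strong solutions on the prescribed space). By Proposition~\ref{prop:independence} the family $\{\barY^n\}_{n\ge1}$ is i.i.d., so $\Sigma_t=\Law(\barY^1_t)=\Law(\barY^2_t)=\cdots\in\cP_1(\real^d\times\cP(U))$ for every $t$, and we may set, for $N\in\nat^+$ and $t\in[0,T]$,
\[
\rho_N(t)\coloneqq\max_{1\le n\le N}\E\biggl[\sup_{u\in[0,t]}\norm{Y^{n,N}_u-\barY^n_u}_{\real^d\times F(U)}^2\biggr].
\]
The \emph{a priori} estimates of Propositions~\ref{prop:stima a priori} and~\ref{prop:a priori mean field} (with $p=2$) guarantee $\rho_N(t)<+\infty$, and $\rho_N$ is non-decreasing, hence measurable. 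The goal is to show $\rho_N(T)\to0$ as $N\to\infty$, which is exactly~\eqref{eq:propagation_of_chaos}.

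Second, I would derive a Grönwall inequality for $\rho_N$. Fix $n\le N$ and $t\in[0,T]$; subtracting the integral equations for $Y^{n,N}$ and $\barY^n$, splitting into spatial and mixed-strategy components, applying Hölder's inequality to the drift terms, the Burkholder--Davis--Gundy inequality~\eqref{eq_BDG} (with $p=2$) to the stochastic integral, and the Lipschitz bounds~\eqref{eq:v_prop}, \eqref{eq:sigma_prop}, \eqref{eq:T_prop}, one obtains
\[
\E\biggl[\sup_{u\in[0,t]}\norm{Y^{n,N}_u-\barY^n_u}_{\real^d\times F(U)}^2\biggr]\le C_1\int_0^t\E\Bigl[\norm{Y^{n,N}_s-\barY^n_s}_{\real^d\times F(U)}^2+W_1(\Sigma^{n,N}_s,\Sigma_s)^2\Bigr]\,\de s,
\]
with $C_1>0$ depending only on $L_v$, $L_\sigma$, $L_\cT$, $T$. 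The empirical measure $\Sigma^{n,N}_s=\frac1N\sum_{j=1}^N\delta_{(X^{j,N}_s,\Lambda^{j,N}_s)}$ does not depend on $n$; comparing it to $\Sigma_s$ through the ``mean-field'' empirical measure $\barSigma^N_s\coloneqq\frac1N\sum_{j=1}^N\delta_{\barY^j_s}$ and using, exactly as in the proof of Proposition~\ref{prop:DiscreteEmpLipSub-v-sigma-T}, the coupling $\frac1N\sum_j\delta_{(Y^{j,N}_s,\barY^j_s)}\in\Gamma(\Sigma^{n,N}_s,\barSigma^N_s)$ together with Jensen's inequality, one gets
\[
W_1(\Sigma^{n,N}_s,\Sigma_s)^2\le\frac2N\sum_{j=1}^N\norm{Y^{j,N}_s-\barY^j_s}_{\real^d\times F(U)}^2+2\,W_1(\barSigma^N_s,\Sigma_s)^2.
\]
Taking expectations, bounding $\frac1N\sum_{j=1}^N\E[\sup_{u\le s}\norm{Y^{j,N}_u-\barY^j_u}^2]\le\rho_N(s)$ and $\E[\norm{Y^{n,N}_s-\barY^n_s}^2]\le\rho_N(s)$, and then maximising over $n\le N$, gives
\[
\rho_N(t)\le C_2\int_0^t\rho_N(s)\,\de s+C_2\,\eta_N,\qquad\eta_N\coloneqq\int_0^T\E\bigl[W_1(\barSigma^N_s,\Sigma_s)^2\bigr]\,\de s,
\]
for all $t\in[0,T]$, with $C_2>0$ depending only on $L_v,L_\sigma,L_\cT,T$. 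Since $\rho_N$ is bounded on $[0,T]$, Grönwall's inequality yields $\rho_N(T)\le C_2\,\eta_N\,e^{C_2 T}$.

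Third, I would prove $\eta_N\to0$, which I expect to be the main obstacle. For fixed $s$, $\barSigma^N_s$ is the empirical measure of the i.i.d.\ sample $\barY^1_s,\dots,\barY^N_s$ with common law $\Sigma_s\in\cP_1(\real^d\times\cP(U))$, so by the law of large numbers for empirical measures on a Polish space, $W_1(\barSigma^N_s,\Sigma_s)\to0$ $\prob$-almost surely. Fixing $y_0=(0,\lambda_0)\in\real^d\times\cP(U)$, the triangle inequality for $W_1$ and Jensen's inequality give
\[
W_1(\barSigma^N_s,\Sigma_s)^2\le\frac2N\sum_{j=1}^N\norm{\barY^j_s-y_0}_{\real^d\times F(U)}^2+2\,\E\bigl[\norm{\barY^1_s-y_0}_{\real^d\times F(U)}^2\bigr],
\]
and the random variables $\frac1N\sum_{j=1}^N\norm{\barY^j_s-y_0}^2$ converge in $L^1$ by the classical law of large numbers, their common mean being finite by the \emph{a priori} estimate~\eqref{eq:APrioriEstimateMeanField}; hence $\{W_1(\barSigma^N_s,\Sigma_s)^2\}_N$ is uniformly integrable, and with the a.s.\ convergence this yields $\E[W_1(\barSigma^N_s,\Sigma_s)^2]\to0$ for every $s$. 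Since the display above also gives $\E[W_1(\barSigma^N_s,\Sigma_s)^2]\le 4\,\E[\sup_{u\in[0,T]}\norm{\barY^1_u-y_0}^2]$ uniformly in $s$ and $N$, dominated convergence gives $\eta_N\to0$, whence $\rho_N(T)\to0$. The delicate point is the empirical-measure law of large numbers on the non-locally-compact space $\real^d\times\cP(U)$; this is reduced to the classical $\real^d$ statement by exploiting the compactness of $\cP(U)$ in $F(U)$ (Proposition~\ref{prop:ProprF(U)P(U)}), so that the $\cP(U)$-component contributes only bounded quantities. (If a quantitative rate in $N$ were wanted one could instead invoke a Fournier--Guillin-type estimate, but the qualitative argument suffices here.) Everything else is a routine adaptation of the Lipschitz/Grönwall computations already carried out for pathwise uniqueness in Sections~\ref{sec_pathwiseuniquenessN} and~\ref{sec:MeanFieldConclusion}.
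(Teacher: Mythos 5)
Your proposal is correct and follows essentially the same route as the paper: synchronous coupling, a Gr\"onwall estimate obtained by splitting $W_1(\Sigma^N_s,\Sigma_s)$ through the mean-field empirical measure via the triangle inequality and the diagonal coupling $\frac1N\sum_j\delta_{(Y^{j,N}_s,\barY^j_s)}$, and a Wasserstein law of large numbers for the empirical measures of the i.i.d.\ copies $\barY^n$ — precisely the content of Propositions~\ref{prop:5.4} and~\ref{prop:Panaretos}. The only difference is organisational: the paper bounds everything by the single path-space quantity $\E\bigl[W_2^2(\barSigma^N,\Sigma)\bigr]$ on $\cC([0,T],\real^d\times\cP(U))$ and invokes the empirical law of large numbers once there, whereas you apply it to each time marginal $\Sigma_s$ and pass to the limit through the time integral by uniform integrability and dominated convergence; both are valid, though note that the almost-sure weak convergence of empirical measures you invoke holds on any separable metric space directly (as in the proof of Proposition~\ref{prop:Panaretos}, by testing against a countable family of bounded continuous functions), so no reduction to the $\real^d$ case via compactness of $\cP(U)$ is actually needed.
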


The proof relies on the following two technical propositions.
\begin{prop}\label{prop:5.4}
Under the hypotheses of Theorem~\ref{thm:propagation_of_chaos}, for all $N\in \nat^+$, define the empirical measures
\begin{equation}
\barSigma^N \coloneqq \frac{1}{N}\sum_{j=1}^N \delta_{\barY^j} \colon \Omega \longrightarrow \cP_2(\cC([0,T],\real^d \times \cP(U))),
\end{equation}
associated with the $N$ processes $\barY^1,\dots,\barY^N$, solving the first $N$ instances of the mean-field problems \eqref{eq:diamogli_un_nome}.  
Define $\Sigma \coloneqq \Law(\barY^1) = \Law(\barY^2) = \cdots$.
Then the following inequality holds:
\begin{equation}\label{eq:PropChaosPreliminaryEstimate}
    \max_{1\le n\le N} \E\biggl[\sup_{t \in [0,T]} \norm{Y^{n,N}_t - \barY^n_t}_{\real^d \times F(U)}^2\biggr] \le \mathcal{C}\, \E\Bigl[W_2^2(\barSigma^N,\Sigma)\Bigr],
\end{equation}
where $\mathcal{C}>0$ is a constant depending on $L_v$\,, $L_\sigma$\,, $L_\cT$\,, and $T$.
\end{prop}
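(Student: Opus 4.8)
The plan is to exploit the \emph{synchronous coupling}: for each $n\in\{1,\dots,N\}$ the processes $Y^{n,N}$ and $\barY^n$ are driven by the same initial datum $Y^n_0=(X^n_0,\Lambda^n_0)$ and the same Brownian motion $B^n$, so their difference solves an equation with no ``fresh'' stochastic input and can be estimated in expectation by the same machinery used in Section~\ref{sec:NParticleModel} and Lemma~\ref{lemma:4.20}. Fix $n$ and $t\in[0,T]$, and write $\Sigma^N_s\coloneqq\frac1N\sum_{j=1}^N\delta_{(X^{j,N}_s,\Lambda^{j,N}_s)}$ for the empirical measure of the $N$-particle system (this is the object called $\Sigma^{n,N}_s$ in the statement, and it does not depend on $n$) and $\Sigma_s=\Law(\barY^1_s)$. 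Subtracting the integral equations for the spatial components and using H\"older's inequality on the drift term, the Burkholder--Davis--Gundy inequality~\eqref{eq_BDG} (with $p=2$) on the stochastic term, and the Lipschitz bounds~\eqref{eq:v_prop}--\eqref{eq:sigma_prop}, one obtains
\begin{equation*}
\E\Bigl[\sup_{u\in[0,t]}\norm{X^{n,N}_u-\barX^n_u}_{\real^d}^2\Bigr]\le C\,\E\biggl[\int_0^t\bigl(\norm{Y^{n,N}_s-\barY^n_s}_{\real^d\times F(U)}^2+W_1^2(\Sigma^N_s,\Sigma_s)\bigr)\,\de s\biggr],
\end{equation*}
and, analogously, using~\eqref{eq:T_prop} and H\"older's inequality on the (driftless, diffusion-free) mixed-strategy component,
\begin{equation*}
\E\Bigl[\sup_{u\in[0,t]}\norm{\Lambda^{n,N}_u-\barLambda^n_u}_{F(U)}^2\Bigr]\le C\,\E\biggl[\int_0^t\bigl(\norm{Y^{n,N}_s-\barY^n_s}_{\real^d\times F(U)}^2+W_1^2(\Sigma^N_s,\Sigma_s)\bigr)\,\de s\biggr],
\end{equation*}
for a constant $C$ depending on $L_v$, $L_\sigma$, $L_\cT$, and $T$.

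Next I would control $W_1(\Sigma^N_s,\Sigma_s)$ by inserting the empirical measure $\barSigma^N_s\coloneqq\frac1N\sum_{j=1}^N\delta_{\barY^j_s}$ of the mean-field copies at time $s$: by the triangle inequality for $W_1$, using $\frac1N\sum_{j=1}^N\delta_{(Y^{j,N}_s,\barY^j_s)}$ as a competitor transport plan for $W_1(\Sigma^N_s,\barSigma^N_s)$, and using the $1$-Lipschitz continuity of the evaluation push-forward $(\ev_s)_\sharp\colon(\cP_2(\cC([0,T],\real^d\times\cP(U))),W_2)\to(\cP_2(\real^d\times\cP(U)),W_2)$ together with $W_1\le W_2$ for $W_1(\barSigma^N_s,\Sigma_s)$, I get
\begin{equation*}
W_1(\Sigma^N_s,\Sigma_s)\le\frac1N\sum_{j=1}^N\norm{Y^{j,N}_s-\barY^j_s}_{\real^d\times F(U)}+W_2(\barSigma^N,\Sigma).
\end{equation*}
Squaring, applying Cauchy--Schwarz to the average, and substituting into the two estimates above, then setting $g_m(t)\coloneqq\E\bigl[\sup_{u\in[0,t]}\norm{Y^{m,N}_u-\barY^m_u}_{\real^d\times F(U)}^2\bigr]$ and bounding both $\E\norm{Y^{n,N}_s-\barY^n_s}^2$ and $\frac1N\sum_j\E\norm{Y^{j,N}_s-\barY^j_s}^2$ by $\max_m g_m(s)$, one arrives at
\begin{equation*}
\max_{1\le n\le N} g_n(t)\le C'\biggl(\int_0^t\max_{1\le n\le N}g_n(s)\,\de s+T\,\E\bigl[W_2^2(\barSigma^N,\Sigma)\bigr]\biggr),\qquad t\in[0,T].
\end{equation*}

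Finally I would apply Gr\"onwall's inequality to $t\mapsto\max_{1\le n\le N}g_n(t)$. Its boundedness on $[0,T]$ follows from the \emph{a priori} estimate of Proposition~\ref{prop:stima a priori} applied to the $N$-particle solution and of Proposition~\ref{prop:a priori mean field} applied to each $\barY^n$, which, together with the fact that the $X^n_0$ are identically distributed, give a bound on $g_n$ uniform in $n$; the same estimates guarantee $\E[W_2^2(\barSigma^N,\Sigma)]<\infty$, since $W_2^2(\barSigma^N,\Sigma)\le 2\int\sup_{[0,T]}\norm{\varphi}_{\real^d\times F(U)}^2\,\de\barSigma^N+2\int\sup_{[0,T]}\norm{\varphi}_{\real^d\times F(U)}^2\,\de\Sigma$ and also ensure $\barSigma^N$ takes values in $\cP_2$. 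Grönwall then yields $\max_{1\le n\le N}g_n(T)\le\mathcal{C}\,\E[W_2^2(\barSigma^N,\Sigma)]$ with $\mathcal{C}\coloneqq C'Te^{C'T}$, which is exactly~\eqref{eq:PropChaosPreliminaryEstimate}. I expect the only genuinely delicate point to be the step $W_1(\barSigma^N_s,\Sigma_s)\le W_2(\barSigma^N,\Sigma)$, i.e.\ passing from a distance between time-marginals to a distance between laws on path space: this is precisely where the choice of the $W_2$ distance on $\cP_2(\cC([0,T],\real^d\times\cP(U)))$ and the contraction property of the evaluation maps enter, and it must be checked carefully; the remaining H\"older/BDG/Gr\"onwall bookkeeping is routine.
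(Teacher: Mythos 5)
Your proposal is correct and follows essentially the same route as the paper's proof: the same Hölder/Burkholder--Davis--Gundy/Lipschitz estimates on each component, the same triangle-inequality splitting of $W_1(\Sigma^N_s,\Sigma_s)$ through $\barSigma^N_s$ with the empirical coupling $\frac1N\sum_j\delta_{(Y^{j,N}_s,\barY^j_s)}$ and the bound $W_1(\barSigma^N_s,\Sigma_s)\le W_2(\barSigma^N,\Sigma)$, and the same Grönwall closure using the \emph{a priori} moment estimates for boundedness. The step you flag as delicate is handled in the paper exactly as you describe, so no gap remains.
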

\begin{proof}
For a fixed $N \in \nat^+$ and $t \in [0,T]$, by applying H\"{o}lder and Burkholder--Davis--Gundy \eqref{eq_BDG} inequalities, combined with the Lipschitz properties \eqref{eq:v_prop} and \eqref{eq:sigma_prop}, we have that for all $n = 1,\dots,N$
\begin{equation*}
\begin{split}
&\,\E\biggl[\sup_{u \in [0,t]} \norm{X^{n,N}_u - \barX^n_u}_{\real^d}^2\biggr]  \\
\le&\, 2t\,\E\biggl[\int_0^t\norm{v_{\Sigma^N_s}(Y^{n,N}_s) - v_{\Sigma_s}(\barY^n_s)}_{\real^d}^2\,\de s\biggr] + 8 \E\biggl[\int_0^t\norm{\sigma_{\Sigma^N_s}(Y^{n,N}_s) - \sigma_{\Sigma_s}(\barY^n_s)}^2_{\real^{d\times m}}\,\de s\biggr]  \\
\le& \,2t\,\E\biggl[\int_0^t\bigl(L_v\bigl(\norm{Y^{n,N}_s - \barY^n_s}_{\real^d\times F(U)} + W_1(\Sigma^N_s,\Sigma_s)\bigr)\bigr)^2\,\de s\biggr] +  \\
&\,+ 8 \E\biggl[\int_0^t\bigl(L_\sigma\bigl(\norm{Y^{n,N}_s - \barY^n_s}_{\real^d\times F(U)} + W_1(\Sigma^N_s,\Sigma_s)\bigr)\bigr)^2\,\de s\biggr] \\
\le&\, 4(tL_v^2 + 4L_\sigma^2)\cdot \E\biggl[\int_0^t \bigl(\norm{Y^{n,N}_s - \barY^n_s}_{\real^d\times F(U)}^2 + W_1^2(\Sigma^N_s,\Sigma_s)\bigr)\,\de s \biggr].
\end{split}
\end{equation*} 
Analogously, by applying H\"{o}lder inequality and the Lipschitz property \eqref{eq:T_prop}, we have that
\begin{equation*}
\begin{split}
&\,\E\biggl[\sup_{u \in [0,t]} \norm{\Lambda^{n,N}_u - \barLambda^n_u}_{F(U)}^2\biggr] \le \E\biggl[t\int_0^t\norm{\cT_{\Sigma^N_s}(Y^{n,N}_s) - \cT_{\Sigma_s}(\barY^n_s)}_{F(U)}^2\,\de s\biggr] \\
\le&\, t\,\E\biggl[\int_0^t\bigl(L_\cT\bigl(\norm{Y^{n,N}_s - \barY^n_s}_{\real^d\times F(U)} + W_1(\Sigma^N_s,\Sigma_s)\bigr)\bigr)^2\,\de s\biggr] \\
\le& \,2t\,L_\cT^2 \E\biggl[\int_0^t \bigl(\norm{Y^{n,N}_s - \barY^n_s}_{\real^d\times F(U)}^2 + W_1^2(\Sigma^N_s,\Sigma_s)\bigr)\,\de s \biggr].
\end{split}
\end{equation*} 
Combining the previous inequalities, we obtain that, for all $t \in [0,T]$ and $n = 1,\dots,N$,
\begin{equation*}
\begin{split}
&\E\biggl[\sup_{u \in [0,t]} \norm{Y^{n,N}_u - \barY^n_u}_{\real^d \times F(U)}^2\biggr] \le 2\E\biggl[\sup_{u \in [0,t]} \norm{X^{n,N}_u - \barX^n_u}_{\real^d}^2\biggr] + 2\E\biggl[\sup_{u \in [0,t]} \norm{\Lambda^{n,N}_u - \barLambda^n_u}_{F(U)}^2\biggr] \\
&\le 4 (2tL_v^2 + 8L_\sigma^2 + tL_\cT^2)\cdot\E\biggl[\int_0^t \bigl(\norm{Y^{n,N}_s - \barY^n_s}_{\real^d\times F(U)}^2 + W_1^2(\Sigma^N_s,\Sigma_s)\bigr)\,\de s \biggr] \\
&\le 4 (2TL_v^2 + 8L_\sigma^2 + TL_\cT^2)\cdot\biggl(\E\biggl[\int_0^t\norm{Y^{n,N}_s - \barY^n_s}_{\real^d\times F(U)}^2\,\de s\biggr] + \E\biggr[\int_0^t W_1^2(\Sigma^N_s,\Sigma_s)\,\de s\biggr]\biggr). 
\end{split}
\end{equation*} 
We shall now estimate the last integral in the previous inequality. We note that, by triangle inequality, for all $s \in [0,t]$, we have
\begin{equation*}
    W_1(\Sigma^N_s,\Sigma_s) \le  W_1(\Sigma^N_s,\barSigma^N_s) + W_1(\barSigma^N_s,\Sigma_s),
\end{equation*}
where $\barSigma_t^N \coloneqq (\ev_t)_\sharp \barSigma^N$; 
therefore we obtain that, for all $t\in[0,T]$,
\begin{equation*}
    \int_0^t W_1^2(\Sigma^N_s,\Sigma_s)\,\de s \le 2\int_0^t W_1^2(\Sigma^N_s,\barSigma^N_s)\,\de s + 2\int_0^t W_1^2(\barSigma^N_s,\Sigma_s)\,\de s \eqqcolon 2\mathrm{I}_t + 2\mathrm{II}_t\,.
\end{equation*}
We observe now that, by definition of the Wasserstein distance and by convexity of the square function, we have
\begin{equation*}
\begin{split}
\mathrm{I}_t &\le \int_0^t \biggl(\int_{(\real^d \times \cP(U))^2}\norm{x-y}_{\real^d \times F(U)}\,\de\biggl(\frac{1}{N}\sum_{i=1}^N \delta_{(Y^{n,N}_s,\barY^n_s)}\biggr)(x,y)\biggr)^2\,\de s \\
&\le \int_0^t \biggl(\frac{1}{N} \sum_{i=1}^N \norm{Y^{n,N}_s - \barY^n_s}_{\real^d\times F(U)} \biggr)^2\,\de s \le \int_0^t \frac{1}{N} \sum_{i=1}^N \norm{Y^{n,N}_s - \barY^n_s}_{\real^d\times F(U)}^2 \,\de s
\end{split}
\end{equation*}
and, since $W_1(\barSigma_s^N,\Sigma_s)\leq W_2(\barSigma^N_s,\Sigma_s) \le W_2(\barSigma^N,\Sigma)$, we immediately obtain that 
$ 
    \mathrm{II}_t \le 
    T W_2^2(\barSigma^N,\Sigma).
$ 
Therefore, by letting $C_0 \coloneqq 4 (2TL_v^2 + 8L_\sigma^2 + TL_\cT^2)$, we can write that, for all $t\in [0,T]$ and $n = 1,\dots,N$,
\begin{equation*}
\begin{split}
&\,\E\biggl[\sup_{u \in [0,t]} \norm{Y^{n,N}_u - \barY^n_u}_{\real^d \times F(U)}^2\biggr] \le C_0 \biggl(\E\biggl[\int_0^t\norm{Y^{n,N}_s - \barY^n_s}_{\real^d\times F(U)}^2\,\de s\biggr] \\
&\,+ 2\E\biggr[\int_0^t \frac{1}{N} \sum_{i=1}^N \norm{Y^{n,N}_s - \barY^n_s}_{\real^d\times F(U)}^2\,\de s\biggr] +  \E\Bigl[2T W_2^2(\barSigma^N,\Sigma)\Bigr] \biggr) \\
\le&\, C_0 \biggl(\int_0^t \max_{1\le n \le N} \E\biggl[\sup_{u \in [0,s]} \norm{Y^{n,N}_u - \barY^n_u}_{\real^d \times F(U)}^2\biggr]\,\de s \\
&\,+ 2\int_0^t \frac{1}{N}\cdot N \max_{1\le n \le N} \E\biggl[\sup_{u \in [0,s]} \norm{Y^{n,N}_u - \barY^n_u}_{\real^d \times F(U)}^2\biggr]\,\de s + 2T \E\biggl[W_2^2(\barSigma^N,\Sigma)\biggr] \biggr) \\
\le&\, 3C_0 \int_0^t \max_{1\le n \le N} \E\biggl[\sup_{u \in [0,s]} \norm{Y^{n,N}_u - \barY^n_u}_{\real^d \times F(U)}^2\biggr]\,\de s + 2TC_0  \E\Bigl[W_2^2(\barSigma^N,\Sigma)\Bigr].
\end{split}
\end{equation*} 
Now we can apply Gr\"{o}nwall inequality to the function
\begin{equation*}
    v(t) \coloneqq \max_{1\le n \le N} \E\biggl[\sup_{u \in [0,t]} \norm{Y^{n,N}_u - \barY^n_u}_{\real^d \times F(U)}^2\biggr],
\end{equation*}
which 
is bounded on $[0,T]$ as a consequence of the \emph{a priori} estimates \eqref{eq:APrioriEstimate} and \eqref{eq:APrioriEstimateMeanField} (with $p=2$), to conclude that, for all $t \in [0,T]$,
\begin{equation*}
    v(t) \le e^{3C_0 t} 2TC_0\,\E\Bigl[W_2^2(\barSigma^N,\Sigma)\Bigr],
\end{equation*}
and by substituting $t = T$ we finally obtain estimate \eqref{eq:PropChaosPreliminaryEstimate} with
$\mathcal{C} \coloneqq e^{3C_0 T} 2TC_0$.
\end{proof}

The next proposition, which is a consequence of the strong law of large numbers, can be found in \cite[Lemma~4.7.1]{Invitation-To-Wasserstein}; we present its proof in Section~\ref{proof:Panaretos} for the reader's convenience.
\begin{prop}\label{prop:Panaretos}
Let $Z_n \colon \Omega \longrightarrow \cX$ be a sequence of random variables defined on the probability space $(\Omega,\mathscr{F},\prob)$ taking values in a complete and separable metric space $(\cX,d)$. Let us assume that the random variables $\{Z_n\}_{n=1}^\infty$ are i.i.d., and, for a certain $p \in [1,+\infty)$, they all have finite $p$-th moment.
Then, by denoting with $\mu \in \cP_p(\cX)$ the common law of the $Z_n$'s and with $\barr{\mu}^N$ the empirical measure
\begin{equation*}
    \barr{\mu}^N \coloneqq \frac{1}{N}\sum_{n=1}^N\delta_{Z_n} \colon \Omega \longrightarrow \cP_p(\cX),\qquad N\in\nat^+,
\end{equation*}
we have
\begin{equation}\label{eq:Panaretos}
    \lim_{N \to \infty}\E\bigl[W_p^p(\barr{\mu}^N,\mu)\bigr] = 0.
\end{equation}
\end{prop}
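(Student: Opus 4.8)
The plan is to derive the statement from the strong law of large numbers, combined with the standard characterisation of convergence in the Wasserstein distance $W_p$, and then to promote the resulting almost sure convergence to convergence of expectations by a generalised dominated convergence argument.

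First I would fix a point $x_0 \in \cX$ and introduce the real-valued random variables $g_n \coloneqq d(Z_n,x_0)^p$, $n \in \nat^+$, which are i.i.d.\ and integrable with $\E[g_1] = \int_\cX d(x,x_0)^p\,\de\mu(x) \eqqcolon m < +\infty$, the finiteness coming from the assumption $\mu \in \cP_p(\cX)$. The first step is to show that $W_p^p(\barr{\mu}^N,\mu) \to 0$ $\prob$-almost surely. To this end, I would invoke Varadarajan's theorem (which applies since $(\cX,d)$ is separable) to obtain that, $\prob$-almost surely, the empirical measures $\barr{\mu}^N$ converge weakly to $\mu$; simultaneously, the strong law of large numbers yields $\int_\cX d(x,x_0)^p\,\de\barr{\mu}^N(x) = \frac{1}{N}\sum_{n=1}^N g_n \to m = \int_\cX d(x,x_0)^p\,\de\mu(x)$ $\prob$-almost surely. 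Since convergence in $W_p$ is equivalent to weak convergence together with convergence of the $p$-th moments (see, \emph{e.g.}, \cite[Theorem~6.9]{Optimal-Transport-Villani} or \cite{AGS}), I conclude that $W_p^p(\barr{\mu}^N,\mu) \to 0$ $\prob$-almost surely.

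Next I would produce a dominating sequence. Using the product plan $\barr{\mu}^N \otimes \mu \in \Gamma(\barr{\mu}^N,\mu)$ and the elementary estimate $d(x,y)^p \le 2^{p-1}\bigl(d(x,x_0)^p + d(x_0,y)^p\bigr)$, one obtains
\begin{equation*}
W_p^p(\barr{\mu}^N,\mu) \le \int_{\cX\times\cX} d(x,y)^p\,\de(\barr{\mu}^N\otimes\mu)(x,y) \le 2^{p-1}\Bigl(\frac{1}{N}\sum_{n=1}^N g_n + m\Bigr) \eqqcolon h_N.
\end{equation*}
By the strong law of large numbers, $h_N \to 2^p m$ $\prob$-almost surely, while $\E[h_N] = 2^{p-1}\bigl(\frac{1}{N}\sum_{n=1}^N \E[g_n] + m\bigr) = 2^p m$ for every $N$, so in particular $\E[h_N] \to 2^p m = \E\bigl[\lim_N h_N\bigr] < +\infty$. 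Finally, since $0 \le W_p^p(\barr{\mu}^N,\mu) \le h_N$, with both $W_p^p(\barr{\mu}^N,\mu) \to 0$ and $h_N \to 2^p m$ holding $\prob$-almost surely, and $\E[h_N]$ converging to $\E[\lim_N h_N]$, Pratt's generalised dominated convergence theorem gives $\E\bigl[W_p^p(\barr{\mu}^N,\mu)\bigr] \to 0$, which is \eqref{eq:Panaretos}.

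The main obstacle is the first step: one must carefully invoke Varadarajan's theorem, whose validity hinges on separability of $\cX$, together with the (non-trivial) equivalence relating $W_p$-convergence to weak convergence plus convergence of $p$-th moments; completeness of $\cX$ is what makes $\bigl(\cP_p(\cX),W_p\bigr)$ a genuine complete metric space, as recalled in Theorem~\ref{thm:WasserPolish}, so that all the above statements are meaningful. The remaining ingredients---the coupling bound, the behaviour of the empirical $p$-th moments, and the application of Pratt's lemma---are routine.
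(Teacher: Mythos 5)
Your proposal is correct and follows essentially the same route as the paper: almost sure convergence $W_p^p(\barr{\mu}^N,\mu)\to 0$ via the characterisation of $W_p$-convergence as weak convergence plus convergence of $p$-th moments (the paper proves the Varadarajan-type weak convergence inline by testing against a countable family of bounded continuous functions, where you cite it as a named theorem), followed by a generalised dominated convergence argument with a dominating sequence built from the product plan $\barr{\mu}^N\otimes\mu$. The only cosmetic difference is that your dominant $h_N$ carries an extra $2^{p-1}$ from the triangle-type estimate, whereas the paper keeps the exact product-coupling cost $\frac{1}{N}\sum_{n}\int_{\cX}d(Z_n,z)^p\,\de\mu(z)$; both satisfy the hypotheses of Pratt's lemma (the paper cites the Evans--Gariepy version).
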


Thanks to Propositions~\ref{prop:5.4} and~\ref{prop:Panaretos}, we can now obtain our propagation of chaos result.
\begin{proof}[Proof of Theorem~\ref{thm:propagation_of_chaos}]
By Proposition~\ref{prop:independence}, the processes $\{\barY^n\}_{n=1}^\infty$ are i.i.d.~with common law $\Sigma\in \cP_2(\cC([0,T],\real^d\times\cP(U)))$, so that \eqref{eq:propagation_of_chaos} holds by combining \eqref{eq:PropChaosPreliminaryEstimate} and \eqref{eq:Panaretos} in Proposition~\ref{prop:Panaretos}, which we apply in the complete metric space $\cX=\cC([0,T],\real^d\times\cP(U))$ to the random variables $Z_n=\barY^n$ and with $p=2$.
\end{proof}

\smallskip 




\appendix
\section{Proofs of some technical propositions}\label{app}
In this appendix, we present the proofs of some propositions with a more probabilistic flavour, which we left behind in the preceding text.
To do so, we start by recalling some known facts on the measurability of maps with values in metric and Banach spaces: the statements of the next two propositions are borrowed from \cite{ProbDistBanachSpaces} and adapted to our needs.

\begin{prop}[{\cite[Chapter I, Proposition 1.1]{ProbDistBanachSpaces}}]\label{prop:measurelim}
Let $(\cA,\sA)$ be a measurable space and let~$\cX$ be a metric space endowed with its Borel $\sigma$-algebra $\sB(\cX)$. Let $f_n \colon \cA \longrightarrow \cX$, $n \in \nat$, be $(\sA,\sB(\cX))$-measurable maps. If the map $f \colon \cA \longrightarrow \cX$ is the pointwise limit of the sequence $\{f_n\}_{n=0}^\infty$\,, then it is $(\sA,\sB(\cX))$-measurable.
    
\end{prop}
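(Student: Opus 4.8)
The plan is to verify the standard measurability criterion on the family $\cC$ of closed subsets of $\cX$: since closed sets are the complements of open sets, $\sigma(\cC)=\sB(\cX)$, and since $\{B\in\sB(\cX):f^{-1}(B)\in\sA\}$ is always a $\sigma$-algebra, it is enough to show that $f^{-1}(C)\in\sA$ for every $C\in\cC$. I deliberately reduce to closed sets rather than open sets because $\cX$ is an arbitrary metric space, possibly non-separable, so an open set need not be a countable union of balls; the distance-to-a-closed-set function, by contrast, is available and well-behaved in any metric space.

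Fix then a closed set $C\subseteq\cX$ and denote by $d_\cX$ the metric on $\cX$. First I would recall that the map $x\longmapsto d_\cX(x,C)$ is $1$-Lipschitz, hence continuous, hence $\sB(\cX)$-measurable; therefore the functions $h_n\coloneqq d_\cX(\cdot,C)\circ f_n\colon\cA\longrightarrow[0,+\infty)$ are $\sA$-measurable for every $n\in\nat$. Moreover, by continuity of $x\longmapsto d_\cX(x,C)$ together with the hypothesis $f_n(a)\to f(a)$ for every $a\in\cA$, the real sequence $h_n(a)$ converges to $h(a)\coloneqq d_\cX(f(a),C)$ for every $a\in\cA$.

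Since $C$ is closed, $f(a)\in C$ if and only if $d_\cX(f(a),C)=0$; hence $f^{-1}(C)=\{a\in\cA:h(a)=0\}=\{a\in\cA: h_n(a)\to 0\}$. Because each $h_n$ is non-negative, this set admits the countable description
\begin{equation*}
f^{-1}(C)=\bigcap_{k=1}^\infty\bigcup_{m=1}^\infty\bigcap_{n=m}^\infty\Bigl\{a\in\cA: h_n(a)<\tfrac1k\Bigr\},
\end{equation*}
and each $\bigl\{a\in\cA: h_n(a)<1/k\bigr\}$ belongs to $\sA$ by measurability of $h_n$. Countable unions and intersections of $\sA$-sets being in $\sA$, we get $f^{-1}(C)\in\sA$, and the criterion recalled at the outset gives the $(\sA,\sB(\cX))$-measurability of $f$.

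There is no genuine obstacle in this argument; the only points demanding a little care are the reduction to the generating class of closed sets instead of open sets (forced by the lack of separability of $\cX$) and the elementary observation that $x\longmapsto d_\cX(x,C)$ is continuous, which is what simultaneously transports measurability through the $f_n$ and survives passage to the pointwise limit.
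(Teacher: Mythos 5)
The paper offers no proof of this proposition: it is quoted as a known fact from the cited reference, so there is nothing internal to compare against. Your argument is correct and is the standard one for this result (essentially the proof in the reference): you reduce to closed sets via the generated-$\sigma$-algebra criterion, transport measurability through the $1$-Lipschitz map $x \longmapsto d_\cX(x,C)$, and identify $f^{-1}(C)$ with $\{a : \lim_n h_n(a) = 0\}$, whose displayed countable Boolean description is valid precisely because the $h_n$ are non-negative and converge pointwise.
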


\begin{prop}[{\cite[Chapter II, Theorem 1.1]{ProbDistBanachSpaces}}]\label{prop:nonPettis} 
Let $(\cA,\sA)$ be a measurable space and $E$ be a separable Banach space endowed with its Borel $\sigma$-algebra $\sB(E)$. If $f \colon \cA \longrightarrow E$ then the following are equivalent:
\begin{enumerate}
    \item[(a)] the map $f$ is $(\sA,\sB(E))$-measurable;
    \item[(b)] for every continuous linear functional $\varphi \in E^*$, the function $\langle\varphi,f\rangle \colon \cA \longrightarrow \real$ is $(\sA,\sB(\real))$-measurable.
\end{enumerate}
\end{prop}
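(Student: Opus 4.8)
The plan is to prove the two implications separately. The implication $(a)\Rightarrow(b)$ is immediate; the real content is $(b)\Rightarrow(a)$, and the strategy there is to exploit the separability of $E$ through two classical ingredients: the Hahn--Banach theorem (to extract a \emph{countable norming family} of functionals) and the fact that in a separable metric space the Borel $\sigma$-algebra is generated by closed balls. Combined with the ``good sets'' principle for $\sigma$-algebras, these reduce weak measurability to strong measurability.

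For $(a)\Rightarrow(b)$: every $\varphi\in E^*$ is norm-continuous, hence $(\sB(E),\sB(\real))$-measurable, and composing with the $(\sA,\sB(E))$-measurable map $f$ shows that $\langle\varphi,f\rangle=\varphi\circ f$ is $(\sA,\sB(\real))$-measurable.

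For $(b)\Rightarrow(a)$: first I would fix a countable dense set $\{x_k\}_{k\in\nat}\subseteq E$ and, by the Hahn--Banach theorem, choose for each $k$ a functional $\varphi_k\in E^*$ with $\norm{\varphi_k}_{E^*}\le 1$ and $\langle\varphi_k,x_k\rangle=\norm{x_k}_E$. A short density-and-approximation argument then gives $\norm{x}_E=\sup_{k\in\nat}\abs{\langle\varphi_k,x\rangle}$ for every $x\in E$, so $\{\varphi_k\}_{k\in\nat}$ is a countable norming family. Consequently, for any $x_0\in E$ and $r>0$ the closed ball $\bar B(x_0,r)\coloneqq\{x\in E:\norm{x-x_0}_E\le r\}$ satisfies
\[
\bar B(x_0,r)=\bigcap_{k\in\nat}\bigl\{x\in E:\abs{\langle\varphi_k,x\rangle-\langle\varphi_k,x_0\rangle}\le r\bigr\}.
\]
Taking preimages under $f$ and invoking hypothesis $(b)$ (each $\langle\varphi_k,f\rangle$ is $(\sA,\sB(\real))$-measurable), it follows that $f^{-1}(\bar B(x_0,r))\in\sA$ for all $x_0\in E$ and $r>0$.

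To conclude, I would observe that the family $\mathcal{G}\coloneqq\{B\subseteq E:f^{-1}(B)\in\sA\}$ is a $\sigma$-algebra on $E$ containing all closed balls by the previous step. Since $E$ is separable, every open subset of $E$ is a countable union of open balls centred at points of $\{x_k\}$ with rational radii, and each such open ball is a countable union of closed balls; hence $\sB(E)$ is generated by closed balls, so $\sB(E)\subseteq\mathcal{G}$, which is precisely the $(\sA,\sB(E))$-measurability of $f$. The only genuinely non-routine points are the construction of the countable norming family and the fact that closed balls generate $\sB(E)$ in a separable space; I expect the former to be the step to state most carefully, since it is exactly where separability is indispensable and where the Hahn--Banach theorem enters.
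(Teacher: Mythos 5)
Your proof is correct and complete: the implication $(a)\Rightarrow(b)$ is the trivial composition argument, and for $(b)\Rightarrow(a)$ you give the standard proof of the Pettis measurability theorem, correctly using Hahn--Banach to build a countable norming family from a dense sequence, writing closed balls as countable intersections of weak ``slabs'', and finishing with the good-sets principle together with the fact that closed balls generate $\sB(E)$ when $E$ is separable. Note that the paper itself does not prove this statement --- it is imported verbatim from \cite[Chapter~II, Theorem~1.1]{ProbDistBanachSpaces} --- so there is nothing to compare against except the classical argument, which is exactly the one you reproduce.
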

\subsection{Proofs of Propositions~\ref{prop:DiscreteIterationsContAdapt} and~\ref{prop:SolContAdatt}}\label{app_adapted}
\begin{proof}[Proof of Proposition \ref{prop:DiscreteIterationsContAdapt}]
We proceed by induction on $n$. By definition \eqref{eq:DiscreteIterations0}, we have that $\bX_{0,t} = \bX_0$ for all $t \in [0,T]$, therefore the initial (spatial) iteration is clearly continuous, and it is $(\sF_t)_{t\in[0,T]}$-adapted since, by hypothesis, the initial datum $\bX_{0}$ is an $\sF_0$-measurable random variable. We argue identically for the process $\bLambda_0$\,.

We now consider $n \ge 0$, and assume that the $n$-th iterations $\bX_n$ and $\bLambda_n$ are continuous and $(\sF_t)_{t\in[0,T]}$-adapted. We first prove the continuity of the trajectories of $\bX_{n+1}$ and $\bLambda_{n+1}$. Let $\omega \in \Omega$ be fixed; we observe that the maps $[0,T] \ni s \longmapsto \bv_{\Sigma^N_{n,s}}(\bX_{n,s},\bLambda_{n,s})(\omega) \in (\real^d)^N$ and $s \longmapsto \bcG_{\Sigma^{N}_{n,s}}(\bX_{n,s},\bLambda_{n,s})(\omega) \in (F(U))^N$ are continuous, since they are obtained through the composition of continuous maps, therefore they are integrable (in the sense of Lebesgue and Bochner respectively) and  the two maps
\begin{equation*}
     t \longmapsto \int_0^t \bv_{\Sigma^{N,n}_s}(\bX_{n,s},\bLambda_{n,s})(\omega)\,\de s,\qquad 
     t \longmapsto \frac{1}{\theta}\int_0^t e^{\frac{s-t}{\theta}} \bcG_{\Sigma^{N,n}_s}(\bX_{n,s},\bLambda_{n,s})(\omega)\,\de s
\end{equation*}
are in turn continuous over $[0,T]$. The inductive hypothesis also implies that the $\real^{dN\times mN}$-valued process $(\bsigma_{\Sigma^{N,n}_t}(\bX_{n,t},\bLambda_{n,t}))_{t\in[0,T]}$ is continuous and $(\sF_t)_{t\in[0,T]}$-adapted, hence it is progressively measurable as a consequence of \cite[Proposition 2.1]{Stochastic-Calculus}, and satisfies the condition $\int_0^T \norm{\bsigma_{\Sigma^{N,n}_t}(\bX_{n,t},\bLambda_{n,t})}_{\real^{dN\times dm}}^2\,\de t < \infty$ everywhere on $\Omega$. Therefore, it is integrable in the sense of It\={o}, and the map 
\begin{equation*}
    t \longmapsto \biggl(\int_0^t \bsigma_{\Sigma^{N,n}_s}(\bX_{n,s},\bLambda_{n,s})\,\de \bB_s\biggr)(\omega)
\end{equation*}
is continuous over $[0,T]$ by virtue of the continuity properties of stochastic integrals (see, \emph{e.g.}, \cite[Theorem 7.3]{Stochastic-Calculus}). Recalling definition \eqref{eq:DiscreteIterations}, we conclude that the maps $[0,T] \ni t \longmapsto \bX_{n+1,t}(\omega) \in (\real^d)^N$ and 
$[0,T] \ni t \longmapsto \bLambda_{n+1,t}(\omega) \in (F(U))^N$ are continuous, thus proving the claim concerning continuity.

We now show that both $\bX_{n+1}$ and $\bLambda_{n+1}$ are $(\sF_t)_{t \in [0,T]}$-adapted. 
We start from the spatial components: owing to the progressive measurability of $\bX_n$ and $\bLambda_n$ (coming, again, from \cite[Proposition~2.1]{Stochastic-Calculus}), we deduce that, for every $t \in [0,T]$, the map $\Omega \times [0,t]\ni (\omega,s) \longmapsto \bv_{\Sigma^{N,n}_s}(\bX_{n,s},\bLambda_{n,s})(\omega)$ is $\mathscr{F}_t\otimes\mathscr{B}([0,t])$-measurable; by the Fubini--Tonelli Theorem (see, \emph{e.g.}, \cite[Theorem~1.2(a)]{Stochastic-Calculus}), we can conclude that, for every $t \in [0,T]$, the map
$\Omega \ni \omega \longmapsto \int_0^t \bv_{\Sigma^{N,n}_s}(\bX_{n,s},\bLambda_{n,s})(\omega)\,\de s$
is $\mathscr{F}_t$-measurable. 
By recalling that the random variable $\int_0^t \bsigma_{\Sigma^{N,n}_s}(\bX_{n,s},\bLambda_{n,s})\de \bB_s$ is $\mathscr{F}_t$-measurable, we conclude that so is $\bX_{n+1,t}$, whence the process $\bX_{n+1}$ is adapted.
Since $\bLambda_{n+1,t}$ takes valued in the separable Banach space $((F(U))^N,\norm{\cdot}_{(F(U))^N})$, by virtue of Proposition~\ref{prop:nonPettis}, we conclude if we show that, for every continuous linear functional $\varphi \in ((F(U))^N)^*$, the real-valued map
$\Omega\ni \omega \longmapsto \langle\varphi, \bLambda_{n+1,t}(\omega)\rangle$ is $\sF_t$-measurable. By taking $\varphi \in ((F(U))^N)^*$, we have that 
\begin{equation}
\langle\varphi, \bLambda_{n+1,t}(\omega)\rangle = 
e^{-\frac{t}{\theta}} \langle\varphi, \bLambda_0(\omega)\rangle + \frac{1}{\theta}\int_0^t e^{\frac{s-t}{\theta}} \Bigl\langle \varphi, \bcG_{\Sigma^{N,n}_s}(\bX_{n,s},\bLambda_{n,s})(\omega)\Bigr\rangle\,\de s,
\label{eq:duality}
\end{equation}
where we have used \cite[Chapter~5.5, Corollary~2]{Yosida} to swap the time integral and the duality action. 
Since the integral in~\eqref{eq:duality} is a Lebesgue integral, we can argue as for the spatial components
to obtain that the map
$\Omega \ni \omega \longmapsto \frac{1}{\theta}\int_0^t e^{\frac{s-t}{\theta}} \bigl\langle \varphi, \bcG_{\Sigma^{N,n}_s}(\bX_{n,s},\bLambda_{n,s})(\omega)\bigr\rangle\,\de s$
is $\mathscr{F}_t$-measurable. 
We conclude by arbitrariness of $\varphi$.
\end{proof}

\begin{proof}[Proof of Proposition \ref{prop:SolContAdatt}] 
Letting $t \in [0,T]$ and  $n = 0,1,2,\dots$, by defining the random variable $\widetilde{\bY}_{n,t} \colon \Omega \longrightarrow (\real^d \times F(U))^N$ as
\begin{equation*}
\widetilde{\bY}_{n,t}(\omega) \coloneqq
\begin{dcases}
 \bY_{n,t}(\omega), &\omega \in \cZ^c, \\
\tilde{\by}, &\omega \in \cZ, 
\end{dcases}
\end{equation*}
we can write $\bY_t(\omega)=\lim_{n\to\infty}\widetilde{\bY}_{n,t}(\omega)$\,, for every $\omega\in\Omega$. 
Proposition~\ref{prop:DiscreteIterationsContAdapt} grants that, for every $n = 0,1,2,\dots$, the random variables $\bY_{n,t}$ are $\sF_t$-measurable. 
Since the filtration $(\mathscr{F}_t)_{t \in [0,T]}$ is standard, $\cZ\in\sF_t$\,, so that the random variables $\widetilde{\bY}_{n,t}$ are $\mathscr{F}_t$-measurable. 
Since $\bY_t$ is the pointwise limit of a sequence of  $\mathscr{F}_t$-measurable random variables, by Proposition~\ref{prop:measurelim}, we conclude that it is $\sF_t$-measurable. The thesis follows from the arbitrariness of $t$.
\end{proof}

\subsection{On the independence of sub-$\sigma$-algebras augmented by null sets}\label{augmented_sub_sigma_algebras}
\begin{lemma}\label{lemma:augmented_sub_sigma_algebras}
Let $(\Omega, \sF, \prob)$ be a probability space, and let $\sN \subseteq \sF$ be the collection of its $\prob$-negligible subsets. If $\{\sA_\alpha\}_{\alpha\in I}$ is an independent family of sub-$\sigma$-algebras of $\sF$, so is the family $\{\barsA_\alpha\}_{\alpha\in I}$\,, where $\barsA_\alpha \coloneqq \sigma(\sA_\alpha \cup \sN)$, for all $\alpha \in I$.
\end{lemma}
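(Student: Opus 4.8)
The plan is to reduce the independence of the family $\{\barsA_\alpha\}_{\alpha\in I}$ to that of $\{\sA_\alpha\}_{\alpha\in I}$ by working at the level of finitely many indices and exploiting a $\pi$-$\lambda$ (Dynkin) argument together with the fact that augmenting by null sets changes probabilities by nothing. Fix a finite subset $\{\alpha_1,\dots,\alpha_k\}\subseteq I$ and, for each $j$, pick $\barr{A}_j\in\barsA_{\alpha_j}$. The first step is the elementary observation that every $\barr{A}\in\barsA_\alpha=\sigma(\sA_\alpha\cup\sN)$ differs from some $A\in\sA_\alpha$ by a $\prob$-null set, i.e.\ there exists $A\in\sA_\alpha$ with $\prob(\barr{A}\,\triangle\,A)=0$; equivalently $\ind_{\barr{A}}=\ind_A$ $\prob$-almost surely. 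This is itself a small Dynkin-class argument: the collection of sets $\barr{A}\in\sF$ admitting such an $A\in\sA_\alpha$ is a $\sigma$-algebra (it is closed under complements and countable unions, using that a countable union of null sets is null) containing both $\sA_\alpha$ and $\sN$, hence it contains $\barsA_\alpha$.

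The second step is to use this to compute the joint probability. Choosing $A_j\in\sA_{\alpha_j}$ with $\prob(\barr{A}_j\,\triangle\,A_j)=0$ for $j=1,\dots,k$, one has $\prob\bigl(\bigcap_{j=1}^k \barr{A}_j\bigr)=\prob\bigl(\bigcap_{j=1}^k A_j\bigr)$, because the symmetric difference of the two intersections is contained in $\bigcup_{j=1}^k(\barr{A}_j\,\triangle\,A_j)$, which is $\prob$-null; likewise $\prob(\barr{A}_j)=\prob(A_j)$ for each $j$. By the assumed independence of $\{\sA_\alpha\}_{\alpha\in I}$,
\begin{equation*}
\prob\Bigl(\bigcap_{j=1}^k \barr{A}_j\Bigr)=\prob\Bigl(\bigcap_{j=1}^k A_j\Bigr)=\prod_{j=1}^k \prob(A_j)=\prod_{j=1}^k \prob(\barr{A}_j).
\end{equation*}
Since the finite index set and the sets $\barr{A}_j\in\barsA_{\alpha_j}$ were arbitrary, this is exactly the definition of independence of the family $\{\barsA_\alpha\}_{\alpha\in I}$, and the proof is complete.

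I do not expect a genuine obstacle here; the only point requiring a modicum of care is the first step, namely verifying that the ``good'' sets form a $\sigma$-algebra and therefore that $\barsA_\alpha$ is contained in it — one should note in particular that $\barr{A}\,\triangle\,A$ being null passes to complements ($\barr{A}^c\,\triangle\,A^c=\barr{A}\,\triangle\,A$) and to countable unions. Everything else is bookkeeping with symmetric differences and the elementary inclusion $\bigl(\bigcap_j \barr{A}_j\bigr)\,\triangle\,\bigl(\bigcap_j A_j\bigr)\subseteq\bigcup_j(\barr{A}_j\,\triangle\,A_j)$.
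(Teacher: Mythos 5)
Your proof is correct. It reaches the same conclusion as the paper but by a genuinely different route. The paper picks, inside each $\barsA_{\alpha_i}$, the $\pi$-system $\sC_{\alpha_i}=\{A\cap N : A\in\sA_{\alpha_i},\ N=\Omega \text{ or } N\in\sN\}$, verifies the product formula only on these generators (splitting into the case where every $N$ equals $\Omega$, where independence of the original family applies, and the case where some $N$ is null, where both sides vanish), and then invokes the coincidence criterion for finite measures repeatedly to extend the identity to the full $\sigma$-algebras. You instead prove a structural fact about $\barsA_\alpha$ itself: every $\barr{A}\in\sigma(\sA_\alpha\cup\sN)$ satisfies $\prob(\barr{A}\,\triangle\,A)=0$ for some $A\in\sA_\alpha$, because the collection of sets with this property is a $\sigma$-algebra containing $\sA_\alpha\cup\sN$ (your verification of closure under complements and countable unions is correct, as is the containment $\bigl(\bigcap_j\barr{A}_j\bigr)\triangle\bigl(\bigcap_j A_j\bigr)\subseteq\bigcup_j(\barr{A}_j\triangle A_j)$). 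Once that characterization is in hand, the independence identity follows for arbitrary elements of the $\barsA_{\alpha_j}$ in one line, with no case distinction and no appeal to the measure-coincidence machinery. What your approach buys is a cleaner, self-contained argument and an explicit description of the augmented $\sigma$-algebra that is reusable elsewhere; what the paper's approach buys is that one never needs to exhibit the approximating set $A$, only to check the identity on a convenient generating class. Both are complete proofs.
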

\begin{proof}
We want to show that, for any finite collection of indices $\alpha_1,\dots,\alpha_r \in I$, and for any choice of $\barA_1 \in \barsA_{\alpha_1}$\,, \dots, $\barA_r \in \barsA_{\alpha_r}$\,, we have
\begin{equation}\label{eq:independenceDef}
    \prob(\barA_1\cap\dots\cap \barA_r) = \prod_{i=1}^r\prob(\barA_i).
\end{equation}
By a repeated application of the coincidence criterion for finite measures \cite[Theorem~1.1 and Remark~1.1]{Stochastic-Calculus}, it is straightforward to check that it suffices to verify condition \eqref{eq:independenceDef} for $\barA_1 \in \sC_{\alpha_1}$\,, \dots, $\barA_r \in \sC_{\alpha_r}$\,, where  $\sC_{\alpha_i}\subseteq\barsA_{\alpha_i}$ (for $i = 1,\ldots,r$) is a subclass which is stable with respect to finite intersection and such that $\sigma(\sC_{\alpha_i}) = \barsA_{\alpha_i}$\,. In the present case, we may choose the subclasses
\begin{equation*}
    \sC_{\alpha_i} \coloneq \{A \cap N : \text{$A \in \sA_{\alpha_i}$ and either $N = \Omega$ or $N \in \sN$}\},\qquad \text{for $i = 1,\dots,r$,}
\end{equation*}
which satisfy the desired properties. We observe that two cases can arise: either all the sets $\barA_1,\dots,\barA_r$ are of the form $A_i \cap \Omega$ with $A_i \in \sA_{\alpha_i}$\,, and in this case \eqref{eq:independenceDef} is satisfied owing to the assumed independence of the family $\{\sA_\alpha\}_{\alpha\in I}$\,, or one of the sets $\barA_1,\dots,\barA_r$ (say $\barA_1$) is of the form $A_1 \cap N_1$ with $A_1 \in \sA_{\alpha_1}$ and $N_1\in \sN$, in this case we have that
\begin{equation*}
    \prob(\barr{A}_1\cap\dots\cap\barr{A}_r) = \prob(A_1\cap N_1\cap\dots\cap A_r\cap N_r) = 0 = \prod_{i=1}^r \prob(A_i\cap N_i) = \prod_{i=1}^r \prob(\barr{A}_i),
\end{equation*}
since both $A_1\cap N_1\cap \dots\cap A_r\cap N_r$ and $A_1\cap N_1$ are contained in the negligible set $N_1$.
\end{proof}

\subsection{Proof of Proposition~\ref{prop:Panaretos}}\label{proof:Panaretos}
To obtain the proof of Proposition~\ref{prop:Panaretos}, the following generalization of Lebesgue's Dominated Convergence Theorem is needed.
\begin{prop}[{\cite[Theorem~1.20]{EvansGariepy}}]\label{prop:EG}
Let $(E,\sE,\nu)$ be a measure space and, for all $N\in\nat^+$, let $g_N\,,g$ be $\nu$-integrable functions, and $f_N\,,f$ be measurable functions. 
Assume that 
\begin{itemize}
    \item[(a)] $\lim_{N\to\infty}f_N=f$,  $\nu$-a.e.;
    \item[(b)] $\abs{f_N} \le g_N$, for every $N\in\nat^+$;
    \item[(c)] $\lim_{N\to\infty} g_N = g$, $\nu$-a.e.;
    \item[(d)] $\displaystyle \lim_{N\to\infty} \int_{E} g_N \,\de\nu= \int_{E}g\,\de\nu$.
\end{itemize}
Then the functions $\{f_N\}_{N=1}^\infty$ and $f$ are $\nu$-integrable and it holds
\begin{equation*}
\lim_{N\to\infty} \int_{E} f_N \,\de\nu = \int_{E}f\,\de\nu.
\end{equation*}
\end{prop}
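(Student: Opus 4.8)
The plan is to derive this generalized dominated convergence theorem from Fatou's lemma applied to two suitably chosen nonnegative sequences, following the classical route. First I would settle the integrability assertions that are part of the conclusion. Passing to the limit in hypothesis (b) and using (a) together with (c), one obtains $\abs{f} \le g$ $\nu$-almost everywhere; since $g$ is $\nu$-integrable, so is $f$. Likewise, each $f_N$ is $\nu$-integrable because $\abs{f_N} \le g_N$ with $g_N$ integrable. This guarantees that all the integrals appearing below are finite real numbers, which is precisely what makes the subsequent arithmetic with $\liminf$ and $\limsup$ legitimate.

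The core of the argument consists in applying Fatou's lemma to the two sequences of nonnegative measurable functions $g_N + f_N \ge 0$ and $g_N - f_N \ge 0$, the nonnegativity being exactly hypothesis (b). For the first one, using that $\liminf_{N\to\infty}(g_N + f_N) = g + f$ holds $\nu$-almost everywhere (from (a) and (c)), Fatou's lemma yields
\[
\int_E (g+f)\,\de\nu \le \liminf_{N\to\infty} \int_E (g_N + f_N)\,\de\nu.
\]
Because $\int_E g_N\,\de\nu$ converges to $\int_E g\,\de\nu$ by (d), this term may be extracted from the $\liminf$, and after cancelling the finite quantity $\int_E g\,\de\nu$ I obtain $\int_E f\,\de\nu \le \liminf_{N\to\infty} \int_E f_N\,\de\nu$. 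Applying Fatou's lemma to $g_N - f_N$ in the same fashion, and this time isolating the $\limsup$ (again permissible because $\int_E g_N\,\de\nu$ genuinely converges), gives the reverse inequality $\limsup_{N\to\infty} \int_E f_N\,\de\nu \le \int_E f\,\de\nu$.

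Combining the two inequalities sandwiches $\int_E f\,\de\nu$ between $\limsup_{N\to\infty} \int_E f_N\,\de\nu$ and $\liminf_{N\to\infty} \int_E f_N\,\de\nu$, which forces the limit of $\int_E f_N\,\de\nu$ to exist and to equal $\int_E f\,\de\nu$, as claimed. The only delicate point—really the main thing to get right—is the bookkeeping with $\liminf$ and $\limsup$: the separation of the $g_N$-integrals from the $f_N$-integrals inside these operations is valid precisely because (d) provides a true limit for $\int_E g_N\,\de\nu$, whereas a merely pointwise-dominated statement would not suffice. No further obstacle is anticipated, since the remaining steps are routine consequences of Fatou's lemma and of the finiteness of $\int_E g\,\de\nu$ established in the first step.
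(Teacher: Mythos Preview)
Your argument is correct and is the standard Fatou-lemma proof of the generalized dominated convergence theorem. The paper does not supply its own proof of this proposition; it merely quotes the statement from \cite[Theorem~1.20]{EvansGariepy}, so there is nothing to compare against beyond noting that your route is the classical one found in that reference as well.
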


\begin{proof}[Proof of Proposition~\ref{prop:Panaretos}]
We start by showing that for almost every $\omega \in \Omega$ we have that $W_p^p(\barmu^N(\omega),\mu) = 0$. By \cite[Theorem~6.9]{Optimal-Transport-Villani}, this is equivalent to showing that there exists a $\prob$-negligible set $\cN \in \mathscr{F}$ such that, for every $\omega \in \cN^c$, the following conditions hold:
\begin{itemize}
    \item[$(i)$](convergence of $p$-th moments) for some $z_0\in \cX$,
    \begin{equation*}
        \int_{\cX} d(z,z_0)^p \,\de (\barmu^N(\omega))(z) \longrightarrow \int_{\cX} d(z,z_0)^p \,\de \mu(z), \qquad \text{as $N\to\infty$,}
    \end{equation*} 
    \item[$(ii)$](weak convergence, in duality with $C_b(\cX)$) $\barmu^N(\omega) \xrightharpoonup{\phantom{M}} \mu$, as $N\to\infty$.
\end{itemize}
To show $(i)$, we observe that the real-valued random variables $\{d(Z_n,z_0)^p\}_{n=1}^\infty$ are i.i.d.~and have finite first moment. 
Thus, we can apply the strong law of large numbers \cite[Theorem~22.1]{Billingsley-PandM} to deduce the existence of a $\prob$-negligible set $\cN_0 \in \mathscr{F}$ such that, for every $\omega \in \cN_0^c$, 
\begin{equation*}
        \int_{\cX} d(z,z_0)^p \,\de (\barmu^N(\omega))(z) = \frac{1}{N}\sum_{n=1}^N d(Z_n(\omega),z_0)^p \xlongrightarrow[N\to\infty]{} \E[d(Z_1,z_0)^p]= \int_{\cX} d(z,z_0)^p \,\de \mu(z).
\end{equation*}
To show $(ii)$, we use the fact that, being $\cX$ a separable space, it suffices to test the convergence condition on a suitable countable collection $\{\varphi_k\}_{k=1}^\infty \subseteq C_b(\cX)$ (see the discussion at the beginning of \cite[Section~2.2.2]{Invitation-To-Wasserstein}). 
By applying, for every fixed $k$, the strong law of large numbers to the real-valued random variables $\{\varphi_k(Z_n)\}_{m=1}^\infty$ (which turn out to be i.i.d.~and with finite first moment), we conclude that for a certain $\prob$-negligible set $\cN_k\in\sF$\,, we have that, for every $\omega \in \cN_k^c$, 
\begin{equation*}
        \int_{\cX} \varphi_k(z) \,\de (\barmu^N(\omega))(z) = \frac{1}{N}\sum_{n=1}^N \varphi_k(Z_n(\omega)) \xlongrightarrow[N\to\infty]{} \E[\varphi_k(Z_1)]= \int_{\cX} \varphi_k(z) \,\de \mu(z).
\end{equation*}
Now the set $\cN\coloneqq \cup_{k=0}^\infty \cN_k$\, belongs to $\sF$, is $\prob$-negligible, and, for every $\omega\in \cN^c$, we have $W_p^p(\barmu^N(\omega),\mu) \to 0$, as $N\to\infty$, as desired. 

To obtain \eqref{eq:Panaretos}, we need to pass to the limit under the expectation, so that 
$$\lim_{N\to\infty}\E\bigl[W_p^p(\barmu^N,\mu)\bigr]=\E\Bigl[\lim_{N\to\infty} W_p^p(\barmu^N,\mu)\Bigr]=0.$$
To show the first equality above, we apply Proposition~\ref{prop:EG} with $(E,\sE,\nu)=(\Omega,\sF,\prob)$,
\begin{equation*}
g_N(\omega) \coloneqq \frac{1}{N}\sum_{n=1}^N\int_{\cX}d(Z_n(\omega),z)^p\,\de\mu(z), \;\;\text{for all $N$,}\quad\text{and}\quad
g(\omega) \coloneqq \E\biggl[\int_{\cX}d(Z_1,z)^p\,\de\mu(z)\biggr],
\end{equation*}
$f_N(\omega) \coloneqq W_p^p(\barmu^N(\omega),\mu)$, for all $N\in\nat^+$, and $f(\omega) \coloneqq 0$.
Indeed, condition (a) follows from what we have just proved; condition (b) is a consequence of the definition of $W_p$ by taking $\barmu^N(\omega)\otimes\mu$ as a competitor in \eqref{eq:WasserDef}; condition $(c)$ follows from the strong law of large numbers applied to the i.i.d.~random variables $\int_{\cX}d(Z_n,z)^p\,\de\mu(z)$; finally, condition $(d)$ is a consequence of the identical distribution of $Z_n$\,, so that $\E[g_N] = \frac{1}{N}\sum_{i=1}^N\E[\int_{\cX}d(Z_n,z)^p\,\de\mu(z)] = \E[\int_{\cX}d(Z_1,z)^p\,\de\mu(z)] = \E[g]$. 
The proof is concluded.
\end{proof}

\bibliographystyle{siam}
\bibliography{bibliography}

\end{document}